\newtheorem{thm}{Theorem}[section]
\newtheorem{THM}{Theorem}
\newtheorem{cor}[thm]{Corollary}
\newtheorem{prop}[thm]{Proposition}
\newtheorem{lemma}[thm]{Lemma}
\theoremstyle{definition}
\newtheorem{definition}[thm]{Definition}
\newtheorem{remark}[thm]{Remark}
\newcommand{\con}{N_\F^*}
\newcommand{\Iadm}{{\mathcal I}^\varepsilon}
\newcommand{\Ieucl}{{\mathcal I}^0}
\newcommand{\Ihyp}{{\mathcal I}^1}
\newcommand{\ladm}{{\mathcal I}_{d\log}^\varepsilon}
\newcommand{\leucl}{{\mathcal I}_{d\log}^0}
\DeclareMathOperator{\ddbar}{\partial\overline{\partial}}
\DeclareMathOperator{\dbar}{\overline{\partial}}
\DeclareMathOperator{\Log}{Log}
\def\R{\mathbb R}
\def\C{\mathbb C}
\def\Q{\mathbb Q}
\def\F{\mathcal F}
\def\D{\mathbb D}
\begin{document}
 
 \title[On the structure of codimension 1 foliations ] {On the structure of codimension 1 foliations with pseudoeffective conormal bundle.}
 \author[FR\'ED\'ERIC TOUZET]{FR\'ED\'ERIC TOUZET$^1$}
\thanks{$^1$ IRMAR, Campus de Beaulieu 35042 Rennes Cedex, France, frederic.touzet@univ-rennes1.fr}


\begin{abstract} Let $X$ a projective manifold equipped with a codimension $1$ (maybe singular) distribution whose conormal sheaf is assumed to be pseudoeffective. By a theorem of Jean-Pierre Demailly, this distribution is actually integrable and thus defines a codimension $1$ holomorphic foliation $\F$.  We aim at describing the structure of such a foliation, especially in the non abundant case: It turns out that $\F$ is the pull-back of one of the "canonical foliations"  on a Hilbert modular variety. This result remains valid for ``logarithmic foliated pairs''.

\end{abstract}
\maketitle

\hskip 20 pt{\bf Keywords:} Holomorphic foliations, pseudoeffective line bundle, abundance.\\

\hskip 20 pt{\bf Mathematical Subject Classification:}37F75\\

\setcounter{tocdepth}{1}
\sloppy
\tableofcontents

\section{Introduction}

In this paper, we are dealing with special class of codimension $1$ holomorphic foliations. To this goal, let us introduce some basic notations. 

Our main object of interest consists of a pair $(X,\mathcal D)$ where $X$ stands for a connected complex manifold (which will be fairly quickly assumed to be K\"ahler compact) equipped with a codimension $1$ holomorphic (maybe singular) distribution $\mathcal D=\mbox{Ann}\ \omega\subset TX$ given as the annihilator subsheaf of a twisted holomorphic non trivial one form 

$$\omega\in H^0(X,\Omega_X^1\otimes L).$$

Without any loss of generalities, one can assume that $\omega$ is surjective in codimension $1$, in other words that the vanishing locus $\mbox{Sing}\  \omega$ has codimension at least $2$.

The line bundle $L$ is usually called the {\bf normal bundle} of $\mathcal D$ and denoted by $N_{\mathcal D}$. Here, we will be particularly interested in studying its dual  $N_{\mathcal D}^*$, the {\bf conormal bundle} of $\mathcal D$ in the case this latter carries some ``positivity'' properties.

We will also use the letter $\mathcal F$ (as foliation) instead of $\mathcal D$ whenever integrability holds, i.e: $[\mathcal D,\mathcal D]\subset \mathcal D$.

Assuming now that $X$ is {\bf compact K\"ahler}, let us review some well known situations where integrability automatically holds.
\vskip 5 pt

{\bf $\mathcal D=\mathcal F$} as soon as:

\begin{enumerate}
 \item $N_{\mathcal D}^*=\mathcal O (D)$ where $D$ is an integral effective divisor. This is related to the fact that $d\omega=0$ (regarding $\omega$ as an holomorphic one form with $D$ as zeroes divisor).
\item $\mbox{Kod}\ (N_{\mathcal D}^*)\geq 0$ where $\mbox{Kod}$ means the Kodaira dimension. Actually, in this situation, one can get back to the previous one passing to a suitable branched cover.
\end{enumerate}

In this setting, it is worth recalling the classical result of {\bf Bogomolov Castelnuovo De Franchis} wich asserts that one has always $\mbox{Kod}\ (N_{\mathcal D}^*)\leq 1$ and when equality holds, $\mathcal D=\mathcal F$ is an holomorphic fibration over a curve.
\vskip 5 pt

In fact, integability still holds under weaker positivity assumptions on $N_{\mathcal D}^*$, namely if this latter is only supposed to be {\bf pseudoeffective} ({\it psef}). This is a result obtained by Demailly in \cite{de}.

For the sequel, it will be useful to give further details and comments on this result.

First, the pseudoeffectiveness property can be translated  into the existence of a singular metric $h$ on $N_{\mathcal D}^*$ with plurisubharmonic ({\it psh}) local weights. This means that $h$ can be locally expressed as 

$$h(x,v)={|v|}^2e^{-2\varphi (x)}$$

\noindent where $\varphi$ is a {\it psh} function.

The curvature form $T=\frac{i}{\pi}\partial\overline{\partial}\varphi$ is then well defined as a closed positive current representing the real Chern class $c_1(N_{\mathcal D}^*)$ and conversely any closed $(1,1)$ positive current $T$ such that $\{T\}=c_1(N_{\mathcal D}^*)$ (here, braces stand for ``cohomology class'') can be seen as the curvature current of such a metric.

What actually Demailly shows is the following vanishing property:

$$ \nabla_{h^*}\omega=0.$$

More explicilely, $\omega$ is closed with respect to the Chern connection attached to $N_{\mathcal D}$ endowed with the dual metric $h^*$.

This somehow generalizes the well known fact that every holomorphic form on compact K\"ahler manifold is closed.

Locally speaking, this equality can be restated as 
\begin{equation}\label{connectionclosed}
2\partial\varphi\wedge\omega=-d\omega. 
\end{equation}
\noindent if one looks at $\omega$ as a suitable local defining one form of the distribution.

Equality (\ref{connectionclosed}) provides three useful informations:

The first one is the sought integrability: $\omega\wedge d\omega=0$, hence $\mathcal D=\mathcal F$.

One then obtains taking the $\overline{\partial}$ operator, that $T$ is {\bf $\mathcal F$-invariant} (or invariant by holonomy of the foliation), in other words, $T\wedge \omega=0$.

One can also derive from (\ref{connectionclosed}) the closedness of $\eta_T$, where $\eta_T$ is a globally defined positive $(1,1)$ form which can be locally expressed as 

    $$\eta_T=\frac{i}{\pi}e^{2\varphi}\omega\wedge\overline{\omega}.$$

In some sense, $\eta_T$ defines the dual metric $h^*$ on $ N_{\mathcal D}$ whose curvature form is $-T$ and is canonically associated to $T$ up to a positive multiplicative constant.

Note that, as well as $T$, the form $\eta_T$ has no reasons to be smooth. Nevertheless, it is well defined as a current and taking its differential makes sense in this framework.

By closedness, one can also notice that $\eta_T$ is indeed a {\bf closed positive $\mathcal F$ invariant current}.
\vskip 5 pt

Now, the existence of two such invariant currents, especially $\eta_T$ which has locally bounded coefficients, may suggest that this foliation has a nice behaviour, in particular from the dynamical viewpoint, but also, as we will see later, from the algebraic one. Then it seems reasonnable to describe them in more details.
\vskip 5 pt

As an illustration, let us give the following (quite classical) example wich also might be enlightening in the sequel.

Let $S=\frac{\D\times\D}{\Gamma}$ a surface of general type uniformized by the bidisk. Here, $\Gamma\subset {\mbox{Aut}\ \D}^2$ is a cocompact irreducible and torsion free lattice wich lies in the identity component of the automorphism group of the bidisk.

The surface $S$ is equipped with two ``tautologically'' defined minimal foliations by curves ${\mathcal F}_h$ and ${\mathcal F}_v$ obtained respectively by projecting the horizontal and vertical foliations of the bidisk (recall that ``minimal'' means that every leaf is dense and here, this is due to the lattice irreducibility).

Both of these foliations have a {\it psef} and more precisely a semipositive conormal bundle. For instance, if one considers $\mathcal F={\mathcal F}_h$, a natural candidate for $\eta_T$ and $T$ is provided by the transverse Poincar\'e's metric $\eta_{\mbox{Poincar\'e}}$, that is the metric induced by projecting the standard Poincar\'e's metric of the second factor of $\D\times\D$.

In this example, it is staightforward to check that $T=\eta_T$ is $\mathcal F$ invariant. We have also $T\wedge T=0$. This implies that the numerical dimension $\nu (N_{\mathcal D}^*)$ is equal to one. By minimality, and thanks to Bogomolov's theorem this equality certainly does not hold for $\mbox{Kod}\ N_{\mathcal D}^*$. In fact, this is not difficult to check that this latter is negative.

Maybe this case is the most basic one where abundance does not hold for the conormal. One can then naturally ask whether this foliation is in some way involved as soon as the abundance principle fails to be true. It turns out that (at least when the ambient manifold $X$ is projective) the answer is yes, as precised by the following statement: 

\begin{THM}\label{thprincipal}
 Let $(X,\mathcal F)$ be a foliated projective manifold by a codimension $1$ holomorphic foliation $\mathcal F$.  Assume moreover that 
$N_{\mathcal F}^*$ is pseudoeffective and $\mbox{Kod}\ N_{\mathcal F}^*=-\infty$, then one can conclude that $\mathcal F={\Psi}^*\mathcal G$ where $\Psi$ is a morphism of analytic varieties between $X$ and the quotient $\frak{H}={{\D}^n}/{\Gamma}$ of a polydisk, ($n\geq 2)$, by an irreducible lattice $\Gamma\subset {(\mbox{Aut}\ \D)}^n$ and $\mathcal G$ is one of the $n$ tautological foliations on $\frak{H}$.
\end{THM}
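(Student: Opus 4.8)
The plan is to read off from the two invariant currents a transverse hyperbolic geometry on $\F$ and then to uniformize it. The first thing I would record is that the $\F$-invariance $T\wedge\omega=0$ is extremely rigid: in a foliated chart in which $\omega$ defines the transverse coordinate, this equation forces $T=\tfrac{i}{\pi}\,u\,\omega\wedge\overline{\omega}$ for a nonnegative potential $u$, so that automatically $T\wedge T=0$ and hence $\nu(N_\F^*)=1$. Thus both $T$ and the bounded current $\eta_T$ are purely transverse rank-one objects, and the only transverse geometry available is a conformal (hence, after normalisation, a constant-curvature) metric on the local leaf space. The model $\mathfrak{H}=\D^n/\Gamma$ is precisely the case where this metric is the transverse Poincar\'e metric, for which $T=\eta_T$; so my target is to prove that, up to the expected degeneracies, $\F$ is transversely hyperbolic with $T=\eta_T$.

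Next I would isolate the locus where this can fail. The current $T$ may carry a divisorial (atomic) part along $\F$-invariant hypersurfaces, and $\F$ has a singular set; let $Z$ collect these together with the polar and indeterminacy sets of the local weights $\varphi$. The hypothesis $\operatorname{Kod} N_\F^*=-\infty$ is exactly what forbids the divisorial part from being genuinely effective — otherwise one falls into case (1) of the introduction and recovers sections — and it is what lets me normalise the weights so that the relation $2\partial\varphi\wedge\omega=-d\omega$ becomes a Liouville-type equation expressing that $\eta_T$ has constant negative curvature on $X\setminus Z$. From such a transversely hyperbolic structure one obtains in the standard way a holonomy representation $\rho\colon\pi_1(X\setminus Z)\to\Aut\D=\mathrm{PSL}(2,\R)$ together with a $\rho$-equivariant developing map to $\D$, with controlled (tame, parabolic) behaviour along $Z$.

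The heart of the argument, and the step I expect to be the main obstacle, is to classify $\rho$. I would feed it into non-abelian Hodge theory for tame harmonic bundles: Corlette--Donaldson--Simpson produce a $\rho$-equivariant harmonic map from the universal cover of $X\setminus Z$ into the symmetric space $\D$ of $\mathrm{PSL}(2,\R)$, and here a dichotomy appears. If this harmonic map factors through a curve, or more generally if $\rho$ admits nontrivial deformations, the factorization theorems for representations of K\"ahler groups (Gromov--Schoen, Zuo, Corlette--Simpson) produce an $\F$-invariant fibration onto a hyperbolic orbicurve, which returns $\operatorname{Kod} N_\F^*\geq 0$ and contradicts the hypothesis; this is exactly where non-abundance is spent. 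Hence $\rho$ must be rigid. Rigidity, through Corlette--Simpson integrality, makes $\rho$ defined over the ring of integers of a number field (totally real, or with an appropriate quaternion algebra) with all archimedean conjugates bounded; assembling the finitely many Galois conjugates yields a representation into $(\Aut\D)^n$ whose image is an irreducible arithmetic lattice $\Gamma$ of Hilbert-modular type, together with a period map $\Psi\colon X\setminus Z\to\D^n/\Gamma$.

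Finally I would check that $\Psi$ does the job. The place among the $n$ conjugates that carries the transverse structure we started from contributes the tautological foliation $\G$ that pulls back to $\F$, so $\F=\Psi^*\G$; the presence of the remaining conjugates forces $n\geq 2$, since $n=1$ would again exhibit $\F$ as pulled back from a curve and restore abundance. It then remains to extend $\Psi$ across $Z$ as a morphism of analytic varieties, using tameness near $Z$, the boundedness of the coefficients of $\eta_T$, and normality/Hartogs-type arguments to fill in the singular and divisorial loci, and to verify that the pulled-back tautological foliation agrees with $\F$ on all of $X$. The logarithmic case runs through the same scheme, carrying the polar divisor along throughout, where it enters both the divisorial part of $T$ and the parabolic weights of the harmonic bundle.
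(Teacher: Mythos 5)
Your overall architecture --- transverse hyperbolic structure, monodromy representation into $\Aut\ \D$, a Corlette--Simpson dichotomy, and a period map to $\D^n/\Gamma$ --- is the paper's, but two of your steps do not hold as stated. First, the reduction to the hyperbolic case is not automatic. From $T\wedge\omega=0$ you cannot conclude $T=\tfrac{i}{\pi}u\,\omega\wedge\overline{\omega}$ nor $\{T\}^2=0$: the extremal invariant current carries an atomic part $[N]$ along the negative part of the divisorial Zariski decomposition, whose components form an exceptional family, so $\{T\}^2\{\theta\}^{n-2}=\{N\}^2\{\theta\}^{n-2}$ is in general strictly negative. What is true is that the positive part $Z$ is a multiple of $\{\eta_T\}$ with $\{\eta_T\}^2=0$, whence $\nu(N_\F^*)\in\{0,1\}$; to know that $\mbox{Kod}\ N_\F^*=-\infty$ forces $\nu=1$ you must prove abundance when $Z=0$, i.e.\ that $N_\F^*\otimes\mathcal O(-N)$ numerically trivial implies torsion. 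In the paper this is a genuine argument (case (1) of theorem \ref{abundancecases}) using Simpson's theorem on isolated points of $\Sigma^1(X)$, Beauville's description of the non-isolated ones, and Deligne's semisimplicity of the Gauss--Manin monodromy; ``falling into case (1) of the introduction'' presupposes the integral effective divisor that is precisely what has to be produced.

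Second, and more seriously, your exclusion of the ``factors through a curve'' branch fails. If $\rho$ factors through $\pi_1$ of a curve, it does \emph{not} follow that $\F$ is tangent to a fibration or that $\mbox{Kod}\ N_\F^*\geq 0$: a priori $\F$ is only the pullback of a Riccati foliation over the curve by a dominant map of $\P^1$-bundles, which need not be a fibration. The paper rules this out dynamically (theorem \ref{nocurvefactorization}): one first proves that $\F$ is quasi-minimal --- which requires the connectedness theorem for the fibers of the developing map (theorem \ref{connect}) and the classification of $\overline{G}$ (proposition \ref{StructureG}), neither of which appears in your outline --- and then observes that since the monodromy lies in $PSL(2,\R)$, which preserves a disk in $\P^1$, no leaf of a Riccati foliation over a curve with real monodromy is dense, contradicting quasi-minimality of the pullback. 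Quasi-minimality is also what makes $\Psi$ dominant at the end. In addition, applying Corlette--Simpson requires quasi-unipotent (here finite) local monodromy at infinity, which the paper extracts from the rationality of the residues of $\eta_H$, i.e.\ of the coefficients of $N$ --- your ``tame, parabolic behaviour'' does not supply this --- and the final identification $\F=\Psi^*\G$ hides the fact that $\Phi^*\F_i$ and $\F$ share a monodromy only up to conjugation in $PSL(2,\C)$, where the conjugating element may lie in the non-identity component of the normalizer of $PSL(2,\R)$, forcing the anti-holomorphic twist of one factor of $\D^n$ carried out in the paper.
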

\vskip 5 pt

One can extend this result for ``logaritmic foliated pairs'' in the following sense:

 \begin{THM}\label{thprincipallog}
 The same conclusion holds replacing the previous assumptions  on $N_{\mathcal F}^*$ by the same assumptions on the logarithmic conormal bundle $N_{\mathcal F}^*\otimes \mathcal O(H)$ where $H=\sum_iH_i$ is reduced and normal crossing divisor, each component of which being invariant by the foliation and replacing $\frak H$ by its Baily-Borel compactification ${\overline{\frak H}}^{BB}$.

\end{THM}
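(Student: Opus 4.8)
The plan is to repeat the analysis leading to Theorem \ref{thprincipal} in the logarithmic category, letting the invariant divisor $H$ encode the cusps of the target. First I would produce a logarithmic analogue of Demailly's closedness result. Since each component $H_i$ is $\mathcal F$-invariant, a local defining form $\omega$ of $\mathcal F$ may be chosen with logarithmic poles along $H$, so that it becomes a section of $\Omega_X^1(\log H)\otimes(N_{\mathcal F}^*\otimes\mathcal O(H))^*$; the pseudoeffectiveness of $N_{\mathcal F}^*\otimes\mathcal O(H)$ then supplies a singular metric of {\it psh} weight $\varphi$ for which the local identity $2\partial\varphi\wedge\omega=-d\omega$ persists, now with this logarithmic $\omega$. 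Exactly as in the compact case this yields a closed positive $\mathcal F$-invariant current $T$ representing $c_1(N_{\mathcal F}^*\otimes\mathcal O(H))$ together with the closed positive invariant current $\eta_T=\frac{i}{\pi}e^{2\varphi}\omega\wedge\overline{\omega}$; the essential point is that over the Zariski-open set $X\setminus H$ the situation is word-for-word that of the compact non-logarithmic case.

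I would then extract the transverse geometry. Over $X\setminus H$ the invariant currents equip $\mathcal F$ with a transversely hyperbolic structure (a transverse Poincar\'e metric), hence a developing map to $\mathbb D$ and a holonomy representation into $\Aut\,\mathbb D=\mathrm{PSL}(2,\mathbb R)$. The hypothesis $\mbox{Kod}(N_{\mathcal F}^*\otimes\mathcal O(H))=-\infty$ rules out the degenerate alternatives---a fibration over a curve (excluded by Bogomolov--Castelnuovo--De Franchis) or a transverse Euclidean reduction---so the structure is genuinely hyperbolic and the classification underlying Theorem \ref{thprincipal} applies on the interior, producing the polydisk quotient $\mathfrak H=\mathbb D^n/\Gamma$ and identifying $\mathcal F$ with the pull-back of a tautological foliation $\mathcal G$.

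The new phenomenon is that $\Gamma$ is now permitted to contain parabolic elements, and these must be matched to the local monodromy of $\mathcal F$ around the components of $H$. I would show that the holonomy around each $H_i$ is parabolic, so that the classifying map $\Psi$ carries $H$ into the set of cusps of $\mathfrak H$, and that $\Psi$ extends across $H$ to a morphism of analytic varieties valued in the Baily--Borel compactification $\overline{\mathfrak H}^{BB}$, in which the cusps are realized as (singular) points.

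The principal obstacle I anticipate is precisely this boundary analysis. One must first prove that the local monodromy along $H$ is unipotent rather than merely quasi-unipotent or hyperbolic---this is where the logarithmic pole, the order of its residue, and the invariance of the $H_i$ enter decisively---and then show that the developing map approaches a cusp in the controlled manner needed for $\Psi$ to extend holomorphically to the singular target $\overline{\mathfrak H}^{BB}$, whose local models at the cusps are quotient (toric) rather than smooth points. A second, genuine difficulty is to guarantee that $\Gamma$ remains a lattice of finite covolume once parabolics are admitted; here I would use the compactness of $X$ together with the control that the closed positive current $\eta_T$ exerts on the transverse volume near the cusps.
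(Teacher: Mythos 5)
Your overall architecture is the right one and matches the paper's intent (the paper itself says the logarithmic case follows the non-logarithmic one \emph{mutatis mutandis}), and you correctly locate the genuinely new difficulty at the boundary. But two substantive gaps remain. First, the claim that the situation over $X\setminus H$ is ``word-for-word'' the compact case is not accurate: the logarithmic analogue of Demailly's identity $\partial_{h^*}\omega=0$ does not come for free, because the candidate current $\eta=\frac{i}{\pi}e^{2\varphi}\omega\wedge\overline{\omega}$ is not integrable across $H$ and the Stokes argument breaks down there. The paper must run a regularization ($\omega^U_\varepsilon$ with cutoffs $\varphi_{j,\varepsilon}$ killing the non-integrable terms $\frac{dz_i}{z_i}\wedge\frac{d\overline{z_i}}{\overline{z_i}}$) and, before that, a case analysis on whether the local potentials of $T$ restrict to $-\infty$ on the components of $H$ (the ``type (1)/(2)'' dichotomy, resolved by Lemma \ref{casetype1} and Corollary \ref{integrabilityforlogarithmic}, which may force a change of the representing current $T$). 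Related to this, you skip the organizing dichotomy KLT versus strict log-canonical: in the KLT case one perturbs the coefficients of $H$ to a datum $\Lambda$ with $0\le\lambda_i<1$, the degeneration along $H$ is orbifold-like, no cusps appear, and either $\F$ is a fibration or the target is $\frak H$ itself; the Baily--Borel compactification is only needed in the strict log-canonical case. Your proof never decides which components of $H$ become cusps and which are absorbed into $\mbox{Supp}\ N$ or force a fibration.

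Second, and more seriously, in the strict log-canonical case with $Z\not=0$ your construction of the transversely hyperbolic structure is circular as stated. To normalize $\eta_T$ so that $\{\eta_T\}=Z$ and to solve $T=[N]+\eta_T$ by the Schauder--Tychonoff fixed point, one needs $\eta_T$ to define a closed positive current on \emph{all} of $X$, but a priori it only lives on $X\setminus H_b$ and there is no reason it has locally finite mass near the boundary. The paper's proof of this extension is the technical heart of the logarithmic theorem: negativity of the intersection form of a boundary component (Lemma \ref{negativeintersectionform}), reduction to dimension $2$ at the corners, a linearization lemma whose proof excludes non-linearizable holonomy via Perez-Marco's hedgehogs and forces the residue ratios $\lambda_{ij}$ to be positive reals, the existence of a cycle of components with $\prod\lambda_{i_ji_{j+1}}\not=1$ (Lemma \ref{cyclecomponents}), and finally a mass estimate on annuli transported by the Dulac transform showing $\nu_{A_n}(S_\eta)\leq 2^{1-n}\nu_{A_1}(S_\eta)$. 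None of this is present in your sketch, and without it the cusp local model $f=-i\sum\alpha_j\log z_j$, the parabolic (affine, non-translation-only) local monodromy around $H_b$, the density of the global monodromy, and hence quasi-minimality, cannot be established. Your final worry about $\Gamma$ having finite covolume is, by contrast, not an issue: once Corlette--Simpson applies (using quasi-unipotency at infinity, which here follows from rationality of the Zariski decomposition and the $\log$ local models), the target is already a polydisk Shimura variety and the extension of $\Psi$ across $H_b$ is handled by Borel's extension theorem.
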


\begin{remark}
 In the logaritmic case, compactification is needed because, unlike to the non logarithmic setting, there may exist among the components of $H$ some special ones arising from the possible existence of cusps in $\frak H$.
\end{remark}

Let us give the outline of the proof. For the sake of simplicity, we will take only in consideration the non logarithmic situation corresponding to theorem \ref{thprincipal}. The logarithmic case follows {\it mutatis mutandis} the same lines althought we have to overcome quite serious additional difficulties.

The first idea consists in performing and analysing the divisorial Zariski decomposition of the pseudoeffective class $c_1(N_{\mathcal F}^*)$ using that we have at our disposal the two positive closed invariant currents $T$, $\eta_T$ and their induced cohomology class $\{T\}$ and $\{\eta_T\}$.

Actually, the existence of such currents yields strong restriction on the Zariski decomposition. Without entering into details, this basically implies that this latter shares the same nice properties than the classical two dimensional Zariski decomposition. 

Once we have obtained the structure of this decomposition, one can solve by a fix point method (namely the Shauder-Tychonoff theorem applied in a suitable space of currents) the equation 

\begin{equation}\label{equa-1}
T=\eta_T+[N]
\end{equation}

\noindent
where $[N]$ is some ``residual'' integration current. More precisely, among the closed positive currents representing $c_1( N_{\mathcal F}^*)$, there exists one (uniquely defined) wich satisfies the equality ($\ref{equa-1}$) (with $\eta_T$ suitably normalized).

This equality needs some explanations. Indeed, it really holds when the positive part in the Zariski decomposition is non trivial, that is when $N_{\mathcal F}^*$ has positive numerical dimension. From now on, we focus only on this case, regardless for the moment of what occurs for vanishing Kodaira dimension. 

We also have to precise what is $N$: it is just the negative part in the Zariski decomposition and it turns out that $N$ is a $\Q$ effective $\mathcal F$ invariant divisor. 

Then, outside $\mbox{Supp}\ N$, near a point where the foliation is assumed to be regular and defined by $dz=0$, ($\ref{equa-1}$) can be locally expressed as 

$${\Delta}_z\varphi (z)=e^{2\varphi (z)}$$

\noindent where $\varphi$ is a suitably chosen local potential of $T$ depending only of the transverse variable $z$ and $\Delta_z$ is the Laplacean with respect to the single variable $z$. This is the equation giving metric of constant negative curvature, wich means that $\mathcal F$ falls into the realm of the so called ``transversely projective'' foliations and more accurately the transversely hyperbolic ones. Equivalently, those can be defined by a collection of holomorphic local first integral valued in the disk such the glueing transformation maps between these first integrals are given by disk automorphisms. In that way, it gives rise to a representation 

$$\rho:\pi_1 (X\setminus \mbox{Supp}\ N)\rightarrow \mbox{Aut}\ \D.$$

\begin{remark}
 We need to remove $\mbox{Supp}\ N$ because the tranversely hyperbolic structure may degenerate on this hypersurface (indeed, on a very mildly way). 
\end{remark}

Let $G$ be the image of the representation: it is the monodromy group of the foliation and faithfully encodes the dynamical behaviour of $\mathcal F$. More precisely, one can show that there are two cases to take into consideration:

Either $G$ is a cocompact lattice and in that case, $\mathcal F$ is an holomorphic fibration, $\mbox{Kod}\ N_{\F}^*=1$ and coincide with the numerical dimension.

Either $G$ is dense (w.r.t the ordinary topology) in $\mbox{Aut}\ \D$ and the foliation is quasiminimal (i.e: all leaves but finitely many are dense). Moreover, $\mbox{Kod}\  N_{\F}^*=-\infty$. Of course, we are particularly interested in this last situation. 

To reach the conclusion, the remaining arguments heavily rely on a nice result of Kevin Corlette and Carlos Simpson (see \cite{corsim}) concerning rank two local systems on quasiprojective manifolds, especially those with Zariski dense monodromy in $SL(2,\C)$. 

Roughly speaking, their theorem gives the following
alternative:

\begin{enumerate}
\item
Either such a local system comes from a local system on a Deligne-Mumford curve $\mathcal C$.
\item
Either it comes from one of the taulogical rank two local systems on a what the authors call a ``polydisk Shimura DM stack'' $\frak H$ (this latter can be thought as some kind of moduli algebraic space uniformized by the polydisk, in the same vein than Hilbert Modular varieties).
\end{enumerate}

\begin{remark}
 In the setting of transversely hyperbolic foliations, as we do not really deal with rank 2 local but rather with projectivization of these, one can get rid of the DM stack formalism, only considering the target spaces $\mathcal C$ or $\frak H$ with their ``ordinary''orbifold strucure (wich consists of finitely many points). Moreover, in the full statement of Corlette-Simpson, one needs to take into account  some quasiunipotency assumptions on the monodromy at infinity. Actually, these assumptions are fulfilled in our situation and are related to the rationality of the divisorial coefficients in the negative part $N$.

\end{remark}

 This paper is a continuation of the previous work \cite{to} where one part of the strategy outlined above has been already achieved (namely the construction of special metrics transverse to the foliation).

Note also that the desription of transversely projective foliations on projective manifolds was recently completed in \cite{lptbis}. However, there are several significant differences between our paper and {\it loc.cit} (and in fact, the techniques involved in both papers are not the same):

\begin{itemize}
 \item In our setting, the transverse projective structure is not given a priori,

\item \cite{lptbis} also deals with irregular transversely projective foliations (ours turn out to be regular singular ones).

\item  When the monodromy associated to the transverse projective structure is Zariski dense in $PSL(2,\C)$, the factorization theorem D, item (2) and (3) of \cite{lptbis} can be made more precise for the class of foliations considered here. Namely, pull-back of a Riccati foliation over a curve (item (2)) is replaced by pull-back of a foliation by points on a curve, that is $\F$ is a fibration, and analogously, item (3) can be reformulated changing ``pull-back between representations'' by ``pull-back between foliations'', as precised by theorems \ref{thprincipal} and  \ref{thprincipallog}  above.
\end{itemize}
 
\vskip 10 pt
{\bf Acknowledgements}: It's a pleasure to thank Dominique Cerveau, Frank Loray and Jorge Vit\'orio Pereira for valuable and helpful discussions.

\section{Positive currents invariant by a foliation}

In this short section, we review some definitions/properties (especially intersection ones) of positive current which are invariant by holonomy of a foliation. They will be used later.

\subsection{Some basic definitions}
\noindent Let $\F$ a codimension $1$ holomorphic foliation on a complex manifold $X$ defined by $\omega\in H^0(X,\Omega_X^1\otimes L)$.

\begin{definition} (see also \cite{brbir}, \cite{to}).
Let $T$ a positive  current of bidegree $(1,1)$ defined on $X$. $T$ is said to be {\bf invariant by $\mathcal F$}, or $\mathcal F$-invariant, or invariant by holonomy of $\F$ whenever
\begin{enumerate}
\item $T$ is closed
\item $T$ is directed by the foliation, that is $T\wedge\omega=0$.

\end{enumerate}

\end{definition}

Notice that when $T$ is a non vanishing smooth semi-positive $(1,1)$ form, we fall into the realm of the so-called {\bf transversely hermitian foliations}.

\begin{definition} (see\cite{to}).
Let $x\in X$ a singular point of $\mathcal F$. We say that $x$ is an elementary singularity if there exists near $x$ a non constant first integral of $\mathcal F$ of the form 
$$f=f_1^{\lambda_1}...f_p^{\lambda_p}$$
where $f_i\in {\mathcal O}_{X,x}$ and $\lambda_i$ is a positive real number .

\noindent Equivalently, $\frac{df}{f}$ is a local defining logarithmic form of $\mathcal F$ with positive residues.
\end{definition}

\begin{definition}
Let $T$ a closed positive $(1,1)$ current on $X$. Recall that $T$ is {\bf residual } is the Lelong number $\lambda_H (T)$ of $T$ along any hypersurface $H$ is zero.

\end{definition}

Let $X$ be compact K\"ahler, and $T$ a $(1,1)$ positive current on $X$ and $\{T\}\in H^{1,1}(X,\R)$ its cohomology class . By definition, this latter is a {\it pseudoeffective}  class and admits, as such, a {\bf divisorial Zariski decomposition} ( a fundamental result due to \cite{bo}, \cite{na}, among others):

$$\{T\}={N} + Z$$

\noindent where $N$ is the "negative part",  an  $\R$ effective divisor, whose support is a finite union of prime divisor $N_1,.. N_p$ such that the family $\{N_1,..,N_p\}$ is exceptional (in the sense of \cite{bo})  and $Z$, the "positive part" is {\it nef} in codimension $1$ (or modified {\it nef}). This implies in particular that the restriction of $Z$ to any hypersurface is still a pseudoeffective class.

(For a precise definition and basic properties of Zariski decomposition useful for our purpose, see \cite{to} and references therein).

\subsection{Intersection of $\mathcal F$-invariant positive currents}

Let $X$ is a K\"ahler manifold equipped with a codimension $1$ holomorphic foliation $\F$ wich carries a $(1,1)$ closed positive $T$ current invariant by holonomy. 

\begin{prop}\label{potconst}
Assume that near every point, $T$ admits a local potential locally constant on the leaves.
 Let $S$ be an other closed positive current invariant by $\mathcal F$, then $\{S\}\{T\}=0$
 \end{prop}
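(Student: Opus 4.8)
The plan is to define the product current $T\wedge S$ through the leafwise-constant local potentials of $T$, to observe that this product computes the cup product $\{T\}\{S\}$, and to show that it vanishes identically.

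I would first record the local structure of the two currents away from the analytic set $\Sigma=\sing\F\cup\{\text{polar set of the potential}\}$, which has codimension $\geq 2$. On $X\setminus\Sigma$ choose foliated coordinates $(z,w_1,\dots,w_{n-1})$ in which $\F$ is defined by $\omega=u\,dz$ for a local unit $u$, so that the leaves are the slices $z=\mathrm{const}$. By hypothesis $T$ has a local potential $\varphi$ constant on the leaves, hence $\varphi=\varphi(z)$, and positivity of $T$ makes $\varphi$ subharmonic in the single variable $z$. The invariance $S\wedge\omega=0$, together with the reality and positivity of $S$, forces $S$ to be \emph{purely transverse}: in the above coordinates the entries $S_{\alpha\bar\beta}$ of its coefficient matrix vanish as soon as $\alpha\neq 0$, and then by Hermitian symmetry also as soon as $\beta\neq 0$, so that $S=\frac{i}{\pi}\,g\,dz\wedge d\overline z$ for a positive coefficient $g$.

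The key computation is then immediate at the formal level and is the reason behind the statement. Since $\varphi$ depends only on $z$, the form $\dbar\varphi$ is proportional to $\overline\omega$, and as $S$ is real and $\F$-invariant one has $S\wedge\overline\omega=\overline{S\wedge\omega}=0$; hence $\dbar\varphi\wedge S=0$. Using that $S$ is closed, so that $\partial S=\dbar S=0$, I obtain
$$T\wedge S=\frac{i}{\pi}\,\ddbar\varphi\wedge S=\frac{i}{\pi}\,\partial\!\left(\dbar\varphi\wedge S\right)=0.$$
Equivalently, both currents are carried by the transverse bidirection $dz\wedge d\overline z$, and the wedge of two such $(1,1)$-objects vanishes for pure degree reasons ($dz\wedge dz=0$). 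Thus $T\wedge S=0$ on $X\setminus\Sigma$.

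It remains to promote this to the cohomological identity. Writing $T=\theta+\ddbar\psi$ with $\theta$ a smooth closed representative of $\{T\}$, the Bedford--Taylor product satisfies $T\wedge S=\theta\wedge S+\ddbar(\psi\,S)$, whose class is exactly $\{T\}\{S\}$; since $T\wedge S$ has just been shown to vanish, we conclude $\{T\}\{S\}=0$. The main obstacle is precisely to give all of this a rigorous meaning at the level of currents: neither $T$ nor $S$ is smooth, and the leafwise potential $\varphi$ need not be locally bounded (its transverse Laplacian may carry atoms, i.e. logarithmic poles), so the products $\dbar\varphi\wedge S$ and $\ddbar\varphi\wedge S$ and the integration by parts above must be justified by a monotone regularization of $\varphi$ by smooth leafwise-constant potentials, together with a control --- via the positivity of the currents and a Skoda--El Mir type extension --- of the masses carried by $\Sigma$, which has codimension $\geq 2$. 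The leafwise-constancy hypothesis on the potential of $T$ is exactly what makes this regularization compatible with the transverse structure, forcing the limiting product to be zero.
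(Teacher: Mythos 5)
Your guiding intuition --- both currents are ``purely transverse'', so their product should vanish for degree reasons --- is the right one, but the step you defer to the last paragraph is not a technicality: it is the whole difficulty, and as set up your argument does not close. Two concrete problems. First, the set $\Sigma$ you remove does \emph{not} have codimension $\geq 2$ in general: a psh potential that is locally constant on the leaves can perfectly well have logarithmic poles along invariant hypersurfaces (in the paper this is exactly the situation of interest, since the currents one applies the proposition to contain a divisorial part $[N]$). Second, and more seriously, even when a regularized or Bedford--Taylor-type product $T\wedge S$ can be given a meaning, it does \emph{not} compute the cup product $\{T\}\{S\}$ unless one proves that no mass escapes to the polar set; for products of closed positive currents mass is typically lost precisely there (think of $T=S=[D]$ with $\{D\}^2<0$: any reasonable ``product'' is supported on $D$ and sees none of the negative self-intersection). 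So the identity ``$T\wedge S=\theta\wedge S+\ddbar(\psi\,S)$, whose class is $\{T\}\{S\}$'' presupposes $\psi\in L^1(S)$ and a justified integration by parts, which is exactly what is in doubt. Your local computation on $X\setminus\Sigma$ is fine as far as it goes, but it only shows that a candidate product vanishes where everything is regular; it says nothing about the cohomology class.

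The paper's proof sidesteps all of this by never multiplying two singular objects. From the leafwise-constant local potentials $\varphi_i$ of $T$ one only extracts the \v{C}ech data $\omega_{ij}=\partial\varphi_i-\partial\varphi_j$, which are \emph{holomorphic} one-forms vanishing on the leaves (local sections of $N_{\mathcal F}^*\subset\Omega^1_X$). One then picks a \emph{smooth} closed representative $\eta$ of $\{T\}$ and local smooth potentials $u_i$ (e.g.\ $u_i=\varphi_i+\psi$ with $\psi$ a global $\ddbar$-potential of $\eta-T$, smooth by ellipticity) having the \emph{same} transition forms $\omega_{ij}$. Since $u_i$ is smooth, $\frac{i}{\pi}\partial u_i\wedge S$ is an honest current; since $\omega_{ij}\wedge S=0$ by $\mathcal F$-invariance of $S$, these local currents glue to a global one whose differential is $\eta\wedge S$. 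Hence $\eta\wedge S$ is exact and $\{T\}\{S\}=\{\eta\}\{S\}=0$, with the product of a smooth form against a current being unambiguous. If you want to salvage your approach you would have to reproduce some version of this: replace one of the two singular factors by a smooth representative before wedging, rather than regularize the product of the two singular currents and then control the lost mass.
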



\begin{proof}

Let  $(U_i)_{i\in I}$ an open cover of $X$ such that $T=\frac{i}{\pi}\partial\overline{\partial}\varphi_i$, $\varphi_i$ {\it psh} locally constant on the leaves.. On intersections, one has $\partial\varphi_i-\partial\varphi_j=\omega_{ij}$ where $\omega_{ij}$ is a holomorphic one  form  defined on $U_i\cap U_j$  vanishing on the leaves (i.e a local section of the conormal sheaf $N_\F^*$ considered as an invertible subsheaf of $\Omega_X^1$). Let $\eta$ a $(1,1)$ closed smooth form such that $\{\eta\}=\{T\}$. One can then write on each $U_i$ (up refining the cover) $\eta=\frac{i}{\pi}\partial\overline{\partial}u_i$, $u_i\in{\mathcal C}^\infty (U_i)$, such that $\partial u_i-\partial u_j=\omega_{ij}$. As a consequence, the collection of $\frac{i}{\pi}\partial u_i\wedge S$, with $S$ any closed invariant current, gives rise to a globally well defined $(1,2)$ current whose differential is $\eta\wedge S$. This proves the result.
\end{proof}

\begin{remark}\label{extensiontocodimension3}
 One can slightly improve this statement replacing $X$ by $X\setminus S$ where $S$ is an analytic subset of codimension $\geq 3$ (it will be useful in the sequel).

To this goal, we use that $H^1(U_i\setminus S, N_{\F}^*)=0$ (for a suitable cover). This  means that one can find on each $U_i$  an holomorphic one form $\Omega_i$ such that $\partial \varphi_i-\partial\varphi_j=\omega_{\ij}+\Omega_i-\Omega_j$. Indeed, take an open cover $(V_k)$ of $U_i\setminus S$ such that $\partial\varphi_i=\partial\varphi_k+ \xi_k$ where $\varphi_k$ is {\it psh} and $\xi_k$ an holomorphic one form both defined on $V_k$.  By vanishing of the cohomology,  $\xi_k-\xi_l=\omega_k-\omega_l$ where $\omega_k$ is an holomorphic form vanishing on the leaves .  Then the $\xi_k-\omega_k$'s glue together on $U_i\setminus S$ into a holomorphic one form whose extension through $S$ is precisely $\Omega_i$.  Taking $\eta$ and $S$ as previously, one obtains that the current defined locally by $(\partial u_i-\Omega_i)\wedge S$ is globally defined. Then, the same conclusion follows taking the $\overline{\partial}$ differential instead of the differential and applying the $\partial\overline{\partial}$ lemma.\qed

\end{remark}

\begin{prop}\label{intersectionresidual}
 Assume that the singular points of $\mathcal F$ are all of {\it elementary type} (as defined above).Let $S,T$  be $\F$-invariant positive currents. Assume moreover that $S$ is residual  (i.e, without any Lelong's numbers in codimension $1$), then the local potentials of $S$ can be chosen to be constant on the leaves and in particular

\begin{enumerate}
\item $\{S\}\{T\}=0$
\item $\{S\}$ and $\{T\}$ are colinear assuming in addition that $T$ is also residual.
\end{enumerate}
\end{prop}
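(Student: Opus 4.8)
The plan is to reduce everything to the single claim that a residual $\F$-invariant positive current $S$ admits, near every point, a local psh potential that is constant on the leaves; granting this, assertion (1) is immediate from Proposition \ref{potconst} applied with $S$ in the role of the current having a leaf-constant potential and $T$ as the auxiliary invariant current, and (2) follows by a Hodge-index argument sketched at the end.

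First I would treat the regular part $X\setminus\sing\F$, away from the invariant hypersurfaces. In a flow box where $\F$ is given by $dz=0$ with transverse coordinate $z$, the conditions $S\wedge\omega=0$ together with $\overline{S}=S$ force $S=\frac{i}{\pi}g\,dz\wedge d\bar z$, and closedness $dS=0$ then forces $g=g(z,\bar z)$ to depend only on the transverse variable. Thus $S$ is the pullback of a positive measure on the local transversal, and solving the one-variable equation $\Delta_z\psi=g$ on the disk produces a psh potential $\psi=\psi(z,\bar z)$ that is constant on the leaves by construction. Two such choices differ by a pluriharmonic leaf-constant function, so the differences $\partial\psi_i-\partial\psi_j$ are holomorphic sections of $N_\F^*$, exactly the data required in the proof of Proposition \ref{potconst}.

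The hard part will be to propagate this leaf-constant potential across $\sing\F$ and across the $\F$-invariant hypersurfaces, and this is where both hypotheses enter. Near an elementary singular point the foliation is defined by a closed logarithmic form $\frac{df}{f}=\sum_i\lambda_i\frac{df_i}{f_i}$ with $\lambda_i>0$, whose level hypersurfaces $H_i=\{f_i=0\}$ are $\F$-invariant; I would use the multivalued first integral $f=f_1^{\lambda_1}\cdots f_p^{\lambda_p}$ as a transverse coordinate and build the leaf-constant potential as a function of $f$ on the complement of $\bigcup_i H_i$. The residuality hypothesis is precisely what guarantees that this potential carries no $\log|f_i|$ singular part: the generic Lelong number of $\psi$ along $H_i$ equals $\lambda_{H_i}(S)=0$, so $\psi$ remains psh and leaf-constant across each $H_i$. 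Finally, a local psh potential is bounded above, hence extends across the remaining singular locus $\sing\F$, of codimension $\ge 2$, by the standard extension theorem for psh functions (in the spirit of Remark \ref{extensiontocodimension3}). This furnishes leaf-constant potentials near every point, and (1) follows.

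For (2), both $S$ and $T$ being residual, the previous step gives leaf-constant potentials for each, and applying Proposition \ref{potconst} to the pairs $(S,S)$, $(T,T)$ and $(S,T)$ yields $\{S\}^2=\{T\}^2=\{S\}\{T\}=0$ as classes in $H^{2,2}(X,\R)$. Fixing a Kähler class $\omega$, the symmetric form $q(a,b)=\int_X a\,b\,\omega^{n-2}$ on $H^{1,1}(X,\R)$ is nondegenerate of signature $(1,h^{1,1}-1)$ by the Hodge--Riemann relations, and the vanishing of all three products shows that $\R\{S\}+\R\{T\}$ is totally isotropic for $q$. Since a form of signature $(1,h^{1,1}-1)$ admits no isotropic subspace of dimension $\ge 2$, the classes $\{S\}$ and $\{T\}$ must be colinear.
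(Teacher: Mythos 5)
Your overall architecture coincides with the paper's: reduce everything to showing that the residual invariant current admits local psh potentials constant on the leaves, feed this into Proposition \ref{potconst} to get item (1), and deduce (2) from $\{S\}^2=\{T\}^2=\{S\}\{T\}=0$ via the Hodge signature theorem. The flow-box analysis on the regular part and the final Hodge-index step are correct. The problem is that the step you yourself flag as ``the hard part'' is exactly where you stop proving and start asserting: near an elementary singularity you say you would ``build the leaf-constant potential as a function of $f$'', with $f=f_1^{\lambda_1}\cdots f_p^{\lambda_p}$, but that is the whole content of the proposition, and two genuine obstructions are left untouched. First, a local potential $\varphi$ of $S$ restricted to a transversal $\mathcal T$ cutting a separatrix is \emph{not} holonomy-invariant a priori --- only its $\ddbar$ is --- so it does not descend to a ``function of $f$''. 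The holonomy on $\mathcal T$ is generated by the rotations $z\mapsto e^{2i\pi\lambda_k}z$, whose closure $G\subset S^1$ may be all of $S^1$; the paper produces an invariant potential of the same measure by averaging $\varphi_{|\mathcal T}$ against the Haar measure of $G$ (\cite{ra}, Th.\ 2.4.8), invariance of the current guaranteeing that the average has the same $\ddbar$. Second, even a holonomy-invariant function on $\mathcal T$ only propagates to a function constant on the leaves of $U\setminus Z(F)$, $Z(F)=\{\prod_i f_i=0\}$, if each fiber of the multivalued first integral $F$ is a single holonomy orbit, i.e.\ if the fibers of $F$ are connected on $U\setminus Z(F)$; this is the theorem of Paul \cite{pa2} that the paper invokes and that your argument silently assumes.

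Your use of residuality to cross the separatrices is the right idea and matches the paper: the psh function extends because it is bounded above, and the extension still has $\ddbar$ equal to $S$ because neither side charges $Z(F)$. But without the Haar averaging and the fiber-connectedness input, ``a function of $f$'' is not a well-defined single-valued leaf-constant psh function on the punctured neighborhood, so the central construction remains a statement of intent rather than a proof.
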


\begin{proof} By proposition \ref{potconst}, it suffices to show that  one can choose the local potentials of $T$ to be locally constant on the leaves to prove the first item.

Let $x\in \mbox{Sing}\ \F$ and $F$ be an elementary first integral of $X$ near $x$. One can assume that 

$$F=\prod_{i=1}^p {f_i}^{\lambda_i}$$

\noindent with $\lambda_1=1$. Following the main result of (\cite{pa2}), the (multivaluate) function $F$ has connected fibers on $U\setminus 	{Z(F)}$ where $U$ is a suitable arbitrarily small open neighborhood of $x$ and $Z(F)=\{\prod f_i=0\}$. Let $\mathcal T\subset U$ an holomorphic curve transverse to $\mathcal F$ with cuts out the branch $f_1=0$ in $x_1$, a regular point of $\F$ and such that ${f_1}_{|{\mathcal T}}$ sends biholomorphically $\mathcal T$ onto a disk ${\mathbb D}_r=\{|z| <r\}$. Let $G$ be the closure in $S^1=\{z=1\}$ of the group generated by $\{e^{2i\pi\lambda_k}\},\  k=1,...,p$; thus, either $G$ is the whole $S^1$ or is finite.

Up shrinking $U$, one can assume that, on $U$, $T=\frac{i}{\pi}\partial\overline{\partial}\varphi$ where $\varphi$ is {\it psh}. Let $\varphi_1=\varphi_{|\mathcal T}$ and $T_1=\frac{i}{\pi}\ddbar{\varphi_1}$. 

Set $$H=\{h\in\mbox{Aut}(\mathcal T)|h(z)=gz, g\in G\}.$$

\noindent (here, and in the sequel we  identify $\mathcal T$ and ${\mathbb D}_r$).
 The fibers connectedness property remains clearly valid if one replaces $U$ by the saturation of
 $\mathcal T$ by $\F$ in $U$ (still called $U$). 
 
 Thus  $T_1$ is $H$-invariant, that is $h^*T_1=T_1$ for very $h\in H$. One also obtain a subharmonic function $\psi_1$    $H$-invariant when averaging $\varphi_1$  with respect to the Haar measure $dg$ on the compact group $G$ (\cite{ra}, th 2.4.8):

$$\psi_1(z)= \int_G \varphi_1(gz)dg$$ 

\noindent Note that $\frac{i}{\pi}\ddbar{\psi_1}=T_1$; then, by $H$ invariance, $\psi_1$ uniquely extends on $U$ as a ${\it psh}$ function $\psi$ locally constant on the leaves and such that

$$T=\frac{i}{\pi}\partial\overline{\partial}{\psi}$$

Indeed, this equality obviously holds on $U\setminus Z(F)$ (where the foliation is regular)  and then on $U$ as currents on each side  do not give mass to $Z(F)$. This proves the first part of the proposition.

The assertion $(2)$ is a straightforward consequence of the Hodge's signature theorem if one keeps in mind that ${\{S\}}^2={\{T\}}^2= \{S\}\{T\}=0$.

\end{proof}
\begin{remark}\label{elementaryinsupport}
  Actually, the same conclusion (and proof) holds under the weaker assumption that every singular point of $\F$ contained in $\mbox{Supp}\ S\cap\mbox{Supp}\ T$ is of elementary type, instead of requiring all the singular points to be elementary.
\end{remark}

\section{Foliations with pseudoeffective conormal bundle} \label{psefcon} 

Throughout this section, $X$ is a compact K\"ahler manifold carrying a codimension $1$ (maybe singular) holomorphic foliation whose conormal bundle $N_\mathcal F^*$ is pseudoeffective ({\it psef} for short). The study of such objects has been undertaken in \cite{to} and in this section, we would like to complete the results already obtained.

\subsection{Some former results}\label{rappelsinvariance}
Before restating the main theorem of \cite{to} and other useful consequences, let us recall what are the basic objects involved (see \cite{to}, INTRODUCTION for the details). 

\noindent  Let $T=\frac{i}{\pi}\ddbar {\varphi}$ be a positive $(1,1)$ current representing $c_1(\con)$. One inherits from $T$ a canonically defined (up to a multiplicative constant) $(1,1)$ form $\eta_T$ with $L_{\mbox{loc}}^ \infty$ coefficients wich can be locally written as 

$$\eta_T=\frac{i}{\pi}e^{2\varphi}\omega\wedge\overline{\omega}$$
\noindent where $\omega$ is an holomorphic one form wich defines $\F$ locally.
It turns out that 
$$d\eta_T=0$$

\noindent in the sense of currents (lemme 1.6 in \cite{to}). We can then deduce that $T$ and $\eta_T$ are both {\bf positive $\F$-invariant currents}, in particular the hypersurface $\mbox{Supp}\ N$ (or equivalently the integration current $[N]$) is invariant by $\F$. Moreover, using that $\eta_T$  represents a nef class, one can show that the positive part $Z$ is a non negative multiple of $\{\eta_T\}$ (Proposition 2.14 of \cite{to}). Then, in case that $Z\not=0$, one can normalize $\eta_T$ such that 
$\{\eta_T\}=Z$.

Here is the main result of \cite{to} (Th\'eor\`eme 1 p.368)

\begin{thm}\label{transmetric}
Let $\{N\}+Z$ be the Zariski decomposition of the pseudoeffective class  $c_1(\con)$, then 
there exists a unique $\F$-invariant positive $(1,1)$ current $T$ with minimal singularities (in the sense of \cite{bo})  
such that
\begin{enumerate}

\item $\{T\}=c_1(\con)$ 
\item $T=[N]$ if $Z=0$ \bf{(euclidean type)}.
\item $T=[N]+\eta_T$ if $Z\not=0$ (\bf{hyperbolic type})
\end{enumerate}

\end{thm}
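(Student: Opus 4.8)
The plan is to construct the current $T$ from the data of the Zariski decomposition $c_1(\con)=\{N\}+Z$ and the invariant forms $\eta_T$, $[N]$, and then establish uniqueness via the minimal singularities property. Recall from Section \ref{rappelsinvariance} that we already have at our disposal a representative $T_0=\frac{i}{\pi}\ddbar\varphi_0$ of $c_1(\con)$, that $\eta_T$ is a closed positive $\F$-invariant $(1,1)$ form with $L^\infty_{\mathrm{loc}}$ coefficients, that $[N]$ is $\F$-invariant, and that the positive part satisfies $Z=c\{\eta_T\}$ for some $c\geq 0$. After normalizing $\eta_T$ so that $\{\eta_T\}=Z$ when $Z\neq 0$, the \emph{existence} in the hyperbolic case amounts to producing a positive $\F$-invariant current in the class $\{N\}+\{\eta_T\}=c_1(\con)$ whose negative part is exactly $[N]$ and whose ``residual'' part is $\eta_T$. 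The natural candidate is literally $T=[N]+\eta_T$: it is positive since both summands are, it is closed since $d[N]=0$ and $d\eta_T=0$, it is $\F$-invariant since both pieces are directed by $\F$, and its cohomology class is $\{N\}+Z=c_1(\con)$ by construction. In the euclidean case $Z=0$, the candidate is simply $T=[N]$, and the same verifications apply trivially.

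The substantive content is therefore the assertion that this explicit $T$ has \emph{minimal singularities} in its class, equivalently that its local potentials dominate those of every other positive representative up to a bounded term. First I would reduce this to controlling the singularities of $\eta_T$: since $[N]$ is an integration current its potentials have the prescribed logarithmic poles along $\mathrm{Supp}\ N$ with the rational/real Zariski coefficients, and any closed positive current in $c_1(\con)$ must carry at least these Lelong numbers along the exceptional family $\{N_1,\dots,N_p\}$ (this is exactly what exceptionality of the negative part encodes). So the negative part contributes the extremal, hence minimal-singularity, behaviour along divisors automatically. What remains is to show that the \emph{residual} part $\eta_T$ has minimal singularities within the nef-in-codimension-$1$ class $Z$: because $\eta_T$ has locally bounded coefficients, its local potentials are themselves bounded, and a current with bounded potentials is as regular as possible among positive representatives of a modified nef class, so it dominates any competitor up to $O(1)$. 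I would phrase this comparison through the divisorial Zariski decomposition machinery recalled above, citing the relevant properties from \cite{to} and \cite{bo}, to conclude $T$ has minimal singularities.

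For \emph{uniqueness}, suppose $T'$ is another $\F$-invariant positive current in $c_1(\con)$ with minimal singularities. Writing $T'=T+\frac{i}{\pi}\ddbar u$ for a quasi-psh function $u$ (both being in the same class, via the global $\ddbar$-lemma on the compact K\"ahler manifold $X$), the minimal-singularities hypothesis on both $T$ and $T'$ forces $u$ to be bounded. The $\F$-invariance of both currents means $\ddbar u$ is directed by the foliation, so $u$ is, up to the foliated structure, constant along leaves at the level of potentials; combined with boundedness and the fact that $X$ is compact, a maximum-principle / pluripotential argument along the leaves pins down $u$ to be constant, whence $T'=T$. I expect this uniqueness step — specifically, upgrading ``bounded and $\F$-invariant at the potential level'' to ``genuinely equal'' — to be the main obstacle, since it requires handling the current $\ddbar u$ across $\mathrm{Supp}\ N$ and through the singular locus of $\F$ where the transverse structure degenerates; the invariance propositions of Section 2 (Proposition \ref{potconst} and Proposition \ref{intersectionresidual}, together with Remark \ref{elementaryinsupport}) are the natural tools to control the intersection-theoretic vanishing that makes this rigidity work.
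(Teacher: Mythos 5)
Your proposal founders on a point of logic before any of the analysis starts: $\eta_T$ is not a datum independent of $T$. By definition (Section \ref{rappelsinvariance}), if $T=\frac{i}{\pi}\ddbar\varphi$ is a positive current representing $c_1(\con)$, then $\eta_T=\frac{i}{\pi}e^{2\varphi}\omega\wedge\overline{\omega}$ is built from the local potentials of that same $T$. So item (3) of the theorem, $T=[N]+\eta_T$, is a \emph{fixed-point equation} — transversally it is the constant-negative-curvature equation $\Delta_z\varphi=e^{2\varphi}$, as the introduction points out — and not something one can take as a definition. When you write ``the natural candidate is literally $T=[N]+\eta_T$,'' you are implicitly starting from some representative $T_0$, forming $\eta_{T_0}$, and setting $T:=[N]+\eta_{T_0}$; this $T$ is indeed closed, positive, invariant and in the right class, but there is no reason whatsoever that $\eta_{T}=\eta_{T_0}$, so the defining identity of the hyperbolic case is not verified. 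The entire substance of the theorem is the solvability of this self-consistency equation, and your proposal never engages with it.

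The actual argument (Théorème 1 of \cite{to}; reproduced in this paper for the KLT analogue, Theorem \ref{transmetricklt}) first shows that for \emph{every} representative $T$ the forms $T$ and $\eta_T$ are closed and invariant, that $\{\eta_T\}$ can be normalized to equal $Z$, and then considers the map $\beta(S)=\eta_{S+[N]}$ on the cone ${\mathcal C}_Z$ of closed positive currents representing $Z$, which is convex and weakly compact by Banach--Alaoglu. One proves $\beta$ is continuous — via the standard $L^1_{\mathrm{loc}}$ compactness of families of psh potentials bounded above and Lebesgue dominated convergence applied to $e^{2\varphi_n}\omega\wedge\overline{\omega}$ — and invokes the Leray--Schauder--Tychonoff theorem to produce the fixed point. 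Uniqueness and the minimal-singularity property then come out of this transverse uniformization (uniqueness of the transverse metric of constant curvature), not from the general pluripotential comparison you sketch; in particular your claim that bounded local potentials of the residual part automatically give minimal singularities, and your ``maximum principle along leaves'' for uniqueness, are both asserted rather than proved, and neither would rescue an existence argument that is circular at the outset.
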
 






Actually, the existence of such a current provides a transverse invariant metric for $\F$ (with mild degeneracies on $\mbox{Supp}\ N\cup\mbox{Sing}\ \F$) , namely $\eta_T$,  whose curvature current is $-T$. This justify the words "euclidean" and "hyperbolic".

To each of these transversely invariant hermitian structure, one can associate the {\bf sheaf of distinguished first integrals} ${\mathcal I}^\varepsilon\ \varepsilon=0,1$ depending on whether this structure is euclidean or hyperbolic. 

It will be also useful to consider the sheaf ${\mathcal I }_{d\log}^\varepsilon$ derived from ${\mathcal I}^\varepsilon$ by taking logarithmic differentials. 

These two locally constant sheaves have been introduced in \cite{to} (D\'efinition 5.2) respectively as "faisceau des int\'egrales premi\`eres admissibles" and "faisceau des d\'eriv\'ees logarithmiques admissibles".  Then, we will not enter into further details.  Strictly speaking, they are only defined on $X\setminus \mbox{Supp}\ N$ .

As a transversely homogeneous foliation, $\F$ admits a developing map $\rho$ defined on any covering $\pi:X_0\rightarrow X\setminus \mbox{Supp}\ N$  where $\pi^*\Iadm$ becomes a constant sheaf (\cite{to}, section 6, and references therein).

Recall that $\rho$ is just a section of $\pi^*\Iadm$ (and in particular a holomorphic  first integral of the pull-back foliation $\pi^*\F$) .     
   \noindent One also recovers a representation
   
   $$r:\pi_1(X\setminus \mbox{Supp}\ N)\rightarrow \Im^\varepsilon$$   
      uniquely defined up to conjugation associated to the locally constant sheaf $\Iadm$ and taking values in the isometry group $\Im^\varepsilon$ of $U^\varepsilon$, $U^0=\C,\ U^1=\D$.

\noindent  One can now restate theorem 3 p.385 of \cite{to}.

\begin{thm}\label{complete}
The developing map is complete. That is, $\rho$ is surjective onto $\C$ (euclidean case) or $\mathbb D$ (hyperbolic case).

\end{thm}

One can naturally ask if the fibers or $\rho$ are connected, a question raised in \cite{to}. A positive answer would give useful information on the dynamic of $\F$.  This problem is settled in the next section.

\subsection{Connectedness of the fibers}\label{fibers connectedness}

Let  $\pi:X_0\rightarrow X\setminus\mbox{Supp}\ N$ a  covering  such that the developing map $\rho$ is defined on $X_0$.

Let $K$ be an invariant hypersurface of $\F$: In sufficiently small neighborhood of $K\cup\mbox{Sing}\ \F $, we get a well defined logarithmic $1$ form $\eta_K$  defining the foliation and which is a (semi) local section of $\ladm$ and such that $K\cup\mbox{Sing}\ \F$ is contained in the polar locus of $\eta_K$ (Proposition 5.1 of \cite{to}).  The main properties of $\eta_K$ are listed below and are directly borrowed from ${\it loc.cit}$. They will be used repeatedly and implicitely in the sequel.

\begin{itemize}
\item The residues of $\eta_K$ are non negative real numbers.  To be more precise, the gerrm of $\eta_K$ at  $x\in K\cup \mbox{Sing}\ F$ can be written as 

  $$\eta_K=\sum_{i=1}^{n_x}\lambda_i\frac{df_i}{f_i}.$$

 \noindent  where the  $f_i\in {\mathcal O}_{X,x}$ are pairwise irreducible and the $\lambda_i$'s are $\geq 1$.
 
 In particular, the multivaluate section  $e^{\eta_K}=\prod {f_i}^{\lambda_i}$ of $\Iadm$ is an elementary first integral of $\F$. 
 
  \item If $f_i=0$ is a local equation of a local branch $N_{f_i}$ of $\mbox{Supp}\ N$, then $\lambda_i= \lambda_{f_i} (N_{f_i}) +1$ where   $\lambda_{f_i} (N_{f_i})$ stands for the weight of $N_{f_i}$ in $N$ and $\lambda_i=1$ otherwise.
Thus,  $e^{\eta_K}$ is univaluate and reduced (as a germ of ${\mathcal O}_{X,x}$) if and only if $x\notin \mbox{Supp}\ N$.
\item The local separatrices at $x$ (i.e the local irreducible invariant hypersurfaces containing $x$) are exactly those given by $f_i=0$.

\item $x$ is a regular point of $\F$ if and only if $n_x=1$ and $f_1$ is submersive at $x$.

\end{itemize}
\vskip 5 pt

Our aim is to show that the fibers of $\rho$ are connected if one allows to identify certain components contained in the same level of $\rho$. On the manifold $X$, this corresponds to  identify some special leaves in a natural way. To clarify this, one may typically think to the fibration defined by the levels of $f=xy$ on ${\C}^2$. All the fibers are connected. However, if we look at the underlying foliation, the fiber $f^{-1}(0)$ consists in the union of two leaves that must be identified if one want to make the space of leaves Hausdorff. Observe also that, when blowing up the origin, these two special leaves intersect the exceptional divisor $E$ on the new ambient manifold $\tilde{{\C}^2}$  . Coming back to our situation, one may think that leaves wich ``intersect'' the same components of $\mbox{Supp}\ N$ must be identified. This indeed occurs if we replace the general $X_0$ by the basic previous example $\tilde{{\C}^2}\setminus E$.
\vskip 5 pt
More precisely, we proceed as follows:
For $p\in X_0$, denote by ${\mathcal C}_p$ the connected component of $\rho^{-1}(\rho (p))$ containing $p$.
Let $p,q\in X_0$. Define the binary relation $\mathcal R$ by $p\mathcal R q$ whenever 

\begin{enumerate}
 \item ${\mathcal C}_p={\mathcal C}_q$

or

\item $\rho (p)=\rho (q)$ and there exists a connected component $H$ of $\mbox{Supp}\ N$ such that on any sufficiently small neighborhood of $H$,  ${(\eta_H)}_\infty\cap\pi ({\mathcal C}_p)\not=\emptyset$ and ${(\eta_H)}_\infty\cap \pi({\mathcal C}_q)\not=\emptyset$.
\end{enumerate}

We will denote by $\overline{\mathcal R}$ the equivalence relation generated by $\mathcal R$ and $A_{\overline{\mathcal R}}$ the saturation of $A\subset X_0$ by $\overline{\mathcal R}$.

\begin{remark}\label{finitelymanyleaves}

The saturation ${({\mathcal C}_p)}_{\overline{\mathcal R}}$ of a connected component of a fiber of $\rho$ is generically reduced to itself. In fact, as a consequence of the above description of $\eta_K$,  this holds outside a countable set of components wich projects by $\pi$ onto a finite set of leaves of $\mathcal F$ whose cardinal is less or equal to the number of irreducible components of ${(\eta_K)}_\infty $ with $K=\mbox{Supp}\ N$. 

\end{remark}

\vskip 20 pt

Consider now the quotient space $X_0/\overline{\mathcal R}$ endowed with the quotient topology; as an obvious consequence of our construction, $\rho$ factorize throught a surjective continuous map $\overline{\rho}:X_0/\overline{\mathcal R}\rightarrow U^{\varepsilon}$.  

Using that the singularities of $\F$ are elementary and \cite{to}( Lemme 6.1, corollaire 6.2), we get the 
\begin{lemma}\label{satopen}
 The saturation ${V}_{\overline{\mathcal R}}$ of an open set $V$ is  open in $X_0$.
\end{lemma}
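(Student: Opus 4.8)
The plan is to prove that the saturation $V_{\overline{\mathcal R}}$ of an open set $V\subset X_0$ is open by showing that the relation $\overline{\mathcal R}$ is, near each point, generated by \emph{local} moves whose saturation operations preserve openness. Concretely, I would first reduce to the two generating clauses of $\mathcal R$ and argue that it suffices to treat each separately, since $\overline{\mathcal R}$ is the equivalence relation they generate and a countable (indeed locally finite, by Remark \ref{finitelymanyleaves}) union of open sets is open. So the real content is: (i) if $V$ is open, then the set of points lying on the same connected component ${\mathcal C}_p$ of a $\rho$-fiber as some point of $V$ is open; and (ii) the set of points related to $V$ by the ``shared component of $\mbox{Supp}\ N$'' clause is open.

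For step (i) the key tool is the openness of the developing map $\rho$ together with the local structure of the foliation. Away from $\mbox{Sing}\ \F$, where $\F$ is defined by a submersive first integral and $\rho$ is (locally) that first integral, saturating by connected components of level sets of $\rho$ is just saturating by local leaves of a regular holomorphic foliation, which is manifestly an open operation; this is essentially the flow-box picture. The delicate points are the singularities, and here I would invoke that \emph{all singularities of $\F$ are elementary} (the standing hypothesis, via \cite{to} Lemme 6.1, corollaire 6.2) so that near a singular point $x$ the foliation has an elementary first integral $F=\prod f_i^{\lambda_i}$ whose fibers are connected on a deleted neighborhood by \cite{pa2}. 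This connectedness is what lets one control how a connected component ${\mathcal C}_p$ approaches and passes through the singular fibers, and it is exactly the input cited in the lemma. I expect this local analysis at elementary singularities — matching the connected components of $\rho^{-1}(\rho(p))$ with the local fiber structure of $F$ and checking that a small open neighborhood of $p$ saturates to an open set — to be the main obstacle.

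For step (ii) I would use the explicit description of $\eta_K$ with $K=\mbox{Supp}\ N$ recalled just above the lemma: the components of $\mbox{Supp}\ N$ that can force an identification are precisely the branches $f_i=0$ appearing in the polar locus ${(\eta_H)}_\infty$, and there are only finitely many of them. Thus the second clause identifies only finitely many connected components of each fiber — the special ones whose projection $\pi({\mathcal C}_p)$ meets a fixed connected component $H$ of $\mbox{Supp}\ N$ near $H$. I would show that for each such $H$, the union of all connected components $\mathcal C$ with ${(\eta_H)}_\infty\cap\pi(\mathcal C)\neq\emptyset$ is itself a saturated-and-open set, again by the elementary-singularity local model near $H$: the first integral $e^{\eta_H}$ has the branches of $H$ in its zero/polar locus, and a neighborhood of the corresponding leaves is open and saturated by the flow-box-plus-connectedness argument of step (i).

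Finally I would assemble the two steps: $V_{\overline{\mathcal R}}$ is obtained from $V$ by alternately applying the operations of (i) and (ii). By Remark \ref{finitelymanyleaves} only finitely many leaves are affected by clause (2), so the process stabilizes after applying clause (1) once more, and since each operation sends open sets to open sets and we take at most countable unions, $V_{\overline{\mathcal R}}$ is open. The whole argument is local and hinges entirely on the two cited facts — openness and completeness of $\rho$ (Theorem \ref{complete}) and the connectedness of fibers of elementary first integrals — so I would organize the write-up around reducing to a neighborhood of a single elementary singular point or of a single component of $\mbox{Supp}\ N$ and verifying openness there.
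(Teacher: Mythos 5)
Your overall architecture (reduce to the two generating clauses of $\mathcal R$, handle clause (1) by a chain of local flow boxes plus the connectedness of fibers of elementary first integrals from \cite{pa2}, then take unions) is the right one, and for clause (1) it reconstructs what the paper merely attributes to the elementarity of the singularities and to \cite{to} (Lemme 6.1, Corollaire 6.2): the saturation $S_1(V)$ of $V$ by connected components of $\rho$-fibers is open because the local first integrals are open maps with locally connected fibers, and one propagates along a finite chain of distinguished neighborhoods covering a connected piece of ${\mathcal C}_p$. The paper offers no written proof beyond that citation, so on this half you are, if anything, more explicit than the source.

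The genuine gap is in step (ii) and in the assembly. The set you claim is ``saturated-and-open'' --- the union of all components $\mathcal C$ with ${(\eta_H)}_\infty\cap\pi(\mathcal C)\neq\emptyset$ --- is not open: by Remark \ref{finitelymanyleaves} it projects onto finitely many leaves, each contained in the invariant hypersurface ${(\eta_H)}_\infty$, so it is a closed, nowhere dense union of fiber components. For the same reason the clause-(2) operation does \emph{not} send open sets to open sets, so the concluding sentence ``each operation sends open sets to open sets and we take unions'' fails exactly where the lemma has content. What is actually needed at a point $q$ of a component ${\mathcal C}_q$ added by clause (2) (say ${\mathcal C}_q$ is related to ${\mathcal C}_p$ with $p\in V$ through a component $H$ of $\mbox{Supp}\ N$) is that every point $y$ close to $q$ already lies in $S_1(V)$ unless it lies on another special component at the same $\rho$-level: one follows ${\mathcal C}_q$ into the neighborhood $U_H$ of $H$, uses that the fiber of the semi-local first integral $e^{\int\eta_H}$ through the nearby point is connected on $U_H\setminus{(\eta_H)}_\infty$ (Paul's theorem applied to the \emph{whole} connected component $H$, as in the paper's construction of the $U_i$'s), so that ${\mathcal C}_y$ also accumulates on the separatrix carrying ${\mathcal C}_p\cap\pi^{-1}(U_H)$, and then propagates along ${\mathcal C}_p$ back to $p\in V$ by the chain argument of step (i). This is the mechanism visible in the model $f=xy$: the hyperbolas accumulating on the branch $\{y=0\}$ are the same hyperbolas that meet a neighborhood of $\{x=0\}$, which is why adding the closed component $\{y=0\}$ still yields an open set. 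Without this connecting step your proof does not close.
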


\begin{remark}\label{deckequiv}
 When $X_0$ is Galoisian, any deck transformation $\tau$ of $X_0$, $r$ is compatible with the equivalence relation above there exists a unique set theoretically automorphism $\overline \tau$ of $X_0/\overline{\mathcal R}$ such that  $\rho\circ \tau={\overline \rho}\circ\overline{\tau}\circ\varphi_{\overline{\mathcal R}}$ where $\varphi_{\overline{\mathcal R}}:X_0\rightarrow X_0/\overline{\mathcal R}$ denotes the canonical projection. Observe also that $\overline{\tau}$ is an homeomorphism, by definition of the quotient topology.
\end{remark}

The following result somewhat teaches us that, modulo these identification, the fibers of $\rho$ are connected.

\begin{thm}\label{connect}
 The map $\overline{\rho}:X_0/\overline{\mathcal R}\rightarrow U^\varepsilon$ is one to one.

\end{thm}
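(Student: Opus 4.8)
The plan is to prove that the continuous surjection $\overline{\rho}\colon X_0/\overline{\mathcal R}\to U^\varepsilon$ is a \emph{covering map}. Since $U^\varepsilon$ (either $\C$ or $\D$) is simply connected and $X_0/\overline{\mathcal R}$ is connected — being the continuous image of $X_0$, which we may take connected — any covering map onto $U^\varepsilon$ is automatically a homeomorphism, and this yields the desired injectivity. So the whole task reduces to verifying the two defining features of a covering: that $\overline\rho$ is a local homeomorphism, and that it enjoys the path-lifting property.

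First I would record the soft properties. Continuity and surjectivity are granted by the construction together with the completeness theorem (Theorem \ref{complete}). Openness follows by combining Lemma \ref{satopen} — saturations of open sets are open, so the projection $\varphi_{\overline{\mathcal R}}$ is an open map — with the openness of the holomorphic first integral $\rho$: for $V\subset X_0$ open one has $\overline\rho(\varphi_{\overline{\mathcal R}}(V))=\rho(V_{\overline{\mathcal R}})$, which is open in $U^\varepsilon$.

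The heart of the argument is the local homeomorphism property. At a point $p$ whose fibre component satisfies $(\mathcal C_p)_{\overline{\mathcal R}}=\mathcal C_p$ — the generic situation described in Remark \ref{finitelymanyleaves}, covering every leaf lying over a regular point of $\mathcal F$ outside the special loci — the map $\rho$ is a submersion with connected local fibres, so a flow-box $V\cong D\times W$ (with $\rho$ the projection onto a disk $D\subset U^\varepsilon$) descends to a neighbourhood of $[p]$ on which $\overline\rho$ is a homeomorphism onto $D$. The delicate case is a class $[p]$ whose representatives meet $\mathrm{Supp}\,N$ or $\mathrm{Sing}\,\mathcal F$, where clause (2) of $\mathcal R$ glues several connected components of a single level together. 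Using the explicit logarithmic model $\eta_K=\sum_i\lambda_i\,\frac{df_i}{f_i}$ of Subsection \ref{fibers connectedness} and the elementary nature of the singularities, I would show that the finitely many components touching a fixed connected component $H$ of $\mathrm{Supp}\,N$ over a given level are amalgamated into one class, and that a saturated neighbourhood of that class again maps homeomorphically onto a disk in $U^\varepsilon$ — exactly the behaviour of the model $f=xy$, whose two branches become a single point after identification. Verifying that the gluing prescribed by $\mathcal R$ is \emph{precisely} enough to produce a homeomorphism, with no residual folding and no missing identifications, is the main obstacle; it is here that the full structure of $\eta_K$ (non-negative residues $\lambda_i\geq 1$, the description of local separatrices, and the regularity criterion $n_x=1$) must be invoked.

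Finally I would promote the local homeomorphism to a covering via path lifting, which is exactly where completeness (Theorem \ref{complete}) enters: surjectivity of $\rho$ onto all of $U^\varepsilon$ ensures that every path in $U^\varepsilon$ lifts through $\overline\rho$, the lift being unique once its initial point is fixed because $\overline\rho$ is a local homeomorphism. The deck-equivariance recorded in Remark \ref{deckequiv} allows these lifts to be carried out compatibly with the deck group, so that the number of sheets is globally constant. With $\overline\rho$ exhibited as a covering of the simply connected, connected space $U^\varepsilon$, it has a single sheet and is therefore bijective; in particular $\overline\rho$ is one to one.
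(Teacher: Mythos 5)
Your overall strategy --- exhibit $\overline{\rho}$ as a local homeomorphism, upgrade it to a covering map using completeness, and conclude by simple connectedness of $U^\varepsilon$ --- is exactly the final paragraph of the paper's proof. But you have skipped the step on which that entire argument rests, and which is where essentially all of the work in the paper is concentrated: proving that the quotient $X_0/\overline{\mathcal R}$ is \emph{Hausdorff}.

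This is not a formality. The space $X_0/\overline{\mathcal R}$ is a leaf space (connected components of fibers of $\rho$ collapsed to points, plus the extra identifications of clause (2)), and leaf spaces are non-Hausdorff by default; the whole purpose of the relation $\overline{\mathcal R}$ is to repair this. Your key sentence --- ``the lift being unique once its initial point is fixed because $\overline\rho$ is a local homeomorphism'' --- is false without Hausdorffness: the line with two origins maps to $\R$ by a surjective local homeomorphism of connected spaces, the target is simply connected, every path lifts, yet lifts through the doubled point are not unique and the map is not injective. If some level of $\rho$ had two components not identified by $\overline{\mathcal R}$, you would get precisely this picture, and nothing in your argument rules it out. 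Likewise, the standard theorem ``local isometry with complete source is a covering'' presupposes that the source is a genuine (Hausdorff) manifold. The paper's proof of Hausdorffness is the substantial part: it introduces a bundle-like metric on the complement of $E=\mbox{Supp}\ N\cup\mbox{Sing}\ \F$, carefully chosen neighborhoods $W_i\Subset V_i\Subset U_i$ of the components of $E$ on which the elementary first integrals $f_i=\exp\int\eta$ have connected fibers (Paul's theorem), and then, given two non-separated classes $p,q$, builds a \emph{locally constant} map $[0,1]\to X_0/\overline{\mathcal R}$ along paths $\gamma_n$ joining $p_n$ to $q_n$ --- via orthogonal geodesics outside $W$ and via fiber-connectedness of the $f_i$ inside $V$ --- forcing $\{p\}_{\overline{\mathcal R}}=\{q\}_{\overline{\mathcal R}}$. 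Your proposal gestures at the same local models (the $\eta_K$ description, the $f=xy$ picture) but only to discuss the local homeomorphism property, and even there you explicitly defer the main verification; the global separation argument, which is what actually makes the covering-space conclusion legitimate, is absent.
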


As a straightforward consequence, we obtain the

\begin{cor}\label{Ntrivial}
 Suppose that the negative part $N$ is trivial, then the fibers of $\rho$ are connected.
\end{cor}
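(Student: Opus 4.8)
The plan is to deduce the corollary directly from Theorem \ref{connect} by unwinding the definition of the equivalence relation $\overline{\mathcal R}$ in the degenerate case $N=0$. First I would observe that when the negative part $N$ is trivial we have $\mathrm{Supp}\, N=\emptyset$, so the covering is simply $\pi:X_0\to X$ and, crucially, the second clause in the definition of the relation $\mathcal R$ becomes vacuous: there is no connected component $H$ of the empty set, hence no pair $(p,q)$ can ever be related through condition (2). Consequently $\mathcal R$ reduces to its first clause $p\,\mathcal R\,q\iff \mathcal C_p=\mathcal C_q$, which is already transitive and symmetric, so that $\overline{\mathcal R}=\mathcal R$ and its classes are exactly the connected components $\mathcal C_p$ of the fibers of $\rho$.

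Next I would identify the points of the quotient $X_0/\overline{\mathcal R}$ with the connected components of the fibers of $\rho$, the map $\overline{\rho}$ sending the class of $p$ to $\rho(p)$. Fixing a value $c\in U^\varepsilon$ with $\rho^{-1}(c)$ nonempty, I would list its connected components $\mathcal C_{p_1},\mathcal C_{p_2},\dots$; each determines a point $\varphi_{\overline{\mathcal R}}(p_j)$ of the quotient, and all of these points are sent by $\overline{\rho}$ to the single value $c$. Since Theorem \ref{connect} asserts that $\overline{\rho}$ is one to one, all the $\varphi_{\overline{\mathcal R}}(p_j)$ must coincide, forcing $\rho^{-1}(c)$ to consist of a single connected component. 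This is precisely the statement that the fibers of $\rho$ are connected.

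The substance of the argument lies entirely in Theorem \ref{connect}, which I am free to assume; the corollary itself is only a matter of checking that the identifications built into $\overline{\mathcal R}$ collapse to nothing once $N$ vanishes. The single point deserving care — and the only place where a subtlety could conceivably hide — is the verification that clause (2) is genuinely empty when $N=0$, together with the confirmation that no additional identification is introduced when passing from $\mathcal R$ to the generated equivalence relation $\overline{\mathcal R}$. But since $\mathcal R$ is already an equivalence relation in this case, the relation it generates is itself, and no such difficulty arises; the proof is therefore essentially immediate given the theorem.
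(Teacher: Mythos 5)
Your argument is correct and is exactly the reasoning the paper intends: when $N$ is trivial, clause (2) of $\mathcal R$ is vacuous, so the classes of $\overline{\mathcal R}$ are precisely the connected components of the fibers of $\rho$, and injectivity of $\overline{\rho}$ from Theorem \ref{connect} forces each fiber to have a single component. The paper states the corollary as a straightforward consequence without spelling this out, and your write-up supplies precisely that routine verification.
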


The proof is in some sense close to that of the completeness of $\rho$ with makes use of a metric argument (see \cite{to}). In this respect, it will be useful to recall the following definition  (see \cite {mo}).
\begin{definition} 
Let $\F$ a regular transversely hermitian codimension $1$  holomorphic foliation on a complex manifold $X$. Let $h$ be the metric induced on $N_\F$ by the invariant  transverse hermitian metric. Let $g$ be an hermitian metric  on $X$ and $g_N$ the metric induced on $N_\F$ by $g$ via the canonical identification ${T_\F}^{\perp}\simeq N_\F$.  

The metric $g$ is said to be {\bf bundle-like} ("quasi-fibr\'ee" in french terminology) if $g_N=h$.
\end{definition}

\begin{remark}
Because of the "codimension 1" assumption on $\F$, one can  notice that, when $\F$ is transversely hermitian, any hermitian metric on $X$ is conformally equivalent to a bundle-like one .
\end{remark}

The following property is used in \cite{to} (Lemme 6.3):
\begin{prop} (cf \cite{mo} , Proposition 3.5 p.86).
Let $\F$ a transversely hermitian foliation equipped with a bundle like metric $g$. 
Let $\gamma:I\rightarrow X$ a geodesic arc (with respect to $g$). Assume that $\gamma$ is orthogonal to the leaf passing through  $\gamma (t_0)$  for some $t_0\in I$. Then, for every $t\in I$,  $\gamma$ remains orthogonal  to the leaf passing through $\gamma (t)$.

\end{prop}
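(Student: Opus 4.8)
The plan is to prove that the final statement---the parallel-transport property for geodesics orthogonal to the leaves of a transversely hermitian foliation equipped with a bundle-like metric---by translating the geometric condition into a first-order ODE along the geodesic and showing that orthogonality is a conserved quantity. Since this is quoted from Molino's book as a known result, the natural strategy is to reconstruct the standard Riemannian-foliation argument adapted to the codimension~$1$ situation, where everything becomes especially transparent.

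\emph{Setup and first step.} First I would work locally, choosing a distinguished chart in which $\F$ is defined by $dz=0$ for a transverse coordinate $z$, and in which the bundle-like condition means that the transverse part of $g$ is the pullback of the invariant transverse metric $h$ via the projection $z$. Concretely, write $g = g_{\mathcal{L}} \oplus g_N$ with respect to the splitting $TX = T_\F \oplus T_\F^\perp$, where $g_N$ is constant along the leaves (this is precisely the bundle-like hypothesis $g_N = h$, with $h$ holonomy-invariant). The orthogonality of $\gamma$ to the leaf at $\gamma(t)$ is the condition that $\dot\gamma(t) \in T_\F^\perp$, equivalently that the leaf-tangent component of $\dot\gamma$ vanishes.

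\emph{Main computation.} The heart of the argument is to show that the function $t \mapsto \langle \dot\gamma(t), V \rangle_g$ vanishes identically whenever $V$ is a leaf-tangent vector field, given that it vanishes at $t_0$. Let $V$ be any local section of $T_\F$; I would compute the derivative
\begin{equation}\label{eq:transport}
\frac{d}{dt}\langle \dot\gamma(t), V(\gamma(t))\rangle_g = \langle \nabla_{\dot\gamma}\dot\gamma, V\rangle_g + \langle \dot\gamma, \nabla_{\dot\gamma}V\rangle_g.
\end{equation}
The first term vanishes since $\gamma$ is a geodesic ($\nabla_{\dot\gamma}\dot\gamma = 0$). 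For the second term, the key input is that the bundle-like property forces the transverse metric to be invariant under the leafwise flow, so that the second fundamental form of the foliation, restricted to the interaction between $T_\F^\perp$ and $T_\F$, is controlled: specifically, for the horizontal (transverse) component of $\dot\gamma$, the term $\langle \dot\gamma, \nabla_{\dot\gamma}V\rangle_g$ can be rewritten using the Koszul formula and the fact that $g_N$ is basic (constant on leaves) to show it is proportional to the already-present leaf-tangent component of $\dot\gamma$. In other words, \eqref{eq:transport} yields a linear homogeneous ODE of the form $\frac{d}{dt}u(t) = a(t)\,u(t)$, where $u(t)$ measures the leafwise component of $\dot\gamma$ and $a(t)$ is a bounded coefficient. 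Since $u(t_0)=0$ by hypothesis, uniqueness for linear ODEs gives $u \equiv 0$, which is the claimed orthogonality for all $t$.

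\emph{Main obstacle.} I expect the delicate point to be the precise verification that the mixed covariant-derivative term in \eqref{eq:transport} is genuinely proportional to the leafwise component of $\dot\gamma$, rather than containing an inhomogeneous piece that would survive when $u=0$. This is exactly where the bundle-like hypothesis $g_N=h$ must be used in full strength: one needs that the transverse metric is holonomy-invariant so that geodesics cannot acquire a spurious leafwise component from the transverse motion. The cleanest route is probably to invoke the characterization of bundle-like metrics via the vanishing of the appropriate component of the Levi-Civita connection (equivalently, that leafwise geodesics orthogonal to $T_\F$ stay transverse), or to use O'Neill-type formulas for Riemannian submersions applied to the local quotient by the foliation. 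Since the transverse structure is one-dimensional, all tensorial complications collapse and the ODE is genuinely scalar, so once the proportionality is established the conclusion is immediate. Given that the statement is cited verbatim from Molino, I would ultimately defer the full technical verification to that reference, indicating only the ODE mechanism above.
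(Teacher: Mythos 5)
The paper does not prove this proposition at all: it is quoted as a known result, with the citation to Molino (\cite{mo}, Proposition 3.5, p.~86) standing in for the proof, so there is no in-paper argument to compare yours against. Your sketch is a correct outline of the classical Reinhart--O'Neill mechanism, and like the paper you ultimately defer the technical verification to the reference, which is consistent with how the result is used here.

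One remark on the substance, since you flag it yourself as the delicate point: in the derivative $\frac{d}{dt}\langle \dot\gamma, V\rangle_g$, after the geodesic term drops out, the piece that must be controlled is $\langle X, \nabla_X V\rangle_g$ where $X$ is the transverse (horizontal) component of $\dot\gamma$ and $V$ is leaf-tangent. Writing $\langle X,\nabla_X V\rangle_g = X\langle X,V\rangle_g - \langle \nabla_X X, V\rangle_g$, the issue reduces to showing that the leafwise component of $\nabla_X X$ vanishes; this is the O'Neill integrability tensor $A_X X$, which is zero by the antisymmetry $A_X Y = -A_Y X$, valid exactly because the bundle-like hypothesis makes the local quotient map a Riemannian submersion. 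Without this identity the ODE for the leafwise component $u(t)$ would pick up an inhomogeneous term independent of $u$, and the conclusion $u\equiv 0$ would fail; so the proportionality you assert is true but is not automatic from the splitting alone. An alternative and arguably cleaner route, which you also mention, is to observe that locally $\F$ is given by a Riemannian submersion $\pi$ and that the geodesic with horizontal initial velocity coincides, by uniqueness of geodesics, with the horizontal lift of the geodesic $\pi\circ\gamma$ downstairs, hence remains horizontal; in the codimension-one situation of the paper this is entirely elementary. Either way, the proposal is acceptable as a blind reconstruction of a result the paper itself only cites.
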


\begin{remark}
 In our setting, it is also worth recalling (\it loc.cit) that $\rho(\gamma)$ is a geodesic arc of $U^\varepsilon$ endowed with its natural metric $ds^\varepsilon$ depending on the metric type of $N_\F$ (euclidean or hyperbolic).
\end{remark}

\noindent {\it Proof of} theorem \ref{connect}

 It is the consequence of  the following sequence of observations.  Consider firstly a bundle-like metric $g$ defined on the complement of $E=\mbox{Supp}\ N\cup \mbox{Sing}\ \mathcal F$ in $X.$  For $p\in X\setminus E$ and $\varepsilon >0$, $B(p,\varepsilon)$ stands for the  closed geodesic ball of radius $\varepsilon$ centered at $p$. This latter is well defined is $\varepsilon$ is small enought (depending on $p$).

Acording to \cite{to}, prop 5.1, there exists a logarithmic $1$ form $\eta$ with positive residues defining the foliation near $E=\mbox{Supp}\ N\cup \mbox{Sing}\ \mathcal F$ (keep in mind that $\eta$ is just a semi-local section of the sheaf $\ladm$ mentioned above).
Let $C_1,...,C_p$ the connected component of $E$. Pick one of these, says $C_1$. Following \cite{pa2}, there exists an arbitrary small connected open neighborood $U_1$ of $C_1$ such that the fibers of $f_1=\exp \int\eta$ (defined as a multivaluate fonction) on $U_1\setminus {(\eta)}_\infty$ are connected and one can also assume that the polar locus $\eta_\infty$ is connected. This latter can be thought as the ``singular fiber'' of $f_1$. Actually, assumptions of theorem B of  \cite{pa2} are fullfilled: this is due to the fact that the residues of $\eta$  are positive  equal to $1$ on the local branches of $(\eta)_\infty$ intersecting 
$C_1$ 
   not contained in $C_1$, noticing moreover that this set $S^1$ of local branches is {\bf non empty} (see \cite{to}, Proposition 5.1 and 5.2), a fundamental fact for the following. Let $D_1\subset C_1$ the union of irreducible codimension $1$ components; in other words, $D_1$ is the connected component of $\mbox{Supp}\ N$ contained in $C_1$.  Remark that $S_1=f_1^{-1}(0)$ when extending $f_1$ throught $S_1\setminus D_1$. Observe also that $S_1$ is not necessarily connected, as the ``negative'' component $D_1$ has been removed. This justify the definition of the previous equivalence relation $\overline{\mathcal R}$.

Let $W_1\Subset V_1\Subset U_1$ some open neighborood of $C_1$. One can perform the same on each component and we thus obtain open set $W_i\Subset V_i\Subset U_i\ i=1...p$ with $f_i, S_i$ similarly defined.

Let $U=\bigcup_iU_i,\ V=\bigcup_iV_i,\ W=\bigcup_iW_i$. Up to shrinking $U_i$, $V_i$ and $W_i$, one can assume that the following holds:

\begin{enumerate}
 \item $U_i\cap U_j=\emptyset,\ i\not= j$
\item There exists $\alpha>0$ such that for every $m\in X\setminus \overline{W}$, $B(m,\alpha)$ is well defined.
\item \label{ballcentered}
Let $x\in V_i\setminus \mbox{Supp}\ N$, and $f_{i,m}$ a determination of $f_i$ at $m$, then the image of $U_i\setminus \mbox{Supp}\ N$ by $f_i$ contains the closed geodesic ball (euclidean or hyperbolic)  of radius $\alpha$ centered at $f_{i,m} (m)$, with $f_{i,m}$ any germ of determination of $f_i$ in $m$.

\end{enumerate}

 

Indeed, one can exhibit such $V_i$ (the only non trivial point) in the following way. Pick $x\in S_i$ such that $\F$ is smooth on a neighborhood of $x$ and take a small holomorphic curve $$\gamma:\{|z|<\varepsilon\}\rightarrow U_i,\ \ \gamma(0)=x$$ 

\noindent tranverse to $\F$ such that property (3) holds for every $y\in\mbox{Im}\ \gamma$ (this means that $\alpha$ can be chosen independently of $y$). This is possible because $f_i$ (more exactly any of its determinations) is submersive near $x$. By \cite{to} (Corollaire 6.2), the saturation of $\mathcal C=\mbox{Im}(\gamma)$ by ${\F}_{|U_i}$ is also an open neighborhood of $C_i$ and one can take $V_i$ equal to it. Note also, that the property (\ref{ballcentered}) mentioned above do not depend on the choice of the determination, as others are obtained by left composition through a global isometry of $U^\varepsilon$.
\begin{lemma}
 The quotient space $X_0/\overline{\mathcal R}$ is Hausdorff (with respect to the quotient topology).
\end{lemma}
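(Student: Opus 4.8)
The plan is to exploit that the canonical projection $\varphi_{\overline{\mathcal R}}:X_0\to X_0/\overline{\mathcal R}$ is an \emph{open} map, which is precisely the content of Lemma \ref{satopen}. For an open quotient map it suffices, in order to separate two distinct classes $\varphi_{\overline{\mathcal R}}(p)\neq\varphi_{\overline{\mathcal R}}(q)$, to produce open sets $A\ni p$ and $B\ni q$ in $X_0$ such that no point of $A$ is $\overline{\mathcal R}$-equivalent to a point of $B$; then $\varphi_{\overline{\mathcal R}}(A)$ and $\varphi_{\overline{\mathcal R}}(B)$ are disjoint open neighbourhoods of the two classes. So the whole task reduces to finding two open neighbourhoods with disjoint saturations.

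I would first dispose of the case $\rho(p)\neq\rho(q)$. Since both clauses defining $\mathcal R$ force $\rho(p)=\rho(q)$, the relation $\overline{\mathcal R}$ is contained in the fibre relation of $\rho$; this is exactly why $\rho$ factorizes as $\overline{\rho}\circ\varphi_{\overline{\mathcal R}}$, and consequently every set of the form $\rho^{-1}(\Omega)$ is saturated. As $U^\varepsilon$ (which is $\C$ or $\D$) is Hausdorff, choosing disjoint open sets $\Omega\ni\rho(p)$ and $\Omega'\ni\rho(q)$ and pulling them back yields disjoint saturated open neighbourhoods, and this case is finished.

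The substance lies in the case $\rho(p)=\rho(q)=:t$ with $\mathcal C_p\neq\mathcal C_q$ and $p\,\overline{\mathcal R}\,q$ false. Here I would imitate the metric argument used for the completeness of $\rho$ (Theorem \ref{complete}). Fix $\delta<\alpha$ and set $D=B(t,\delta)\subset U^\varepsilon$; then $\rho^{-1}(D)$ is open and saturated, and I build tubular neighbourhoods of $\mathcal C_p$ and of $\mathcal C_q$ inside it. Away from the neighbourhood $W$ of $E=\mbox{Supp}\ N\cup\mbox{Sing}\ \F$, I use the normal geodesic exponential of radius $\delta$ for the bundle-like metric $g$: by the orthogonality-preserving property recalled above these normal geodesics are sent by $\rho$ to geodesics of $U^\varepsilon$, so the tube around $\mathcal C_p$ projects into $D$ and its transverse extent is measured directly by the distance in $U^\varepsilon$. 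Over each $U_i$, where the metric degenerates, I instead use the local first integral $f_i$ together with property (\ref{ballcentered}): by Theorem B of \cite{pa2} the fibres of $f_i$ on $U_i\setminus(\eta)_\infty$ are connected, so a tube entering $U_i$ stays inside the single $f_i$-fibre component selected by $\mathcal C_p$. Patching the two descriptions produces an open set $A\ni p$ (resp. $B\ni q$) whose slice $A\cap\rho^{-1}(t)$ is a union of connected components of the central fibre and whose saturation adds only components linked to $\mathcal C_p$ (resp. $\mathcal C_q$) through the $\eta_H$-mechanism of clause (2).

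It then remains to verify that $(A)_{\overline{\mathcal R}}$ and $(B)_{\overline{\mathcal R}}$ are disjoint. Both lie in $\rho^{-1}(D)$, and on the central slice $\rho^{-1}(t)$ the saturation of $A$ meets exactly the connected components $\overline{\mathcal R}$-equivalent to $\mathcal C_p$, which by hypothesis exclude $\mathcal C_q$; the finiteness statement of Remark \ref{finitelymanyleaves} guarantees that only finitely many components are involved and that the identifications are governed solely by which components of $\mbox{Supp}\ N$ are approached, so shrinking $\delta$ keeps the two saturations apart over all of $\rho^{-1}(D)$. The main obstacle, and the place demanding most care, is precisely this control near $E$: away from $\mbox{Supp}\ N\cup\mbox{Sing}\ \F$ the bundle-like metric reduces the question to elementary Riemannian geometry, whereas near $E$ the transverse structure degenerates and distinct leaves accumulate onto the same invariant hypersurface, so one must rely on the connectedness of the fibres of the elementary first integrals $f_i$ and on the precise residue normalisation of $\eta_H$ to ensure that the saturation never inadvertently bridges $\mathcal C_p$ and $\mathcal C_q$.
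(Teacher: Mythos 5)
Your reduction to producing two open sets with disjoint $\overline{\mathcal R}$-saturations is legitimate (openness of the quotient map is indeed what Lemma \ref{satopen} provides), and the case $\rho(p)\neq\rho(q)$ is correctly dispatched since $\overline{\mathcal R}$ refines the fibre relation of $\rho$. But in the essential case $\rho(p)=\rho(q)$ there is a genuine gap: the disjointness of the saturations of your two tubes is asserted, not proved, and it is exactly the content of the lemma. The saturation of a tube $A$ around $\mathcal C_p$ is not controlled by its transverse extent: for a point $x\in A$ lying in a nearby fibre $\rho^{-1}(t')$, the \emph{entire} connected component $\mathcal C_x$ of that fibre is added to the saturation, and nothing in your construction prevents $\mathcal C_x$ from also entering the tube $B$ around $\mathcal C_q$ --- this is precisely the non-Hausdorff scenario one must exclude. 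Your sentence ``shrinking $\delta$ keeps the two saturations apart over all of $\rho^{-1}(D)$'' is therefore equivalent to the statement being proved. Note also that the real difficulty is not confined to a neighbourhood of $E=\mbox{Supp}\ N\cup \mbox{Sing}\ \F$: even for regular transversely hermitian foliations, failure of separation in the leaf space comes from the global recurrence of leaves, not from degeneracies of the transverse metric, so ``elementary Riemannian geometry'' away from $E$ does not settle anything by itself.

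What is missing is the leafwise transport argument that the paper supplies. One takes a point $p_n$ close to $p$ whose class is a generic, regular fibre component which also accumulates on $q$ (Remark \ref{finitelymanyleaves} is used here to choose it generic), a path $\gamma_n$ inside that component joining $p_n$ to a point $q_n$ close to $q$, and one shows that the assignment $t\mapsto\{r_n(t)\}_{\overline{\mathcal R}}$ --- obtained by flowing back to the level $\rho(p)$ along geodesics orthogonal to the leaves outside $\overline{W}$, and by using the connectedness of the fibres of $f_i$ inside $V_i$, where Remark \ref{path} is what guarantees the class is independent of the chosen path --- is locally constant on $[0,1]$. Connectedness of $[0,1]$ then forces $\{p\}_{\overline{\mathcal R}}=\{q\}_{\overline{\mathcal R}}$, the desired contradiction. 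Your ingredients (the bundle-like metric, the constant $\alpha$, property (\ref{ballcentered}) of the $U_i$, Th\'eor\`eme B of \cite{pa2}) are the right ones, but they must be assembled into this global argument along the component; the local tube geometry around $\mathcal C_p$ and $\mathcal C_q$ alone does not yield the conclusion.
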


\begin{proof} 
Suppose that $X_0/\overline{\mathcal R}$ is not Hausdorff. Using lemma \ref{satopen} and remark \ref{finitelymanyleaves}, one can then find two points $p,q$ belonging to the same fiber  of $\rho$ such that $\{p\}_{\overline{\mathcal R}}\not=\{q\}_{\overline{\mathcal R}}$ and a sequence $(p_n),\ p_n\in X_0$ converging to $p$ such that 
\begin{enumerate}
 \item ${\{p_n\}}_{\overline{\mathcal R}}$ coincide with the connected component of $\rho^{-1}(\rho (p_n))$ containing $p_n$ and such that ${\{p_n\}}_{\overline{\mathcal R}}$ does not intersect the singular locus of the pull-back foliation $\pi^*\mathcal F$ (in other words, $\rho$ is submersive near every point of ${\{p_n\}}_{\overline{\mathcal R}}$).

\item  For every neighborood $\mathcal V$ of $q$, there exists $n$ such that ${\{p_n\}}_{\overline{\mathcal R}}\cap{\mathcal V}\not=\emptyset$. Equivalently, up passing to a subsequence,  one can find  $(q_n),\  q_n\in{\{p_n\}}_{\overline{\mathcal R}}$ converging to $q$. In particular, $\lim\limits_{n \to +\infty}\rho (q_n)=\rho (q)=\rho (p)$.
\end{enumerate}
Without any loss of generality, we may also assume that $\rho$ is submersive near $p$ and $q$.

Let $\gamma_n:[0,1]\rightarrow {\{p_n\}}_{\overline{\mathcal R}}$ be a continuous path joining $p_n$ to $q_n$. 

Let $t\in[0,1]$ and $p_n(t)=\rho (\gamma_n(t))$. In $[0,1]$, consider the open sets $O_n^1=p_n^{-1}(U\setminus \overline{W})$ and $O_n^2=p_n^{-1}(V)$.

One can choose $n$ big enought such that the following constructions make sense.

If $p_n(t)\notin \overline{W}$, there exists a unique geodesic arc in $B(p_n(t),\alpha)$ orthogonal to the leaves wich lifts in $X_0$ to a path joining $\gamma_n(t)$ to a point $r_n(t)$ such that $\rho (r_n(t)=\rho (p)$. 

In that way, one can define a map  $$\alpha_n:O_n^1\rightarrow X_0/\overline{\mathcal R}$$
setting $\alpha_n(t)= {\{r_n(t)\}}_ {\overline{\mathcal R}}.$ Note that $\alpha_n$ is {\it locally constant}.

Now suppose  that  $p_n(t)\in V_i$ for some $i$ and denote by $\pi^{-1}$ the local inverse of the covering map $\pi$ sending $p_n(t)$ to $\gamma_n(t)$. 
 Keeping track of the conditions imposed on $V_i\Subset U_i$, there exists a path in $U_i\setminus \mbox{Supp}\ N$ starting from $p_n(t)$  such the analytic continuation of   
 $\rho\circ {\pi}^{-1}$  has $\rho (p)$ as ending value. This path lifts in $X_0$ to a path joining $\gamma_n(t)$ to a point  $s_n(t)$.

\begin{remark}\label{path}Note that the point $s_n(t)$ depends on the choice of this path. However, 
as a consequence of the connectedness of the fibers of $f_i$, the equivalent class ${\{s_n(t)\}}_ {\overline{\mathcal R}}$ {\it is uniquely defined}.
\end{remark}

This allows us to define a map 

$$\beta_n:O_2^n\rightarrow X_0/\overline{\mathcal R}$$

\noindent setting $\beta_n(t)={\{s_n(t)\}}_ {\overline{\mathcal R}}$ which is also obviously locally constant. 

\noindent Moreover, by remark \ref{path}, $\alpha_n$ and $\beta_n$ coincide on $O_1^n\cap O_2^n$.

We have thus constructed a {\it locally constant} map from $[0,1]$  to  $X_0/\overline{\mathcal R}$ sending $0$ to $\{p\}_{\overline{\mathcal R}}$ and $1$ to $ {\{q}\}_{\overline{\mathcal R}}$. This provides the sought contradiction.

\end{proof}
{\it Proof of the theorem \ref{connect}.}

By lemma \ref{satopen}, the map $\overline{\rho}$ is a local homeomorphism. Moreover, $X_0/\overline{\mathcal R}$ is Hausdorff. Hence,  $X_0/\overline{\mathcal R}$ carries, via $\overline{\rho}$, a uniquely defined connected Riemann surface structure wich makes $\overline{\rho}$ analytic.

Moreover the pull-back by $\overline{\rho}$ of the metric $ds^\varepsilon$ is complete (this is straighforward consequence of the proof of theorem \ref{complete}). Hence, it turns out that $\overline{\rho}$ is indeed an analytic covering map. The space $U^\varepsilon$ being simply connected, one can conclude that $\overline{\rho}$ is actually a biholomorphism.\qed.

This allows us to recover (modulo further informations postponed in the sequel (see section \ref{kodaira}) a result due to Carlos Simpson who has proved in \cite{sim1} (among other things)

\begin{thm}
 Let $\omega$ an holomorphic one form on $X$ K\"ahler compact and let $\tau:\tilde X\rightarrow X$ a covering map such that

     $$g:\tilde X\rightarrow\C$$
obtained by integration of $\omega$ is well defined. Then we get the following alternative:

\begin{enumerate}
 \item $\omega$ factors throught a surjective morphism $X\rightarrow B$ on a compact Riemann surface $B$,
\item the fibers of $g$ are connected.
\end{enumerate}

\end{thm}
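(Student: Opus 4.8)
The plan is to recognise the statement as the euclidean instance of the foliated picture developed above and to run it through Theorem~\ref{complete}, Theorem~\ref{connect} and Corollary~\ref{Ntrivial}. Since $X$ is compact K\"ahler, $\omega$ is closed, so $\ker\omega$ is integrable; write $\omega=s_D\,\omega_0$, where the effective divisor $D$ collects the codimension~$1$ part of the zero locus of $\omega$ and $\omega_0$ has singular set of codimension $\geq 2$. Then $\omega_0$ defines a codimension~$1$ foliation $\F$ with conormal bundle $\con=\mathcal O(D)$, which is effective and in particular pseudoeffective, so all the results of this section apply. The local primitives of $\omega$ endow $\F$ with a transverse translation (euclidean) structure, and on the covering $\tau:\tilde X\to X$ the single-valued primitive $g=\int\tau^*\omega$ restricts, over $X\setminus\mathrm{Supp}\,N$, to a developing map $\rho$ of this structure, valued in $U^0=\C$, and extends it holomorphically across $\tau^{-1}(\mathrm{Supp}\,N)$ because $\omega$ is holomorphic. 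By Theorem~\ref{complete} the developing map is complete, so $g$ is surjective onto $\C$, and the fibers of $g$ are precisely the leaves of $\tau^*\F$.

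First I would apply Theorem~\ref{transmetric} to the pseudoeffective class $c_1(\con)=\{D\}$ to obtain its Zariski decomposition $\{D\}=\{N\}+Z$ and the canonical invariant current $T$. In the euclidean regime $Z=0$ the developing map is $g$ itself and Theorem~\ref{connect} applies: the induced map $\overline\rho:X_0/\overline{\mathcal R}\to\C$ is a bijection. If moreover $N$ is trivial, then $\mathrm{Supp}\,N=\emptyset$, the second clause defining $\mathcal R$ is vacuous, and $\overline{\mathcal R}$ reduces to ``lying in the same connected component of a fiber''; Corollary~\ref{Ntrivial} then yields at once that the fibers of $g$ are connected, which is alternative~(2).

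It remains to treat the case where the fibers of $g$ fail to be connected. By the previous paragraph this forces $N\neq 0$ (or the non-euclidean regime $Z\neq 0$), so $\mathrm{Supp}\,N$ is a nonempty $\F$-invariant hypersurface. Bijectivity of $\overline\rho$ in Theorem~\ref{connect} says that any two distinct components of a fiber $g^{-1}(c)$ must be $\overline{\mathcal R}$-equivalent, hence glued through a common connected component of $\mathrm{Supp}\,N$, and by Remark~\ref{finitelymanyleaves} this happens only over the finitely many special leaves meeting $\mathrm{Supp}\,N$. The aim is then to collapse these identifications into an honest fibration: passing to the leaf space $X/\overline{\mathcal R}$ and combining completeness of $g$ with compactness of $X$, one wants a surjective morphism $f:X\to B$ onto a compact Riemann surface whose fibers are the $\overline{\mathcal R}$-saturated leaves of $\F$, so that $\omega=f^*\omega_B$ for a holomorphic form $\omega_B$ on $B$, giving alternative~(1). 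In the non-euclidean regime $Z\neq 0$ one uses instead that $\con$ is effective, so $\mathrm{Kod}\,\con\geq 0$; the dichotomy recalled in the introduction then excludes the quasiminimal $-\infty$ possibility, forces the monodromy to be a cocompact lattice, and Bogomolov--Castelnuovo--De Franchis again produces a fibration over a curve with $\omega$ pulled back, i.e. alternative~(1).

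The main obstacle is precisely this last step: turning a nontrivial negative part $N$ (equivalently, genuinely disconnected special fibers) into a holomorphic fibration over a compact curve with $\omega$ descending. This is where the further information of Section~\ref{kodaira} is required, namely controlling $\mathrm{Kod}\,\con$ and showing that the presence of the compact invariant hypersurface $\mathrm{Supp}\,N$, together with completeness of the developing map, rigidifies the period and monodromy data enough for $X/\overline{\mathcal R}$ to carry the structure of an honest compact curve rather than a merely topological quotient. Everything else is a direct application of Theorems~\ref{transmetric}, \ref{complete} and \ref{connect} and of Corollary~\ref{Ntrivial}.
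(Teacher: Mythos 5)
Your reduction to the foliated picture is the same as the paper's: $N_\F^*=\mathcal O(D)$ is effective, hence $\kappa(N_\F^*)\in\{0,1\}$; the value $1$ gives a fibration by Castelnuovo--De Franchis (alternative (1)), and the value $0$ forces, via Theorem \ref{abundancecases}, the euclidean regime $U^\varepsilon=\C$, in which $g$ restricted to the complement of $\tau^{-1}(H)$, $H=\mbox{Supp}\ N$, is the developing map; when $N=0$, Corollary \ref{Ntrivial} finishes. Up to that point you agree with the paper. (One remark: your parenthetical ``or the non-euclidean regime $Z\neq0$'' is already excluded at this stage --- effectivity plus Theorem \ref{abundancecases} rules out the hyperbolic non-fibration case, exactly as you note earlier --- so the only case genuinely left is euclidean with $N\neq0$.)

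The genuine gap is your last step. When $N\neq 0$ you try to ``collapse the $\overline{\mathcal R}$-identifications into an honest fibration'' and you concede that you do not know how to carry this out. That route cannot work: in the euclidean case with $\kappa=0$ the foliation may perfectly well be quasiminimal (case (\ref{minimal}) of Proposition \ref{StructureG} with $\varepsilon=0$), so no fibration exists, and yet the theorem asserts that the fibers of $g$ are then connected. The point you are missing --- and the whole content of the paper's argument for the fibers over the exceptional set $A=\{a\in\C:\ g^{-1}(a)\cap\tau^{-1}(H)\neq\emptyset\}$ --- is that the identifications made by $\overline{\mathcal R}$ are already realized \emph{inside} the honest fiber $g^{-1}(a)\subset\tilde X$. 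Indeed each connected component of $\tau^{-1}(H)$ is invariant, hence contained in a single fiber of $g$; and by Propositions 5.1 and 5.2 of \cite{to}, every point $y\in H$ is joined by a continuous path inside $H$ to a point lying on a local separatrix which is \emph{not} a local branch of $H$, i.e.\ to a point where the exterior leaves constituting the fiber attach to $H$. Consequently two components of the fiber of the developing map that are glued by clause (2) of the definition of $\mathcal R$ through a component of $\mbox{Supp}\ N$ are in fact path-connected through $g^{-1}(a)\cap\tau^{-1}(H)$, and Theorem \ref{connect} (bijectivity of $\overline\rho$) then yields connectedness of every fiber of $g$, including those over $A$. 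So the correct conclusion when $N\neq0$ in the euclidean regime is still alternative (2), not alternative (1).
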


\begin{proof}
 Let $\kappa (N_\mathcal F^*)$ be the Kodaira dimension of the conormal bundle of the foliation $\mathcal F$ defined by $\omega$.

If $\kappa((N_\mathcal F^*))=1$, one can apply the Castelnuovo-De Franchis theorem on a suitable ramified cyclic covering and show that $\mathcal F$ define a fibration over an algebraic curve $B$(see \cite{Reid}); moreover ({\it loc.cit}, see also \cite{brme}), this dimension can't exceed one. Then one can assume that $\kappa((N_\mathcal F^*))=0$; this is equivalent to say that $c_1(N_\F^*)$ has zero numerical dimension (see theorem \ref{abundancecases}, section \ref{kodaira}) and this implies that $U^\varepsilon=U^0=\C$. Let $\mathcal I$ the sheaf on $X$ whose local sections are primitives of $\omega$. In restriction to the complement of $\mbox {Supp}\ N$ we thus have $\mathcal I={\mathcal I}^0$ 

Keeping the previous notations, one can take $X^0$ equal to ${\tilde X}\setminus g^{-1}(H)$, setting $H=\mbox{Supp N})$.
In particular, one can conclude when $N$ is trivial (corollary \ref{Ntrivial} ). 


We will denote by $g^0$ the restriction of $g$ to $X^0$. This is nothing but the developing map considered previously, in particular,  the image of $g^0$ is $\C$. Let $A=\{a\in\C\ \mbox{such that} \ g^{-1}(a)\cap \tau^{-1}(H)\not=\emptyset\}$. Following theorem \ref{connect}, the fiber $g^{-1}(x)$ is connected whenever $x\notin A$. We want to show that the same holds true if $x\in A$. Actually this easily results from the definition of $\overline{\mathcal R}$, theorem \ref{connect}  and the following fact: for every point $y\in H$, there exists  a continuous path $\gamma:[0,1]\rightarrow H$ such that $\gamma (0)=y$ and $\gamma(1)$ belongs to a local separatrix wich is not a local branch of $H$ ( \cite{to}, Proposition 5.1 and 5.2).

\end{proof}

\section{Uniformization and dynamics}\label{sectiondynamics}

 We keep the same assumptions and notations of the previous section, in particular, unless otherwise stated,  $X$ still denotes compact K\"ahler manifold carrying a codimension $1$ foliation such that $N_\mathcal F^*$ is pseudoeffective. 
 
 The Hermitian transverse structure of the foliation $\mathcal F$,  as explicited in \cite{to} and recalled in section \ref{psefcon}  , may degenerate on the negative part with very ``mild'' singularities, as described in {\it loc.cit}.

 Hence, one can guess that this kind of degeneracy will not deeply affect the dynamics of leaves in comparison to the usual case: transversely riemannian foliations on (real) compact manifolds.
   Let us recall some properties  in this setting (see \cite{mo}).
\vskip 5 pt
-The manifold is a disjoint union of minimals (recall that a minimal is a closed subset of $X$ saturated by the foiation and minimal with respect to this properties). 
\vskip 5 pt
-When the real codimension $\mathcal F$ is equal to $2$, a minimal $\mathcal M$ fall into one of the following three types:

\begin{enumerate}
 \item $\mathcal M$ is a compact leaf.
\item $\mathcal M$ coincide with the whole manifold.
\item $\mathcal M$ is a real hypersurface.
\end{enumerate}

Now, let's come back to our situation.

Recall that  $\Im^\varepsilon$ denotes the isometry group of $U^\varepsilon$, $\varepsilon\in\{0,1\}$ (keep in mind that  $U^0=\C$ and $U^1=\mathbb D$). The locally constant sheaf $\Iadm$ induces a representation 

      $$r: \pi_1(X\setminus \mbox{Supp}\ N)\rightarrow \Im^\varepsilon$$
Let $G$ the image of the representation $r$ and $\overline{G}$ its closure (with respect to the usual topology) in $\Im^\varepsilon$.

Let $\rho:X_0\rightarrow U^\varepsilon$ be the associated developping map.
For the sake of convenience, we will assume that the covering $\pi:X_0\rightarrow X\setminus \mbox{Supp}\ N$ is Galoisian and as such satisfies the following equivariance property

    $$\rho\circ \gamma:r (\gamma)\circ\rho$$
    
    \noindent where $\gamma$ in the right side is an element of $\pi_1(X\setminus\mbox{Supp}\ N)$, in the left one is the corresponding deck transformation. 
    

\begin{definition} The group $G$ is called the monodromy group of the foliation $\mathcal F$.
 
\end{definition}
\vskip 10 pt

We would like to recover the aforementioned  dynamical properties 
 for regular foliations. As we deal with singular foliations, we will have to modify slightly the usual definition of  a leaf, taking into account the identifications made in a previous section.

To this end, let $c\in U^\varepsilon$ and recall that $\rho^{-1}(c) ={(\rho^{-1} (c))}_{\overline{\mathcal R}}$, that is coincide with its saturation by the  equivalence relation $\overline {\mathcal R}$ (theorem \ref{connect}). Let $F_c=\pi (\rho^{-1} (c))$; as a consequence of the previous construction, we have $F_c=F_{c'}$ if and only if there exists $g\in G$ such that $c'=g(c)$ (see remark \ref{deckequiv}). Let ${\mathcal C}_1,....,{\mathcal C}_p$ the connected components of $\mbox{Supp}\ N$ such that ${(\eta_{\mathcal C_i})}_\infty\cap F_c\not=\emptyset$ and let ${\mathcal L}_c=F_c\cup_i{\mathcal C}_i$. Of course, this family of components depends  on the choice of $c$.

\begin{definition}\label{modifiedleaf}
 A subset of $X$ of the form ${\mathcal L}_c$ is called {\bf modified leaf} of the foliation $\mathcal F$.
\end{definition}

\begin{definition}
 A subset $\mathcal M$ is called {\bf modified minimal} of the foliation whenever $\mathcal M$ is the closure of a modified leaf and is minimal for the inclusion.
\end{definition}

By remark \ref{finitelymanyleaves}, all modified leaves, except a finite number, are indeed ordinary leaves. 

The family $\{{\mathcal L}_c\}$ defines a partition of $X$. Let $X/\mathcal F$ the associated quotient space which, in some sense represents the space of leaves. One obtain a canonical bijection $$\Psi:X/\mathcal F\rightarrow U^\varepsilon/G$$ induced by  ${\mathcal L}_c\rightarrow c$. 

One can translate the previous observations into the following statement.

\begin{thm}
 The map $\Psi$ is a homeomorphism w.r.t the quotient topology on each space
\end{thm}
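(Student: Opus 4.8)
The plan is to show that $\Psi\colon X/\F\to U^\varepsilon/G$ is a homeomorphism by verifying that it is a continuous open bijection, exploiting the biholomorphism $\overline\rho\colon X_0/\overline{\mathcal R}\to U^\varepsilon$ already established in Theorem \ref{connect}. First I would recall that $\Psi$ is a well-defined bijection: the construction of the modified leaves $\mathcal L_c$ together with the identity $F_c=F_{c'}\iff c'=g(c)$ for some $g\in G$ (itself a consequence of Remark \ref{deckequiv} and the equivariance $\rho\circ\gamma=r(\gamma)\circ\rho$) is exactly what makes the assignment $\mathcal L_c\mapsto Gc$ both well defined and injective, while surjectivity is immediate from completeness of $\rho$ (Theorem \ref{complete}). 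So the only content is the bicontinuity.

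The key point is to set up the right commutative diagram relating the three quotient topologies. Write $\varphi_{\overline{\mathcal R}}\colon X_0\to X_0/\overline{\mathcal R}$, the covering $\pi\colon X_0\to X\setminus\mathrm{Supp}\ N$, and the canonical projection $q\colon X\to X/\F$. I would first argue that the composite $X_0\to X\to X/\F$ (sending a point to the modified leaf through its image) factors through $X_0/\overline{\mathcal R}$, giving a continuous map $\overline q\colon X_0/\overline{\mathcal R}\to X/\F$, and that under the identifications this intertwines $\overline\rho$ with $\Psi$, i.e.\ $\pi_G\circ\overline\rho=\Psi\circ\overline q$ where $\pi_G\colon U^\varepsilon\to U^\varepsilon/G$ is the orbit projection. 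Since $\overline\rho$ is a biholomorphism and $\pi_G$ is a continuous open map (orbit maps for group actions are always open), the continuity and openness of $\Psi$ reduce to the corresponding properties of $\overline q$. Continuity of $\Psi$ follows because $q$ is a quotient map and the lift to $X_0$ is continuous; openness follows from Lemma \ref{satopen}, which guarantees that saturations of open sets by $\overline{\mathcal R}$ are open, and hence that the foliation-saturation map on $X$ is open on the locus where things are controlled.

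The main obstacle will be the behaviour over $\mathrm{Supp}\ N$: the covering $\pi$ and the developing map $\rho$ are only defined on $X\setminus\mathrm{Supp}\ N$, whereas the modified leaves $\mathcal L_c$ deliberately include the components $\mathcal C_i$ of $\mathrm{Supp}\ N$ that they meet. Thus I must check that the topology of $X/\F$ near these finitely many special modified leaves (Remark \ref{finitelymanyleaves}) is correctly captured by $\overline\rho$ and the action of $G$. Concretely, I would use the semi-local logarithmic forms $\eta_{\mathcal C_i}$ and the Papier/Paul connectedness of the fibers of $f_i=\exp\int\eta_{\mathcal C_i}$ near each component (the ingredients assembled in the proof of Theorem \ref{connect}) to show that a basis of open neighborhoods of $\mathcal L_c$ in $X/\F$ corresponds, via $\Psi$, to a basis of neighborhoods of the orbit $Gc$ in $U^\varepsilon/G$; the identifications built into $\overline{\mathcal R}$ are precisely designed so that the components $\mathcal C_i$ glued into $\mathcal L_c$ do not create extra separation. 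Once this local picture over $\mathrm{Supp}\ N$ matches, the bicontinuity of $\Psi$ follows and the proof is complete.
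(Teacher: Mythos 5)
Your proposal is correct and follows exactly the route the paper intends: the paper offers no written proof ("One can translate the previous observations into the following statement"), and the observations it has in mind are precisely the ones you assemble — bijectivity from Theorem \ref{connect} together with the equivariance $\rho\circ\gamma=r(\gamma)\circ\rho$ and Remark \ref{deckequiv}, continuity and openness via the diagram through $X_0/\overline{\mathcal R}$ using Lemma \ref{satopen} and the openness of the orbit map $\pi_G$, and the treatment of the finitely many special modified leaves over $\mbox{Supp}\ N$ via the semi-local forms $\eta_{\mathcal C_i}$ and the fiber-connectedness built into $\overline{\mathcal R}$. Your write-up is in fact more explicit than the paper about the only delicate point (matching neighborhood bases of $\mathcal L_c$ and of $Gc$ across $\mbox{Supp}\ N$), and the argument goes through as you describe.
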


By compactness of $X$, we obtain the 
\begin{cor}
 The group $G$ is cocompact, i.e: $U^\varepsilon/G$ is a (non necessarily Hausdorff) compact space for the quotient topology
\end{cor}

With this suitable definition of the leaves space $X/\mathcal F$, one can observe that the dynamic behaviour of $\mathcal F$ is faithfully reflected in those of the group $G$. The next step is then to describe its topological closure $\overline{G}$.

Assume firstly that $U^\varepsilon=\mathbb D$ and $G$ is affine (i.e: $G$ fix a point in $\partial \mathbb D $). We want to show that this situation can't happen. 

In this case, $G$ does not contain any non trivial elliptic element. In particular, this means that $F=e^{\int\eta_H}$ is a well defined (univaluated) {\it holomorphic} first integral of the foliation on a neighborhood $V$ of $H=\mbox{supp}\ N$. Futhermore, one can notice that the zero divisor of the differential $dF$ coincide with the negative part $N$. Using again that $G$ is affine, one eventually obtains that $\mathcal F$ is defined by a section of $N_\F^*\otimes{\mathcal O}(-N)\otimes E$ without zeroes in codimension $1$ where $E$ is a flat line bundle (take the half plane model instead of $\mathbb D$).This contradicts the fact that the positive part $Z$ is non trivial.

Once this case has been eliminated, the remaining ones can be easily described as follows (taking account that $G$ is cocompact):

\begin{prop}\label{StructureG}
 Up to conjugation in ${\Im}^\varepsilon$, the toplological closure $\overline{G}$ has one of the following form:

\begin{enumerate}
 \item \label{lattice} $G=\overline{G}$ and then is a cocompact lattice.
\item\label{minimal} The action of $G$ is minimal and
\begin{enumerate}
 \item either $\varepsilon =0$ and $\overline G$ contains the translation subgroup of ${\Im}^0$,
\item either $\varepsilon=1$ and $\overline {G}= \Im^1$.
\end{enumerate}
\item \label{hypersurface} $\varepsilon=0$ and $\overline{G}$ contains the subgroup of real translation , $T=\{t_\alpha, \alpha\in\R\} (t_\alpha (z)=z+\alpha)$ and more precisely 
\begin {enumerate}
\item either $\overline{G}=\langle T,t\rangle$,
\item  either 
$\overline{G}=\langle T,t,s\rangle$
\end{enumerate}
with $t(z)=z+ai$ for some real $a\not=0$  and $s(z)=-z$.

\end{enumerate}
\end{prop}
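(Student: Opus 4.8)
The plan is to study $\overline G$ through its identity component ${\overline G}^0$, using that $\overline G$, being a closed subgroup of the Lie group $\Im^\varepsilon$, is itself a Lie subgroup. The first point is to promote cocompactness from $G$ to $\overline G$: since $G\subseteq\overline G$, the tautological map $U^\varepsilon/G\to U^\varepsilon/\overline G$ is continuous and onto, so compactness of $U^\varepsilon/G$ (the cocompactness corollary above) yields compactness of $U^\varepsilon/\overline G$; and because $\overline G$ is closed it acts properly and isometrically, whence $U^\varepsilon/\overline G$ is in fact a \emph{compact Hausdorff} orbit space. This is the device that upgrades the informal ``the quotient carries an unbounded distance coordinate'' into a genuine non-compactness obstruction. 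If ${\overline G}^0=\{1\}$ then $\overline G$ is discrete; as $G$ is dense in it, $G=\overline G$ is a discrete cocompact subgroup, i.e. a cocompact lattice, which is case \ref{lattice}. It then remains to analyse ${\overline G}^0\neq\{1\}$ in each geometry.

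Suppose first $\varepsilon=1$. I would run through the connected Lie subgroups of $\mbox{Aut}\ \D\cong PSL(2,\R)$ up to conjugacy: apart from $\{1\}$ and the whole group, each proper nontrivial one is conjugate into the maximal compact $SO(2)$ (an interior fixed point), into the unipotent or the Borel subgroup (a single boundary fixed point), or into the diagonal subgroup (a preserved geodesic, equivalently a pair of boundary fixed points). In the elliptic and the geodesic cases the $\overline G$-invariant function ``distance to the fixed point'', resp. ``distance to the geodesic'', is unbounded on $\D$ and descends to the quotient, contradicting compactness of $U^1/\overline G$. In the parabolic and Borel cases ${\overline G}^0$ fixes a unique boundary point, and since $\overline G$ normalizes ${\overline G}^0$ it fixes that point too; hence $G\subseteq\overline G$ is affine, precisely the configuration already excluded in the paragraph preceding the statement. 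Only ${\overline G}^0=\mbox{Aut}\ \D$ survives, so $\overline G\supseteq\mbox{Aut}\ \D$ acts transitively and $\overline G=\Im^1$: this is case \ref{minimal} with $\varepsilon=1$.

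Suppose now $\varepsilon=0$, so $\Im^0=\{z\mapsto e^{i\theta}z+b\}\cong U(1)\ltimes\C$. Its connected subgroups are, up to conjugacy, $\{1\}$, a one-parameter rotation group about a point, a one-parameter translation group, the full translation group $\C$, and $\Im^0$ itself. A rotation group fixes a point $p$, and $\overline G$ (normalizing it) then fixes $p$, so ``distance to $p$'' gives a non-compact quotient; this case is excluded and ${\overline G}^0$ consists of translations. If ${\overline G}^0=\C$ then $\overline G$ contains all translations, acts transitively, and we are in case \ref{minimal} with $\varepsilon=0$. If ${\overline G}^0$ is a single one-parameter translation group, I conjugate it to the real translations $T=\{z\mapsto z+t:t\in\R\}$. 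A one-line computation gives $g\,\tau_t\,g^{-1}=\tau_{e^{i\theta}t}$ for $g(z)=e^{i\theta}z+b$, so the normalizer of $T$ in $\Im^0$ is exactly $\{z\mapsto\pm z+b\}$; thus $\overline G$ sits inside this group, and the discrete quotient $\overline G/T$ embeds into $\{z\mapsto\pm z+b\}/T\cong(\Z/2)\ltimes\R$ acting on the transverse line $\C/T\cong\R$. Compactness of $U^0/\overline G$ forces $\overline G/T$ to be cocompact there, hence either infinite cyclic or infinite dihedral; pulling back, $\overline G=\langle T,t\rangle$ or $\overline G=\langle T,t,s\rangle$ with $t(z)=z+ai$ ($a\neq0$) and, after recentering, $s(z)=-z$ — exactly the alternatives of case \ref{hypersurface}.

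The delicate point, I expect, is the euclidean ``line of translations'' branch. One must compute the normalizer of $T$ correctly — it is the requirement $e^{i\theta}t\in\R$ that pins $\theta$ to $\{0,\pi\}$ and thereby produces the reflection $s(z)=-z$ — and one must verify that compactness of the $\overline G$-orbit space genuinely descends to cocompactness of the discrete group $\overline G/T$ acting on the transverse line $\C/T$, so that the elementary classification of cocompact subgroups of the isometry group of $\R$ applies and delivers precisely the two normal forms $\langle T,t\rangle$ and $\langle T,t,s\rangle$.
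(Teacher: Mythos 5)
Your argument is correct and follows exactly the route the paper intends but leaves to the reader: the paper only proves the exclusion of the affine case (the paragraph preceding the statement) and then asserts that the remaining possibilities ``can be easily described\dots taking account that $G$ is cocompact''. Your write-up supplies the missing details in the standard way --- passing to the identity component of the closed subgroup $\overline G$, classifying the connected subgroups of $\Im^1\cong PSL(2,\R)$ and of $\Im^0\cong U(1)\ltimes\C$ up to conjugacy, and ruling out the point-stabilizer, geodesic-stabilizer and affine branches via cocompactness and the paper's affine exclusion --- so it matches the intended proof.
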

When considering the foliation viewpoint, this latter description leads to the 

\begin{THM}\label{leafdynamic}
 The topological closure of every modified leaf is a modified minimal, the collection of those modified minimals form a partition of $X$ , moreover 
\begin{enumerate}
 \item in case (\ref{lattice}) of prop. \ref{StructureG}, Modified leaves coincide with their closure and $\mathcal F$ defines, via $\Psi$, an holomorphic fibration over the curve $U^{\varepsilon}/G$;
\item in case (\ref{minimal}) of prop. \ref{StructureG}, $X$ is the unique modified minimal;
\item in case (\ref{hypersurface}) of prop. \ref{StructureG}, every modified minimal is a real analytic hypersurface.

\end{enumerate}

\end{THM}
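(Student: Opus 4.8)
The plan is to derive Theorem \ref{leafdynamic} as a translation, via the homeomorphism $\Psi:X/\mathcal F\to U^\varepsilon/G$, of the structural description of $\overline G$ given in Proposition \ref{StructureG}. The guiding principle is that under $\Psi$ a modified leaf $\mathcal L_c$ corresponds to the point $c\in U^\varepsilon$, and its closure (in $X$, with the ordinary topology) should correspond to the closure of the orbit $Gc$ in $U^\varepsilon$. So the first step I would carry out is to verify this dictionary precisely: that $\Psi$ being a homeomorphism for the quotient topologies forces $\overline{\mathcal L_c}$ to be the $\pi$-image of $\overline{\rho^{-1}(\overline{Gc})}$, i.e.\ that taking closures commutes with $\Psi$ up to the finite identifications bundled into $\overline{\mathcal R}$. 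This is where the genuine content sits, because $X/\mathcal F$ and $U^\varepsilon/G$ are in general non-Hausdorff, so one must argue that the closure of a modified leaf is again saturated and is the closure of a single $G$-orbit — and then invoke that the orbit closure $\overline{Gc}$ is the orbit $\overline Gc$ of the topological closure $\overline G$ acting on $U^\varepsilon$.

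Granting that dictionary, each of the three cases of Proposition \ref{StructureG} would be handled in turn. In case (\ref{lattice}), $\overline G=G$ is a cocompact lattice, so every orbit $Gc$ is already closed and discrete; hence every modified leaf equals its closure and is a modified minimal, and $U^\varepsilon/G$ is a (Hausdorff, compact) Riemann orbifold curve. Through $\Psi$ this exhibits $\mathcal F$ as the foliation by fibers of the induced holomorphic map $X\to U^\varepsilon/G$, i.e.\ a fibration over the curve $U^\varepsilon/G$. I would make the fibration holomorphic by noting that $\Psi$ conjugates $\rho$ (which is holomorphic on $X_0$) to the orbifold uniformization, so the first integral descends analytically. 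In case (\ref{minimal}), $\overline G$ acts minimally on $U^\varepsilon$ (the whole of $\Im^1$ when $\varepsilon=1$, or a group containing all translations when $\varepsilon=0$), so $\overline{Gc}=U^\varepsilon$ for every $c$; translating back, the closure of every modified leaf is all of $X$, so $X$ itself is the unique modified minimal. In case (\ref{hypersurface}), one has $\varepsilon=0$ and $\overline G\supset T=\{z\mapsto z+\alpha:\alpha\in\R\}$, so each orbit closure $\overline{Gc}$ is a union of finitely many (one or two, depending on whether $s(z)=-z$ lies in $\overline G$) horizontal real lines $\{\Im z=\text{const}\}$ in $\C$; pulling back through $\rho$ and projecting by $\pi$, these lines become real-analytic hypersurfaces in $X$, giving that every modified minimal is a real-analytic hypersurface.

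I would then assemble the partition statement: since $\{\mathcal L_c\}$ already partitions $X$ and modified minimals are the closures that are minimal for inclusion, it remains to check that two modified minimals are either equal or disjoint. This follows because orbit closures $\overline{Gc}$ for the $\overline G$-action partition $U^\varepsilon$ into $\overline G$-invariant closed sets in each of the three regimes (discrete orbits; a single dense orbit closure; or the foliation of $\C$ by horizontal lines, respectively), and $\Psi$ transports this partition to $X$. The compactness of $X$, already used to deduce cocompactness of $G$ in the corollary, guarantees that closures are taken of genuinely compact modified minimals in the fibration case.

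The main obstacle I expect is the first step — making rigorous that the correspondence $\mathcal L_c\leftrightarrow c$ intertwines the two closure operations despite the non-Hausdorff quotients and the finite set of exceptional modified leaves (those meeting $\mathrm{Supp}\,N$, cf.\ Remark \ref{finitelymanyleaves}). One must check that adding the components $\mathcal C_i$ of $\mathrm{Supp}\,N$ into $\mathcal L_c$ does not spoil the identification of $\overline{\mathcal L_c}$ with $\pi(\rho^{-1}(\overline{Gc}))$; concretely, that the degenerate behaviour of the transverse structure along $\mathrm{Supp}\,N$ contributes only these finitely many exceptional leaves and therefore cannot create extra accumulation. The homeomorphism property of $\Psi$, together with Lemma \ref{satopen} (saturations of opens are open) and the explicit local form of $\eta_K$, is what I would lean on to control this boundary behaviour.
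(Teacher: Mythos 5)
Your proposal is correct and follows exactly the route the paper intends: the paper gives no written proof of Theorem \ref{leafdynamic}, presenting it as the direct translation of Proposition \ref{StructureG} through the homeomorphism $\Psi:X/\mathcal F\to U^\varepsilon/G$, which is precisely the dictionary (orbit closures $\overline{Gc}=\overline{G}c$ versus closures of modified leaves) that you spell out case by case. The subtleties you flag — non-Hausdorff quotients, openness of saturations via Lemma \ref{satopen}, and the finitely many exceptional leaves along $\mathrm{Supp}\,N$ — are exactly the points the paper leaves implicit.
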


\begin{remark}\label{closedallclosed}
 It is worth noticing that the existence of a closed modified leaf implies that the other ones are automatically closed (case 1 of the previous theorem).
\end{remark}

\begin{cor}\label{boundedfamily}Assume that $\mathcal F$ is not a fibration, then the family of irreducible hypersurfaces invariant by $\mathcal F$ is exceptional and thus has cardinal bounded by the picard number $\rho (X)$.
 \end{cor}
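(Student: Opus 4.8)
The goal is to prove Corollary \ref{boundedfamily}: assuming $\mathcal F$ is not a fibration, the family of irreducible $\mathcal F$-invariant hypersurfaces is exceptional, and hence has cardinality bounded by the Picard number $\rho(X)$.

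\textbf{Outline of the approach.} The plan is to reduce the statement to a property of $\mathcal F$-invariant positive currents and then invoke the intersection-theoretic machinery established in Section 2, together with the dynamical dichotomy of Theorem \ref{leafdynamic}. The key observation is that since $\mathcal F$ is not a fibration, we are in case (\ref{minimal}) or case (\ref{hypersurface}) of Proposition \ref{StructureG}, so the monodromy group $G$ has dense orbits (the action is minimal, or every modified minimal is a real hypersurface). In either case no modified leaf is closed, so by Remark \ref{closedallclosed} there is no compact-leaf behaviour to contend with. The strategy is to show that any finite collection $H_1,\dots,H_k$ of invariant irreducible hypersurfaces is exceptional in the sense of \cite{bo}, i.e.\ that the classes $\{H_i\}$ are ``negative'' -- concretely, that the intersection matrix $(\{H_i\}\cdot\{H_j\})$ restricted to them is negative definite, which is exactly the condition guaranteeing that the number of such hypersurfaces is bounded by $\rho(X)$ (since an exceptional family is linearly independent in the N\'eron--Severi space).

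\textbf{Key steps, in order.} First I would recall that each invariant hypersurface $H$ appears, via the logarithmic form $\eta_H$ and the structure of the negative part $N$, inside the support of some $\mathcal F$-invariant positive current; more precisely, the integration currents $[H_i]$ are themselves residual $\mathcal F$-invariant closed positive currents once we check invariance of each component (invariance of $\mbox{Supp}\ N$ and of invariant hypersurfaces was already noted in Section \ref{rappelsinvariance}). Second, I would apply Proposition \ref{intersectionresidual}: since the singularities of $\mathcal F$ lying on invariant hypersurfaces are elementary (Remark \ref{elementaryinsupport}), any two residual $\mathcal F$-invariant positive currents $S,T$ satisfy $\{S\}\{T\}=0$, and if both are residual their classes are colinear. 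The crucial point is that the self-intersection $\{H_i\}^2$ and the mutual intersections $\{H_i\}\{H_j\}$ are controlled: residuality forces the ``positive part'' contribution to vanish, so the classes $\{H_i\}$ lie in the kernel-like locus where the Hodge index theorem applies with all relevant products zero. Third, combining the Hodge signature theorem (as in the proof of Proposition \ref{intersectionresidual}(2)) with the vanishing of pairwise intersections shows that the $\{H_i\}$ span a subspace on which the intersection form is negative semidefinite, and the minimality/density of $G$ (ruling out case (\ref{lattice})) forbids any nonzero nef combination, upgrading this to negative definiteness and hence to exceptionality.

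\textbf{Main obstacle.} The hardest part will be step two: verifying that each invariant hypersurface genuinely gives rise to a \emph{residual} $\mathcal F$-invariant current whose class captures $\{H_i\}$, and correctly bookkeeping the Lelong numbers so that Proposition \ref{intersectionresidual} applies. The subtlety is that $[H_i]$ is an integration current with a Lelong number $1$ along $H_i$, so it is \emph{not} residual as it stands; the residual currents are rather the transverse invariant currents $T,\eta_T$ of Theorem \ref{transmetric}, and one must instead argue that the presence of too many invariant hypersurfaces would force extra residual invariant currents (or extra components in the Zariski negative part $N$) contradicting colinearity in Proposition \ref{intersectionresidual}(2). Concretely, the delicate point is to translate ``exceptional family'' (a statement about the negative part of a Zariski decomposition, by definition of \cite{bo}) into the intersection-theoretic vanishing supplied by Section 2, and to ensure the elementary-singularity hypothesis of Remark \ref{elementaryinsupport} is met along every $H_i\cap H_j$ and at every singular point of $\mathcal F$ on these hypersurfaces, which is where the transverse hyperbolic (respectively euclidean) structure and the positivity of residues of $\eta_H$ must be used.
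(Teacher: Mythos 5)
Your proposal correctly identifies the ambient dichotomy (if $\mathcal F$ is not a fibration then no modified leaf is closed, by Theorem \ref{leafdynamic} and Remark \ref{closedallclosed}), but the engine that is supposed to produce a contradiction from a non-exceptional family is missing, and you acknowledge the hole yourself. Proposition \ref{intersectionresidual} only applies to \emph{residual} invariant currents, whereas the integration currents $[H_i]$ have Lelong number $1$ along $H_i$ and are therefore precisely not residual; the colinearity statement $\{S\}$ colinear to $\{T\}$ requires residuality of \emph{both} currents, so it gives you no control on the classes $\{H_i\}$ themselves. Your proposed repair (``the presence of too many invariant hypersurfaces would force extra residual invariant currents or extra components in $N$'') is not an argument: nothing in Section 2 converts an invariant integration current into a residual one, and negative semidefiniteness of the intersection form on the span of the $\{H_i\}$ (even if you could get it) is not by itself the definition of an exceptional family in the sense of \cite{bo}. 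The characterization of exceptionality by negativity of the matrix $(\{A_i\}\{A_j\}\{\theta\}^{n-2})$ is itself a statement that the paper proves later (item (\ref{familleexcep}) of Proposition \ref{Z(alpha)nef}) using the invariant currents, so using it here without proof would be circular in spirit.

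The paper's actual proof is short and goes through a different, essentially local-to-dynamical mechanism that your outline never touches. If the family of invariant hypersurfaces is not exceptional, one extracts a \emph{connected} invariant hypersurface $K=K_1\cup\dots\cup K_r$ whose components form a non-exceptional family. Proposition 5.2 of \cite{to} (quoted in this paper as item (2) of Proposition \ref{logformnearK} in its logarithmic avatar) says that for connected $K$, exceptionality of $\{K_1,\dots,K_r\}$ is equivalent to the polar locus $(\eta_K)_\infty$ of the semi-local logarithmic form having a branch not contained in $K$, i.e.\ to the existence of an ``exterior'' separatrix meeting $K$. Non-exceptionality therefore forces every separatrix at every point of $K$ to be a local branch of $K$, so $K$ is saturated and is a closed modified leaf; Remark \ref{closedallclosed} then forces $\mathcal F$ to be a fibration, contradicting the hypothesis. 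This link between the intersection-theoretic notion (exceptional family) and the foliation-theoretic one (existence of a connecting exterior leaf) is the key idea, and it is what your intersection-theoretic route cannot supply.
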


\begin{proof}
Assume that there exists a non exceptional family of invariant hypersurfaces. One can then find a connected hypersurface $K=K_1\cup K_2\cup ... \cup K_r$ invariant by $\mathcal F$ such that the family of irreducible components $\{K_1,...,K_r\}$ is not an exceptional one. By virtue of Proposition 5.2 of \cite{to} , $K$ is necessarily a modified leaf and $\mathcal F$ is tangent to a fibration by remark \ref{closedallclosed}.
\end{proof}

\section{Kodaira dimension of the conormal bundle}\label{kodaira}

As previously, $\F$ is a codimenson $1$ holomorphic  foliation on a compact K\" abler manifold $X$ such that $N_\FÃÅ¡^*$ is pseudoeffective. We adopt the same notations as before.

In \cite{to} (see Remarque 2.16 in {\it loc.cit}), it has been proved that for $\alpha=c_1(N_\F^*)$, the numerical dimension $\nu(\alpha)$ takes value in $\{0,1\}$ depending of the metric type of $N_\F$; $0$ corresponds to the Euclidean one and $1$ to the hyperbolic one (see theorem \ref{transmetric}).  Moreover, we always have 

$$\kappa (N_\F^*)\leq \nu(\alpha)$$
where the left member represents the Kodaira dimension of $N_\F^*$. This is a common feature for line bundles (see \cite{na}) that can be directly verified in our case. Indeed, we always have $\kappa  (N_\F^*)\leq 1$ (cf. \cite{Reid}) and when $\alpha=\{N\}$ (in particular, $N=\sum_i\lambda_i D_i$ is a $\Q$ effective divisor), $[N]$ is the only positive current wich represents $\{N\}$. This latter property implies that  $\kappa(N_\F^*)\leq\kappa (N)=0$.  

Among the different cases described in prop.\label{structureG} theorem \ref{leafdynamic}, we would like to characterize those for wich abundance holds, that is $\kappa (N_\F^*)= \nu(\alpha)$.

\begin{THM}\label{abundancecases}
 Let $\alpha=c_1(N_\mathcal F^*)$. Then, $\nu (\alpha)=\kappa (N_\F^*)$ in both cases:
\begin{enumerate}
 \item $\varepsilon=0$ (Eulidean type)\label{parabolic}.
\item \label{latticecase}
 $\varepsilon=1$ (Hyperbolic type) and $G$ is a lattice.

\end{enumerate}

Moreover, in the remaining case $\varepsilon=1$ and $G$ dense in ${\Im}^1$, we have $\kappa (N_\F^*)=-\infty$ and $\nu(\alpha)=1$.

\end{THM}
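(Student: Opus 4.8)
The plan is to split along the three regimes of Proposition~\ref{StructureG}, using throughout that $\nu(\alpha)\in\{0,1\}$ is already known, that $\kappa(N_\F^*)\le\nu(\alpha)$ and $\kappa(N_\F^*)\le 1$ hold a priori, and that in each regime abundance amounts to producing, or obstructing, a nonzero section of a power of $N_\F^*$.

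I would dispose first of the hyperbolic lattice case ($\varepsilon=1$, $G$ a cocompact lattice), where $\nu(\alpha)=1$. Theorem~\ref{leafdynamic}(1) already asserts that $\mathcal F$ is a genuine holomorphic fibration $f\colon X\to B$ over the quotient curve $B=\D/G$, which is hyperbolic. It then remains to identify $N_\F^*$ with the pull-back $f^{*}$ of the (orbifold) cotangent bundle of $B$, corrected along the multiple fibres by the $\mathcal F$-invariant divisor $N$; since $B$ is hyperbolic its orbifold canonical bundle has positive degree, whence $\kappa(N_\F^*)=1=\nu(\alpha)$. This case is thus essentially a translation of Theorem~\ref{leafdynamic}.

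Next comes the euclidean case ($\varepsilon=0$). By Theorem~\ref{transmetric} the positive part $Z$ vanishes and $\alpha=\{N\}$ with $N=\sum_i\lambda_i D_i$ an $\mathcal F$-invariant effective $\R$-divisor; since $\kappa(N_\F^*)\le\nu(\alpha)=0$, abundance reduces to the single inequality $\kappa(N_\F^*)\ge 0$, i.e.\ to exhibiting a nonzero pluri-section. Unwinding the euclidean developing map $\rho\colon X_0\to\C$ of Theorem~\ref{complete}, the twisted form $d\rho$ realises $N_\F^*$ as $\mathcal O(N)\otimes L^{-1}$, where $L$ is the flat unitary line bundle whose monodromy is the rotational (linear) part $a\colon\pi_1(X\setminus\mathrm{Supp}\,N)\to S^1$ of the holonomy representation $r$. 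Since the local monodromy around each $D_i$ is a translation, $a$ extends to a character of $\pi_1(X)$; I would then show that this character has finite image and that the residues $\lambda_i$ are rational. Granting this, a suitable power simultaneously trivialises $L$ and renders $N$ integral, so $(N_\F^*)^{\otimes m}\cong\mathcal O(mN)$ carries the canonical section of the effective divisor $mN$, giving $\kappa(N_\F^*)=0$. Finiteness of $a$ is read off Proposition~\ref{StructureG}: in cases (1) and (3) the rotational part of $\overline G$ is finite, and the minimal case (2a) must be settled by the rationality (quasi-unipotence) of the divisorial coefficients of $N$. I expect this rationality/finiteness statement to be the main obstacle of the whole theorem, since without it $N_\F^*$ could fail to be $\Q$-effective even though $\nu(\alpha)=0$.

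Finally the dense hyperbolic case ($\varepsilon=1$, $G$ dense in $\mathrm{Isom}^1$), where I must prove $\kappa(N_\F^*)=-\infty$. Suppose not; since $\kappa(N_\F^*)\le\nu(\alpha)=1$ we would have $\kappa(N_\F^*)\in\{0,1\}$. If $\kappa(N_\F^*)=1$, the Castelnuovo--De Franchis argument already used for the Simpson-type statement (via a cyclic cover, see \cite{Reid}) forces $\mathcal F$ to be a fibration over a curve; but a dense $G$ makes $\mathcal F$ quasiminimal, hence not a fibration, a contradiction. If $\kappa(N_\F^*)=0$, then some power $(N_\F^*)^{\otimes m}$ has a nonzero section whose zero divisor $D$ satisfies $\{D\}=m\alpha$ and is a union of $\mathcal F$-invariant hypersurfaces. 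As $\mathcal F$ is not a fibration, Corollary~\ref{boundedfamily} says the family of invariant hypersurfaces is exceptional, so $\{D\}$, hence $\alpha$, lies in the negative-definite span of that family; its divisorial Zariski decomposition therefore has trivial positive part, i.e.\ $Z=0$, contradicting $\nu(\alpha)=1$. Thus $\kappa(N_\F^*)=-\infty$ while $\nu(\alpha)=1$, and abundance fails exactly as claimed.
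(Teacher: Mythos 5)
Your reduction of the theorem to the three regimes of Proposition \ref{StructureG} is sound, and two of the three cases are handled correctly: the lattice case is essentially the paper's argument (pull back pluricanonical sections of the hyperbolic quotient curve, the point $\lambda_i>1$ ensuring they lift holomorphically across $\mbox{Supp}\ N$), and your treatment of the dense hyperbolic case is a valid alternative to the paper's route --- the paper rules out $\kappa\ge 0$ uniformly via the Schwarzian-derivative/quadratic-differential trick and quasi-minimality, whereas you split off $\kappa=0$ and kill it with Corollary \ref{boundedfamily} plus the Zariski decomposition (an effective divisor representing $\alpha$ would be $\F$-invariant, hence supported on an exceptional family, forcing $Z=0$); that sub-argument is clean and arguably more elementary.

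The genuine gap is in the euclidean case, precisely where you flag "the main obstacle": proving that the flat unitary line bundle $E$ (equivalently, the linear part $G_L\subset S^1$ of the monodromy) is torsion when $\overline G$ acts minimally on $\C$ (case (2a) of Proposition \ref{StructureG}) and the generic leaf is not compact. Your proposed mechanism --- "settled by the rationality (quasi-unipotence) of the divisorial coefficients of $N$" --- does not do this: rationality of the residues only makes the \emph{local} monodromy around the branches of $\mbox{Supp}\ N$ finite, and says nothing about the image of the global character $\pi_1(X)\to S^1$, which could perfectly well be an infinite (dense) subgroup of $S^1$ compatibly with a $\Q$-effective negative part. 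This is exactly the point where the paper has to work: it observes that $E^*$ lies in the Green--Lazarsfeld locus $S^1(X)$, invokes Simpson's theorem (isolated points of $\Sigma^1(X)$ are torsion characters, extended to the K\"ahler case by Campana) so that a non-torsion $E$ forces, via Beauville, a fibration $p\colon X\to B$ onto a curve of genus $\le 1$ with $\ker\nabla_u\in p^*H^1(B,\C^*)$ after an \'etale cover, and then applies Deligne's semisimplicity of the Gauss--Manin monodromy to the line spanned by $\omega_{|F}$ in $H^1(F,\C)$ to conclude that $G_L$ is finite after all. None of this is recoverable from Proposition \ref{StructureG} or from the rationality of $N$ alone, so as written your euclidean case is incomplete at its decisive step.
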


\begin{proof}
 
Let us firstly settle the case $\varepsilon=1$ and $\overline{G}={\Im}^1$

Assume that $\kappa(N_\F^*)\geq 0$. 
Consider in first place the simplest situation $h^0(N_\mathcal F^*)\not=0$; this means that the foliation is defined by an holomorphic $1$ form $\omega$. Taking local primitives of $\omega$, we can equipped the foliation with a transverse projective structure  different from the hyperbolic one (attached to $\varepsilon=1$). By means of the Schwarzian derivative, one can thus construct, as usual, a quadratic differential of the form $$F\omega\otimes\omega$$

\noindent where $F$ is a meromorphic first integral of $\mathcal F$. For obvious dynamical reasons, $F$ is automatically constant and this forces local first integrals $g\in {\mathcal I}^1$ to be expressed in the form 
$g=e^{\lambda f}$ (modulo the left action of $\Im^1$). In particular the monodromy group $G$ must be abelian and this leads to a contradiction.

In the more general setting, the same conclusion holds with the same arguments, considering a suitable ramified covering $\pi:X_1\rightarrow X$ such that the pull-back foliation is defined by an holomorphic form.

\bigskip

Concerning the case \ref{latticecase}) of theorem \ref{abundancecases} , we know that $\mathcal F$ is a fibration over the compact riemann surface $S=\mathbb D/G$. For $n\gg 0$ one can then find two independant holomorphic sections $s_1,s_2$ of ${K_S}^{\otimes n}$ (with respect to the hyperbolic orbifold structure).  One must take care that the map $\Psi$ is not necessarily  a morphism in the orbifold sense.  Indeed, the local expression of $\Psi$  along $\mbox{Supp}\  N$ is given, up to left composition by an element of $\Im^\varepsilon$,  as a multivaluate section of $\Iadm$ of the form $f_1^{\lambda_1}....f_r^ {\lambda_r}$ (see remark \ref{finitelymanyleaves}). Hence, it fails to be an orbifold map through $\mbox{Supp}\ N$ unless the $\lambda_i $ are integers.  However, the fact that they are $> 1$ (as pointed out in {\it loc.cit}) guarantees that $s_1$ and $s_2$ lift (via $\Psi$) to $2$ independant {\it holomorphic} sections  of ${N_\mathcal F^*}^{\otimes n}$, whence $\kappa (N_\mathcal F^*)=1$.
\bigskip

The remaining case $(1)$ is a bit more delicate to handle. Here, the Kodaira dimension $\kappa (N_{\F}^*)$ is directly related to the linear part $G_L$. Namely, $\kappa (N_{\F}^*)=0$ if and only if $G_L$ is finite. This occurs in particular when $\F$ is a fibration. 

One can then eliminate this case and suppose from now on that the generic leaf of $\F$ is not compact.  Assume firstly that $N$ is an effective integral divisor (a priori, it is only $\Q$ effective). This means that the foliation is defined by a twisted one form $\omega\in H^0(X,\Omega^1\otimes E)$ with value in a numerically trivial line bundle $E$, that is $E\in{\mbox{Pic}}^\tau (M):=\{L\in\mbox{Pic}\ (M)|c_1(L)=0\in H^2(M,\R)\}$. Note that the zeroes divisor of $\omega$ is precisely $N$ and the theorem will be established once we will prove that $E$ is actually a torsion line bundle. 

To this aim, remark that the dual $E^*$ is an element of the so-called Green-Lazarsfeld subset  

$$S^1(X)=\{L\in {\mbox{Pic}}^\tau\ \mbox{such that}\ H^1(X,L)\not=0\}.$$

In the sequel, it will be useful to consider $H^1(X,{\C}^*)$ as the paramerizing space of rank one local systems and to introduce the set 

   $$\Sigma^1(X)=\{\mathcal L\in H^1(X,{\C}^*)\ \mbox{such that}\ H^1(X,\mathcal L)\not=0\}$$

When $\mathcal L$ is unitary, $S^1(X)$ and $\Sigma^1(X)$ are related as follows:

$$\mathcal L \in \Sigma^1(X)\ \mbox{if and only if}\ L\in S^1(X)\cup-S^1(X)$$

where $L=\mathcal L\otimes {\mathcal O}_X$ is the numerically trivial line bundle defined by $\mathcal L$ (see \cite{beau}, a) Proposition 3.5).

When $X$ is projective, Simpson  has shown that the isolated points of $\Sigma^1(X)$ are torsion characters (\cite{sim}). 
This has been further generalized by Campana in the K\" ahler setting (\cite{camp}).

We proceed by contradiction. Assume that $\kappa(N_\F^*)\not=0$, in other words, $E$ is not torsion. 

This can be translated into the existence of a flat unitary connection 

$$\nabla_u:E\rightarrow \Omega^1(E)$$ whose monodromy representation has infinite image $H$ in the unitary group $U(1)$. As mentioned before, $H$ is nothing but the linear part $G_L$ of the monodromy group $G$ attached to the foliation 

Keeping track of Simpson's result, $\mathcal E=\mbox{ker}\ \nabla_u$ is not isolated in $\Sigma^1(X)$, hence, following \cite{beau}, one can conclude to the existence of an holomorphic fibration with connected fibers over a compact Riemann surface of genus $g\leq 1$ such that ${\mathcal E}\in p^*(H^1(B,{\C }^*)$, up replacing $X$ by a finite \' etale covering. 





\vskip 5 pt
From $\nabla_u \omega=0$ and the fact that $E$ comes from a line bundle on $B$, one can conclude that $\omega$ restricts to an holomorphic closed one form on open sets $p^{-1}(V)$, $V$ simply connected open set in $B$.

Let $F$ be a smooth fiber of $p$ and $\Gamma < \mbox{Aut}(H^1(F,\C))$ the monodromy group associated to the Gauss-Manin connection. Let $D$ the line spanned by $\omega_{|F}$ in $H^1(F,\C)$ (recall that the foliation is not tangent to the fibration).

One can observe than $D$ is globally $\Gamma$ invariante (actually, $H$ is just the induced action of $\Gamma$ on $D$).  By \cite{del}, Corollaire 4.2.8, or the fact that the connected component of ${\overline{\Gamma}}^{{Zar}}$ is semi-simple (see {\it loc.cit}), one can then infer that $H$ is finite.  


\vskip 5 pt

 The fact that $N$ is in general only effective over $\mathbb Q$ does not really give troubles. One can easily reduce this case to the previous one, working on a suitable ramified covering.


\end{proof}
 \begin{remark}
 
 Deligne's results quoted above are stated in an algebraic setting but the semi-simplicty of  ${\overline{\Gamma}}^{{Zar}}$ remains true in the K\" ahler realm, as pointed out in \cite{camp}.
\end{remark}

\section{The non abundant case}\label{nonabundant}
We are dealing with the non abundant case: $\kappa(N_\F^*)=-\infty<\nu(N_\F^*)=1$. From now on, we will also assume that the ambient manifold $X$ is {\bf projective}.

As a consequence of theorem \ref{abundancecases}, we know that $\F$ is a {\bf quasi-minimal} foliation (i.e: all but finely many leaves are dense).

The foliation $\F$ fall into the general setting of transversely projective foliations. For the  reader convenience, we now recall some definitions/properties concerning these latters.

\subsection{Transversely projective foliations, projective triples}\label{projectivetriple}

We follows closely the presentation of \cite{lope} (see also \cite{lptbis}) and we refer to {\it loc.cit} for precise definitions. A transversely projective foliation $\mathcal F$ on a complex manifold $X$ is the data of $(E,\nabla,\sigma)$ 
where 
\begin{itemize}
\item $E\to X$ is a rank $2$ vector bundle, 
\item $\nabla$ a flat meromorphic connection on $E$ and
\item $\sigma:X\rightarrow P=\mathbb P (E)$ a meromorphic section generically transverse to the codimension one Riccati foliation $\mathcal R=\mathbb P(\nabla)$ and such that $\mathcal F=\sigma^*\mathcal R$. 
\end{itemize}

Let us assume (up to birational equivalence of bundles) that $E=X\times {\mathbb P}^1$ is the trivial bundle (this automatically holds when $X$ is projective by GAGA principle), $\sigma$ is the section $\{z=0\}$
so that the Riccati equation writes
$$\Omega=dz+\omega_0+z\omega_1+z^2\omega_2$$
with $\omega_0$ defining $\mathcal F$.
Setting $z=\frac{z_2}{z_1}$ we get the $\mathfrak{sl}_2$-connection (i.e. trace free)
$$\nabla\ :\ Z=\begin{pmatrix}z_1\\ z_2\end{pmatrix}\ \mapsto\ dZ+\begin{pmatrix}\alpha&\beta\\ \gamma&-\alpha\end{pmatrix}Z$$
where
$$ \begin{pmatrix}\alpha&\beta\\ \gamma&-\alpha\end{pmatrix}:=\begin{pmatrix}-\frac{1}{2}\omega_1&-\omega_2\\ \omega_0&\frac{1}{2}\omega_1\end{pmatrix}.$$
Note that 
$$\nabla\cdot\nabla=0\ \Leftrightarrow\ \Omega\wedge d\Omega=0\ \Leftrightarrow\ \left\{\begin{matrix}
d\omega_0=\hfill\omega_0\wedge\omega_1\\
d\omega_1=2\omega_0\wedge\omega_2\\
d\omega_2=\hfill\omega_1\wedge\omega_2
\end{matrix}\right.$$

Change of triples arise from birational gauge transformations of the bundle fixing the zero section 

\begin{equation} \label{gaugetransform}
\frac{1}{z}=a\frac{1}{z'}+b
\end{equation} 
where $a,b$ are rational functions on $X$, $a\not\equiv0$:
$$\left\{\begin{matrix}
\omega_0'=a\omega_0\hfill\\
\omega_1'=\omega_1-\frac{da}{a}+2b\omega_0\hfill\\
\omega_2'=\frac{1}{a}\left(\omega_2+b\omega_1+b^2\omega_0-db\right)
\end{matrix}\right.$$
A projective structure is thus the data of a triple $(\omega_0,\omega_1,\omega_2)$ up to above equivalence.

\begin{remark}\label{twofirstforms}
Once $\omega_0$ and $\omega_1$ are fixed, $\omega_2$ is completely determined.
\end{remark}
\begin{definition}
The transversely projective  foliation $\F$ has regular singularities if the corresponding connection has at worst regular singularities in the sense of \cite{Deligne}.
\end{definition}
Let us make this more explicit in the case we are concerned with.. 

\noindent Pick a point $x\in X$ and assume first that $x\notin \mbox{Supp}\ N$. Over a small neighborhood of $x$, consider the Ricatti equation $dz+df$ where $f$ is a distinguished first integral of $\mathcal F$. 

Consider now the hypersurface $H=\mbox{Supp}\ N$ along which $\F$ is defined by the logaritmic closed form $\eta_{H}$ . Let us define the Ricatti equation over a neighbohood of $\mathcal H$ as 

\begin{equation}\label{regsing}
\frac{dz}{z+1}+\eta_{\mathcal C}. 
\end{equation}



Note that one recover the original foliation on the section  $z=0$



Let $\{F=0\}$ the local equation at $p$ of the polar locus of $\eta_{\mathcal C}$ corresponding to residues equal to one. Permorming the birationnal tranformation of ${\mathbb P}^1$ bundle $z\rightarrow Fz$, one obtains the new Ricatti  equation $dz+F\eta_{\mathcal C}+z (\eta_{\mathcal C}-\frac{dF}{F})$ without pole on $\{F=0\}$. This latter property enables us to glue together these local models with gluing local bundle automorphisms of the previous form (\ref{gaugetransform}). 
 Note that these transformations preserve the section $\{z=0\}$.  
In this way, we have obtained a ${\mathbb P}^1$ bundle equipped with a Riccati foliation ( in this setting, this means a foliation transverse to the general fiber)
wich induced the sought foliation $\mathcal F$ on a meromorphic (actually holomorphic) section. By \cite{gro}, this ${\mathbb P}^1$ is actually the projectivization of a rank $2$ vector bundle. 

Without additional assumptions on the complex manifold $X$, nothing guarantees that this Riccati foliation  ${\mathbb P}^1$ bundle is the projectivization of a rank $2$ meromorphic flat connection.


However, when $X$ is projective, 
one can assume 
that (birationnaly speaking) this ${\mathbb P}^1$ bundle is trivial and that the resulting Riccati foliation is then defined by a global equation

\begin{equation}\label{riccatiequation}
\Omega=dz+\omega_0+z\omega_1+z^2\omega_2
 \end{equation}

hence defined as the projectivization of the rank two meromorphic flat connection.

$$\nabla\ :\ Z=\begin{pmatrix}z_1\\ z_2\end{pmatrix}\ \mapsto\ dZ+\begin{pmatrix}\alpha&\beta\\ \gamma&-\alpha\end{pmatrix}Z.$$

 It is worth noticing that $\Delta$ has regular singular poles, thanks to the local expression given by  (\ref{regsing}).
 
 Let ${(\nabla)}_\infty$ be this polar locus. From flatness, we inherit a representation of monodromy
 
 $$r_\nabla:\pi_1(X\setminus {(\nabla)}_\infty)\rightarrow SL(2,\C)$$
 whose projectivization gives the  original representation 
 
$$r:\pi_1(X\setminus \mbox{Supp}\ N)\rightarrow PSL(2,\C)$$
 
  \noindent attached to the transversely hyperbolic foliation $\mathcal F$. In particular, one can assume that the image of $r_\nabla$ {\bf lies in $SL(2,\R)$ } (if one takes the Poincar\'e upper half plane model $\mathbb H$ instead of the disk).
 
 \noindent As we are dealing with regular singularities,  note  also than one can recover $\nabla$ from $r_\nabla$ (Riemann-Hilbert correspondance).
 
 \begin{remark}
 As we work up to birationnal equivalence of connection, ${(\nabla)}_\infty$ does not necessarily coincide with $\mbox{Supp}\ N$. We may have added some "fakes" poles around which the local monodromy (associated to $\nabla$) is $\pm Id$
  \end{remark}

Let $U$ a dense Zariski open subset of $X$ wich does not not intersect ${(\nabla)}_\infty$ and denote by $\rho_U: \pi_1(U)\rightarrow SL(2,\mathbb R)$ the  monodromy representation of the flat connection $\nabla$ restricted to $U$.   We want to show that the representation $\rho_\nabla$ does not come from a curve in the following sense: 
\begin{thm}\label{nocurvefactorization}
 Let $\mathcal C$ a quasiprojective curve and let

$$\rho_{\mathcal C}:\pi_1(\mathcal C)\rightarrow PSL(2,\mathbb C)$$
a representation.  Then there is no morphism

 $$\varphi:U\rightarrow \mathcal C$$
  such that $\rho_U$ factors projectively to $\rho_{\mathcal C}$ through $\varphi$.

\end{thm}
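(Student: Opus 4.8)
The plan is to argue by contradiction, showing that any such factorization would force $\F$ to be a fibration over a curve, which is incompatible with the standing hypothesis $\kappa(\con)=-\infty$ of this section. So assume a quasiprojective curve $\mathcal C$, a representation $\rho_{\mathcal C}:\pi_1(\mathcal C)\to PSL(2,\C)$ and a morphism $\varphi:U\to\mathcal C$ are given with $\rho_U=\rho_{\mathcal C}\circ\varphi_*$ projectively. First I would dispose of the case where $\varphi$ is not dominant: then $\varphi(U)$ is a point, $\rho_U$ is trivial, and so is the monodromy $G$, contradicting its density in $\Im^1$ (Theorem \ref{abundancecases}). Hence $\varphi$ may be assumed dominant, with irreducible codimension one generic fibres.

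Next I would upgrade the factorization of \emph{monodromy} to a factorization of the \emph{Riccati structure}. Since, by construction (see (\ref{regsing})), the flat connection $\nabla$ attached to $\F$ has only regular singularities, the Riemann--Hilbert correspondence lets me reconstruct $\nabla|_U$, up to birational gauge, from its monodromy $\rho_U$; likewise $\rho_{\mathcal C}$ gives a flat $\P^1$-bundle with Riccati foliation $\mathcal R_{\mathcal C}$ on $\mathcal C$. The equality $\rho_U=\rho_{\mathcal C}\circ\varphi_*$ then yields an isomorphism, over $U$ and up to birational gauge, between the projectivized connection $\mathcal R=\P(\nabla)$ and the pullback $\varphi^*\mathcal R_{\mathcal C}$, carrying the distinguished section $\sigma=\{z=0\}$ to the $\varphi$-pullback of the corresponding section $\sigma_{\mathcal C}$ over $\mathcal C$ (the zero section is preserved by the gauge transformations (\ref{gaugetransform})).

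From this I would read off the foliation directly. Using $\F=\sigma^*\mathcal R$ and the compatibility above,
$$\F|_U=\sigma^*\,\varphi^*\mathcal R_{\mathcal C}=\varphi^*\bigl(\sigma_{\mathcal C}^*\mathcal R_{\mathcal C}\bigr).$$
But $\sigma_{\mathcal C}^*\mathcal R_{\mathcal C}$ is a codimension one foliation on the \emph{curve} $\mathcal C$, hence the foliation by points; equivalently, the local defining form $\omega_0$ of $\F$ equals $\varphi^*\alpha$ for a $1$-form $\alpha$ on $\mathcal C$, and since $\mathcal C$ is a curve one has $\ker\varphi^*\alpha=\ker d\varphi$ wherever $\alpha\neq0$. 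Therefore $\F$ coincides on a dense Zariski open set, hence everywhere by analytic continuation, with the fibration defined by $\varphi$. A fibration over a curve has $\kappa(\con)=1$ (Castelnuovo--de Franchis, as recalled in Section \ref{kodaira}), contradicting $\kappa(\con)=-\infty$; alternatively, Theorem \ref{leafdynamic} would force $G$ to be a cocompact lattice, against its density. This contradiction proves the theorem.

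I expect the second step to be the main obstacle: turning the abstract factorization of monodromy representations into a genuine isomorphism of Riccati structures compatible with the section $\sigma$. This is exactly where the regular-singularity hypothesis on $\nabla$ and the projectivity of $X$ (via GAGA, allowing the $\P^1$-bundle to be trivialized and $\nabla$ to be realized as a global meromorphic connection) are indispensable; one must also keep track of the possible ``fake'' poles where the local monodromy of $\nabla$ is $\pm\mathrm{Id}$, so that the comparison is carried out after discarding them and only affects the birational gauge class, not the foliation itself.
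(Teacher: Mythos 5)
Your reduction to the non-dominant case and your use of Riemann--Hilbert to promote the factorization of monodromy into a birational identification of $\mathbb P(\nabla)|_U$ with $\varphi^*\mathbb P(\nabla_{\overline{\mathcal C}})$ are fine, and match the first half of the paper's argument. The gap is in the next step, where you assert that this identification carries the section $\sigma=\{z=0\}$ to the $\varphi$-pullback of a section $\sigma_{\mathcal C}$ living over $\mathcal C$. Nothing in the hypothesis provides such a $\sigma_{\mathcal C}$: the flat isomorphism is an isomorphism of $\mathbb P^1$-bundles over $U$, and under it $\sigma$ is sent to some section of $\varphi^*E_{\overline{\mathcal C}}$, i.e.\ to a morphism $\Psi_X:U\to E_{\overline{\mathcal C}}$ covering $\varphi$ but in general not constant in the $\mathbb P^1$-direction along the fibres of $\varphi$. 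What you actually obtain is therefore $\F|_U=\Psi_X^*\mathcal R_{\overline{\mathcal C}}$, the pullback of the \emph{Riccati foliation on the ruled surface} $E_{\overline{\mathcal C}}$, not the pullback of the foliation by points on the curve. Since a Riccati foliation is transverse to the generic fibre of the ruling, its leaves dominate $\overline{\mathcal C}$, so the leaves of $\F$ dominate $\mathcal C$ under $\varphi$, and $\F$ is emphatically not the fibration defined by $\varphi$. Your final contradiction ($\kappa(\con)=1$ against $-\infty$) thus rests on a false intermediate claim; if your step were correct, every transversely projective foliation whose monodromy factors through a curve would be a fibration, which is precisely the situation (pullback of a Riccati foliation over a curve, item (2) of Theorem D of \cite{lptbis} quoted in the introduction) that this theorem has to work to exclude.

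The paper closes the argument differently: having produced the map $\Psi_X: X\dashrightarrow E_{\overline{\mathcal C}}$ with $\F=\Psi_X^*\mathcal R_{\overline{\mathcal C}}$, it first notes that $\Psi_X$ is dominant, since otherwise $\F$ would admit a rational first integral, impossible for a quasi-minimal foliation. It then uses that the image of $r$ lies in $PSL(2,\R)$, so that $\rho_{\overline{\mathcal C}}$ may be assumed (up to finite index) to take values in $PSL(2,\R)$ as well; the monodromy then preserves a disk in each fibre, hence no leaf of $\mathcal R_{\overline{\mathcal C}}$ is dense in $E_{\overline{\mathcal C}}$, hence no leaf of $\F$ is dense in $X$ --- contradicting the quasi-minimality of $\F$ guaranteed by Theorem \ref{abundancecases}. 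This dynamical argument is what you need to substitute for the step ``$\F$ is the fibration defined by $\varphi$''.
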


\begin{proof}

Up removing some additionnal points in $\mathcal C$ one can realize  $\rho_{\mathcal C}$ as the monodromy  projectivization   of a regular singular rank $2$ connection $\nabla_{\overline{\mathcal C}}$  defined on  the projective closure $\overline{\mathcal C}$. 
Let ${\mathcal R}_X$ the Riccati foliation defined by (\ref{riccatiequation}) and ${\mathcal R}_{\overline{\mathcal C}}$ the one defined by ${\mathbb P}(\nabla_{\overline{\mathcal C}})$. They respectively lie on ${\mathbb P}^1$ bundles $E_X$, $E_{\overline{\mathcal C}}$ over $X$ and $\overline{\mathcal C}$.

We proceed by contradiction. The existence of the morphism $\phi$ can be translated into the existence of a rational map of ${\mathbb P}^1$ bundles $\Psi:E_X\dashrightarrow E_{\overline{\mathcal C}}$ such that

    $${\mathcal R}_X=\Psi^*{\mathcal R}_{\overline{\mathcal C}}.$$
    

In particular, we obtain that ${\Psi_X}^* {\mathcal R}_{\overline{\mathcal C}}=\mathcal F$, where $\Psi_X$ denotes the restriction of $\Psi$ to $X$ (identified with the section $\{z=0\}$. The foliation $\mathcal F$ being a quasi-minimal foliation, this implies that $\Psi_X$ is dominant ( otherwise, $\F$ would admit a rational first integral). Moreover, as the image of $r$ lies in $PSL(2,\R)$ , one can assume that the same holds for $\rho_{\overline{\mathcal C}}$ (this is true up to finite index). In particular none of the leaves  ${\mathcal R}_{\overline{\mathcal C}}$ is dense  (the action of $PSL(2,\mathbb R)$ on ${\mathbb P}^1$ fix a disk). This contradicts the quasi-minimality of $\mathcal F$.


\end{proof}

We are now ready to give the proof of the  main theorem \ref{thprincipal} of the introduction.
\vskip 5 pt
{\it Proof of theorem \ref{thprincipal}}.
By \cite{to}, iv) of Proposition 2.14, the defining form $\eta_H$ has {\it rational} residues. As a by-product,  the local monodromy of the rank two local system defined by $\nabla=0$ around ${(\nabla)}_\infty$ is finite (and more precisely takes values in a group of roots of unity). Keeping in mind theorem \ref{nocurvefactorization}, one can now apply the main theorem of \cite{corsim} (rigid case in the alternative) which asserts that there exists an algebraic morphism $\Phi$ between $X\setminus {(\nabla)}\infty$  and the orbifold space $\frak H$ such that $r=\Phi^*\rho_i$, $1\leq i\leq p=\mbox{Dim}\ \frak h$  where $\rho_i$ is the $i^{\mbox th}$ tautological representation $\Gamma=\pi_1^{orb} (X)\rightarrow \Gamma_i$ defined by the projection onto the $i^{\mbox{th}}$ factor. Actually these representations are related to one another by field automorphisms of $\mathbb C$.

Note that there may be exists other ``tautological'' representations of $\Gamma$ of unitary type (hidden behind the arithmetic nature of $\Gamma$) but they are not relevant in our case, as $r_\nabla$ takes values in $SL(2,\R)$ (see \cite{corsim} for a thorough discussion).



 Actually, the theorem of Corlette and Simpson only claims  a priori the existence of a morphism 
which factorize representations and where foliations are not involved, hence we have to provide an additional argument wich allows passing from representation to foliations, as stated by theorem \ref{thprincipal} . 

To achieve this purpose, consider  the $n$ codimension $1$ foliations ${\F}_j$, $j=1,...,n$ on $\frak H$. These are obtained from the codimension $1$ foliations $dz_j=0$ on the polydisk ${\mathbb D}^p$ after passing to the quotient (this makes sense as $\Gamma$ acts diagonally on ${\mathbb D}^p$). These $p$ foliations give rise by pull-back $p$ transversely hyperbolic foliations $\Phi^* {\F}_j$ on $X\setminus {(\nabla)}_\infty$. Note that $\Phi^*{\F}_i$ has the same monodromy representation than $\F$ but might, a priori differ from this latter.

 Remark that one can recover $\Phi$ from the datum of ${\F}_j$ by choosing in a point $x\in X$ $n$ germs of distinguished first integrals  $Z({\F}_j)$ $,j=1,...p$ attached to ${\F}_j$ and taking their analytic continuation along paths  in $X\setminus {(\nabla)}_\infty$ starting at $x$.. More precisely, the analytic continuation of the $p-uple$ $(Z{({\F}_1)},...,Z({\F}_p)$  defines a map $X\setminus {(\nabla)}_\infty$ to $\frak H$ which is nothing but $\Phi$.

Actually, $\Phi$ extends across ${(\nabla_\infty)}\setminus\mbox{Supp}\ N$ (as an orbifold morphism). This is just a consequence of the Riemann extension theorem, keeping in mind that distinguished first integrals take values in $\mathbb D$ and that the local monodromy around extra poles of ${(\nabla)}_\infty$ is projectively trivial. By the same kind of argument, one can extend $\Phi$ through $\mbox{Supp}\ N$ using that the finiteness of the local monodromy around this latter. The only difference with the previous case lies in the fact that $\Phi$ extends as an analytic map between $X$ and the underlying analytic space of the orbifold $\frak H$, but not as a map in the orbifold setting (this phenomenon occurs when this local monodromy is not projectively trivial).

To get the conclusion, we are going to modify $\Phi$ into an algebraic map
$$\Psi: X\rightarrow \frak H$$
 in order to have $\F=\Psi^*{\F}_i$.

For convenience, $\frak H$ will be regarded as a quotient of the product ${\mathbb H}^p$ of $p$ copies of the upper Poincar\'e's upper halph plane. 

Let $r_i:\pi_1(X\setminus \mbox{supp}\ N)\rightarrow PSL(2,\mathbb R)$ the monodromy representation associated to $\Phi^*{\F}_i$.  One knows that $r$ and $r_i$ coincide up to conjugation in $PSL(2,\C)$. In other words, there exists $\alpha\in PSL(2,\mathbb C)$  such that for every loop $\gamma$ of $\pi_1 (X\setminus \mbox{supp}\ N),\  r_i(\gamma)=\alpha r(\gamma)\alpha^{-1}$.

As the image of $r$ is dense in $PSL(2,\mathbb R)$,  the conjugating transformation $\alpha$ lies in the normalizer  of $PSL(2,\R$ in $PSL(2,\C)$ which is the group generated by $PSL(2,\R)$ and the inversion $\tau (z)=\frac{1}{z}$. We then have two cases to examine, depending whether one can choose $\alpha\in PSL(2,\C)$ or not.

\vskip 5 pt

-  If $\alpha\in PSL(2,\R)$, take $p$ germs of distinguished first integrals associated to $\Phi^*{\F}_j\ j\not=i$ and $\mathcal F$ in $x\in X\setminus {(\nabla)}_\infty$ and consider their analytic continuation along paths.
This induces an analytic orbifold map $X\setminus {(\nabla)}_\infty$ to $\frak H$ which analytically extends to the whole manifold $X$ using the same lines of argumentation  as previously This defines the sought morphism $\Psi$.

\vskip 5 pt

- Otherwise, one can assume that $\alpha=\tau$. We do the same work as before except that we replace the germ of  distinguished first integral $f_x$ of $\mathcal F$ by $\overline {\tau\circ f_x}$. We thus obtain a well defined map 

    $$X\setminus {(\nabla)}_\infty\rightarrow \frak H$$
    
    which is no longer holomorphic (it's antiholomorphic on the "$i^{th}$" component).
    Let $\varphi :{\mathbb H}^p\rightarrow {\mathbb H}^p$ defined by $\varphi (z_1,...,z_p)= (z_1,...,z_i, \overline{\tau}(z_i),z_{i+1},...,z_p)$ and ${\Gamma}'=\varphi\Gamma{\varphi}^{-1}$ . As $\varphi$ is an isometry, ${\Gamma}'$ is still a lattice.  We then obtain, passing to the quotient, a map
    
    $$\Psi_2:\frak H\rightarrow {\frak H}'={\mathbb H}^p/{\Gamma}'$$
    
    One can now easily check that $\Psi=\Psi_2\circ\Psi_1$ (more exactly its extension to the whole $X$) has the required property by changing the complex structure on  $\frak H$ as described above.

\qed



\section{The logarithmic setting}

We begin this section by collecting some classical results about logarithmic $1$ form for further use.
\subsection {Logarithmic one forms}
We follow closely the exposition of \cite{brme}.
Let $X$ a complex manifold and $H$ a {\it simple normal crossing hypersurface }wich decomposes into irreducible components as $H=H_1\cup...H_p$.  A {\it logarithmic 1 form} on $X$ with poles along $H$ is a meromorphic $1$ form $\omega$ with polar set ${(\omega)}_\infty$ such that $\omega$ and $d\omega$ have at most simple poles along $H$. In other words, if $f=0$ is a local reduced equation of $H$, then $f\omega$ and $fd\omega$ are holomorphic. This latter properties are equivalent to say that $f\omega$ and $df\wedge d\omega$ are holomorphic. By localization on arbitrarily small open subset, one obtain a locally free sheaf on $X$ denoted by $\Omega_X^1(\log H)$.

If $(z_1,...,z_n)$ is a local coordinate system around $x\in X$ such that $H$ is locally expressed as $z_1...z_k=0$, then every section of $\Omega_X^1(\log H)$ around $x$ can be written as 

\begin{equation}\label{loglocal}
 \omega=\omega_0+\sum_{i=1}^k g_i\frac{dz_i}{z_i}
\end{equation}

 where $\omega_0$ is a holomorphic $1$ form and each $g_i$ is a holomorphic function.

Equivalently, one can locally write $\omega$ as 

$$\omega=\omega_i +g_i\frac{dz_i}{z_i}$$
where $g_i$ is holomorphic and $\omega_i$ is a local section of $\Omega_X^1(\log H)$ whose polar set does not contain $H_i=\{z_i=0\}$.

Despite that this decomposition fails to be unique, we have a well defined map (the so called {\it Residue map}):

$$\mbox{Res}:\Omega_X^1(\log H)\rightarrow \bigoplus_{i=1}^p{\mathcal O}_{H_i}$$ 

summing on $i$ the maps

$${\mbox{Res}}_{H_i}:\omega\rightarrow {g_i}_{|H_i}.$$

 We will also denote by $\mbox{Res}$ the induced morphism 

$$H^0(\Omega_X^1(\log H)\otimes L)\rightarrow \bigoplus_{i=1}^pH^0(H_i,{\mathcal O}_{L_i})$$ for global logarithmic form twisted by a line bundle $L$ (setting ${\mathcal O}(L_i):={\mathcal O}(L_{|{H_i}})$).

\subsection{A class of twisted logarithmic form}

From now on, we will suppose that there exists on {\bf $X$ K\"ahler} compact, with fixed K\"ahler form $\theta$,   a globally defined twisted logarithmic form, assuming moreover that the dual bundle $L^*$ is {\bf pseudoeffective}. 

Let $h$ be a (maybe singular) hermitian metric on $L^*$ with positive curvature current $\Theta_h\geq 0$. In a local trivialization $L_{|U}\simeq U\times \C$, we thus have

$$ h(x,v)={|v|}^2 e^{-2\varphi (x)}.$$

Hence, $$T=\frac{i}{\pi}\partial\overline{\partial}\varphi$$ is a positive current representing $c_1(L^*)=-c_1(L)$.

and let 
$$\{T\}=\{N\}+Z$$ be  the divisorial Zariski decomposition of this latter.
\vskip 5 pt
In the sequel we will assume that $\mbox{Res}_{H_i} (\omega)\not=0$ for every component $H_i$ of $H$.

\begin{remark}\label{restrictionistrivial}
 
 As $L^*$ is pseudoeffective, one can infer that either $L_i$ is the trivial line bundle, either ${L_i}^*$ is not pseudoeffective and $H_i\subset \mbox{Supp}\ N$.

\end{remark}


As a by-product, one obtains the 

\begin{lemma} \label{alternativeforpoles}

 Let $\mathcal C$ a connected component of $H$, then the following alternative holds:

\begin{enumerate}
 \item[a)] $\mathcal C$ is contained in $\mbox{Supp}\ N$.\label{includeinnegative}
\item[b)] For any irreducible component $H_{\mathcal C}$ of $\mathcal C$, $H_{\mathcal C}\cap \mbox{Supp}\ N=\emptyset$ and $Z.\{H_{\mathcal C}\}=0$. Such a $\mathcal C$ is then called a {\bf non exceptional component}.
\end{enumerate}

\end{lemma}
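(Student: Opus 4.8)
The plan is to argue one irreducible component at a time and then propagate the conclusion along the connected configuration $\mathcal{C}$; the crux is that a component \emph{not} contained in $\mbox{Supp}\ N$ turns out to be disjoint from $\mbox{Supp}\ N$ altogether, so the two alternatives cannot mix within a connected $\mathcal{C}$.

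First I would fix an irreducible (hence smooth, compact Kähler) component $H_{\mathcal{C}}$ of $\mathcal{C}$ and treat the case $H_{\mathcal{C}}\not\subset \mbox{Supp}\ N$. Since $\mbox{Res}_{H_{\mathcal{C}}}(\omega)\neq 0$ by our standing hypothesis, Remark \ref{restrictionistrivial} applies; as the second alternative there (``$L_{\mathcal{C}}^*$ not pseudoeffective and $H_{\mathcal{C}}\subset\mbox{Supp}\ N$'') is excluded in this case, it forces $L_{\mathcal{C}}=L_{|H_{\mathcal{C}}}$ to be trivial, so $c_1(L_{\mathcal{C}})=0$.

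Next I would restrict the identity $c_1(L)=-\{N\}-Z$ to $H_{\mathcal{C}}$. Because $H_{\mathcal{C}}\not\subset\mbox{Supp}\ N$, the class $\{N\}_{|H_{\mathcal{C}}}$ is represented by the effective $\Q$-divisor $N_{|H_{\mathcal{C}}}=\sum_j\lambda_j\,(N_j\cap H_{\mathcal{C}})$, hence is pseudoeffective, while $Z_{|H_{\mathcal{C}}}$ is pseudoeffective because $Z$ is nef in codimension $1$ (as recalled after the Zariski decomposition). Vanishing of $c_1(L_{\mathcal{C}})$ then yields $\{N_{|H_{\mathcal{C}}}\}+Z_{|H_{\mathcal{C}}}=0$, a relation between two pseudoeffective classes whose sum vanishes. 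Pairing with $\{\theta_{|H_{\mathcal{C}}}\}^{\dim H_{\mathcal{C}}-1}$ and using that a nonzero pseudoeffective class has strictly positive mass against a Kähler power (its representing closed positive current has positive trace), I conclude that both classes vanish. Thus $Z\cdot\{H_{\mathcal{C}}\}=0$, and the effective divisor $N_{|H_{\mathcal{C}}}$ is numerically trivial, hence zero, which is exactly $H_{\mathcal{C}}\cap\mbox{Supp}\ N=\emptyset$.

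Finally, the dichotomy follows by connectedness. The previous step shows that any component of $\mathcal{C}$ not lying in $\mbox{Supp}\ N$ is disjoint from $\mbox{Supp}\ N$, in particular disjoint from every component of $\mathcal{C}$ that does lie in $\mbox{Supp}\ N$. Since $\mathcal{C}$ is a connected normal crossing configuration, a single component meeting $\mbox{Supp}\ N$ would propagate along intersecting neighbours and force every component into $\mbox{Supp}\ N$; hence either all components of $\mathcal{C}$ lie in $\mbox{Supp}\ N$, giving alternative (a), or none do, in which case each component $H_{\mathcal{C}}$ satisfies $H_{\mathcal{C}}\cap\mbox{Supp}\ N=\emptyset$ and $Z\cdot\{H_{\mathcal{C}}\}=0$ by the claim, giving alternative (b). I expect the main obstacle to be the middle step: extracting triviality of $L_{\mathcal{C}}$ cleanly from Remark \ref{restrictionistrivial} and then running the positivity argument correctly, that is, verifying that $Z_{|H_{\mathcal{C}}}$ is genuinely pseudoeffective (via modified nefness) so that the vanishing of a sum of two pseudoeffective classes forces each summand to vanish.
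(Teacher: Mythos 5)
Your argument is correct and follows the route the paper intends: the lemma is stated there as a direct by-product of Remark \ref{restrictionistrivial}, and you have simply written out the implicit steps — triviality of $L_{|H_{\mathcal C}}$ when $H_{\mathcal C}\not\subset\mbox{Supp}\ N$, restriction of $\{N\}+Z$ to $H_{\mathcal C}$, vanishing of a sum of two pseudoeffective classes tested against $\{\theta_{|H_{\mathcal C}}\}^{\dim H_{\mathcal C}-1}$, and propagation by connectedness. Both the pseudoeffectivity of $Z_{|H_{\mathcal C}}$ (via modified nefness) and the positivity of the mass of a nonzero pseudoeffective class are exactly the inputs the paper relies on, so no genuinely different machinery is introduced.
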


From now on, we fix a current $T$ representing $c_1(L^*)$.

 By definition of the negative part, $T$ has non vanishing Lelong's numbers along $\mbox{Supp}\ N$. 
 
 In particular, the local potentials $\varphi$ of $T$ are necessarily equal to $-\infty$ on connected component of $H$ satisfying the item a) in lemma \ref{alternativeforpoles}.
 
 Assume now that $H_\mathcal C\subset \mathcal C$ fulfills the properties of point $(2)$. In this case, $L$ is trivial restricted to $H_\mathcal C$ (remark \ref{restrictionistrivial}). Then,  
 
 \begin{enumerate}
\item either the restrictions of $\varphi$ on $H_\mathcal C$ are local {\it pluriharmonic functions},
\item either these restrictions are identically $-\infty$.
\end{enumerate}

Following the previous numerotation, $H_{\mathcal C}$ is said to be of {\bf type $(1)$ or $(2)$}. Note that, by connectedness, every  
irreducible component of $\mathcal C$ has the same type. This allows us to define {\bf connected components of $H$ of type $(1)$ and $(2)$} with respect to $T$, noticing that any connected component belongs to one (and only one) of these types.

\begin{definition}
Let $T$ a closed positive current such that $\{T\}=c_1(N_\F^*)$.

 The boundary divisor $H$ is said to be of {\it type $(1)$} (resp. {\it type $(2)$}) with respect to $T$ if each of its connected components is of type $(1)$ (resp. of type (2)) and when both situations occur, $H$ is said to be of {\it mixed type} (with respect to $T$).
\end{definition}

\begin{lemma}\label{casetype1}
 Let $\mathcal C\subset H$ a connected component of type $(1)$. Assume that $Z\not=0$.
Then there exists a non negative real number $\lambda$  and an integral effective divisor $D$, $\mbox{Supp D}=\mathcal C$ such $\{D\}=\lambda Z$. 
\end{lemma}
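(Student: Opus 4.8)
The plan is to reduce the statement to a Hodge index (signature) argument of the same flavour as Proposition~\ref{intersectionresidual}(2), applied to the two classes $Z$ and $\{D\}$, where $D$ is the reduced divisor supported on $\mathcal C$. First I would fix the candidate divisor. Since $\mathcal C$ is of type $(1)$ it falls, by Lemma~\ref{alternativeforpoles}, into case b); in particular $\mathcal C\cap\mbox{Supp}\ N=\emptyset$, so the residues of the semi-local logarithmic defining form $\eta_{\mathcal C}$ along the components $C_1,\dots,C_r$ of $\mathcal C$ are all equal to $1$. Consequently the first integral $F=e^{\int\eta_{\mathcal C}}=\prod_i f_i$ is univaluate and reduced, so I set $D=\sum_i C_i$ (an integral effective divisor with $\mbox{Supp}\ D=\mathcal C$) and aim to prove $\{D\}=\lambda Z$ with $\lambda\geq0$. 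The whole proof then amounts to checking the three numerical identities
$$\{D\}^2\cdot\theta^{n-2}=0,\qquad \{D\}\cdot Z\cdot\theta^{n-2}=0,\qquad Z^2\cdot\theta^{n-2}=0,$$
after which the fact that the intersection form $q(a,b)=\int_X a\wedge b\wedge\theta^{n-2}$ has exactly one positive eigenvalue forces the isotropic plane spanned by $\{D\}$ and $Z$ to be at most a line, hence $\{D\}$ and $Z$ colinear.

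Next I would establish the three vanishings. The identity $Z^2\cdot\theta^{n-2}=0$ follows from the normalisation $\{\eta_T\}=Z$ (Theorem~\ref{transmetric}) together with $\eta_T\wedge\eta_T=0$: locally $\eta_T=\frac{i}{\pi}e^{2\varphi}\omega\wedge\overline\omega$ is decomposable, so its self-product vanishes as an $L^1_{loc}$ form and represents $Z^2$. For $\{D\}\cdot Z\cdot\theta^{n-2}=\sum_i\int_{C_i}\eta_T\wedge\theta^{n-2}$, I would use that each $C_i$ is a leaf of $\F$ lying off $\mbox{Supp}\ N$, so $\eta_T$ has bounded coefficients near $C_i$ and its restriction $\eta_T|_{C_i}$ vanishes (because $\omega$ vanishes on $T\F$); hence each integral is $0$. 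The crucial point is $\{D\}\cdot\{C_j\}\cdot\theta^{n-2}=0$ for every $j$, which then yields $\{D\}^2\cdot\theta^{n-2}=\sum_j\{D\}\cdot\{C_j\}\cdot\theta^{n-2}=0$. Here I would argue that, on a suitable neighbourhood $U$ of $\mathcal C$, the single-valued holomorphic function $F$ with $\mbox{div}(F)=D$ trivialises $\mathcal O_U(D)$, so that $\mathcal O(D)|_{C_j}\cong\mathcal O_{C_j}$ has degree zero.

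The main obstacle is exactly this last step: producing $F$ as an honest holomorphic function on a full neighbourhood of the compact, possibly reducible and singular $\mathcal C$, and controlling the monodromy at the points $C_i\cap C_j$ where $\F$ is singular. This is where the type~$(1)$ hypothesis (finiteness, i.e. pluriharmonicity, of the weight $\varphi$ along $\mathcal C$) and the earlier local results are indispensable: the integrality of the residues makes the holonomy around each $C_i$ trivial, and the connectedness theorem of \cite{pa2} together with Proposition~5.1 of \cite{to} provides the neighbourhood $U$ on which $\eta_{\mathcal C}$ integrates to a univaluate $F$ with $\mbox{div}(F)=D$, so that $\mathcal O_U(D)$ is genuinely trivial. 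Granting this, $\mathcal O(D)|_{C_j}$ is trivial for each $j$ (a purely local computation around the compact curve $C_j$, incorporating both the normal bundle contribution and the intersections with the other components), which gives the sought vanishing. Finally, since $Z\not=0$ by hypothesis and both $\{D\}$ and $Z$ are pseudoeffective, the proportionality constant $\lambda$ produced by the signature argument is necessarily $\geq0$, completing the proof.
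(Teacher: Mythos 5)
There is a genuine gap, and it is essentially the one you flag yourself as ``the main obstacle''. Lemma~\ref{casetype1} sits \emph{before} Theorem~\ref{loginvariant} and Corollary~\ref{integrabilityforlogarithmic} in the logical order: it is precisely the tool that lets one replace $T$ by a current $T'$ for which $H$ becomes of type $(2)$, and only then is integrability of $\omega$, the invariance of $T$ and $\eta_T$, and the whole transverse structure established in the logarithmic setting. Your argument presupposes all of that: you invoke the semi-local logarithmic form $\eta_{\mathcal C}$ with residues equal to $1$, the univaluate first integral $F=e^{\int\eta_{\mathcal C}}$, the connectedness theorem of \cite{pa2} and Proposition~5.1 of \cite{to}, the normalisation $\{\eta_T\}=Z$ (Theorem~\ref{transmetric} is stated for the non-logarithmic case only), and the fact that the $C_i$ are leaves on which $\eta_T$ restricts to $0$ with bounded coefficients. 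None of this is available at this stage (and the last point is false anyway: in the logarithmic setting $\eta_T=\frac{i}{\pi}e^{2\varphi}\omega\wedge\overline{\omega}$ is not even locally integrable along $H$, as the paper notes explicitly). The identity $\{D\}\cdot Z\cdot\theta^{n-2}=0$, by contrast, needs no such detour: it is item b) of Lemma~\ref{alternativeforpoles}.

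Independently of the circularity, your choice of $D$ as the \emph{reduced} divisor $\sum_i C_i$ is not justified and is wrong in general. The paper's proof works with the residues $\mbox{Res}_{H_i}(\omega)$, which are sections of the flat bundles $L_i$, and shows (via pluriharmonicity of $\varphi_{|\mathcal C}$, compactness, and a Stokes computation) that their pairwise pairings $a_{ij}$ are constants satisfying $\sum_{i,j}a_{ij}\{H_i\}\{H_j\}\theta^{n-2}=0$; Cauchy--Schwarz then gives $\langle I\Lambda,\Lambda\rangle\geq 0$ for $\Lambda=(\sqrt{a_{11}},\dots,\sqrt{a_{pp}})$, and the isotropic effective divisor $D$ is extracted from the kernel of the intersection matrix by bilinear algebra. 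Its coefficients are governed by the norms $a_{ii}$, which need not be equal, so there is no reason that $\mathcal O(\sum_i C_i)$ is trivial near $\mathcal C$, nor that $\prod_i f_i$ glues to a single-valued function there. The final Hodge-index step of your proposal coincides with the paper's, but the route to an isotropic effective class supported on $\mathcal C$ — which is the actual content of the lemma — is missing.
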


\begin{proof}
By pluriharmononicity of the local  $\varphi_{|H}$, $h$ restricts on $H$ to a unitary flat metric $g$. We denote by $\nabla_{g^*}$ the $(1,0)$ part of the Chern connection associated to the dual metric $g^*$. This expresses locally as

  $$ \nabla_{g^*}=\partial\varphi_{|H}+d$$
and $\partial\varphi_{|H}$ is an {\it holomorphic} $1$ form.

Fix $j_0\in\{1,...,p\}$
 The norm $a_{j_0 j_0}={\{\mbox{Res}_{H_{j_0}}(\omega),\{\mbox{Res}_{H_{j_0}}(\omega)\}}_{g^*}$ defined on $H_{j_0}$ is locally given by 

$$ e^{2\varphi_{|H}}{|g_{j_0}|}^2$$

which is obviously {\it plurisubharmonic}. This implies that $a_{j_0 j_0}$  is indeed {\it constant}. Similarly, one can deduce that $a_{j j_0}={\{\mbox{Res}_{H_{j}}(\omega),\mbox{Res}_{H_{j_0}}(\omega)\}}_{g^*}$  is constant along every connected component of $H_{j_0}\cap H_j$.

Locally, one can write $$\omega=\omega=\omega_0+\sum_{j=1}^p g_j\frac{df_j}{f_j}
$$

where $f_{j_0}$ is the local expression of a global section of ${\mathcal O}(H_{j_0})$ vanishing on $H_{j_0}$. Consider a smooth metric on $\mathcal (H_{j_0})$ with local weight $\phi$. One can then easily check that the local forms $$\omega_\phi=\omega_0+\sum_{j\not=j_0} g_i\frac{dz_j}{z_j} +2g_{j_0}\partial \phi$$ glue together along $H_{j_0}$ in a section $\omega_{j_0}$  of $\Omega_{H_{j_0}}^1(\log D_{j_0})\otimes {\mathcal E }^\infty\otimes L_{j_0}$ where $D_{j_0}=\bigcup_{j\not={j_0}}H_{j_0}\cap H_j$  is a simple normal crossing  hypersurface on $H_{j_0}$ and ${\mathcal E}^\infty$ is the sheaf of germs of smooth functions in $H_{j_0}$.

The hermitian product 

$${\{\omega_i, \mbox{Res}_{H_{j_0}}(\omega)\}}_{g^*}= e^{2\varphi_{|H}} \overline{g_{j_0}}\wedge\omega_\phi$$ is well defined as a current on $H_{j_0}$. One may compute its differential with respect to the $\overline{\partial}$ operator. Using that 

$$\frac{1}{2i\pi}\overline{\partial}(\frac{d {z_i}}{ {z_j}})=[z_j=0]\ \mbox{( the integration current along}\ z_j=0)$$ and that the $a_{j j_0}$'s are constant, one easily gets that 

$$\frac{1}{2i\pi}\overline{\partial}({\{\omega_i, \mbox{Res}_{H_{j_0}(\omega)}\}_{g^*})}$$
represents in cohomology the intersection class

$$(\sum_j a_{j j_0}\{H_j\})\{H_{j_0}\}.$$

In particular Stokes theorem yields that $(\sum_j a_{j j_0}\{H_j\}\{H_{j_0}\}){\{\theta\})}^{n-2}=0$. Summing up on all $j_0$, we obtain the following vanishing formula

$$(\sum_{i,j} a_{i j}\{H_i\}\{{H_{j}}\}){\{\theta\}}^{n-2}=0.$$

Thanks to Cauchy-Schwartz's inequality and the fact that $\{H_i\}\{H_j\}{\{\theta\}}^{n-2}\geq 0$ if $i\not=j$, one can promptly deduce that 





$$ \langle I\Lambda,\Lambda\rangle\geq 0$$ where $I$ is the intersection matrix $\{H_i\}\{H_j\}{\{\theta\}}^{n-2}$,  $\langle\rangle$ is the standard Hermitian product on ${\C}^p$ and $\Lambda$ denotes the vector $\left(\begin{array}{c}
 \sqrt {a_{11}}\\
\vdots\\
\sqrt{a_{pp}}
\end{array}
  \right)$.

In particular, $I$ is non-negative and from elementary bilinear algebra , one can produce an effective integral divisor $D$ whose support is $\mathcal C$ and such that 

            $${\{D\}}^2{\{\theta\}}^{n-2}=0.$$
            
 Keeping in mind that we have also $Z^2{\{\theta\}}^{n-2}\geq0$ and $Z\{D\}=0$, one concludes by Hodge's signature theorem that $\{D\}$ is a multiple of $Z$.


\end{proof}

Let $\{\omega, \omega\}_{h^*}$ the norm of $\omega$ with respect to the dual metric $h^*$. This is a well defined positive (a priori non closed) positive current on $X\setminus H$ wich locally expressed as 

$$\eta=\frac{i}{\pi}e^{2\varphi}\omega\wedge\overline{\omega}$$ 

(by a slight abuse of language, we denote also by $\omega$ a local holomorphic $1$ form- the restriction of the global twisted $1$ form $\omega$ to a trivialyzing open set).

Following [De], we inherit from the dual metric $h^*$ a Chern connection  whose $(1,0)$ part $\partial_{h^*}$ acts on $\omega$ as

$$\partial_{h^*} \omega=\partial\omega  + 2\partial\varphi\wedge \omega$$

\noindent on a trivializing chart.

Note that the resulting $(2,0)$ form (well defined outside $H$) has $L_{\mbox{loc}}^1$ coefficients.

\begin{THM}\label{loginvariant}
 Let $X$ be a compact Ka\"hler manifold and $\omega$ a non trivial section of  $\Omega_X^1(\log H)\otimes L$ with $L^*$ pseudoeffectif. 
 Assume that either $Z=0$ ($T$ is then unique and equal to $\left[N\right]$), either $Z>0$ and $H$ is of type (2),
 then, outside the polar locus $H$, the following identity holds (in the sense of current):

\begin{equation}\label{vanishingchernconnection}
\partial_{h^*}\omega=0.
\end{equation}

In particular, $\omega$ is Frobenius integrable ($\omega\wedge d\omega=0$); moreover $ \eta$ and $T=\frac{i}{\pi}\partial\overline{\partial}\varphi$ are closed positive $\mathcal F$-invariant currents respectively on $X\setminus H$ and $X$, where $\F$ is the codimension $1$ foliation defined by $\omega$.
\end{THM}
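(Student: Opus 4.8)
The plan is to reduce the whole statement to the single identity (\ref{vanishingchernconnection}), since once $\partial_{h^*}\omega=0$ is known on $X\setminus H$ the remaining assertions are formal and follow exactly as in the non-logarithmic case treated in \cite{to}. On $X\setminus H$ the form $\omega$ is a genuine holomorphic $L$-valued $(1,0)$-form, so $\dbar\omega=0$ and $d\omega=\partial\omega$. Writing (\ref{vanishingchernconnection}) as $\partial\omega=-2\partial\varphi\wedge\omega$, one gets $\omega\wedge d\omega=\omega\wedge\partial\omega=-2\,\omega\wedge\partial\varphi\wedge\omega=0$, whence Frobenius integrability and the existence of $\F$. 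Applying $\dbar$ to $\partial\omega=-2\partial\varphi\wedge\omega$ and using $\dbar\partial\omega=0$ and $\dbar\omega=0$ gives $\partial\dbar\varphi\wedge\omega=0$, that is $T\wedge\omega=0$, so $T$ is $\F$-invariant (its closedness being automatic). Finally $d\eta=0$ on $X\setminus H$ is obtained by substituting $\partial\omega=-2\partial\varphi\wedge\omega$ and its conjugate into $d\bigl(\tfrac{i}{\pi}e^{2\varphi}\omega\wedge\overline\omega\bigr)$, which is precisely the computation of Lemme 1.6 of \cite{to} carried out here on $X\setminus H$. Thus everything hinges on (\ref{vanishingchernconnection}), the exact logarithmic analogue of (\ref{connectionclosed}).

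To prove (\ref{vanishingchernconnection}) I would run, by hand, the Bochner--Kodaira--Nakano integration by parts underlying Demailly's vanishing $\nabla_{h^*}\omega=0$ in \cite{de}: the curvature of the dual metric $h^*$ on $L$ is $-T\le 0$ because $L^*$ is pseudoeffective, which is the sign that drives the argument. The obstruction to quoting \cite{de} verbatim is that $X\setminus H$ is non-compact, $\omega$ has poles along $H$, and $h$ is only a singular metric. I would therefore regularize $\varphi$ by a decreasing family of smooth weights $\varphi_\nu$ with curvature bounded below by $-\varepsilon_\nu\theta$, introduce cut-off functions $\chi_\delta$ vanishing on a shrinking neighborhood of $H$, and study the global energy $\int_X \chi_\delta\,|\partial_{h^*}\omega|^2\,\theta^{\,n-2}$. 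Integration by parts rewrites its leading term as $\int_X \chi_\delta\,\langle i\Theta_{h^*}(L)\,\omega,\omega\rangle\,\theta^{\,n-2}\le 0$; combined with the nonnegativity of the energy, this forces $\partial_{h^*}\omega\equiv 0$ \emph{provided} the error terms coming from $d\chi_\delta$ and from the regularization tend to $0$.

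The genuinely new point, and the step I expect to be the main obstacle, is the control of these boundary contributions concentrated near $H$, namely the terms pairing $d\chi_\delta$ and $\partial\varphi$ against the pole of $\omega$. They are integrable and vanish as $\delta\to 0$ exactly because the hypotheses force $\eta=\tfrac{i}{\pi}e^{2\varphi}\omega\wedge\overline\omega$ to have $L^\infty_{\mathrm{loc}}$ coefficients across $H$: along a component of type $(2)$ one has $\varphi\equiv-\infty$ on $H$ with residue weights $\lambda_i\ge 1$ (as in \cite{to}), so that $e^{2\varphi}\sim\prod|z_i|^{2\lambda_i}$ precisely absorbs the $|z_i|^{-2}$ pole of $\omega\wedge\overline\omega$; and when $Z=0$ the current $T=[N]$ is the unique representative of its class while lemma \ref{alternativeforpoles} places every pole either inside $\mathrm{Supp}\,N$ or in a harmless position. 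By contrast, on a type $(1)$ component $e^{2\varphi}$ stays bounded away from $0$ while $\omega$ carries a genuine logarithmic pole, so $\eta$ fails to be integrable and the boundary terms survive — which is exactly why type $(1)$ with $Z\neq 0$ is excluded (and why, when $Z\neq0$, lemma \ref{casetype1} must be invoked to guarantee that the admissible components behave as required). Once this integrability analysis is in place, the compact Bochner argument closes and the formal consequences of the first paragraph yield the theorem.
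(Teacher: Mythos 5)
Your formal reduction of the theorem to the single identity $\partial_{h^*}\omega=0$ on $X\setminus H$ is fine and agrees with the paper, and the general Bochner/Stokes strategy is the right one. But the step you yourself identify as the main obstacle is resolved by a claim that is false: you assert that the hypotheses force $\eta=\frac{i}{\pi}e^{2\varphi}\omega\wedge\overline\omega$ to have $L^\infty_{\mathrm{loc}}$ coefficients across $H$, because $e^{2\varphi}\sim\prod|z_i|^{2\lambda_i}$ with $\lambda_i\ge 1$. That estimate holds only along components carried by $\mbox{Supp}\ N$, where $T$ has positive Lelong numbers. A type $(2)$ component of $H$ in the sense of the paper may be disjoint from $\mbox{Supp}\ N$ (case b) of lemma \ref{alternativeforpoles}); there the only information is that $\varphi\equiv-\infty$ on $H$ with \emph{zero} Lelong number, so $e^{2\varphi}$ can decay arbitrarily slowly (e.g.\ like ${(\log|z_i|)}^{-2}$) and does not absorb the $|z_i|^{-2}$ pole of $\omega\wedge\overline\omega$. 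The paper states this explicitly: $\eta$ ``failed to be integrable along the boundary divisor $H$'', and in the strict log-canonical analysis it is emphasized that $\eta$ does not a priori extend through the boundary components. With $\eta$ non-integrable near $H$, your cut-off error terms involving $d\chi_\delta$ have no reason to tend to $0$, and the argument does not close.

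The paper's device is different: instead of cutting off in space, it regularizes the logarithmic form itself, replacing $g_j\frac{df_j}{f_j}$ by $g_j\varphi_{j,\varepsilon}\frac{df_j}{f_j}$ with $\varphi_{j,\varepsilon}={|f_j|}^2/({|f_j|}^2+\varepsilon e^{2\phi_j})$, gluing by a partition of unity into a smooth section $\Omega_\varepsilon$ of $\Omega_X^1\otimes L$, and applying Stokes on the \emph{compact} manifold $X$ to $i\pi\ddbar(e^{2\varphi}\Omega_\varepsilon\wedge\overline{\Omega_\varepsilon})\wedge\theta^{n-2}$. This produces two positive terms plus one residual term $\gamma_\varepsilon=e^{2\varphi}\partial\overline{\Omega_\varepsilon}\wedge\overline{\partial\overline{\Omega_\varepsilon}}$, which is killed not by integrability of $\eta$ but by a uniform $L^1$ bound on $\partial\overline{\omega_\varepsilon^U}\wedge\overline{\partial\overline{\omega_\varepsilon^U}}$ combined with $\varphi=-\infty$ on $H$ and upper semicontinuity of $\varphi$. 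Separately, the case $Z=0$ is not amenable to this metric argument at all (components of $H$ off $\mbox{Supp}\ N$ are then of type $(1)$): the paper treats it by a Bogomolov-type residue computation, viewing $\omega$ as a section of $\Omega_X^1(\log H)(-N)\otimes E$ with $E$ numerically trivial and flat unitary, noting that the residues are flat sections so that $\nabla_E\omega$ is a \emph{holomorphic} $2$-form with values in $E$, and concluding $\nabla_E\omega=0$ by Stokes; your one-sentence dismissal of this case does not supply that argument.
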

\begin{remark}

\begin{itemize}

\noindent \item When $H=\emptyset$, this theorem has been established in \cite{de} and this statement is just a generalization to the logarithmic setting of the invariance properties of currents recalled in section \ref{rappelsinvariance}.
\item By invariance of $H$, it is enough to prove the invariance of $T$ on $X\setminus H$ to get the invariance on the whole $X$ (see \cite{to}, Remarque 2.2). 

\end{itemize}

\end{remark}

\vskip 5 pt

The proof of theorem \ref{loginvariant} is divided into two parts, each one corresponding to a particular ``numerical property'' of the positive part $Z$ and the type of the boundary hypersurface $H$. 

\subsection{The case $Z=0$}

\vskip 5 pt
 
  In this situation  $N$ is a $\mathbb Q$ effective divisor. 
  
  Assume firstly that $N$ is an integral effective divisor.


We have   

 $$\omega\in H^0(X,\Omega_X^1 (\log H)(-N)\otimes E)$$ with $E$ a numerically trivial line bundle.

 Consider the  canonical injection 

$$\iota:\Omega_X^1 (\log H)(-N)\otimes E\hookrightarrow \Omega_X^1 (\log H)\otimes E$$ where the first term is regarded as the sheaf of logarithmic $1$ forms with respect to $H$ whose zeroes divisor is contained in $N$.  
 
  Let $\nabla_E$  the unique unitary connection on $E$  and $\nabla_{E_i}$ its restriction on $H_i$. By flatness of  $E_i$, one can infer that 
  
  $$\nabla_{E_i}\mbox{Res}_{H_i}(\iota(\omega))=0$$
 
 As a by-product, and using that $\overline{\partial}$ commutes with $\nabla_E$, one obtains that 
 
       $$\xi=\nabla_E \omega\in H^0(X,\Omega_X^2\otimes E)$$


In particular, one obtains that the differential $d$ of the  well defined $(2,1)$ current  $(\nabla_E\omega)\wedge\overline{\Omega}\wedge{\theta}^{n-2}$ 
is a {\it positive} $(2,2)$ form, namely $-\xi\wedge\overline{\xi}\wedge{\theta}^{n-2}$ . By Stokes theorem, this latter is necessarily identically zero. In other words,

 \begin{equation}\label{vanishingconnection}
 \nabla_E\ \iota (\omega)=0.
 \end{equation}
 
 The proof above is the same of that presented in \cite{brbir}, p.81 (see also \cite{brme} and references therein). We have have just replaced $L$  {\it trivial} by $L$ {\it numerically trivial}.
 
For some suitable open covering $\mathcal U$, $\omega$ is defined by the data of local logarithmic one forms $\omega_U\in H^0(U, \Omega_X^1 \otimes\log H)$, $U\in \mathcal U$ with the glueing condition $f_U\omega_U=g_{UV}f_V\omega_V$, $g_{UV}\in{\mathcal O}^*(U\cap V)$ such that $|g_{UV}|=1$ and $f_U=0$ is a suitably chosen local equation  of $N$.

The vanishing property (\ref{vanishingconnection}) can be rephrased as  
 
  $$\partial {(f_U{\omega_U})}=0\ \mbox{for every}\ U.$$

which finally gives (\ref{vanishingchernconnection}), using that $T=[N]$, the integration current on $N$.

Note that $\partial_{h^*}$ expresses locally as $\partial +\frac{df_U}{f_U}$ is indeed a {\it meromorphic logarithmic flat} connection on $L$.
\vskip 5 pt






Consider now the general case where $N$ is only effective over $\mathbb Q$. By ramified covering trick and after suitable blowng-up, we obtain a k\"ahler manifold $\hat{X}$,  $\mbox{dim}\ \hat{X}=\mbox{dim}\ X$ equipped with a generically finite morphism $\psi:\hat{X}\rightarrow X$ such that $\psi^*(N)$ is an {\it integral} effective divisor and $\hat H =\psi^{-1} (H)$  a normal crossing hypersurface.  One can then assert that $\pi^*\omega\in H^0(\hat X, \Omega_{\hat X}^1 (\log\ \hat{H})\otimes\hat L)$. where $\hat L \in \mbox{Pic}\ X$ is numerically equivalent to $\pi^* (N)$.  Let $\nabla_{\hat h}$ be the logarithmic connection on $\hat L$ defined by a metric $\hat h$ whose curvature current is $-[\psi^* N]$. In other words,  $\nabla_{\hat h}$ is the pull-back by $\psi$ of the logaritmic connection $\nabla_{h^*}$. By the computation performed above, we have $\nabla_{\hat h}\psi^* \omega=0$. For obvious functoriality reasons, this can be restated as $\psi^* (\nabla_{h^*} \omega)=0$, wich eventually leads to $\nabla_{h^*} \omega=0$.\qed 

\subsection{The case $H$ of type (2)}


For $j=1,...p$,  let us fix a smooth metric $g_j$ on $\mathcal O (H_i)$ with local weight $\phi_j$. For $\varepsilon >0$, we thus obtain a smooth metric on $\mathcal O (H_j)$ with local weight ${\phi_j}_{\varepsilon} =\Log{({|f|}^2+\varepsilon e^{2\phi_j})}$.

The $(1,1)$ positive form $\eta:=\frac{i}{\pi}e^{{2\varphi}}\omega\wedge\overline{\omega}$ is  well defined as a current on the whole $X\setminus \mbox{Supp}\ H$. but failed to be integrable along the boundary divisor $H$. To cure this, take a smooth section $\Omega$ of $\Omega_X^{1,0}\otimes L$ (a sheaf in the ${\mathcal C}^\infty$ category), and consider 

   $$\Delta=\frac{i}{\pi}e^{{2\varphi}}\Omega\wedge\overline{\Omega}$$
   
\noindent which is  well defined as a positive current on the whole $X$ (making again the confusion between $\Omega$ and its writing on a trivializing chart)..
   
Let us compute $\overline{\partial}\Delta$ as a current. This latter decomposes as:

$$\pi\overline{\partial}\Delta=A + B$$

where $$A=ie^{2\varphi}\dbar\Omega\wedge\overline{\Omega}$$

and $$B=i((\overline{\partial}e^{2\varphi})\Omega\wedge\overline{\Omega}-e^{2\varphi}\Omega\wedge\dbar\overline{\Omega})$$

\noindent each of the summand being a well defined current on $X$.

\vskip 5 pt




An easy calculation yields

$$i\pi\ddbar{(\Delta)}=i\partial A-2e^{2\varphi}\Omega\wedge\overline{\Omega}\wedge \ddbar\varphi -e^{2\varphi}(2\partial\varphi\wedge \Omega +d\Omega)\wedge\overline{(2\partial\varphi\wedge \Omega+d\Omega)}$$ $$+\dbar(e^{2\varphi}\Omega\wedge\partial\overline\Omega+e^{2\varphi}{\partial\overline\Omega}\wedge\overline{\partial\overline\Omega})$$






Hence, by Stokes theorem,
\begin{equation}\label{decomposeddbar}
\langle i\pi\partial\overline{\partial}\Delta\wedge{\theta}^{n-2}, 1\rangle=0=\langle T_1\wedge{\theta}^{n-2},1\rangle +\langle T_2\wedge{\theta}^{n-2},1 \rangle +\langle\gamma\wedge{\theta}^{n-2},1\rangle
\end{equation}
where $$T_1=-2e^{2\varphi}\Omega\wedge\overline{\Omega}\wedge \ddbar\varphi$$ 

and

$$T_2=-e^{2\varphi}(2\partial\varphi\wedge \Omega+d\Omega)\wedge\overline{(2\partial\varphi\wedge \Omega+d\Omega)}$$

are both positive $(2,2)$ currents (which makes sense, according to \cite{de}) and 

$$\gamma=e^{2\varphi}{\partial\overline\Omega}\wedge\overline{\partial\overline\Omega}.$$ Actually, one performs the same computation as in \cite{de}.except that the final expression contains the residual term $\gamma$ arising from the holomorphicity defect of $\Omega$. 

\vskip 10 pt



Let  $\mathcal U$ an open cover of $X$ such that each $U\in\mathcal U$ is equipped with local coordinates 
$(z_1,..,z_n)$ in such a way that $\omega_{|U}$ can be written as 

\begin{equation}
 \omega_{|U}=\omega_0+\sum_{j=1}^p g_j\frac{df_j}{f_j}
\end{equation}

where $\{f_j=0\}$ is a local equation of $H_j$ and $f_j$ coincide with one of the coordinates $z_k$ whenever $H_j\cap U\not=\emptyset$.

For $\varepsilon>0$, set $$\omega_\varepsilon^U= \omega_0+\sum_{j=1}^p   g_j\varphi_{j,\varepsilon}\frac{df_j}{f_j}$$

where $\varphi_{j,\varepsilon}=\frac{{|f_j|}^2}{{|f_j|}^2+\varepsilon e^{2\phi_j}}$. One can remark that for each $j$, the $\varphi_{j,\varepsilon}$'s glue together on overlapping charts an thus define a global smooth function of $X$. One can see $\omega_\varepsilon^U$ as a smooth section of $\Omega_X^1\otimes L$ which approximates $\omega$ (on $U$) in the following sense

\begin{enumerate}
\item $\lim\limits_{\varepsilon\to 0}\omega_\varepsilon^U=\omega$ weakly as a current.

\item For every $K\Subset U\setminus H$, $\omega_\varepsilon^U$ converges uniformly to $\omega$ on $K$ when $\varepsilon$ goes to $0$.
\end{enumerate}

Before mentioning other properties,
we introduce the following notation:
 
 Let $T$ a $(p,q)$ current  defined on $U$ with $L_{\mbox{loc}}^1$ coefficient written as 

   $$T=\sum_{I,J}F_{I,J} dz_{I}\wedge d\overline{z_{J}}$$

\noindent in standard multiindices notation with respect to the coordinate system $(z_1,..z_n)$.

 We define  

$${\Vert T\Vert}_K=\mbox{sup}_{I,J}{\Vert F_{I,J}\Vert}_{L^1 (K)}$$  the $L^1$ norm being evaluated with respect to the Lebesgue measure $d\mu=|dz_1\wedge d\overline{z_1}....dz_n\wedge d\overline{z_n}|$. 
Now, the key point is to notice that 

\begin{equation}\label{unifbounded}
\sup_{\varepsilon >0}{\Vert\partial({\overline{\omega_\varepsilon^U}})\wedge\overline{\partial({\overline{\omega_\varepsilon^U}}})\Vert}_K<+\infty
 \end{equation}

for every $K\Subset U$.

Indeed, the choice of $\varphi_{i,\varepsilon}$ allows us to "get rid" of the non-integrable terms $\frac{dz_i}{z_i}\wedge\frac{d\overline{z_i}}{\overline{z_i}}$ with could appear after passing to the limit.

Moreover,$\partial{\overline{\omega_\varepsilon^U}}\wedge
\overline{\partial{\overline{\omega_\varepsilon}}}$ 
converges uniformly to $0$ on every $K\Subset U\setminus H$.

Let $(\psi_U)$ be a partition of unity subordinate to the open cover $\mathcal U$. Set 

$$\Omega_\varepsilon=\sum_U \psi_U \omega_\varepsilon^U.$$

Note that this defines a smooth section of $\Omega_X^1 \otimes {L}$ whose restriction to $U$ share the same properties than ${\omega_\varepsilon}^U$. 


In particular, we get
\begin{equation}
\lim\limits_{\varepsilon\to 0}e^{2\varphi}\partial{\overline{\Omega_\varepsilon}}\wedge\overline{\partial{\overline{\Omega_\varepsilon}}}=0\ \mbox{in the weak sense}
\end{equation}

Actually this is obvious on $K\Subset X$ (where the convergence is even uniform) and this extends to the whole $X$, thanks to  (\ref{unifbounded}) and the fact that $\varphi=-\infty$ on $U\cap H$ and is upper-semicontinuous (as a plurisubharmonic function).

In order to conclude, replace $\Omega$ by $\Omega_\varepsilon$ in (\ref{decomposeddbar} ) and $\gamma,\  T_1,\  T_2$ by the corresponding $\gamma_\varepsilon,\  {T_1}_\varepsilon,\  {T_2}_\varepsilon$, keeping in mind that $\lim\limits_{\varepsilon\to 0}\gamma_\varepsilon=0$, and that $T_{1\varepsilon}$ and $T_{2\varepsilon}$ are positive current wich converges on $X\setminus H$ respectively to 

$$S_1= -2e^{2\varphi}\omega\wedge\overline{\omega}\wedge \ddbar\varphi$$

and

$$S_2=-e^{2\varphi}(2\partial\varphi\wedge \omega+d\omega)\wedge\overline{(2\partial\varphi\wedge \omega+d\omega)}$$

Passing to the limit when $\varepsilon$ goes to $0$, one can infer that $S_1=S_2=0$, whence the result.\qed

\begin{cor}\label{integrabilityforlogarithmic}
  Let $X$ a compact K\"ahler manifold and $\omega\in H^0(\Omega_X^1(\log H)\otimes L)$ a non trivial twisted logaritmic $1$ form with poles on a simple normal crossing hypersurface $H$.  Assume that $L^*$ is pseudo-effective. Then $\omega$ is integrable and there exists an $\F$ invariant positive current representing $L^*$, where $\F$ is the foliation defined by $\omega$. In particular, the integration current $[N]$ is $\F$ invariant, where $N$ is the negative part in the Zariski's divisorial decomposition of $c_1(L^*)$.
  \end{cor}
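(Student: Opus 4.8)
The plan is to reduce everything to Theorem \ref{loginvariant}, by choosing the representative current so that the boundary divisor $H$ is globally of type $(2)$ (the case $Z=0$ being already covered by the theorem). First I would fix a current $T$ with minimal singularities representing $c_1(L^*)$ and write its divisorial Zariski decomposition $\{T\}=\{N\}+Z$. If $Z=0$, the first alternative of Theorem \ref{loginvariant} applies directly with $T=[N]$ and yields at once the integrability of $\omega$, the $\F$-invariance of $[N]$, and that $[N]$ represents $c_1(L^*)$. So assume $Z\neq 0$. With $T$ of minimal singularities, a connected component of $H$ contained in $\mbox{Supp}\ N$ is automatically of type $(2)$ (there $\varphi\equiv-\infty$), while along a non-exceptional component the potential cannot be identically $-\infty$, so such a component is of type $(1)$; thus the only obstruction to $H$ being of type $(2)$ is the presence of non-exceptional, i.e. type $(1)$, components (Lemma \ref{alternativeforpoles}). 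If there are none, $H$ is already of type $(2)$ with respect to $T$ and Theorem \ref{loginvariant} applies to $T$.

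It remains to treat the type $(1)$ components $\mathcal C_1,\dots,\mathcal C_k$ (with $k\geq 1$). For each $j$, Lemma \ref{casetype1} (here $Z\neq 0$) provides a real number $\lambda_j\geq 0$ and an integral effective divisor $D_j$ with $\mbox{Supp}\ D_j=\mathcal C_j$ and $\{D_j\}=\lambda_j Z$. Since $D_j$ is a non-trivial effective divisor on the compact K\"ahler manifold $X$, one has $\{D_j\}\cdot\{\theta\}^{n-1}>0$, hence $\{D_j\}\neq 0$ and $\lambda_j>0$. I then replace $T$ by
$$T'=[N]+\sum_{j=1}^{k}c_j\,[D_j],\qquad c_j=\frac{1}{k\,\lambda_j}>0,$$
which is closed and positive, is supported on the $\F$-invariant hypersurface $H$, and satisfies $\{T'\}=\{N\}+\bigl(\sum_j c_j\lambda_j\bigr)Z=\{N\}+Z=c_1(L^*)$. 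Its local potential is identically $-\infty$ along $\mbox{Supp}\ N\cup\bigcup_j\mathcal C_j$, that is along every component of $H$; hence $H$ is of type $(2)$ with respect to $T'$.

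I would then invoke the second alternative of Theorem \ref{loginvariant} ($Z>0$ and $H$ of type $(2)$) for the current $T'$: this gives $\partial_{h^*}\omega=0$ outside $H$, whence the integrability $\omega\wedge d\omega=0$ of $\omega$ and of the foliation $\F$ it defines, and it shows that $T'$ (on $X$) and $\eta$ (on $X\setminus H$) are closed positive $\F$-invariant currents. In particular $T'$ is an $\F$-invariant positive current representing $c_1(L^*)$, and $[N]$, one of its summands supported on the invariant hypersurface $\mbox{Supp}\ N$, is itself $\F$-invariant, completing the proof. The step I expect to be the main obstacle is precisely this reduction: one must be sure that every type $(1)$ component can be turned into a type $(2)$ one without altering the cohomology class, and the two facts that make it work are the strict positivity $\lambda_j>0$ (which allows the balancing coefficients $c_j$ to be chosen with $\sum_j c_j\lambda_j=1$) and the $\F$-invariance of the supports of the $D_j$, so that $T'$ is a legitimate input for Theorem \ref{loginvariant}.
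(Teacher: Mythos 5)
Your strategy is exactly the paper's: dispose of $Z=0$ and of the case where $H$ is already of type $(2)$ by Theorem \ref{loginvariant}, and in the remaining case use Lemma \ref{casetype1} to replace $T$ by a representative $T'$ of $c_1(L^*)$ for which $H$ becomes type $(2)$; the paper states this replacement in one line, and your balanced sum $T'=[N]+\sum_j c_j[D_j]$ with $\sum_j c_j\lambda_j=1$ is a correct way to carry it out (the positivity $\lambda_j>0$ follows as you say from $\{D_j\}\{\theta\}^{n-1}>0$ and $Z\neq 0$). The one assertion you should not lean on is that, for $T$ of minimal singularities, every non-exceptional component is automatically of type $(1)$: a psh potential can be identically $-\infty$ along a hypersurface without positive Lelong number there, so a non-exceptional component of type $(2)$ with respect to $T$ is not obviously excluded, and since your $T'$ is purely divisorial and supported only on $\mbox{Supp}\ N\cup\bigcup_j\mathcal C_j$, such a component would become type $(1)$ with respect to $T'$ and break the reduction. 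This is harmless to repair: take instead $T''=\tfrac12\bigl(T+T'\bigr)$, which still represents $c_1(L^*)$ and whose local potentials are $-\infty$ wherever those of $T$ or of $T'$ are, so that every component of $H$ (exceptional, type $(1)$, or type $(2)$ with respect to $T$) is of type $(2)$ with respect to $T''$, and Theorem \ref{loginvariant} applies to $T''$ exactly as you intended.
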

  
  \begin{proof}One can assume that $\omega$ has non trivial residues along the irreducible components of $H$. The only remaining case to deal with is $Z\not=0$ and among the connected components of $\mathcal C$, there exists at least one of type (1) with respect to a given closed positive current $T$ representing $c_1(N_\F^*)$. By making use of lemme \ref{casetype1}, one can change $T$ into another representing positive current $T'$ such that $H$ becomes type (2) with respect to $T'$.
\end{proof}

\section{Pseudoeffective logarithmic conormal bundle and invariant metrics}

Let $(X, \mathcal D)$ be a pair consisting of a compact k\"ahler manifold $X$ equipped with a codimension $1$ holomorphic distribution $\mathcal D$. We assume that there exists a $\mathcal D$-invariant hypersurface $H$ (i.e: $H$ is tangent to $\mathcal D$)  such that 

$$L=N_{\mathcal D}^*\otimes \mathcal O (H)$$ is {\it psef}. 

From the normal crossing and invariance conditions, $\mathcal D$ is defined by a twisted logarithmic one form $\omega\in H^0(X, \Omega_X^1(\log{H})\otimes L) $ with non vanishing residues along $H$ (see for instance Lemma 3.1 in \cite{brme}).

As a consequence of corollary \ref{integrabilityforlogarithmic}, $\mathcal D$ is {\bf integrable} and we will call $\F$ the corresponding foliation.

 The following definitions are borrowed from the terminology used in the study of $\log$ singular varieties, even if the setting is somewhat different.

\begin{definition}
Let $\F$ a foliation like above. Let $H=H_1\cup ....\cup H_p$ the decomposition of $H$ into irreducible components. 

$\F$  is said to be of 
\begin{enumerate}
\item {\bf KLT} type if there exists $p$ rational numbers $0 \leq {a_i} < 1$ such that $$c_1(N_{\F}^*)+\sum_i a_i\{H_i\}$$ is a {\it pseudoeffective} class.
\item {\bf strict Log-canonical} type if the the condition above is not satisfied. 
\end{enumerate}

\end{definition}

\subsection{Existence of special invariant transverse metrics for KLT foliations}

 Here, we assume that the foliation $\F$  is of KLT type. This means (see definition above) that $c_1(N_{\mathcal F}^*)+\sum_i\lambda_i \{H_i\}$ is {\it pseudoeffective} where the $\lambda_i$'s are real numbers such that $0\leq\lambda_i<1$.
 
\begin{definition}
Set $\Lambda=(\lambda_1,...,\lambda_p)$ and $H_\Lambda= \sum\lambda_i H_i$ .

$\Lambda$ will be called a {\bf KLT datum}.

\end{definition}










By definition of pseudoeffectiveness, there exists on $N_{\F}^ *$ a singular metric $h$ wich locally can be expressed as 

    $$h(x,v)={|v|}^2 e^{-2\varphi (x)+\sum 2\lambda_i\log{|fi|}}$$

where $\varphi$ is a locally defined {\it psh} function such that $$T_\Lambda=\frac{i}{\pi}\ddbar \varphi$$ is a globally defined positive current representing $c_1(N_{\mathcal F}^*)+\{D\}$, the $\{f_i=0\}$'s are local reduced equation of $H_i$ and $\omega$ is a local defining one form of $\F$. On $N_\mathcal D$, the dual metric $h^*$ can be represented by the positive $(1,1)$ form $$\eta_{T_\Lambda}=\frac{i}{\pi}e^{2\varphi-2\sum_i\lambda_i\log{|f_i|}}\omega\wedge\overline{\omega}$$ 

\noindent with locally integrable coefficients, which is unique up to multiplication by a positive constant once $T_{\Lambda}$ is has been fixed (cf \cite{to} Remarque 1.4, Remarque 1.5).

Let ${\mathcal C}_\F$ the cone of closed positive current $T_\Lambda$ such that $\{T_\Lambda\}=c_1(N_{\mathcal F}^*)+\{H_\Lambda\}$.

 \noindent We are going to establish a similar statement to that of Theorem \ref{transmetric}

\begin{thm}\label{transmetricklt}
Let $\{N_\Lambda\}+Z_\Lambda$ be the Zariski decomposition of the pseudoeffective class  $c_1(\con)+\{H_\Lambda\}$ and $T_\Lambda\in{\mathcal C}_\F$.

Then, both $T_\Lambda$ and $\eta_{T_\Lambda}$ are {\bf closed $\F$ invariant positive currents}. Moreover, $T_\Lambda$ can be chosen in such a way that  
\begin{enumerate}

\item $T_\Lambda=[N_\Lambda]$ if $Z_\Lambda=0$ ({\bf euclidean type}).
\item $T_{\Lambda}=[N]+{\eta_{T_\Lambda}}$ if $Z_\Lambda\not=0$ ({\bf hyperbolic type})
($\eta_{T_\Lambda}$ being normalized such that $\{\eta_{T_\Lambda}\}=Z_\Lambda$).
\end{enumerate}

\end{thm}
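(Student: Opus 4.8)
The statement is the KLT counterpart of Theorem \ref{transmetric}, so the plan is to transpose the strategy of \cite{to} to the fractional boundary setting, the genuinely new input being the invariance machinery of Theorem \ref{loginvariant}. First I would prove that for every $T_\Lambda\in{\mathcal C}_\F$ the currents $T_\Lambda$ and $\eta_{T_\Lambda}$ are closed and $\F$-invariant. The heart of the matter is the vanishing $\partial_{h^*}\omega=0$ of the Chern connection attached to the twisted metric $h$ on $\con$. I would obtain it by adapting the regularization argument in the proof of Theorem \ref{loginvariant} (case ``$H$ of type $(2)$''): approximate $\omega$ by the smooth forms $\Omega_\varepsilon$ built from the cut-offs $\varphi_{j,\varepsilon}$ and run the Stokes computation. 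The decisive point is that the KLT hypothesis $0\leq\lambda_i<1$ makes the weight $e^{2\varphi-2\sum_i\lambda_i\log|f_i|}$ locally integrable, so that $\eta_{T_\Lambda}$ and the auxiliary positive currents $S_1,S_2$ are genuine currents putting no mass on $H$; the Stokes argument then forces $S_1=S_2=0$, whence $\partial_{h^*}\omega=0$. Alternatively, since the $\lambda_i$ are rational one may pass to a ramified cover $\psi:\hat X\to X$ on which $H_\Lambda$ becomes integral and invoke Corollary \ref{integrabilityforlogarithmic} directly, descending the vanishing by functoriality exactly as in the $Z=0$ case of Theorem \ref{loginvariant}. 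Integrability of $\omega$, closedness of $\eta_{T_\Lambda}$, and the relation $T_\Lambda\wedge\omega=0$ all follow at once.

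With invariance secured I would analyze the divisorial Zariski decomposition $\{T_\Lambda\}=\{N_\Lambda\}+Z_\Lambda$ of $c_1(\con)+\{H_\Lambda\}$ as in \cite{to}. The form $\eta_{T_\Lambda}$ is $\F$-invariant with $L^1_{\mathrm{loc}}$ (hence residual) coefficients and represents a modified nef class, so by the intersection results of Section 2 (Propositions \ref{potconst} and \ref{intersectionresidual}, via the elementarity of the singularities) the positive part $Z_\Lambda$ is a non-negative multiple of $\{\eta_{T_\Lambda}\}$, while $N_\Lambda$ is an $\R$-effective $\F$-invariant divisor with exceptional support. This already dictates the dichotomy $Z_\Lambda=0$ (euclidean) versus $Z_\Lambda\neq0$ (hyperbolic), and in the latter case I would normalize $\eta_{T_\Lambda}$ so that $\{\eta_{T_\Lambda}\}=Z_\Lambda$.

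It then remains to single out the distinguished $T_\Lambda$ with minimal singularities realizing the asserted equalities, and this is where the main work lies. When $Z_\Lambda=0$ the class $\{N_\Lambda\}$ is that of an $\R$-effective divisor with exceptional support, which admits a unique closed positive representative, namely $[N_\Lambda]$; hence $T_\Lambda=[N_\Lambda]$. The hyperbolic case $Z_\Lambda\neq0$ is the main obstacle: since $\eta_{T_\Lambda}$ depends on $T_\Lambda$ through its potential, the identity $T_\Lambda=[N_\Lambda]+\eta_{T_\Lambda}$ is a genuine fixed-point equation. Following the scheme indicated in the introduction I would construct a continuous self-map of a suitable weakly compact convex subset of ${\mathcal C}_\F$ sending a representative to $[N_\Lambda]+\eta_{T_\Lambda}$, and apply the Schauder--Tychonoff theorem to produce a fixed point. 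The delicate points, and where the fractional boundary must be handled with care, are the weak continuity of $T_\Lambda\mapsto\eta_{T_\Lambda}$, the compactness of the ambient set of currents, and the verification that the fixed point has minimal singularities and agrees with $[N_\Lambda]$ off the positive part. These are precisely the estimates carried out in \cite{to} for Theorem \ref{transmetric}, and the role of the KLT condition $\lambda_i<1$ is exactly to keep $\eta_{T_\Lambda}$ integrable throughout, so that the argument transposes without essential change.
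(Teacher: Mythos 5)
Your overall architecture matches the paper's: invariance first, then the identification of $Z_\Lambda$ with a multiple of $\{\eta_{T_\Lambda}\}$ via elementarity of the singularities, Proposition \ref{intersectionresidual} and the Hodge signature theorem, and finally the Leray--Schauder--Tychonoff fixed-point argument for $T_\Lambda=[N_\Lambda]+\eta_{T_\Lambda}$, with the continuity of $T\mapsto\eta_{T+[N_\Lambda]}$ obtained from $L^1_{\mathrm{loc}}$ convergence of potentials and dominated convergence, the condition $\lambda_i<1$ guaranteeing integrability of $\prod_i|f_i|^{-2\lambda_i}$. That part of your proposal is essentially the paper's proof.

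The gap is in your primary route to the invariance statement. You propose to rerun the regularization of Theorem \ref{loginvariant} with the KLT weight $e^{2\varphi-2\sum_i\lambda_i\log|f_i|}$ and claim that the decisive point is its local integrability. But in the proof of the type $(2)$ case the local integrability of the weight is not what closes the Stokes argument: the residual term $\gamma_\varepsilon=e^{2\varphi}\partial\overline{\Omega_\varepsilon}\wedge\overline{\partial\overline{\Omega_\varepsilon}}$ concentrates along $H$ as $\varepsilon\to0$, and it is killed precisely because the weight function is $-\infty$ on $H$ (that is the type $(2)$ hypothesis), so that $e^{2\varphi}\to0$ there. The KLT weight $e^{2\varphi}/\prod_i|f_i|^{2\lambda_i}$ does the opposite: it blows up along $H$ whenever $\varphi|_{H_i}\not\equiv-\infty$, so the limit $\gamma_\varepsilon\to0$ is not available and $S_1=S_2=0$ does not follow. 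The paper's fix, which you are missing, is to apply Theorem \ref{loginvariant} not to $T_\Lambda$ but to the auxiliary current $S=T_\Lambda+\sum_i(1-\lambda_i)[H_i]$, which represents $c_1(\con)+\{H\}$ with the full reduced boundary; since $1-\lambda_i>0$, its local potentials $\varphi+\sum_i(1-\lambda_i)\log|f_i|$ are identically $-\infty$ on $H$, so $H$ is automatically of type $(2)$ with respect to $S$ and the theorem applies verbatim, yielding $\bigl(2\partial\varphi-2\sum_i\lambda_i\frac{df_i}{f_i}\bigr)\wedge\omega=-d\omega$ and hence the invariance of every $T_\Lambda\in{\mathcal C}_\F$ and of $\eta_{T_\Lambda}$. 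Your fallback via a ramified cover making $H_\Lambda$ integral is in the right spirit but is not developed (one must say which cover, how the boundary data pull back, and why the descent gives invariance of an arbitrary $T_\Lambda$ rather than of a single distinguished representative), and it is not the paper's argument.
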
 


\begin{proof}
Let $T_\Lambda$ in ${\mathcal C}_\F$. The closed positive current $S= T_\Lambda+\sum_i (1-\lambda_i )[H_i]$ represents $c_1( \con )+\{H\}$.  As a by-product of theorem \ref{loginvariant}, we obtain that

  $$(2\partial\varphi-2\sum\lambda_i\frac{df_i}{f_i})\wedge\omega=-d\omega=0$$
  
 \noindent which implies the closedness and $\F$ invariance of $T_\Lambda$ and $\eta_{T_\Lambda}$. 
  
  The remainder follows the same line of argumentation as in \cite {to} (which corresponds to $\Lambda=0$). From $\F$-invariance of $T_\Lambda$, one can deduce that $[N_\Lambda]$ is also ${\F}$- invariant and that the same holds for any closed positive current representing $Z_\Lambda$.  Note that the theorem is proved when $Z_\Lambda=0$.

  We investigate now the nature of the singularities of the foliations.
  Outside $H$, $\eta_T$ has a local expression of the form $ie^{\Psi}\omega\wedge\overline{\omega}$ with $\Psi$ {\it psh} and thus admits local {\it elementary first integrals} (Th\'eor\`eme 2 of \cite{to}). Near a point on $H$, the same holds replacing $\eta_T$ by $\pi^*\eta_T$  where $\pi$ is a suitable local branched covering over $H$ ( we use that $\lambda_i<1$) and one can then again conclude that $\F$ admits a local elementary first integral ( this property is clearly invariant under ramified covering).
  
  By proposition \ref{intersectionresidual}, $\{\eta_{T_\Lambda}\}\{W\}=0$ for any closed $\F$ invariant positive current $W$. In particular, this holds whenever $W=\eta_{T_\lambda}$ or $\{W\}=Z_\Lambda$. By Hodge's signature theorem, one can then infer that $Z_\Lambda$ is a multiple of $\{\eta_{T_\Lambda}\}$ and after normalization, that $\{\eta_{T_\Lambda}\}=Z_{\Lambda}$ whenever $Z_\Lambda \not=0$. In this case, we have 

       $$\{T_\Lambda\}=\{N_\Lambda\}+\{\eta_{T_{\Lambda}}\}\label{cohomologicallevel}.$$

From now on, we assume that $Z_{\Lambda}\not=0$. By the normalization process above, $\eta_T$ is then uniquely determined by the datum of $T_{\Lambda}\in{\mathcal C}_\F$.

Let ${\mathcal C}_{Z_\Lambda}$ the cone of closed positive currents $T$ such that $\{T\}=Z_{\Lambda}$.

By Banach-Alaoglu's theorem, ${\mathcal C}_{Z_{\Lambda}}$ is compact with respect to the weak topology on current.

 From \ref{cohomologicallevel}, one inherits a map

   $$\beta:{\mathcal C}_{Z_{\Lambda}}\rightarrow{\mathcal C}_{{Z_{\Lambda}}}$$

\noindent defined by $\beta (T)=\eta_{T+[N_\Lambda]}$.
Remark that the point $(2)$ ot the theorem is equivalent to the existence of a fix point for $\beta$. This fix point will be produced by the Leray-Schauder-Tychonoff's theorem once we have proved that $\beta$ is {\it continuous}. This can be done following the same approach than \cite{to}(D\'emonstration du lemme 3.1, p.371). We briefly recall the idea (see {\it  loc.cit} for details): Let $(T_n)\in  
{{\mathcal C}_{Z_\Lambda}}^{\mathbb N}$ converging to $T$ such that the sequence $\beta(T_n)$ is convergent. On has to check that 
$$\lim\limits_{n\to +\infty} \beta(T_n)=\beta(T).$$
 Locally, one can write $T_n=\frac{i}{\pi}\ddbar\varphi_n$ and $\beta(T_n)=\frac{i}{\pi}e^{2\varphi_n}\frac{\omega\wedge\overline{\omega}}{\prod_i{|f_i|}^{2\lambda_i}}$. By plurisubharmonicity, one can suppose, up extracting a subsequence, that $(\varphi_n)$ is uniformly bounded from above on compact sets and converges in $L_{loc}^1$ to a {\it psh} function $\varphi$ wich necessarily satisfies

$$\frac{i}{\pi}\ddbar\varphi=T.$$

By Lebesgue's dominated convergence, one obtains that $e^{2\varphi_n}\frac{\omega\wedge\overline{\omega}}{\prod_i{|f_i|}^{2\lambda_i}}$ converges in $L_{ loc}^1$ to $e^{2\varphi}\frac{\omega\wedge\overline{\omega}}{\prod_i{|f_i|}^{2\lambda_i}}$. This obviously implies that

$$\lim\limits_{n\to +\infty}\beta(T_n)=\beta(T)$$.
\end{proof}

The main informations provided by the proof above are:

\begin{itemize}

\item The $\F$-invariant positive current $\eta_{T_\Lambda}$ represents the positive part $Z_\Lambda$ when this latter is non $0$.

\item For every $\F$-invariant positive current $S$, one has $\{\eta_{T_\Lambda}\}\{S\}=0$, in particular ${\{\eta_{T_\Lambda}\}}^2=0.$

\end{itemize}

This is enought to get an analogous statement to that of \cite{to}, Proposition 2.14 and Corollaire 2.15.

 \begin{prop}\label{Z(alpha)nef}Let $\F$ a KLT foliation with KLT datum $\Lambda=(\lambda_1,...,\lambda_p)$. Let $\Xi$ a $(1,1)$ positive current $\mathcal F$-invariant (for instance, $\Xi=T_\Lambda$). Let's consider the Zariski's decomposition.  
 
$$  \alpha =\{N(\alpha)\}+Z(\alpha)$$

with $\alpha=\{\Xi\}$ (e.g: $\alpha=c_1(N_{\mathcal F}^*)+ \{H_{\Lambda}\}).$

Then the following properties hold:
\begin{enumerate}[a)]
\item  the components  $D_i$ of the negative part $N(\alpha)$ are hypersurfaces invariant by the foliation;  in particular,  $Z(\alpha)$ can be represented by an $\F$ invariant closed positive current.

\item  $Z(\alpha)$ is a multiple of $\{\eta_T\}$.

\item  \label{square=0} $Z(\alpha)$ is {\it nef} 
 and ${Z(\alpha)}^2=0$.

\item  The decomposition is orthogonal: $\{N(\alpha)\} Z(\alpha)=0$. More precisely, for every component $D_i$ of $N(\alpha)$, one has $ \{D_i\} Z(\alpha)=0$.
\item   In $H^{1,1}(M,\R)$, the  $\R$ vector space spanned by the components $\{D_i\}$ of $\{N(\alpha)\}$ intersects the real line spanned by $\{\eta_T\}$ only at the origin.
\item  The decomosition is rational if $X$ is projective and $\alpha$ is a rational class (e.g: $\alpha=c_1(N_{\mathcal F}^*)+\{H_\Lambda\} $).
\item  Every  $(1,1)$ closed positive current wich represents $\alpha$ is necessarily $\mathcal F$-invariant.
\item \label{familleexcep} let $A$ be a hypersurface invariant by the foliation and $A_1,...,A_r$ its irreducible components; the family $\{A_1,...,A_r\}$ is exceptional if and only if the matrix $(m_{ij})=(\{A_i\}\{A_j\}{\{\theta\}}^{n-2})$ is negative, where $\theta$ is a K\" ahler form on $X$.
 \end{enumerate}

\end{prop}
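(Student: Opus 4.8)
The plan is to follow closely the scheme of \cite{to} (Proposition 2.14 and Corollaire 2.15), feeding in the two facts isolated just before the statement: the class $\{\eta_{T_\Lambda}\}$ satisfies $\{\eta_{T_\Lambda}\}\{S\}=0$ for every $\F$-invariant positive current $S$, and in particular ${\{\eta_{T_\Lambda}\}}^2=0$. Write $q(x,y)=x\cdot y\cdot{\{\theta\}}^{n-2}$ for the intersection form on $H^{1,1}(X,\R)$, which by the Hodge--Riemann relations has signature $(1,h^{1,1}-1)$. I would first prove (a), which is the geometric heart; once it is in hand, items (b)--(f) and (h) reduce to Hodge-index theory and linear algebra on the hyperplane $\{\eta_{T_\Lambda}\}^\perp$, while (g) requires a separate wedge-product argument.

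For (a): by Theorem \ref{transmetricklt} the class $\alpha$ is represented by the $\F$-invariant current $T_\Lambda=[N_\Lambda]+\eta_{T_\Lambda}$, so $N(\alpha)=N_\Lambda$ is the divisorial part of an $\F$-invariant closed positive current. The key local statement is that such a current cannot carry positive generic Lelong number along a \emph{non-invariant} hypersurface. Indeed, near a regular transverse point write $\omega=dz$ with leaves $\{z=\mathrm{const}\}$; directedness $T\wedge\omega=0$ forces $T=f\,\tfrac{i}{\pi}dz\wedge d\bar z$ with $f\geq0$, and closedness forces $f=f(z,\bar z)$, so that $T$ is the pull-back of a positive measure in the transverse variable. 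Its Lelong numbers are then concentrated on leaves $\{z=z_0\}$ and vanish along any hypersurface transverse to $\F$. Hence every component $D_i$ of $N(\alpha)$ is $\F$-invariant, the $[D_i]$ are $\F$-invariant currents, and $Z(\alpha)$ is represented by the $\F$-invariant current $\eta_{T_\Lambda}$ (equivalently by $\Xi-\sum_i\nu_i[D_i]\geq 0$, positivity holding because $\nu_{D_i}(\Xi)\geq\nu_i$).

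Granting (a), the numerical items follow formally. Since $[D_i]$ and $Z(\alpha)$ are represented by $\F$-invariant currents, the input facts give $\{D_i\}\cdot Z(\alpha)=0$ and $q(\{\eta_{T_\Lambda}\},\{\eta_{T_\Lambda}\})=0$, which is (d). For (b)--(c): $Z(\alpha)$ is modified nef (it is the positive part of a Zariski decomposition), whence $q(Z(\alpha),Z(\alpha))\geq 0$; on the other hand $q(Z(\alpha),\{\eta_{T_\Lambda}\})=0$ and $\{\eta_{T_\Lambda}\}$ is a nonzero null vector, so in signature $(1,\ast)$ any class orthogonal to it has $q\leq 0$; thus $q(Z(\alpha),Z(\alpha))=0$, and the equality case of the Hodge index theorem (the radical of $q$ on $\{\eta_{T_\Lambda}\}^\perp$ is $\R\{\eta_{T_\Lambda}\}$) forces $Z(\alpha)=\lambda\{\eta_{T_\Lambda}\}$ with $\lambda\geq 0$ and ${Z(\alpha)}^2=0$; nefness of $Z(\alpha)$ comes from nefness of $\{\eta_{T_\Lambda}\}$. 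For (h): the classes of $\F$-invariant hypersurfaces all lie in $\{\eta_{T_\Lambda}\}^\perp$, on which $q$ is negative semi-definite with kernel $\R\{\eta_{T_\Lambda}\}$, so $(m_{ij})$ is automatically $\leq 0$ and is negative definite exactly when the span avoids the kernel, which by Boucksom's criterion (\cite{bo}) is equivalent to exceptionality. Item (e) is then the special case $\sum c_i\{D_i\}=t\{\eta_{T_\Lambda}\}$: the left side lies in the span of an exceptional family, on which $q$ is negative definite, so its vanishing self-intersection (the right side being null) forces it to be $0$, whence $t=0$ and all $c_i=0$. Finally (f): orthogonality $q(\{D_i\},Z(\alpha))=0$ reads as the linear system $\sum_j\nu_j\,q(\{D_i\},\{D_j\})=q(\{D_i\},\alpha)$ with a negative-definite, hence invertible, matrix; choosing $\{\theta\}$ rational and ample ($X$ projective) makes all entries and the right-hand side rational when $\alpha$ is rational, so the $\nu_j$ and the whole decomposition are rational.

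The point needing a genuinely different argument is (g), and I expect it to be the main obstacle to make fully rigorous. Because $\eta_{T_\Lambda}$ has $L^\infty_{\mathrm{loc}}$ coefficients, its local psh potentials are continuous, so for any closed positive $S$ with $\{S\}=\alpha$ the Bedford--Taylor product $S\wedge\eta_{T_\Lambda}$ is a well-defined closed positive $(2,2)$-current whose class is $\{S\}\{\eta_{T_\Lambda}\}=\alpha\cdot\{\eta_{T_\Lambda}\}=0$ (using $\{N(\alpha)\}\{\eta_{T_\Lambda}\}=0$ and ${\{\eta_{T_\Lambda}\}}^2=0$). A closed positive current of mass $\int_X S\wedge\eta_{T_\Lambda}\wedge{\theta}^{n-2}=0$ must vanish, so $S\wedge\eta_{T_\Lambda}=0$. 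Locally $\eta_{T_\Lambda}=\tfrac{i}{\pi}e^{2\varphi-2\sum\lambda_i\log|f_i|}\,\omega\wedge\overline\omega$ with positive weight, whence $S\wedge\omega\wedge\overline\omega=0$; writing $\omega=dz_1$ at a transverse regular point and using positive semi-definiteness of the coefficient matrix of $S$ (a zero diagonal entry kills its whole row), one deduces $S\wedge\omega=0$, i.e. $S$ is $\F$-invariant. When $Z_\Lambda=0$ this is immediate since $[N_\Lambda]$ is then the unique positive current in $\alpha$. The delicate aspects I anticipate are the justification of the Bedford--Taylor product and the identification of its class with the cup product in this singular logarithmic setting, together with making the concentration-of-mass step for (a) rigorous across $\mathrm{Sing}\,\F$ and $\mathrm{Supp}\,N$ via the elementary-singularity structure of Proposition \ref{intersectionresidual}.
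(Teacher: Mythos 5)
Your overall scheme is the one the paper itself relies on (it defers to \cite{to}, Proposition 2.14 and Corollaire 2.15, plus the Hodge signature theorem), and your treatment of items a)--f) and h) is sound: the Lelong-number/directedness argument for a), and the Lorentzian linear algebra on the isotropic hyperplane ${\{\eta_{T_\Lambda}\}}^\perp$ for the numerical items, are exactly what is needed.

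The genuine gap is in your argument for g), and it is precisely where you deviate from the paper's toolkit. Two steps of the Bedford--Taylor route do not go through as written. First, the local potentials of $\eta_{T_\Lambda}$ are not known to be locally bounded near $H_\Lambda$: the density of $\eta_{T_\Lambda}$ blows up like ${\prod_i|f_i|}^{-2\lambda_i}$, which is only $L^p$ for some $p>1$, and $W^{2,p}$-regularity of the potential gives continuity only when $p$ exceeds the dimension; so neither the product $S\wedge\eta_{T_\Lambda}$ nor the identification of its class with the cup product is justified. Second, even granting the product, the implication $S\wedge e^{2\varphi-2\sum\lambda_i\log|f_i|}\omega\wedge\overline{\omega}=0\Rightarrow S\wedge\omega\wedge\overline{\omega}=0$ only holds where $e^{2\varphi}>0$; a positive current $S$ in the class may a priori charge the pluripolar set $\{\varphi=-\infty\}$ (for instance through a divisorial component along a non-invariant hypersurface contained in it), and your argument does not exclude this.

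The intended proof of g) avoids intersection theory entirely and is already in the paper. Any closed positive current $\Xi$ with $\{\Xi\}=c_1(N_\F^*)+\{H_\Lambda\}$ is the curvature current of a singular metric with psh weights on the logarithmic conormal bundle, and the proof of Theorem \ref{transmetricklt} applies Theorem \ref{loginvariant} to the associated current $\Xi+\sum_i(1-\lambda_i)[H_i]$ representing $c_1(N_\F^*\otimes\mathcal O(H))$ (whose potentials are automatically $-\infty$ on $H$, so $H$ is of type (2)); the resulting identity $\partial_{h^*}\omega=0$ gives $\Xi\wedge\omega=0$ directly, for \emph{every} such $\Xi$, not just a preferred one. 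For a general invariant class $\alpha$, one first subtracts $\sum_i\nu_i[D_i]$ (legitimate by the Lelong-number bound defining the negative part, and the $[D_i]$ are invariant by a)), then uses b) to rescale the residual part into the class $c_1(N_\F^*)+\{H_\Lambda\}$ after adding back $[N_\Lambda]$, and applies the same vanishing theorem. You should replace your Bedford--Taylor paragraph by this reduction.
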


\begin{proof}
This is essentially a consequence of the Hodge's signature theorem (see,\cite{to} for the details).
\end{proof}

In this setting one can introduce, exactly on the same way as  \cite{to} D\'efinition 5.2 (see also section \ref{psefcon}), the locally constant sheaves ${\mathcal I}^\varepsilon$ and ${\mathcal I }_{d\log}^\varepsilon$ attached to the transverse metric structure given by theorem \ref{transmetricklt}. Strictly speaking, those sheaves are only defined on the complement of $H\cup\mbox{Supp}\ N_\Lambda$.

Let $\mathcal P$ the set of prime divisor of $X$. To each $P\in\mathcal P$, one can associate non negative numbers $\lambda_D$ , $\nu_D$ defined by the decompositions

      $$N_\Lambda= \sum_{D\in\mathcal P}\nu_D D. $$
      $$H_\Lambda= \sum_{D\in\mathcal P}\lambda_D D. $$

Following again exactly the same lines of argumentation of \cite{to} p.381-384, one can establish the following proposition similar to proposition 5.1 and proposition 5.2 of {\it loc.cit}:

\begin{prop}\label{logformnearK} Let $K_1,...,K_r$ be a finite family of irreducible hypersurfaces invariant by $\F$. Set $K=K_1\cup K_2...\cup K_r$.  Then there exist on a sufficiently small connected neighborhood of $K\cup \mbox{Sing}\  (\mathcal F)$ a uniquely defined section  $\eta_K$ of $\ladm$ such that the polar locus ${(\eta_K)}_\infty$ contains 
$K\cup \mbox{Sing}\  (\mathcal F)$  with the following additional properties:

 \begin{enumerate}
 \item For every $i=1,...,r$, the residue of $\eta_K$ along $K_i$ is equal to $1+\nu_{K_i}-\lambda_{K_i}$ and equal to $1$ on the other components of ${(\eta_K)}_\infty$.  In particular the residues of $\eta_K$ are {\bf non-negative} real numbers.
 
 \item If $K$ is connected, $\{K_1,...., K_r\}$ is an exceptional family if and only if the germ of  ${(\eta_K)}_\infty$ along $K$ does not coincide with $K$. 
 \end{enumerate}
\end{prop}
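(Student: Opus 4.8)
The plan is to reproduce, in the twisted logarithmic and KLT framework, the argument of Propositions 5.1 and 5.2 of \cite{to}, the transverse metric now being the one supplied by Theorem \ref{transmetricklt}. Write $T_\Lambda=\frac{i}{\pi}\ddbar\varphi$ for the canonical invariant current and $\eta_{T_\Lambda}$ for the associated transverse metric. Away from $K\cup\mbox{Sing}\,\F$ the foliation $\F$ is transversely hyperbolic (if $\varepsilon=1$) or euclidean (if $\varepsilon=0$), and its distinguished first integrals are honest sections of $\Iadm$ valued in $U^\varepsilon$. First I would work near a generic smooth point of a single component $K_i$ at which $\F$ is regular: there $K_i$ is a leaf on which neighbouring leaves accumulate, the local holonomy around $K_i$ is an isometry of $U^\varepsilon$ fixing the boundary point towards which the leaves accumulate (a parabolic element in the hyperbolic case), and the associated multivalued elementary first integral has a logarithmic derivative which is the candidate for $\eta_K$. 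Propagating and gluing these local data along $K$, exactly as in \emph{loc.cit.}, produces the desired global section of $\ladm$, which is unique because it is entirely determined by the canonical transverse structure of Theorem \ref{transmetricklt}.

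The residue along $K_i$ is read off by comparing $\eta_{T_\Lambda}$ with the standard Poincar\'e cusp metric (resp. the flat model). Since $T_\Lambda=[N_\Lambda]+\eta_{T_\Lambda}$ in the hyperbolic case, its potential decomposes near a smooth point of $K_i=\{z=0\}$ as $\varphi=\nu_{K_i}\log|z|+\psi$, with $\psi$ a potential of $\eta_{T_\Lambda}$; substituting into $\eta_{T_\Lambda}=\frac{i}{\pi}e^{2\varphi-2\sum_j\lambda_j\log|f_j|}\omega\wedge\overline\omega$ isolates a factor $|z|^{2(\nu_{K_i}-\lambda_{K_i})}$. Matching the resulting metric to the cusp model pins down the order of vanishing of the elementary first integral $e^{\eta_K}$, which turns out to be $1+\nu_{K_i}-\lambda_{K_i}$, so that $\eta_K$ carries exactly this residue along $K_i$; on the remaining polar components, which are separatrices through $\mbox{Sing}\,\F$ meeting neither $N_\Lambda$ nor $H$, the same computation returns residue $1$. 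Non-negativity is then immediate from $\nu_{K_i}\geq0$ and $0\leq\lambda_{K_i}<1$ (these in fact force every residue to be strictly positive). The singular points of $\F$ and the crossings $K_i\cap K_j$ are treated through the local normal form of $e^{\eta_K}$, reproducing the bookkeeping of \cite{to}.

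For the second item I would relate the polar locus of $\eta_K$ to the position of $K$ in the divisorial Zariski decomposition, following Proposition 5.2 of \cite{to}. Suppose first that the germ of $(\eta_K)_\infty$ along $K$ equals $K$; then $e^{\eta_K}$ is a multivalued first integral whose polar locus reduces to $K$, so the transverse metric does not degenerate maximally there, and Lemma \ref{casetype1} furnishes a nonzero effective divisor $D$ with $\mbox{Supp}\,D=K$ and $\{D\}=\lambda Z_\Lambda$ a positive multiple of the nef class $Z_\Lambda$. Since $Z_\Lambda^2\{\theta\}^{n-2}=0$, the self-intersection $\{D\}^2\{\theta\}^{n-2}$ vanishes; were $\{K_1,\dots,K_r\}$ exceptional, item \ref{familleexcep} of Proposition \ref{Z(alpha)nef} would force this self-intersection to be strictly negative, a contradiction, so the family is not exceptional. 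Conversely, if the family is exceptional the metric degenerates maximally along $K$, and I would invoke the connectedness result (Theorem B) of \cite{pa2}, as in Section \ref{fibers connectedness}, to exhibit a local separatrix meeting $K$ but not contained in it (the non-emptiness of the set $S^1$ of \cite{to}, Propositions 5.1 and 5.2); hence $(\eta_K)_\infty$ strictly contains $K$. The two implications together yield the stated equivalence.

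The principal obstacle is the local residue computation of the first item in the presence of the twisting factor $\prod_j|f_j|^{-2\lambda_j}$: one must verify that it shifts the residue by precisely $-\lambda_{K_i}$ while preserving both the parabolic nature of the holonomy and the integrability of $\eta_{T_\Lambda}$ across $K$ (this is exactly where the KLT inequality $\lambda_{K_i}<1$ enters), and that the local models glue coherently through $\mbox{Sing}\,\F$. In the second item the delicate direction is the exceptional one, where translating negative-definiteness of the intersection matrix into the geometric appearance of extra separatrices rests essentially on the connectedness theorem of \cite{pa2}.
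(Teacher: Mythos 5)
Your overall strategy --- adapting Propositions 5.1 and 5.2 of \cite{to} to the transverse metric supplied by Theorem \ref{transmetricklt} --- is exactly what the paper intends (it gives no independent proof, only the reference to \cite{to}, p.~381--384), and your residue computation $1+\nu_{K_i}-\lambda_{K_i}$ is the right one. But there is a genuine logical gap in item (2): the statement is an equivalence, and you only ever prove one of its two implications. Your first paragraph shows ``$(\eta_K)_\infty=K$ along $K$ implies not exceptional''; the paragraph beginning ``Conversely'' then shows ``exceptional implies $(\eta_K)_\infty\neq K$'', which is the contrapositive of the same implication, not its converse. The missing direction is: if $(\eta_K)_\infty$ has extra polar branches along $K$, then $\{K_1,\dots,K_r\}$ \emph{is} exceptional. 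This is the substantive half (it is what Corollary \ref{boundedfamily} and Proposition \ref{boundedfamilyKLT} actually use), and it needs an argument: from $\frac{1}{2i\pi}\overline{\partial}\eta_K=\sum_j\lambda_j[D_j]$ (sum over \emph{all} polar branches) one gets $\sum_i\lambda_i\{K_i\}\{K_j\}\{\theta\}^{n-2}=-\sum_{e}\lambda_e\{E_e\}\{K_j\}\{\theta\}^{n-2}\le 0$ for every $j$, with strict inequality for some $j$ because the extra branches $E_e$ meet $K$; one must then convert this, using the connectedness of $K$ and a Perron--Frobenius/Zariski-type argument for symmetric matrices with non-negative off-diagonal entries, into negative definiteness of the intersection matrix, which is exceptionality by the criterion in Proposition \ref{Z(alpha)nef}. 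None of this appears in your proposal. (For the direction you do prove, note also that Lemma \ref{casetype1} concerns connected components of the boundary divisor $H$ and does not apply verbatim to an arbitrary invariant $K$; the same $\overline{\partial}$ identity gives directly $\sum_i\lambda_i\{K_i\}\{K_j\}\{\theta\}^{n-2}=0$ for all $j$ when there are no extra branches, which already contradicts negative definiteness without passing through $Z_\Lambda$, and also covers the case $Z_\Lambda=0$ which your route ignores.)

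A second, conceptual, problem sits in item (1): you describe the holonomy around $K_i$ as a parabolic isometry fixing a boundary point and compare $\eta_{T_\Lambda}$ with the Poincar\'e cusp metric. In the KLT case this is the wrong local model: since $0\le\lambda_{K_i}<1$ the exponent $1+\nu_{K_i}-\lambda_{K_i}$ is strictly positive, the distinguished first integral behaves like $z^{1+\nu_{K_i}-\lambda_{K_i}}$ and tends to $0$, an \emph{interior} point of $U^\varepsilon$, and the holonomy is the elliptic rotation $w\mapsto e^{2i\pi(1+\nu_{K_i}-\lambda_{K_i})}w$. Cusps and parabolic holonomy occur only for strict log-canonical foliations along $H_b$. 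Your numerical matching still yields the correct residue because near an interior point the hyperbolic metric is comparable to the flat one, but the elliptic nature of the holonomy is precisely what guarantees that $e^{\eta_K}$ is an elementary first integral $\prod f_i^{\lambda_i}$ rather than an expression involving logarithms, so the distinction is not cosmetic.
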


\begin{remark}
As the foliation only admits singularities of elementary type,  the set of separatrices at any point $x\in K\cup \mbox{Sing}\ \F$ coincide locally with the poles of $\eta_K$ 
  (see \cite{to}, Remarque 4.1).
\end{remark}
\subsection{Existence of special invariant transverse metrics for strict Log-Canonical foliations}
As usual,  $\{N\}+Z$ will denote the Zariski's divisorial decomposition of $c_1(N_{\F}^*)+\{H\}$.

\subsubsection{The case Z=0}\label{caseZ=0logcan}
In this situation,   $c_1(N_{\F}^*)\otimes \mathcal{O} (H)$ is represented by a unique positive current, namely $T=[N]$.
 
  Let $H_b$ the union of non exceptional components (see lemma \ref{alternativeforpoles}). $H_b$ will be called the {\bf boundary part} of $H$. Note that $H_b$ is non empty, otherwise the foliation would be a KLT one.  On $X\setminus H_b$,  $\F$ admits an invariant transverse metric $\eta $.   The semi-positive $(1,1)$ form $\eta$ involved in the statement of theorem \ref{loginvariant} is well defined as a current on $X\setminus H_b$. Indeed, it can be expressed locally as 
  
  $$\eta=\frac{i}{\pi}e^{2\psi}\frac{\tilde{\omega}\wedge\overline{\tilde{\omega}}}{{|F|}^2\prod_i {|f_i|}^{2\mu_i}}$$
  
  where the $f_i=0$'s are  defining equations of  the components $H_i$ of $H$   contained in $\mbox{Supp}\ N$ , with $0\leq\mu_i<1$ , $F=0$ is a  defining equation of $H_b$, $\psi$ is a  plurisubharmonic function such that $\frac{i}{\pi}\ddbar \psi=[N]-\sum_i\mu_i[H_i]$ 
  and $\tilde{\omega}$ is a local generator of $N_{\F}^*$ (i.e: a local defining holomorphic $1$ form of $\F$ without  zeroes divisor). On a neighborhood $V_x$ of a point $x\in H_b$ where $z_1,..z_{n_x}=0$ is a local equation of $H_b$, the closedness property (\ref{vanishingchernconnection}) shows that the foliation is defined by a closed logarithmic form of the form 

$$\omega=\sum_{i=1}^ {n_x}\alpha_i \frac{dz_i}{z_i}$$  
and that 
\begin{equation}
\eta=\frac{i}{\pi}\omega\wedge\overline{\omega}\label{euclideanatinfinity}.
\end{equation}

(for suitable choice of $z_i$'s) where the $\alpha_i$'s are non zero complex numbers.

 Because of this local expression, one can observe that ${\mbox{Sing}\ \F}_b:=\mbox{Sing}\ \F\setminus H_b$ is a {\it compact} analytic subset of $X\setminus H_b$. 
 Contrarily to the KLT case, this current $\eta$ does not  extend to $X$ for the simple reason that it can not be extended through $H_b$. 
 
 The archetypal example is provided by logarithmic foliations on the projective space ${\mathbb P}^n$, that is foliations defined by a logarithmic form $\omega$. Such a form is automatically closed whenever the polar locus ${(\omega)}_\infty$ is normal crossing and thus induces the euclidean transverse metric $\eta= \frac{i}{\pi}\omega\wedge\overline{\omega}$, only defined outside the polar locus. In this example, it is also worh noticing that $H_b$ is ample, a phenomenon wich does not occur  for KLT foliations: in this situation, the class $\{D\}$  of any divisor whose support is $\F$-invariant has numerical dimenson at most $1$, thanks to  proposition \ref{Z(alpha)nef}. Another important difference lies in the fact that the number of $\F$-invariant hypersurfaces is arbitrarily hight (compare with proposition \ref{boundedfamilyKLT} ).
 
 Coming back to our setting, $\eta$ defined an euclidean transverse structure to wich one can associate as before a locally constant sheaf of distinguished first integrals  $\Ieucl$ and the corresponding sheaf of logarithmic differentials $\leucl$. These latters are defined a priori on the complement of $\mbox{Supp}\ N\cup H$. Near a point $x\in H_b$, the foliation admits as first integral the multivaluate section $\sum_i\alpha_i\log {z_i}$ of $\Ieucl$ whereas these multivaluate sections take the form of elementary first integral $\prod f_i^{\lambda_i}$, $\lambda_i >0$ at a neighborhood of $x\in\mbox{Supp}\ N$.

 Thanks to this euclidean structure, one can easily  exhibit a section $\eta_K$ of  $\Ieucl$ on a neighborhood of $K\cup{\mbox{Sing}\ \F}_b$ where $K=\mbox{Supp}\ N$ whose polar locus ${(\eta_K)}_\infty$  contains $\mbox{Supp}\ N$. 
 
 \begin{lemma}\label{etaklogZ=0}
 ${(\eta_K)}_\infty$ does not coincide with $K$ along $K$. 
 \end{lemma}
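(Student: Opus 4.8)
The plan is to argue by contradiction and to transplant to the euclidean sheaf $\Ieucl$ the exceptionality criterion already used in \cite{to} (Proposition 5.2) and recorded, in the KLT setting, as item (2) of Proposition \ref{logformnearK}. Recall at the outset that the irreducible components of $N=\mbox{Supp}\ N$ form, by the very definition of the divisorial Zariski decomposition, an exceptional family; equivalently, by Proposition \ref{Z(alpha)nef}\,(\ref{familleexcep}), their intersection matrix $(\{K_i\}\{K_j\}{\{\theta\}}^{n-2})$ is negative definite. Assume now, for contradiction, that the germ of $(\eta_K)_\infty$ along $K$ reduces to $K$ itself.

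First I would dispose of the points where $K$ meets the boundary. At any $x\in K\cap H_b$ the local separatrices comprise the branches of $H_b$ through $x$, none of which lies in $\mbox{Supp}\ N$; hence near such $x$ one already has $(\eta_K)_\infty\supsetneq K$ and there is nothing to prove. One may therefore assume $K\cap H_b=\emptyset$, so that a neighbourhood of $K$ sits in the genuinely euclidean region $X\setminus H_b$, away from the degeneration of $\eta$, and ${\mbox{Sing}\ \F}_b$ stays at positive distance. There the distinguished first integral is the elementary section $w=\prod_i f_i^{\lambda_i}$ of $\Ieucl$ with $\lambda_i>0$, so that $K=\{w=0\}$ sits at the finite origin of $\C=U^0$, with purely rotational local monodromy around each $K_i$.

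The core of the argument is then the same as in \cite{to}, Proposition 5.2. Under the contradiction hypothesis the only polar branches of $\eta_K$ along $K$ are the $K_i$ themselves, so that near $K$ the multivalued map $w$ exhibits the compact set $K$ as the central fibre of a (multivalued) local fibration whose nearby fibres are the leaves $\{w=c\}$; applying the connectedness theorem of \cite{pa2} (after a finite ramified cover clearing the denominators of the $\lambda_i$, so that $w$ becomes genuinely holomorphic) these fibres are connected and vary without fixed part. Such a moving family of effective divisors forces the positive combination $\sum_i\lambda_i\{K_i\}$ to be nef in codimension one, whence $(\sum_i\lambda_i\{K_i\})^2{\{\theta\}}^{n-2}\geq 0$; since the $\lambda_i$ are strictly positive and not all zero, this contradicts the negative definiteness of the exceptional family recorded above. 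Thus $(\eta_K)_\infty\neq K$ along $K$, proving Lemma \ref{etaklogZ=0}.

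The step I expect to be the main obstacle is precisely turning the local first integral $w$ into a genuine moving family of compact fibres. In the quasi-minimal regime the global leaf through a point near $K$ is typically dense and escapes toward $H_b$, so one cannot naively compactify the level sets $\{w=c\}$; the remedy is to localise, using the contractibility of the exceptional set $K$ (a Grauert-type consequence of negative definiteness) to realise $w$ as a proper map onto a disc in a small neighbourhood of $K$, and to control the ramified cover so that it does not disturb the elementary form of the first integral nor introduce contributions from $H_b$. Carrying this localisation out rigorously, exactly as in \cite{to}, is the heart of the matter.
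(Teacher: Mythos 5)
There is a genuine gap at the step you yourself single out as the main obstacle, and the proposed remedy does not close it. The level sets $\{w=c\}$ of the elementary first integral $w=\prod_i f_i^{\lambda_i}$ near $K$ are (unions of) leaves of $\F$ restricted to a neighbourhood $U$ of $K$: they are closed in $\{0<|w|<\varepsilon\}$ but not compact in $X$, so they are not effective divisors, carry no cohomology class, and no ``moving family of divisors $\Rightarrow$ nef in codimension one'' argument can be launched from them. Contracting $K$ via a Grauert-type criterion does not make $w$ proper on a punctured neighbourhood --- the fibres still accumulate on $\partial U$ (and globally, in the quasi-minimal regime, they are dense), and the connectedness theorem of \cite{pa2} is not the relevant tool here. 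Incidentally, your preliminary reduction at $K\cap H_b$ is vacuous: by Lemma \ref{alternativeforpoles} the boundary components are disjoint from $\mbox{Supp}\ N$, so $K\cap H_b=\emptyset$ automatically.

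The idea you are missing is that the auxiliary nef class should be produced as the class of a closed positive $\F$-invariant \emph{current}, not of a divisor. Under the contradiction hypothesis $(\eta_K)_\infty=K$, the function $F=|e^{\int\eta_K}|$ is single-valued near $K$ (only the argument of $e^{\int\eta_K}$ is multivalued), constant on the leaves, and --- precisely because there are no extra polar components --- the sets $\{F<\varepsilon\}$ form a neighbourhood basis of $K$. Choosing $\psi:\R\to[0,\infty)$ continuous, not identically zero and vanishing outside $(0,\varepsilon)$, the current $S=(\psi\circ F)\,\eta$ is a globally defined closed positive $\F$-invariant current supported in the annulus $\{0<F<\varepsilon\}$ and without Lelong numbers along hypersurfaces. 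Remark \ref{elementaryinsupport} then yields $\{S\}\{N_i\}=\{S\}^2=0$, the Hodge signature theorem forces $\sum_i\lambda_i\{N_i\}$ to be proportional to the nef class $\{S\}$, and this contradicts the exceptionality of the family $\{N_1,\dots,N_p\}$ --- the same final contradiction you were aiming for, but reached by a construction your argument does not supply.
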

 \begin{proof}
 Suppose that equality holds. Then 
 
 $${(\sum_i \lambda_i \{N_i\})}^2=0$$
 
 \noindent where the $N_i$'s stand for the irreducible components of $\mbox{Supp}\ N$ and $\lambda_i>0$ is the residue of $\eta_N$ along $N_i$.
  Moreover, $F=|e^{\int \eta_N}|$  is a function defined near $\mbox{Supp}\ N$, constant on the leaves and such that the family $\{F<\varepsilon\}$ forms a basis of neighborhoods of $\mbox{Supp}\ N$.  
 
 Let $\varepsilon>0$  and and $\psi:\R\rightarrow [0,\infty)$ continuous non identically zero such that $\psi$ vanishes outside $(0,\varepsilon)$. Set $g=\psi\circ F$ (this makes sense is $\varepsilon$ is chosen small enought). This guarantees that $g\eta$ is an $\F$ invariant  closed positive current  $S$ such that $\{S\}\{N_i\}=\{S\}^2=0$, thanks to remark \ref{elementaryinsupport}. By Hodge's signature theorem, $\sum_i\lambda_i\{N_i\}$ is proportional to $\{S\}$. This latter being a {\it nef} class ($S$ has no positive Lelong numbers), we get the sought contradiction.
 \end{proof}
\begin{definition}
 Let $V$ an open neighborhood of $H_b$; an hermitian metric defined on $V\setminus H_b$ is said to be {\it complete at infinity} if there exists an open neighborhood $U$ of $H_b$, $U\Subset V$ such that for every maximal geodesic arc $\gamma :[0,a) \rightarrow V$, 

\begin{itemize}
 \item either $a=+\infty$,
\item either there exists $0<t<a$ with $\gamma(t)\in V\setminus U$.
\end{itemize}

\end{definition}

\begin{lemma}\label{bundlelikemetricatinfinity}
 There exists a bundle like metric on $X\setminus  (H_b\cup\mbox{Supp}\ N\cup \mbox{Sing}\ \F)$ inducing the invariant transverse metric $\eta$ wich is complete at infinity.
\end{lemma}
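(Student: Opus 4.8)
The plan is to build the metric from a local model near $H_b$ and then glue. By (\ref{euclideanatinfinity}), near a point $x\in H_b$, in coordinates $(z_1,\dots,z_n)$ with $H_b=\{z_1\cdots z_{n_x}=0\}$, the foliation is transversely Euclidean, defined by the closed logarithmic form $\omega=\sum_{i=1}^{n_x}\alpha_i\frac{dz_i}{z_i}$, and $\eta=\frac{i}{\pi}\omega\wedge\overline{\omega}$. I would take as local model a metric whose transverse part is pinned to $\eta$ and whose leafwise part is chosen \emph{complete toward} $H_b$, arranging that on the chart $g\ge c\bigl(\sum_{i=1}^{n_x}\frac{|dz_i|^2}{|z_i|^2}+\sum_{j>n_x}|dz_j|^2\bigr)$ for some $c>0$; the decisive feature is that each branch $\{z_i=0\}$ lies at infinite distance, since $\int\frac{|dz_i|}{|z_i|}=+\infty$. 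Because $\omega$ is itself built from the $\frac{dz_i}{z_i}$, the transverse metric $\eta$ already exhibits this cylindrical growth in the normal direction. To make the model \emph{bundle-like} ($g_N=h$), I would correct it by a conformal factor, as allowed by the remark following the definition of bundle-like metric since $\F$ has codimension $1$; this factor stays bounded on the chart (the transverse parts being comparable to $\eta$), so it does not affect completeness toward $H_b$.

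Next I would globalize. Covering a neighborhood $U$ of $H_b$ by such charts, I glue the local models by a partition of unity and restore the bundle-like condition by one further bounded conformal rescaling; I then glue the result with any globally defined bundle-like metric on $X\setminus(\overline{U'}\cup\mbox{Supp}\ N\cup\mbox{Sing}\ \F)$, $U'\Subset U$, which exists because $\F$ is transversely hermitian there. Away from $H_b$ no completeness is required, so this outer gluing is harmless.

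Finally I would verify completeness at infinity. The poles of the defining logarithmic form — along which $\eta$ has the cylindrical blow-up above — make up the whole removed locus $H_b\cup\mbox{Supp}\ N$ that a small $U$ meets, and $\mbox{Sing}\ \F\cap U\subset H_b\cup\mbox{Supp}\ N$ (because $\mbox{Sing}\ \F\setminus H_b$ is compact, hence away from $H_b$). Thus on $U$ the metric $g$ dominates a metric complete toward this locus, giving $d_g(\,\cdot\,,\,H_b\cup\mbox{Supp}\ N)=+\infty$ inside $U$. A maximal geodesic arc $\gamma\colon[0,a)\to V$ with $a<+\infty$ has finite length and so cannot accumulate on the removed locus while staying in $U\Subset V$; it must therefore enter $V\setminus U$, which is exactly the required alternative. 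The proposition of Molino recalled above refines the transverse picture: a leaf-orthogonal geodesic projects under $\rho$ to a Euclidean geodesic of $U^0=\C$ of equal length, so escaping transversally already costs infinite length, $(\C,ds^0)$ being complete.

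The main obstacle, which shapes the whole construction, is that bundle-likeness locks the transverse part of $g$ to $\eta$, while completeness must hold toward \emph{all} of $H_b$. Transverse completeness alone does not suffice: a leaf $\{u=\mathrm{const}\}$ can spiral into a deep stratum of $H_b$ with the transverse coordinate bounded — e.g. $z_1,z_2\to0$ proportionally while $\alpha_1\log z_1+\alpha_2\log z_2$ stays finite — so one genuinely needs the leafwise metric to be complete toward $H_b$ as well. Securing this leafwise completeness compatibly with the fixed transverse metric $\eta$ and across the normal-crossing strata of $H_b$ (where $n_x$ jumps) is the delicate part; the cylindrical model is exactly what renders both the transverse and the leafwise escape infinitely long.
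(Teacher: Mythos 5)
Your construction is essentially the paper's: the same local cylindrical model $i\sum_l\frac{dz_l}{z_l}\wedge\frac{d\overline{z_l}}{\overline{z_l}}+(\text{smooth})$ near $H_b$, glued by a partition of unity with an arbitrary hermitian metric away from $H_b$, then corrected by a bounded conformal factor (licit in codimension $1$, and controlled because $\eta=\frac{i}{\pi}\omega\wedge\overline{\omega}$ with $\omega=\sum\alpha_i\frac{dz_i}{z_i}$ is comparable to the transverse part of the model) to restore bundle-likeness without losing completeness at infinity. Your closing remark on why leafwise, not just transverse, completeness toward $H_b$ is needed is correct and is precisely what the full cylindrical model secures.
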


\begin{proof}
Keeping the notation above, for $x\in H_b$, one put on $V_x$ the metric with poles $g_x=i\sum_{l=1}^{n_x}\frac{dz_l}{z_l}\wedge\frac{d\overline{z_l}}{\overline{z_l}} +h_x$ where $h_x$ is a smooth hermitian metric on $V_x$.  Let $h$ a smooth hermitian metric on $X\setminus H_b$ which is the restriction of an hermitian metric on $X$.. By partition of unity subordinate to the cover of $X$ determined by $X\setminus H_b$ and the collection of $V_x$, these models glue together in a complete hermitian metric defined on $X\setminus H_b$. Thanks to the expression of $\eta$ given in (\ref{euclideanatinfinity}), this latter is conformally equivalent on $X\setminus  (H_b\cup\mbox{Supp}\ N\cup \mbox{Sing}\ \F)$ to a metric $g$ fulfilling the conclusion of the lemma.

\end{proof}
\subsubsection{The case $Z\not=0$}.\label{strictZnot0}
As we shall see later, this situation is described by the theorem \ref{thprincipallog} of the introduction.
 We first collect some observations useful in the sequel. 
 
\begin{lemma}\label{negativeintersectionform} Let $\mathcal C$ a connected component of $H$  which is not contained in $\mbox{Supp}\ N.$ and let  ${\mathcal C}_1,...,{\mathcal C}_r$ be its irreducible components. 

Then, the intersection form $\{{\mathcal C}_i\} \{{\mathcal C}_j\} \{\theta\}^{n-2}$ is negative definite and in particular $\{{\mathcal C}_1,....,{\mathcal C}_r\}$  forms an exceptional family.
 \end{lemma}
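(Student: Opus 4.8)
The plan is to read off negative definiteness from the Hodge index theorem, the essential input being that the boundary classes are orthogonal to the positive part $Z$. Write $Q(\alpha,\beta)=\alpha\cdot\beta\cdot\{\theta\}^{n-2}$ for the intersection form on $H^{1,1}(X,\R)$ and put $V=\langle\{\mathcal C_1\},\dots,\{\mathcal C_r\}\rangle$. Since $\mathcal C$ is not contained in $\mbox{Supp}\ N$, Lemma \ref{alternativeforpoles} places us in its case b): each $\mathcal C_i$ meets $\mbox{Supp}\ N$ trivially and $Z\cdot\{\mathcal C_i\}=0$, so that $V\subset Z^{\perp}$. Recall moreover that in the present hyperbolic regime $Z\neq0$ is isotropic, $Q(Z,Z)=Z^2\{\theta\}^{n-2}=0$ (the positive part being carried by the invariant current $\eta_T$, exactly as in Proposition \ref{Z(alpha)nef}).

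By the Hodge index theorem $Q$ has Lorentzian signature on $H^{1,1}(X,\R)$, so a nonzero isotropic vector $Z$ makes $Q$ negative semidefinite on $Z^{\perp}$ with kernel precisely $\R Z$. As $V\subset Z^{\perp}$, the form $Q|_V$ is therefore automatically negative semidefinite; the whole content of the lemma is the \emph{strictness}, which amounts to $V\cap\R Z=\{0\}$, i.e.\ to the assertion that no numerical combination of the $\{\mathcal C_i\}$ is proportional to $Z$. Indeed any $\alpha\in V$ with $Q(\alpha,\alpha)\ge 0$ lies in $Z^{\perp}$ and is thus isotropic, hence belongs to the kernel $\R Z$; if $Z\notin V$ this forces $\alpha=0$, giving negative definiteness, and then, the $\mathcal C_i$ being $\mathcal F$-invariant, the exceptionality of $\{\mathcal C_1,\dots,\mathcal C_r\}$ follows from the negativity criterion of Proposition \ref{Z(alpha)nef}. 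Equivalently, via Proposition \ref{logformnearK}(2), exceptionality is the statement that ${(\eta_{\mathcal C})}_\infty$ does not coincide with $\mathcal C$ along $\mathcal C$, i.e.\ that there is a separatrix through $\mathcal C$ not contained in it.

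The crux, and the step I expect to be the main obstacle, is therefore to exclude $Z\in V$, where the strict Log-canonical hypothesis must enter. I would argue by contradiction, mimicking the case $Z=0$ treated above: if ${(\eta_{\mathcal C})}_\infty=\mathcal C$ along $\mathcal C$, then $F=\bigl|e^{\int\eta_{\mathcal C}}\bigr|$ is a first integral defined near $\mathcal C$, constant on the leaves, whose sublevels form a neighbourhood basis of $\mathcal C$; cutting off $\eta$ by a suitable function of $F$ yields an $\mathcal F$-invariant closed positive current $S$ with $\{S\}=\mu\sum_i\lambda_i\{\mathcal C_i\}$ (the $\lambda_i>0$ being the residues of $\eta_{\mathcal C}$ and $\mu>0$), for which $Q(\{S\},\{S\})=0$ and $Q(\{S\},\{\mathcal C_i\})=0$ by Proposition \ref{intersectionresidual} and Remark \ref{elementaryinsupport}. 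Two $Q$-orthogonal isotropic classes are proportional in Lorentzian signature, so $Z$ would be a positive multiple of the effective boundary class $\sum_i\lambda_i\{\mathcal C_i\}$; subtracting a small multiple of $Z$ from $c_1(N_{\mathcal F}^*)+\{H\}=\{N\}+Z$ then keeps the class pseudoeffective while strictly lowering the coefficients along $\mathcal C$ below $1$ (the components inside $\mbox{Supp}\ N$ being handled in the same way), exhibiting a KLT datum and contradicting strict Log-canonicity. The delicate point to make fully rigorous is precisely this last bookkeeping — verifying that \emph{all} boundary coefficients can be brought below $1$ at once — which is exactly what the isotropy $Q(Z,Z)=0$ together with the orthogonality $V\subset Z^{\perp}$ is meant to control.
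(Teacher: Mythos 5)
Your first half is exactly the paper's opening move: Lemma \ref{alternativeforpoles} places the classes $\{\mathcal C_i\}$ in $Z^{\perp}$, and the Hodge index theorem reduces the lemma to excluding that some non-trivial, non-negative combination $D=\sum_i\mu_i\mathcal C_i$ has class proportional to $Z$ (the non-negativity of the $\mu_i$, which you skip, comes from a Perron--Frobenius argument on the connected intersection matrix with non-negative off-diagonal entries); the contradiction is then, as you say, that $\F$ would be KLT. But the closing step has a genuine gap, and it is precisely the one you flag as ``the delicate point''. To exhibit a KLT datum you must bring \emph{every} coefficient of $H$ strictly below $1$. Subtracting $\varepsilon\{D\}$ from $c_1(N_\F^*)+\{H\}=\{N\}+Z$ only lowers the coefficients along $\mathcal C$; the components of $H$ inside $\mbox{Supp}\ N$ are absorbed by the negative part as you indicate; but you say nothing about the components of $H$ lying in \emph{other} connected components $\mathcal C'$ not contained in $\mbox{Supp}\ N$, and there is no reason the intersection form of such a $\mathcal C'$ should also fail to be negative definite, so your argument cannot be rerun there. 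The paper closes exactly this hole with Lemma \ref{casetype1}: once $Z=\lambda[D]$ is represented by an effective divisor supported on $\mathcal C$, the corresponding current has pluriharmonic (not $-\infty$) potentials along any such $\mathcal C'$, so $\mathcal C'$ is of type $(1)$, and Lemma \ref{casetype1} extracts an effective divisor $D'$ with $\mbox{Supp}\ D'=\mathcal C'$ and $\{D'\}$ a multiple of $Z$, which can then also be subtracted. Without this input the KLT datum is not produced and the contradiction is not reached.

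Two smaller points. The detour through the cut-off current $S=g\eta$ is both unnecessary and imprecise: that construction (as in Lemma \ref{etaklogZ=0}) produces a nef invariant class that is only \emph{proportional} to $\sum_i\lambda_i\{\mathcal C_i\}$ after a further application of Hodge index, not equal to it, and its starting hypothesis ``$(\eta_{\mathcal C})_\infty=\mathcal C$ along $\mathcal C$'' is not what the failure of negative definiteness hands you directly. Also, the isotropy $Z^2\{\theta\}^{n-2}=0$ is borrowed from Proposition \ref{Z(alpha)nef}, which is stated for KLT foliations; in the strict log-canonical case this equality is only established later (after $\eta_T$ has been extended across $H_b$, which itself relies on the present lemma), and at this stage only $Z^2\{\theta\}^{n-2}\ge 0$ is available --- which is in fact all the Hodge-index step needs.
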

 \begin{proof} Suppose by contradiction that the intersection matrix is not negative.
 Then, thanks to remark \ref{alternativeforpoles} and Hodge's signature theorem, one can exhibit $r$ non-negative real numbers $\mu_1,...,\mu_r$ with at least one positive, such that $\sum_{i=1}^r\mu_i \{{\mathcal C}_i\}$ is colinear to $Z$.
 
 
 In particular, one can represents $Z$ by the integration current $\lambda [D]$, where $\lambda>0$ and $D=\sum_{i=1}^r\mu_i {\mathcal C}_i$. With the help of lemma \ref{casetype1}, this implies that for any other connected component ${\mathcal C}^{'}$ of $H$ not supported on $\mbox{Supp}\ N$, one can extract an effective divisor $D'$ supported exactly on ${\mathcal C}^{'}$ such that $\{D^{'}\}$ and $Z$ are colinear. Then $\F$ turns out to be a KLT foliation: a contradiction.
 \end{proof}
 
 Like the previous situation, we will denote by $H_b$ the union of these connected components and will call it the {\it boundary component}.

 \begin{lemma}
 Let $T$ a closed  positive current representing $Z$. Then $T$ has no Lelong numbers along $H_b$ and the local psh potentials of $T$ are necessarily equal to $-\infty$ on $H$.
 
 \end{lemma}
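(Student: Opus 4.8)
The plan is to derive both statements from the $\mathcal F$-invariance of $T$ combined with the negative definiteness of the boundary part supplied by Lemma~\ref{negativeintersectionform}. First I would establish that any closed positive current $T$ with $\{T\}=Z$ is directed by $\mathcal F$: since $Z$ has numerical dimension one ($Z^2\{\theta\}^{n-2}=0$ in the hyperbolic case) and is represented, away from the boundary $H_b$, by the invariant transverse metric of hyperbolic type, the Hodge-signature mechanism underlying Propositions~\ref{intersectionresidual} and \ref{Z(alpha)nef} forces $T\wedge\omega=0$. The singularities of $\mathcal F$ being elementary, Proposition~\ref{intersectionresidual} then allows the local potentials of $T$ to be chosen constant on the leaves, so that near a transversal $T$ descends to a subharmonic function $\psi$ of the single transverse variable.

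For the potentials being $-\infty$ on $H$ I would argue locally. Along each component of $H$ the foliation is defined by a logarithmic form with a genuine (positive) residue, so in suitable coordinates $\omega\sim dz/z$ and the invariant transverse form reads $\frac{i}{\pi}e^{2\psi}\omega\wedge\overline{\omega}\sim e^{2\psi}\frac{i}{\pi}\frac{dz\wedge d\overline{z}}{|z|^2}$. As this positive $(1,1)$ current has locally integrable coefficients, the factor $e^{2\psi}$ must tend to $0$ as $z\to0$, that is $\psi\to-\infty$ along $H$, which is the second assertion. Tracking the rate distinguishes the two kinds of components: along a component of $\mathrm{Supp}\,N$ one meets the cone-point model, with $\psi\sim\frac{1}{\lambda}\log|z|$ and positive Lelong number $1/\lambda$, whereas along $H_b$ one meets the cusp model $\psi\sim-\log|\log|z||$, whose Lelong number is zero; this already suggests the first assertion.

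For the vanishing of the Lelong numbers along $H_b$ I would make this rigorous cohomologically. Writing the Siu decomposition $T=R+\sum_i\nu_i[\mathcal C_i]+E'$ along the irreducible components $\mathcal C_i$ of $H_b$, the residual part $R$ inherits $\mathcal F$-invariance (each $[\mathcal C_i]$ being invariant), so Proposition~\ref{intersectionresidual} gives $\{R\}\{\mathcal C_i\}\{\theta\}^{n-2}=0$; combined with $Z\{\mathcal C_i\}\{\theta\}^{n-2}=0$ from Lemma~\ref{alternativeforpoles}, intersecting $\{T\}=Z$ with $\{\mathcal C_i\}\{\theta\}^{n-2}$ yields $\sum_j m_{ij}\nu_j\le0$ for the negative definite matrix $m_{ij}=\{\mathcal C_i\}\{\mathcal C_j\}\{\theta\}^{n-2}$ of Lemma~\ref{negativeintersectionform}. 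The delicate point, and the main obstacle, is that negative definiteness alone does not close the argument: one must show that the cross term $\{E'\}\{\mathcal C_i\}\{\theta\}^{n-2}$ coming from the other invariant divisors charged by $T$ actually vanishes. I expect to handle this by localizing near $H_b$, where the cusp analysis of the second paragraph shows directly that the subharmonic potential $\psi$ has zero Lelong number at the puncture, thereby forcing $\nu_i=0$ and reconciling the arbitrary representative $T$ with the rigid cusp normalization dictated by completeness (Theorem~\ref{complete}).
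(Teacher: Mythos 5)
Your proposal assembles the right ingredients (Siu decomposition, $Z\cdot\{\mathcal C_i\}=0$ from Lemma~\ref{alternativeforpoles}, negative definiteness from Lemma~\ref{negativeintersectionform}), but it has two genuine gaps, both of a circular nature. First, your derivation of the $\F$-invariance of $T$ and of the divergence of its potentials is not legitimate at this stage: Proposition~\ref{intersectionresidual} and Proposition~\ref{Z(alpha)nef} \emph{presuppose} that the currents involved are $\F$-invariant, so they cannot be used to force $T\wedge\omega=0$; and your local argument for $\psi=-\infty$ along $H_b$ rests on the claim that $\eta_T=\frac{i}{\pi}e^{2\psi}\omega\wedge\overline{\omega}$ has locally integrable coefficients across $H_b$ --- but the extension of $\eta_T$ through $H_b$ is precisely what the paper must prove \emph{afterwards} (via the Dulac transform analysis), and it is explicitly not available here. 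The paper's route is the reverse of yours: one first shows the potentials are $-\infty$ on $H_b$ by a purely cohomological argument --- if a boundary component $\mathcal C$ were of type $(1)$, Lemma~\ref{casetype1} would produce an effective divisor $D$ supported on $\mathcal C$ with $\{D\}$ proportional to $Z$, hence $\{D\}^2\{\theta\}^{n-2}=0$, contradicting the negative definiteness of Lemma~\ref{negativeintersectionform} --- and only then invokes Theorem~\ref{loginvariant} to conclude that $T$ is invariant.

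Second, the ``delicate point'' you flag (the cross terms $\{E'\}\{\mathcal C_i\}\{\theta\}^{n-2}$) is indeed where the proof lives, and your proposed resolution by ``cusp analysis'' inherits the circularity above; moreover, as you note, the inequality $\sum_j m_{ij}\nu_j\le 0$ with $(m_{ij})$ negative definite does not force $\nu=0$. The paper closes this gap by two genuinely different steps: (a) a connectedness argument showing that on a connected component $\mathcal C$ of $H_b$ the Lelong numbers $\lambda_i$ are either all zero or all nonzero (if $\lambda_i=0$, $\lambda_j\neq 0$ and $\mathcal C_i\cap\mathcal C_j\neq\emptyset$, then $Z\{\mathcal C_i\}\{\theta\}^{n-2}>0$, contradicting Lemma~\ref{alternativeforpoles}); and (b) the geometric fact that every other hypersurface $K_i$ charged by the Siu decomposition of $T$ is disjoint from $\mathcal C$, because the existence of the invariant form $\eta$ near $\mathcal C$ forces the singularities there to be of elementary type, so the only separatrices through points of $\mathcal C$ are the local branches of $\mathcal C$ itself; together with $\{R\}\{\mathcal C_j\}=0$ (Remark~\ref{elementaryinsupport}) this turns the inequality into the exact identity $(\sum_i\lambda_i\{\mathcal C_i\})\{\mathcal C_j\}\{\theta\}^{n-2}=0$ for all $j$, which contradicts negative definiteness when all $\lambda_i>0$. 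You would need to supply both (a) and (b) to make your third paragraph work.
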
 
 
 \begin{proof}
 
 The fact that the local potentials are $-\infty$ on $H_b$ is a consequence of lemma \ref{casetype1} together with lemma \ref{negativeintersectionform}. In particular, $T$ is necessarily invariant according to theorem \ref{loginvariant}. Let $T$ be a positive closed current, $\{T\}=Z$.  Assume by contradiction that $T$ has some non vanishing belong numbers on a connected component $\mathcal C$ of $H_b$. Let   ${\mathcal C}_1,...,{\mathcal C}_r$ the irreducible components of $\mathcal C$, and $\lambda_i$ the Lelong number of $T$ along ${\mathcal C}_i$. We first claim that there doesn't exist  two indices $i\not=j$ such that $\lambda_i=0$, $\lambda_j\not=0$ and ${\mathcal C}_i\cap {\mathcal C}_j\not=0$. Otherwise this would imply that $Z\{{\mathcal C}_i \}\{{\theta}^{n-2}\}>0$, a contradiction with lemma \ref{alternativeforpoles}. One can then easily deduce that $\lambda_i\not=0$ for each $i$.  
Let 

           $$T=\sum_{i=1}^{+\infty} {\mu}_i [K_i] +R$$
           
           \noindent the Siu's decomposition of $T$. 
           
           Remark, that from the $\F$- invariance of $T$, one can infer that each hypersurface $K_i$ is also $\F$ invariant whenever $\mu_i\not=0$ and that the same holds for the residual part $R$.

By virtue of theorem \ref{loginvariant}, $\F$ admits in a neighborhood of $\mathcal C$  an invariant current $\eta=\eta_T$ wich can be locally written

$$\eta=\frac{i}{\pi}e^{2\psi}\frac{{\omega}\wedge\overline{{\omega}}}{\prod_j {|f_j|}^{2\alpha_j}}$$

with $f_j=0$ a reduced  equation of ${\mathcal C}_j$, $\omega$ a  defining form of $\F$,  $0\leq \alpha_j<1$ and $\psi$ a {\it psh} function.

This implies (see the proof of theorem \ref{transmetricklt}) that the singularties of the foliation near $\mathcal C$ are of elementary type, more precisely of the form 

   $F=u{f_1}^{\beta_1}...{f_r}^{\beta_r}$

where $u$ is some unit and the $\beta_i$'s are non negative real numbers. 
Suppose that for some $i$,  $K_i\nsubseteq \mathcal C$. By remark 4.1 of \cite{to}, $K_i\cap \mathcal C=\emptyset$ and in particular $\{K_i\}\{{\mathcal C}_j\}=0$ for every $j$. Moreover, $\{R\}\{{\mathcal C}_j\}=0$ according to remark \ref{elementaryinsupport}. hence, the equality 

    $$Z\{C_j\}=0$$

can be reformulated as

$$(\sum_i\lambda_i\{{\mathcal C}_i\})\{{\mathcal C}_j\}=0$$

wich contradicts lemma \ref{negativeintersectionform}.
 
 \end{proof}
 
By theorem \ref{loginvariant}, $\eta=\eta_T$ is a well defined as a closed positive current on $X\setminus H_b$ but a priori not extend through $H_b$ as a current.
We will actually show that this extension really holds but this requires further analysis and the proof is postponed to the last section. Moreover, we will explain how we can ensure that the invariant transverse metric associated to $\eta$ is hyperbolic, a key property to reach the conclusion of theorem \ref{thprincipallog}.


\section{Settling the case of KLT foliations}

In this section, we aim at discussing abundance properties of the logarithmic  conormal bundle $N_{\F}^*\otimes \mathcal O(H)$ for KLT foliations (defined in the previous chapter). The ambient manifold is again assumed to be {\it K\"ahler compact}. 

Here, $\mbox{Kod}(L)$ stands for the {\it Kodaira dimension} of a line bundle $L$ and $\nu (L)$ for its {\it numerical Kodaira dimension}.

One firstly recall the statement of Bogomolov's theorem ( see \cite{brme} and references therein).

\begin{prop}  
Let $X$ a compact K\"ahler manifold X and $H\subset X$ a normal crossing hypersurface. Assume that for some $L\in \mbox{Pic} (X)$, there exists a non trivial twisted logarithmic form
 $\omega\in H^0(X, \Omega_X^1(\log H)\otimes L)$. Then 
  $\mbox{Kod} (L^*)\leq 1$; moreover, when equality holds, $\omega$ is integrable and the corresponding foliation is a meromorphic fibration.
 \end{prop}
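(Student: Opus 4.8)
The statement is the twisted, logarithmic incarnation of the classical Bogomolov--Castelnuovo--de Franchis theorem: the twisted form $\omega$ realizes $\con=L^*$ as a line subsheaf of $\Omega_X^1(\log H)$, and the assertion is that such a subsheaf has Kodaira dimension at most $1$, the extremal case forcing a fibration. The plan is to \emph{reduce the twisted problem to the untwisted one} by an untwisting covering construction, and then to invoke the classical (untwisted) logarithmic theorem as recalled in \cite{brme} and the references therein.

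If $\mbox{Kod}(L^*)<0$ there is nothing to prove, so I would assume $\mbox{Kod}(L^*)\geq 0$; then some power $(L^*)^{\otimes N}$ carries a nonzero section $s$, with zero divisor $D$ satisfying $\mathcal O(D)\simeq (L^*)^{\otimes N}$. I would form the degree-$N$ cyclic cover $\pi:Y\to X$ determined by $s$ (the normalization of $\{t^N=\pi^*s\}$ inside the total space of $L^*$), and resolve it to a compact K\"ahler manifold $Y$, using that a finite cover of a K\"ahler manifold is again K\"ahler. On $Y$ the tautological root $u$ of $\pi^*s$ satisfies $u^N=\pi^*s$, so that $\tilde\omega:=u\cdot\pi^*\omega$ is a \emph{genuine}, untwisted rational $1$-form; after the resolution it becomes a logarithmic $1$-form with poles along $\pi^{-1}(H)$ together with the ramification divisor. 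Its annihilator is the pulled-back distribution $\pi^*\mathcal D$, and the line subsheaf of $\Omega_Y^1(\log)$ it generates is, up to an effective twist coming from the ramification, the pullback $\pi^*\con$; since finite pullback preserves Kodaira dimension and tensoring by an effective divisor cannot decrease it, the subsheaf spanned by $\tilde\omega$ upstairs has Kodaira dimension at least $\mbox{Kod}(L^*)$.

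Next I would apply the classical logarithmic Bogomolov--Castelnuovo--de Franchis theorem to $\tilde\omega$ on $Y$: the line subsheaf it spans in $\Omega_Y^1(\log)$ has Kodaira dimension $\leq 1$, which forces $\mbox{Kod}(L^*)\leq 1$; and in the equality case $\tilde\omega$ is closed (hence integrable) and defines a meromorphic fibration $Y\dashrightarrow C$ over a curve. Finally I would descend: because $\pi$ is finite and $\pi^*\mathcal D$ is this fibration, $\mathcal D$ itself is a meromorphic fibration on $X$ (take the Stein factorization of the induced map, quotienting by the deck action). In particular $\mathcal D=\F$ is integrable, since $\pi$ is generically \'etale so $\pi^*(\omega\wedge d\omega)$ is a nonzero multiple of $\tilde\omega\wedge d\tilde\omega=0$.

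The hard part will be the bookkeeping along the covering: one must verify that $Y$ may be taken K\"ahler, that $\tilde\omega$ is genuinely logarithmic after resolution (controlling the zeros and poles created along the ramification, and checking that the residues remain admissible), and that the Kodaira dimension of the spanned subsheaf transfers faithfully between $Y$ and $X$ so that both the bound and the equality case descend. Everything else is a direct citation of the classical untwisted statement. Alternatively one could argue directly on $X$ using the decomposable symmetric differentials $s\,\omega^{\otimes N}\in H^0(\mathrm{Sym}^N\Omega_X^1(\log H))$ with $s\in H^0((L^*)^{\otimes N})$, but then the closedness of the associated multivalued $1$-forms---which is what makes Castelnuovo--de Franchis applicable---has to be recovered by hand, and this is precisely the difficulty that the covering construction sidesteps.
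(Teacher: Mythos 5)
Your proposal is correct and is essentially the argument the paper relies on: the paper gives no proof of this proposition at all, recalling it verbatim from Brunella--Mendes \cite{brme}, and where it actually uses the statement (Section 3) it describes exactly your strategy --- ``apply the Castelnuovo--De Franchis theorem on a suitable ramified cyclic covering,'' citing \cite{Reid}. The one detail to keep straight when writing it out is that Castelnuovo--de Franchis needs \emph{two} linearly independent, pointwise proportional closed $1$-forms, so one should untwist two sections $s_0,s_1$ of $(L^*)^{\otimes N}$ on a common covering (their ratio then descends through the resulting fibration, which is how the bound $\mbox{Kod}(L^*)\leq 1$ and the fibration structure are actually extracted) rather than a single section; since you ultimately defer to the untwisted logarithmic Bogomolov statement upstairs, which is itself the citable result, this is a presentational point rather than a gap.
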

 
 \begin{cor}
 Let $\mathcal F$ a codimension $1$ foliation on a compact K\"ahler manifold $X$. Let $H$ a normal crossing hypersurface invariant by $\F$. Then  $\mbox{Kod} (N_{\F}^ * \otimes \mathcal{O} (H))\leq 1$ and $\F$ is a meromorphic fibration when equality holds.
\end{cor}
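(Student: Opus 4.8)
The plan is to realize the foliation $\F$ as the zero locus of a global twisted \emph{logarithmic} one form whose twist is precisely the dual of $N_\F^*\otimes\mathcal O(H)$, and then simply to invoke the preceding proposition (Bogomolov's theorem). The whole content is the bookkeeping that identifies the correct line bundle.

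First I would produce the logarithmic form. As a codimension $1$ foliation, $\F$ is defined by a twisted holomorphic one form $\omega_0\in H^0(X,\Omega_X^1\otimes N_\F)$ whose singular locus has codimension at least $2$. Writing each irreducible component of $H$ locally as $\{f_i=0\}$, invariance of $H_i$ by $\F$ means $df_i\wedge\omega_0=f_i\,\theta_i$ for some holomorphic $2$ form $\theta_i$. Let $f$ be the local reduced equation of $H$. Then $\omega:=\omega_0/f$ is logarithmic along $H$: indeed $f\omega=\omega_0$ is holomorphic, and since $df\wedge\omega_0=f\sum_i\theta_i$ one checks that $f\,d\omega=d\omega_0-\frac{df\wedge\omega_0}{f}=d\omega_0-\sum_i\theta_i$ is holomorphic as well. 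Globally these local expressions patch into a non trivial section
$$\omega\in H^0\bigl(X,\Omega_X^1(\log H)\otimes L\bigr),\qquad L=N_\F\otimes\mathcal O(-H),$$
defining the same foliation $\F$; this is exactly the mechanism of Lemma 3.1 in \cite{brme}.

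It then suffices to apply Bogomolov's theorem (the proposition above) to $\omega$. Since $L^*=N_\F^*\otimes\mathcal O(H)$, the bound $\mbox{Kod}(L^*)\leq 1$ reads $\mbox{Kod}(N_\F^*\otimes\mathcal O(H))\leq 1$, and in the equality case the proposition asserts that $\omega$ is integrable and that the foliation attached to it, which is $\F$ itself, is a meromorphic fibration. The only genuine point to verify is the global identification of the twist, namely that the local divisions $\omega_0/f$ glue to a section valued in $N_\F\otimes\mathcal O(-H)$ rather than in some other line bundle; this is where both the invariance of $H$ and the normal crossing hypothesis are used, and once it is granted the corollary is immediate.
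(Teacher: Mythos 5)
Your proposal is correct and is exactly the argument the paper intends: the corollary is stated without proof as an immediate consequence of the Bogomolov proposition, via the standard fact (Lemma 3.1 of \cite{brme}, which the paper itself invokes for the same purpose in the section on logarithmic conormal bundles) that an $\F$-invariant normal crossing $H$ lets one realize $\F$ by a section of $\Omega_X^1(\log H)\otimes L$ with $L=N_\F\otimes\mathcal O(-H)$, so that $L^*=N_\F^*\otimes\mathcal O(H)$. Your local verification that $f\omega$ and $f\,d\omega$ are holomorphic and your identification of the twist are both accurate.
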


By virtue of corollary \ref{integrabilityforlogarithmic} and item \ref{square=0}) of proposition \ref{Z(alpha)nef}, these upper bound remains valid in the numerical setting:
\begin{prop}
 Let $\mathcal F$ a codimension $1$ foliation on a projective manifold $X$. Let $H$ a normal crossing hypersurface invariant by $\F$. Then  $\nu (N_{\F}^ * \otimes \mathcal{O} (H))\leq 1$, where $\nu$ stands for the numerical dimension.\end{prop}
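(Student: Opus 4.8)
The plan is to run the same argument that yields $\nu(c_1(N_\F^*))\le 1$ in Section~\ref{kodaira}, now for the logarithmic class. First I would dispose of the trivial case: if $N_\F^*\otimes\mathcal O(H)$ is not pseudoeffective there is nothing to prove, so assume it is and put
$$\alpha=c_1(N_\F^*\otimes\mathcal O(H))=c_1(N_\F^*)+\{H\}.$$
Since $H$ is a normal crossing hypersurface invariant by $\F$, the foliation is defined by a twisted logarithmic one form $\omega\in H^0(X,\Omega_X^1(\log H)\otimes L)$ with $L=N_\F^*\otimes\mathcal O(H)$, and Corollary~\ref{integrabilityforlogarithmic} furnishes a closed positive $\F$-invariant current $\Xi$ with $\{\Xi\}=\alpha$.

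Next I would exploit the divisorial Zariski decomposition $\alpha=\{N(\alpha)\}+Z(\alpha)$ of this pseudoeffective class. The whole point is that, because $\alpha$ is represented by the invariant current $\Xi$, the positive part is governed by the invariant transverse form: by item~(\ref{square=0}) of Proposition~\ref{Z(alpha)nef}, $Z(\alpha)$ is \emph{nef} and $Z(\alpha)^2=0$. The vanishing $Z(\alpha)^2=0$ is the crux and it is of the same nature as $\{\eta_T\}^2=0$ in the closed case: $Z(\alpha)$ is a non-negative multiple of the class of the invariant metric form $\eta_{T}$, which is pointwise a rank-one positive $(1,1)$ form (locally proportional to $\omega\wedge\overline\omega$), so its self-wedge vanishes already at the level of currents.

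Finally I would translate this cohomological vanishing into the numerical statement. Invoking the theory of the numerical dimension of pseudoeffective classes, the numerical dimension is carried by the positive part, $\nu(\alpha)=\nu(Z(\alpha))$, and for a nef class $\nu(Z(\alpha))$ equals the largest $k$ with $Z(\alpha)^{k}\ne 0$ in $H^{2k}(X,\R)$. As $Z(\alpha)^{2}=0$, this largest $k$ is at most $1$, giving $\nu(N_\F^*\otimes\mathcal O(H))=\nu(\alpha)\le 1$ (with $\nu=0$ precisely in the Euclidean case $Z(\alpha)=0$).

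The main obstacle I anticipate is of bookkeeping rather than of substance, and it is twofold. On the one hand, Proposition~\ref{Z(alpha)nef} is phrased for KLT foliations, whereas the full boundary $\{H\}$ (all coefficients equal to $1$) sits at the log-canonical threshold; I would therefore check that its conclusion ``$Z(\alpha)$ nef with $Z(\alpha)^2=0$'' uses only the existence of the invariant current $\Xi$ and of the rank-one transverse form $\eta_T$ supplied by Corollary~\ref{integrabilityforlogarithmic}, together with the Hodge-index argument behind Proposition~\ref{intersectionresidual}, none of which requires $\lambda_i<1$; hence the same proof applies in the strict log-canonical case as well. On the other hand, the implication $Z(\alpha)^2=0\Rightarrow\nu(\alpha)\le 1$ rests on the identification $\nu(\alpha)=\nu(Z(\alpha))$ and on computing $\nu$ of a nef class by its top non-vanishing self-intersection; this is the only genuinely external input, and it is where the Kähler, pseudoeffective machinery enters.
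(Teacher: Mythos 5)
Your proof is correct and follows essentially the same route as the paper, which simply invokes Corollary~\ref{integrabilityforlogarithmic} together with item~\ref{square=0}) of Proposition~\ref{Z(alpha)nef} to get $Z(\alpha)$ nef with $Z(\alpha)^2=0$ and hence $\nu(\alpha)\leq 1$. The bookkeeping issue you flag (the full boundary $H$ sitting at the log-canonical threshold rather than in the KLT range) is real but is handled in the paper by the parallel statement for strict log-canonical foliations, Proposition~\ref{Z(alpha)neflogcan}, exactly as you anticipate.
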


 \begin{definition}
 Let $\Lambda=(\lambda_1,...,\lambda_p)$ a KLT datum, $A\subset X$ a prime divisor and $\mathcal C$  a connected component of $H$.  
 
 \begin{itemize}
 \item We will denote by $m(A)$ the multiplicity of $A$ along $H_\Lambda$: $m(A)=\lambda_i$ if $A=H_i$, $m(A)=0$ otherwise.
 \item $\mathcal C$ is said to be exceptional if its irreducible components form an exceptional family.
 \end{itemize}
 \end{definition}
 
 \begin{lemma}\label{exceptionalsuppN}
  Let $\mathcal C$ a connected component of $H$, then the following properties are equivalent
 
 \begin{itemize}
 \item $\mathcal C$ is exceptional.
 \item $\mathcal C$ is contained in $\mbox{Supp}\ N$.
 \end{itemize}
 
 \end{lemma}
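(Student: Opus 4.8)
The plan is to prove the two implications separately; the inclusion ``contained in $\mbox{Supp}\ N$ $\Rightarrow$ exceptional'' is immediate, while the converse carries the real content. Throughout, $N=N_\Lambda$ denotes the negative part of the Zariski decomposition $\alpha=\{N\}+Z$ of $\alpha=c_1(\con)+\{H_\Lambda\}$ furnished by Theorem \ref{transmetricklt}, and $\theta$ is a fixed K\"ahler form.

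For the easy direction, suppose $\mathcal C\subset\mbox{Supp}\ N$. By the very definition of the divisorial Zariski decomposition recalled in Section 2, the whole family of prime components of $\mbox{Supp}\ N$ is exceptional, and by item a) of Proposition \ref{Z(alpha)nef} these components are $\F$-invariant. Hence, by the characterization in item \ref{familleexcep} of that proposition, their intersection matrix $(\{N_i\}\{N_j\}\{\theta\}^{n-2})$ is negative definite. The irreducible components of $\mathcal C$ form a subfamily, so their intersection matrix is a principal submatrix of a negative definite matrix, thus again negative definite; applying item \ref{familleexcep} in the reverse direction shows that $\mathcal C$ is exceptional.

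For the converse I would argue by contraposition, assuming $\mathcal C\not\subset\mbox{Supp}\ N$ and showing that $\mathcal C$ cannot be exceptional. The first step is the dichotomy, modelled on Lemma \ref{alternativeforpoles} but carried out for the current $T_\Lambda$ of Theorem \ref{transmetricklt}: a connected component of $H$ either lies in $\mbox{Supp}\ N$ or is disjoint from it, with all its irreducible components sharing the same type. Thus $\mathcal C\not\subset\mbox{Supp}\ N$ forces every component $\mathcal C_i$ of $\mathcal C$ to avoid $\mbox{Supp}\ N$ and to be of type $(1)$. Assuming for the moment that $Z\ne 0$, Lemma \ref{casetype1} then produces a real number $\lambda\ge 0$ and an integral effective divisor $D$ with $\mbox{Supp}\ D=\mathcal C$ and $\{D\}=\lambda Z$. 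Writing $D=\sum_i d_i\mathcal C_i$ with every $d_i>0$ and using $Z^2\{\theta\}^{n-2}=0$ from item \ref{square=0}, I get $\{D\}^2\{\theta\}^{n-2}=\lambda^2\,Z^2\{\theta\}^{n-2}=0$, that is $d^{t}Md=0$ for the nonzero vector $d=(d_i)$ and the intersection matrix $M=(\{\mathcal C_i\}\{\mathcal C_j\}\{\theta\}^{n-2})$. Were $\mathcal C$ exceptional, item \ref{familleexcep} would force $M$ to be negative definite, whence $d^{t}Md<0$, a contradiction.

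The delicate point, and the main obstacle, is to justify the dichotomy, and in particular the assertion that a component of $H$ lying outside $\mbox{Supp}\ N$ is necessarily of type $(1)$, so that Lemma \ref{casetype1} is applicable; this is precisely where the fractional KLT condition $0\le\lambda_i<1$ enters, and I would establish it by transposing the restriction-and-residue analysis of Section 6 (Remark \ref{restrictionistrivial} together with Lemma \ref{alternativeforpoles}) to the twisted potential of $T_\Lambda$, excluding the identically $-\infty$ behaviour along a component carrying no Lelong number. Finally the degenerate case $Z=0$, in which Lemma \ref{casetype1} is unavailable, would be treated apart: here $T_\Lambda=[N]$ is the unique positive current representing $\alpha$, and one checks directly, using the local logarithmic model $\eta_K$ of Proposition \ref{logformnearK} near a component disjoint from $\mbox{Supp}\ N$, that such a component cannot be exceptional, so the equivalence persists in this case as well.
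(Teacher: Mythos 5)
Your easy direction is fine, and your reduction of ``exceptional $\Rightarrow$ contained in $\mbox{Supp}\ N$'' to the existence of an effective divisor $D$ supported on $\mathcal C$ with $\{D\}=\lambda Z$ is a legitimate idea: such a $D$ would indeed contradict negative definiteness, since $\{D\}^2\{\theta\}^{n-2}=\lambda^2 Z^2\{\theta\}^{n-2}=0$. The gap is that the only tool you invoke to produce $D$, Lemma \ref{casetype1}, requires $\mathcal C$ to be of type $(1)$, and the ``dichotomy'' you postulate --- that a component disjoint from $\mbox{Supp}\ N$ is automatically of type $(1)$ --- is false and cannot be obtained the way you suggest. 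A psh potential can be identically $-\infty$ along a divisor on which the current has zero Lelong number (e.g.\ $-(-\log|z|)^{1/2}$), so excluding Lelong numbers does not exclude type $(2)$. Worse, the paper itself exhibits the opposite behaviour: for strict log-canonical foliations with $Z\neq 0$ the boundary components are disjoint from $\mbox{Supp}\ N$, are of type $(2)$ for every representing current, and are nevertheless exceptional (Lemma \ref{negativeintersectionform}). Since your argument nowhere uses the KLT hypothesis in a substantive way, it would contradict that lemma if it were complete. Note also that the existence of your $D$ is essentially equivalent to the non-exceptionality you are trying to prove (the paper extracts such a $D$ \emph{from} non-exceptionality by the Hodge index theorem in the paragraph following this lemma), so deriving it from Lemma \ref{casetype1} without an independent proof of the type $(1)$ property is circular in spirit.

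The paper's proof of the hard direction runs on an entirely different, foliation-theoretic mechanism, which is precisely where the missing content lives. Assuming $\mathcal C$ exceptional and not contained in $\mbox{Supp}\ N$, Remark \ref{restrictionistrivial} makes $L=N_\F^*\otimes\mathcal O(H)$ trivial in restriction to each irreducible component of $\mathcal C$, so the residues of the defining twisted logarithmic form $\omega$ are nowhere vanishing there; on the other hand, item (2) of Proposition \ref{logformnearK} says that an exceptional connected invariant hypersurface must carry extra separatrices, i.e.\ $(\eta_{\mathcal C})_\infty$ strictly contains $\mathcal C$. Writing $U\eta_{\mathcal C}=\omega$ at a point where an extra pole $\{f=0\}$ meets $\mathcal C$ forces $U=fV$ with $V$ holomorphic, hence forces the residue of $\omega$ to vanish on $\{z_i=0\}\cap\{f=0\}$ --- contradicting its non-vanishing. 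Your cohomological route has no substitute for this local residue argument, and as written the central implication remains unproved. (A minor additional point: the $N$ of the lemma is the negative part of $c_1(N_\F^*)+\{H\}$, not of $c_1(N_\F^*)+\{H_\Lambda\}$ as in your setup; the two supports need not coincide a priori.)
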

 
 \begin{proof}
 Assume that $\mathcal C$ is exceptional and suppose by contradiction that $\mathcal C$ is not contained in $\mbox{Supp}\ N$. This implies, using remark \ref{restrictionistrivial} , that $\mathcal C\cap H$ is empty and $L=N_{\F}^*\otimes \mathcal O (H)$ is trivial in restriction to each  irreducible component of $\mathcal C$. 
 The foliation $\F$ is defined by a logarithmic form $\omega\in H^0(X, \Omega(\log H )\otimes L)$ whose residues along the irreducible components of $\mathcal C$ are  
  nowhere vanishing. Pick a point $x\in \mathcal C$ with meets $n_p$ branches of $\mathcal C$ and such that  ${\eta_{\mathcal C}}_\infty$ has extra components (not supported in $\mathcal C$) at $p$ (this is made possible by proposition \ref{logformnearK})  and considered a local holomorphic coordinate patch $z=(z_1,...,z_n),\ z(x)=0$ such the equation of $\mathcal C$ is given by $z_1...z_{n_x}=0$ and such that the  logarithmic  form $\eta_{\mathcal C}$ is expressed near $x$ as 
  
  $$\eta_{\mathcal C}=\sum_{i=1}^{n_x}\alpha_i\frac{dz_i}{z_i} +\frac{df}{f}$$
  where $f=0$ is a reduced equation of the additional poles and the $\alpha_i$'s are {\it positive} real numbers.  On the other hand, 

$$\omega= \omega_0+\sum_{i=1}^{n_x}g_i\frac{dz_i}{z_i}$$ in a trivializing neighborhood of $p$ where $\omega_0$ is an holomorphic $1$ form and the $g_i$'s are units. As $\omega$ and $\eta_{\mathcal C}$ define the same foliation, there exists a meromorphic function $U$ such that $U\eta_{\mathcal C}=\omega$. One can then easily check that $U=fV$ with $V$ holomorphic and this forces the residue $g_i$ to vanish on $\{z_i=0\}\cap \{f=0\}$: a contradiction. The converse implication is obvious.
 \end{proof}
 
 \subsection{The case $Z\not=0$}.

  Let $\{N\}+Z$ be  the Zariski decomposition of $c_1(N_{\F}^* ) + \{H \}$ and assume that $Z\not=0$.

Fix an arbitrarily small $\varepsilon>0$ and
consider an arbitrary non exceptional component $\mathcal C$ of $H$.  Up renumerotation of indices, one can write $\mathcal C=H_1\cup...\cup H_l,\ l\leq p$.  Keeping trak of proposition \ref{Z(alpha)nef}, one can extract from $\mathcal C$, using Hodge's signature theorem, a $\mathbb Q$ effective divisor  $D=\sum_{i=1}^l\lambda_i H_i$ with the following properties
 \begin{enumerate}
 \item For every $1\leq i\leq l$, one has $0<\lambda_i<1$.
 \item there exists 
a positive integer $p$ such that $\lambda_{1}=\frac{1}{p}$.
 \item there exists a real number $\lambda$, $0<\lambda<\varepsilon$ 
such that $\{D\}=\lambda Z$.
 \end{enumerate}
 Combining these observations with lemma \ref{exceptionalsuppN}, one obtains the following statement:
 
\begin{prop}Let $\F$ be a KLT foliation and assume that  $Z\not=0$. 

Then one can find a KLT datum $\Lambda$  with enjoys the following properties:

\begin{enumerate}

\item  $\mbox{Supp}(N_\Lambda)=\mbox{Supp}(N)$.
\item  $Z_{\Lambda}$ is non trivial multiple of $Z$.
\item For any non exceptional connected component $\mathcal C\subset H$, there exists a prime divisor $H_\mathcal C \subset \mathcal C$ and a positive integer $p$ tel que $m(H_\mathcal C)=1-\frac{1}{p}$.
\end{enumerate}

\end{prop}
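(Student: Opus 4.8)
The plan is to assemble the datum $\Lambda$ component-by-component, using Lemma~\ref{exceptionalsuppN} to split the connected components of $H$ into the exceptional ones (equivalently, those contained in $\mbox{Supp}\ N$) and the non exceptional ones. Fix a small $\varepsilon>0$. On each non exceptional component $\mathcal C$ I would use the $\mathbb Q$-effective divisor $D_{\mathcal C}=\sum_{H_i\subset\mathcal C}\lambda_i^{\mathcal C}H_i$ produced in the discussion above (via Proposition~\ref{Z(alpha)nef} and the Hodge signature theorem, followed by a rational rescaling), which satisfies $0<\lambda_i^{\mathcal C}<1$, $\{D_{\mathcal C}\}=\lambda^{\mathcal C}Z$ with $0<\lambda^{\mathcal C}<\varepsilon$, and whose coefficient along one distinguished component $H_{\mathcal C}\subset\mathcal C$ equals $1/p$ for some $p\in\mathbb N^*$. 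I then declare $m(H_i):=1-\lambda_i^{\mathcal C}$ on the components of such a $\mathcal C$, so that $m(H_{\mathcal C})=1-1/p$. On the exceptional components, writing $E$ for the set of indices $i$ with $H_i\subset\mbox{Supp}\ N$ and $\nu_i>0$ for the multiplicity of $H_i$ in $N$, I set $m(H_i):=b_i$ for a rational $b_i$ chosen in the (non-empty, because $\nu_i>0$) interval $(\max(0,1-\nu_i),1)$.

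The crux is to recognise the divisorial Zariski decomposition of $\alpha_\Lambda:=c_1(\con)+\{H_\Lambda\}$. Starting from the reduced decomposition $c_1(\con)+\{H\}=\{N\}+Z$ and subtracting $\sum_i(1-m(H_i))\{H_i\}$, the non exceptional contributions recombine as $\sum_{\mathcal C}\{D_{\mathcal C}\}=(\sum_{\mathcal C}\lambda^{\mathcal C})Z$, giving
$$\alpha_\Lambda=\Big(\{N\}-\sum_{i\in E}(1-b_i)\{H_i\}\Big)+\Big(1-\sum_{\mathcal C}\lambda^{\mathcal C}\Big)Z.$$
If $\varepsilon$ is small enough that $\sum_{\mathcal C}\lambda^{\mathcal C}<1$, the right-hand summand is a strictly positive multiple of the nef class $Z$, hence nef; the left-hand summand is an effective $\mathbb R$-divisor $N_\Lambda$ whose coefficient along $H_i$ is $\nu_i-(1-b_i)>0$ for $i\in E$ and is unchanged (hence still positive) on any component of $N$ not lying on $H$, so $\mbox{Supp}\,N_\Lambda=\mbox{Supp}\,N$ and $\{N_\Lambda\}$ is supported on an exceptional family. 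Because $Z\cdot\{H_i\}=0$ for every component of $N$ (orthogonality, Proposition~\ref{Z(alpha)nef}(d)) and for every non exceptional $H_i$ (Lemma~\ref{alternativeforpoles}(b)), the two summands are orthogonal. By uniqueness of the divisorial Zariski decomposition this is the decomposition of $\alpha_\Lambda$, i.e.\ $N_\Lambda$ and $Z_\Lambda=(1-\sum_{\mathcal C}\lambda^{\mathcal C})Z$ are its negative and positive parts.

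The three assertions then follow immediately: item~(1) from $\mbox{Supp}\,N_\Lambda=\mbox{Supp}\,N$, item~(2) from $Z_\Lambda=(1-\sum_{\mathcal C}\lambda^{\mathcal C})Z$ being a non-trivial multiple of $Z$, and item~(3) from the weight $1-1/p$ assigned to the distinguished divisor $H_{\mathcal C}$ of each non exceptional $\mathcal C$; along the way one checks that $\Lambda$ is a genuine rational KLT datum, since $0\le m(H_i)<1$ and $\alpha_\Lambda=N_\Lambda+Z_\Lambda$ is pseudoeffective. I expect the only substantial point to be the extraction of $D_{\mathcal C}$, which is exactly where the KLT hypothesis enters: on a non exceptional $\mathcal C$ one needs $\{D_{\mathcal C}\}$ to be a positive multiple of $Z$, and this is possible precisely because the intersection form on the components of $\mathcal C$ is \emph{not} negative definite --- the reverse of the strict log-canonical situation of Lemma~\ref{negativeintersectionform}, and also the content of Lemma~\ref{casetype1} for type $(1)$ components. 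Since that extraction has already been carried out above, the remaining argument is the bookkeeping of coefficients and the appeal to uniqueness of the Zariski decomposition.
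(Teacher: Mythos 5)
Your proposal is correct and follows essentially the same route as the paper: extract from each non exceptional component a $\mathbb Q$-effective divisor $D_{\mathcal C}$ with $\{D_{\mathcal C}\}=\lambda^{\mathcal C}Z$, $0<\lambda^{\mathcal C}<\varepsilon$ and one coefficient equal to $\frac{1}{p}$, combine with Lemma~\ref{exceptionalsuppN} to handle the components inside $\mbox{Supp}\ N$, and identify the resulting decomposition of $c_1(N_\F^*)+\{H_\Lambda\}$ by exceptionality of the negative part and (modified) nefness of the positive part. The paper leaves the coefficient bookkeeping and the appeal to uniqueness of the divisorial Zariski decomposition implicit; you have simply made them explicit.
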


We are now ready to prove the main theorem of this section (wich corresponds to theorem \ref{thprincipallog} of the introduction).

\begin{THM}
 Let $\F$ be a KLT foliation on $X$ projective. Assume moreover that $Z\not=0$. Then,
\begin{enumerate}
\item Either $\F$ is tangent to the fibers of an holomorphic map $f:X\rightarrow S$ onto a Riemann surface and the abundance principle holds: $$\mbox{Kod}(N_{\mathcal F}^*\otimes\mathcal O (H))=\nu (N_{\mathcal F}^*\otimes\mathcal O (H))=1.$$ This case occurs in particular whenever there exists at least one non exceptional component $\mathcal C\in H$,

\item
either $\mathcal F={\Psi}^*\mathcal G$ where $\Psi$ is a morphism of analytic varieties between $X$ and the quotient $\frak{H}={{\D}^n}/{\Gamma}$ of a polydisk, ($n\geq 2)$ by an irreducible lattice $\Gamma\subset {(\mbox{Aut}\ \D)}^n$ and $\mathcal G$ is one of the $n$ tautological foliations on $\frak{H}$.

\end{enumerate}

\end{THM}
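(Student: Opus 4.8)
The plan is to mirror, over the quasi-projective manifold $X\setminus(H\cup\mbox{Supp}\ N_\Lambda)$, the three-stage argument (dynamics, Kodaira dimension, factorization) already carried out for Theorem \ref{thprincipal} in Sections \ref{sectiondynamics}--\ref{nonabundant}. First I would fix the KLT datum $\Lambda$ furnished by the proposition preceding this statement, so that $\mbox{Supp}\ N_\Lambda=\mbox{Supp}\ N$, the class $Z_\Lambda$ is a nonzero multiple of $Z$, and every non exceptional component of $H$ carries a coefficient $1-\tfrac1p$. Since $Z_\Lambda\neq0$, Theorem \ref{transmetricklt} puts us in the hyperbolic type: $\F$ inherits an invariant transverse hyperbolic structure off $H\cup\mbox{Supp}\ N_\Lambda$, together with a developing map to $\D$ and a monodromy representation $r\colon\pi_1\bigl(X\setminus(H\cup\mbox{Supp}\ N_\Lambda)\bigr)\to PSL(2,\mathbb R)$ with image $G$ and closure $\overline G$.

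Next I would transport the dynamical discussion of Section \ref{sectiondynamics} to this logarithmic framework. The elimination of an affine monodromy group (the argument that uses $Z_\Lambda\neq0$) applies unchanged, so that $\overline G$ is either a cocompact lattice or the whole of $PSL(2,\mathbb R)$, exactly as in Proposition \ref{StructureG} and Theorem \ref{leafdynamic}. If $\overline G=G$ is a lattice, $\F$ is tangent to the fibration $X\to\D/G$ and abundance follows as in case (\ref{latticecase}) of Theorem \ref{abundancecases}: orbifold pluricanonical sections on the curve $\D/G$ lift, through the residues $>1$ recorded in Proposition \ref{logformnearK}, to sections of a power of $\con\otimes\mathcal O(H)$, giving $\mbox{Kod}=\nu=1$. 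This is conclusion (1).

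I would then settle the parenthetical claim that a non exceptional component $\mathcal C\subset H$ forces case (1). By Lemma \ref{exceptionalsuppN} such a $\mathcal C$ is not contained in $\mbox{Supp}\ N$, so Lemma \ref{alternativeforpoles} gives $Z\{H_{\mathcal C}\}=0$ while its components are \emph{not} an exceptional family; inserting this into the Hodge-index computation of Proposition \ref{Z(alpha)nef}, precisely as in Lemma \ref{casetype1}, produces an effective $\mathbb Q$-divisor $D$ with $\mbox{Supp}\ D=\mathcal C$ and $\{D\}$ a positive multiple of $Z$. Thus $Z$ is rationally effective with $Z^2=\{D\}^2=0$, which together with the rationality of the Zariski decomposition is meant to upgrade $\nu(\con\otimes\mathcal O(H))=1$ to $\mbox{Kod}(\con\otimes\mathcal O(H))=1$; the Bogomolov-type corollary stated above then forces $\F$ to be a meromorphic fibration. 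Hence, in the remaining dense case I may assume that \emph{every} component of $H$ is exceptional, i.e. $H\subset\mbox{Supp}\ N_\Lambda$ --- in particular there are no cusp components and the lattice produced below will be cocompact, so that no Baily--Borel compactification is needed here.

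Finally, in the dense case I would reproduce the proof of Theorem \ref{thprincipal}. The rationality of the residues of $\eta_H$ (Proposition \ref{logformnearK}) makes the local monodromy of the attached rank-two regular-singular connection quasi-unipotent, and projectively finite, around its polar locus $(\nabla)_\infty$; Theorem \ref{nocurvefactorization}, whose proof only invoked quasi-minimality and the real form of $r$, shows that $r$ does not factor through a curve, so the Corlette--Simpson rigid alternative supplies an algebraic map $\Phi\colon X\setminus(\nabla)_\infty\to\frak H$ onto a polydisk Shimura variety $\frak H=\D^n/\Gamma$ (necessarily with $n\geq2$, the curve case being excluded) satisfying $r=\Phi^*\rho_i$. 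The passage from representations to foliations then follows Section \ref{nonabundant} verbatim: one picks distinguished first integrals of the $\Phi^*\F_j$ and of $\F$, resolves the $PSL(2,\mathbb C)$-normalizer ambiguity by conjugating the complex structure on one $\mathbb H$-factor, and extends the resulting map across $H\cup(\nabla)_\infty$ by the Riemann extension theorem using the finiteness of the local monodromy, yielding $\Psi\colon X\to\frak H$ with $\F=\Psi^*\mathcal G$, which is conclusion (2). The main obstacle is this last logarithmic extension: one must control the developing map along $\mbox{Supp}\ N_\Lambda$, where the hyperbolic transverse structure genuinely (if mildly) degenerates, and verify that the Corlette--Simpson factorization persists as a morphism of analytic varieties on all of $X$ rather than merely on the locus where the transverse structure is non-degenerate.
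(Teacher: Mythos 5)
Your overall architecture (fix the KLT datum $\Lambda$, reduce to the case where every component of $H$ is exceptional, then rerun the dynamics/Kodaira/Corlette--Simpson machinery of Sections \ref{sectiondynamics}--\ref{nonabundant}) is the same as the paper's. The gap is in how you dispose of the non exceptional components, which is precisely the step where the paper has to do new work. Your argument extracts from a non exceptional component $\mathcal C$ an effective $\mathbb Q$-divisor $D$ with $\{D\}=\lambda Z$ and then asserts that $Z$ being rationally effective with $Z^2=0$ ``is meant to upgrade'' $\nu=1$ to $\mbox{Kod}=1$. That implication is false in general: an effective nef divisor with vanishing self-intersection need not have Kodaira dimension $1$ (the section of self-intersection zero on the non-split ruled surface over an elliptic curve has $h^0(nC_0)=1$ for all $n$, so $\kappa=0$ while $\nu=1$). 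What you actually get is $\mbox{Kod}(\con\otimes\mathcal O(H))\geq 0$, and to promote this to $\mbox{Kod}=1$ you would need the trichotomy ``lattice / dense / affine'' for the monodromy group together with the fact that the dense case forces $\mbox{Kod}=-\infty$ --- but establishing that trichotomy in the presence of non exceptional components is exactly the point at issue, so the shortcut is circular.

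The paper's route is dynamical and requires a genuine modification of the covering construction, which your proposal skips. A non exceptional component $\mathcal C$ is not connected to the rest of $X$ by any exterior separatrix (every separatrix through a point of $\mathcal C$ is a local branch of $\mathcal C$, by Proposition \ref{logformnearK}), so the completeness and fiber-connectedness arguments of Section \ref{fibers connectedness}, which rely on connecting leaves meeting $\mbox{Supp}\ N$, do not ``apply unchanged''. The fix is to choose in each non exceptional component a prime divisor $H_{\mathcal C_i}$ with $m(H_{\mathcal C_i})=1-\frac{1}{p_i}$, to build an infinite Galois covering of $(X\setminus(H\cup\mbox{Supp}\ N))\cup\bigcup_i(H_{\mathcal C_i}\setminus\mbox{Sing}\ \F)$ ramified to order $p_i$ over $H_{\mathcal C_i}$ (so that the local multivaluate first integral $f_i^{1/p_i}$ becomes univalued and the developing map is submersive there), and to add to the equivalence relation $\mathcal R$ the identification of points lying over the same value in $H_{\mathcal C_i}$. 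Each non exceptional component then becomes a \emph{closed} modified leaf, and Remark \ref{closedallclosed} forces all modified leaves to be closed, i.e.\ $\F$ is a fibration; the lifting of orbifold pluricanonical sections then gives $\mbox{Kod}=\nu=1$. Once this is in place, the remainder of your proposal (the all-exceptional case and the passage from representations to foliations) does follow the paper.
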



%

\begin{proof}
 Let $\Lambda$ be a KLT datum satisfying the properties given by the previous proposition and consider the transversely hyperbolic structure attached to this latter.
 The strategy remains essentially the same as in the proof of theorem \ref{thprincipal}. In particular, the first step consists in relating the monodromy of the foliation inherited from the hyperbolic  transverse structure  with its dynamical properties. Assume firstly that each component of $H$ is exceptional and therefore it is contained in $\mbox{Supp N}$. In this case, we do exactly the same  and prove the same as the situation "$N_{\F}^*$ pseudoeffective", starting from the fibers connectedness of the developing map studied in paragraph \ref{fibers connectedness}, then establishing the same results proved in sections \ref{kodaira}  and \ref{nonabundant} wich finally leads to the same conclusion.
 
 One needs to modify slightly the previous construction when there exists non exceptional components. Indeed, on such a component $\mathcal C$, the hyperbolic transverse metric is likely to degenerate; moreover $\mathcal C$ is not connected by any exterior leaf in the sense that every separatrix of $\F$ passing through $x\in\mathcal C$ is a local branch of $\mathcal C$ ( consequence of proposition \ref{logformnearK}). In section \ref{psefcon} , one had to remove $\mbox{supp}\ (N)$ (where the transverse metric also degenerates) to define properly the developing map $\rho$. However, if one looks thoroughly at the arguments developped in the same section, the fact that $\rho$ is both complete and has  "connected fibers" (in the sense of theorem \ref{connect}) heavily relies on the existence of connecting exterior leaf  in $\mbox{supp}\ N$.  If one comes back to our current setting, the developing map is now defined on some covering of $X\setminus (\mbox{supp} N\cup H)$.  The point is that one can extend the basis of this covering and in that way get rid of the inexistence of connecting leaves. This can be done as follows: Let $\{{\mathcal C}_1,...,{\mathcal C}_r\}$ the set of non exceptional components. For each ${\mathcal C}_i$, select  a prime divisor $H_{{\mathcal C}_i} \subset \mathcal C$  such that $m(H_{{\mathcal C}_i })=1-\frac{1}{p_i}$  for some positive integer $p_i$. By proposition \ref{logformnearK}, $\F$ admits locally  around each $H_{{\mathcal C}_i}$ and away from $\mbox{Sing}\ \F$ a (multivaluate) section of $\Ihyp$ of the form ${f_i}^{\frac{1}{p_i}}$ where $f_i=0$ is a suitable reduced equation of $H_{{\mathcal C}_i}$. Set $\tilde{H_i}= H_{{\mathcal C}_i}\setminus\mbox{Sing}\ \F$ and $A=\bigcup \tilde{H_i}$ (note that $\mbox{Sing}\ \F\cap H_{{\mathcal C}_i}$ is precisely the union of intersection loci of $H_{{\mathcal C}_i}$ with the other components of ${\mathcal C}_i$). 

Using this observation, one can construct an infinite Galoisian ramified covering
 
 $$\pi:X_0\rightarrow (X\setminus (H\cup\mbox{Supp}\ N))\cup A. $$

such that $\pi^*\Ihyp$ becomes a constant sheaf and wich ramified exactly over each $H_{{\mathcal C}_i}$ with order $p_i$. In particular the developping map $\rho$ (defined as a section of $\pi^*\Ihyp$) is submersive along $\pi^{-1}(A)$. This allows us to prove that $\rho$ is complete (its image is the whole Poincar\'e disk) using exactly the same arguments as \cite{to}, th\'eor\`eme 3.

Keeping the same notations as subsection \ref{fibers connectedness}, one defines a binary relation $\mathcal R$ on $X_0$ setting $p\mathcal R q$ if
\begin{enumerate}
 \item ${\mathcal C}_p={\mathcal C}_q$

or

\item $\rho (p)=\rho (q)$ and there exists a connected component $K$ of $\mbox{Supp}\ N$ such that ${(\eta_K)}_\infty\cap\pi ({\mathcal C}_p)\not=\emptyset$ and ${(\eta_K)}_\infty\cap \pi({\mathcal C}_q)\not=\emptyset$ near $K$

 or 
\item There exists $i\in\{1,...,r\}$ such $\rho (p)=\rho (q)\in H_{{\mathcal C}_i}$   
\end{enumerate}

 We denote by $\overline{\mathcal R}$ the equivalence relation generated by $\mathcal R$.
 
 \noindent Similarly to proposition \ref{fibers connectedness} , one can prove that the map $\overline{\rho}: X/\overline{\mathcal R }$ onto $\D$ is one to one. The notion of modified leaf remains the same as definition \ref{modifiedleaf} except that we include now as modified leaves each non exceptional component. Taking into account this slight modificaton the statement of theorem \ref{leafdynamic} remains valid. In particular, one can conclude that $\F$ is a fibration provided  there exists at least one non exceptional component. 
 
 Following the same line of argumentaton as in the begining of the proof of theorem \ref{abundancecases}, one can also conclude that $\mbox{Kod}(N_{\mathcal F}^*\otimes\mathcal O (H))=1$.
 
 \end{proof}
\subsection{The case $Z=0$}
Here, we aim at proving the analogue of theorem \ref{abundancecases} item 1), namely

\begin{thm}

 Let $\F$ be a KLT foliation on $X$ projective. Assume moreover that $Z=0$. Then,
$$\mbox{Kod}(N_{\mathcal F}^*\otimes\mathcal O (H))=\nu (N_{\mathcal F}^*\otimes\mathcal O (H))=0.$$ 
\end{thm}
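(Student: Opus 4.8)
The plan is to prove the two equalities separately. Since $Z=0$ by hypothesis, the positive part of the divisorial Zariski decomposition of $\alpha:=c_1(\con)+\{H\}$ vanishes, so $\alpha=\{N\}$ is purely negative; by the characterization of the numerical dimension recalled in Section \ref{kodaira} and in Proposition \ref{Z(alpha)nef} (where $\nu\in\{0,1\}$, the value $0$ being exactly the euclidean type $Z=0$), this gives $\nu(\con\otimes\mathcal O(H))=0$. The general Nakayama inequality $\mathrm{Kod}(L)\le\nu(L)$ quoted in Section \ref{kodaira} then yields $\mathrm{Kod}(\con\otimes\mathcal O(H))\le 0$, so the whole content is to prove $\mathrm{Kod}(\con\otimes\mathcal O(H))\ge 0$, i.e. that $L:=\con\otimes\mathcal O(H)$ is $\Q$-effective. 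As a preliminary I would use Lemma \ref{exceptionalsuppN} to reduce to the case where every component of $H$ lies in $\mathrm{Supp}\,N$ (all components exceptional): a surviving non-exceptional component would, by the mechanism of the case $Z\neq 0$, make $\F$ a fibration, and then the bound $\nu=0$ already caps $\mathrm{Kod}$ at $0$, so the conclusion persists.

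By the rationality statement of Proposition \ref{Z(alpha)nef} the decomposition is rational, so $N$ is a genuine $\Q$-effective divisor with $c_1(L)=\{N\}$. Choosing $m$ with $mN$ integral, I would write $L^{\otimes m}=\mathcal O(mN)\otimes E$ with $E\in\Pic^{\tau}(X)$ numerically trivial; since $\mathcal O(mN)$ is effective, it suffices to show that $E$ is \emph{torsion}, for then a further power of $L$ is linearly equivalent to an effective divisor and $\mathrm{Kod}(L)=0$. To produce the flat structure on $E$, I would invoke Theorem \ref{loginvariant} in the case $Z=0$: the defining twisted logarithmic form satisfies $\partial_{h^*}\omega=0$, which (after a ramified covering adapted to the fractional residues $1-\tfrac1p$ to make $N$ integral and absorb the logarithmic poles, as in the proof of Theorem \ref{loginvariant}) exhibits $\F$ as defined by a closed logarithmic $1$-form with values in the unitary flat bundle $E$, with polar locus inside $H\subset\mathrm{Supp}\,N$ and zero divisor governed by $N$.

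From here the argument runs exactly parallel to Theorem \ref{abundancecases}, case (1). The flat closed form places the character attached to $E^{*}$ in the Green--Lazarsfeld jump locus $\Sigma^1(X)$, and by Simpson \cite{sim} in the projective case (Campana \cite{camp} in the K\"ahler one) the isolated points of $\Sigma^1(X)$ are torsion characters. Arguing by contradiction, suppose $E$ is not torsion; then its character is non-isolated, so by Beauville's structure theorem \cite{beau} there is, after replacing $X$ by a finite \'etale cover, a fibration $p\colon X\to B$ onto a compact curve of genus $\le 1$ with $E$ pulled back from $B$. As in \emph{loc. cit.}, $\omega$ then restricts to a closed holomorphic $1$-form on the preimages of simply connected opens of $B$; considering the line $D=\langle\omega|_F\rangle$ spanned in $H^1(F,\C)$ of a general fibre $F$, which is invariant under the Gauss--Manin monodromy $\Gamma$, the semisimplicity of the identity component of $\overline{\Gamma}^{\mathrm{Zar}}$ (Deligne \cite{del}, Corollaire 4.2.8, valid in the K\"ahler setting by \cite{camp}) forces the induced linear monodromy $G_L$ to be finite — contradicting the non-torsionness of $E$. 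Hence $E$ is torsion, $L$ is $\Q$-effective, and $\mathrm{Kod}(\con\otimes\mathcal O(H))=0$.

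The main obstacle I anticipate is not the Green--Lazarsfeld machinery itself but the bookkeeping needed to pass from the logarithmic, fractional (KLT) data to a setting where the compact results of \cite{sim}, \cite{camp}, \cite{beau} apply verbatim: one must choose the ramified covering $\psi$ so that $\psi^{*}\omega$ becomes an honest (non-logarithmic) twisted form with numerically trivial coefficient bundle and controlled zero divisor, and then verify that torsionness descends along $\psi$ and that $\mathrm{Kod}$ is preserved under this generically finite map. The secondary point, flagged above, is to confirm the structural reduction $H\subseteq\mathrm{Supp}\,N$ via Lemma \ref{exceptionalsuppN}, so that no boundary component obstructs the identification $c_1(L)=\{N\}$ with $N$ supported on $\F$-invariant hypersurfaces.
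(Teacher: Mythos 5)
Your proposal follows essentially the same route as the paper: reduce to $H\subseteq\mathrm{Supp}\,N$, use the rationality of the negative part and a suitable ramified covering to exhibit $\F$ as defined by a holomorphic form valued in a numerically trivial (flat) bundle $E$, and conclude that $E$ is torsion via the Green--Lazarsfeld/Simpson/Beauville/Deligne argument of Theorem \ref{abundancecases}, case (1). The only wobble is your fallback for a putative non-exceptional component of $H$ (``$\nu=0$ caps $\mathrm{Kod}$ at $0$'' does not by itself yield $\mathrm{Kod}=0$), but that branch is vacuous: with $Z=0$ the class $c_1(\con\otimes\mathcal O(H))=\{N\}$ is represented only by the current $[N]$, which forces $H\subseteq\mathrm{Supp}\,N$ exactly as the paper observes.
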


\begin{proof}
In this situation, $N$ is an effectve divisor over $\mathbb Q$. From the fact that the integration current $[N]$ is the only closed positive current wich represents the class $\{N\}$, one easily infers that $H$ is contained in $\mbox{Supp}\ N$. Let $N_i$, $i=1,...,l$ the irreducible components of $\mbox{Supp}\ N$. One can then exhibit a KLT datum $\Lambda=(\lambda_1,...,\lambda_p)\in {\mathbb Q }^p$ such that 
$$c_1(N_{\F}^*)+\{H_\Lambda\}= \{N_\Lambda\}$$

\noindent where $N_\Lambda\leq N$ is $\mathbb Q$ effective. This KLT datum induces a transverse euclidean metric in the sense of theorem \ref{transmetricklt}. Let $G\subset\Im(\C)$ the monodromy group attached to this tranverse structure (recall that is defined as the image of the representation $r:\ \pi_1(X\setminus \mbox{Supp}\ N)\rightarrow \Im (\C))$ associated to the locally constant sheaf $\Ieucl$.
One proceed on the same way that the proof of theorem \ref{abundancecases}  ($\varepsilon=0$): When the generic leaf of $\F$ is compact, the linear part of $G$ is finite and abundance holds for $ N_{\mathcal F}^*\otimes\mathcal O (H)$. Otherwise, by performing a suitable covering ramified over $\mbox{Supp}\ N$, one can suppose, after desingularization, that the foliation is given by an {\it holomorphic} section of $\Omega_X^1\otimes L$  (thanks to the fact that $N_\lambda-H_\lambda=\sum_i \mu_i N_i$ where each $\mu_i$ is a rational $>-1$) where $L$ is a flat line bundle such that $\mbox{Kod}\ (N_{\mathcal F}^*\otimes\mathcal O (H))=0$ whenever $L$ is torsion. This actually holds for exactly the same reasons that {\it loc.cit}.

\end{proof}

Note also that the statement of corollary \ref{boundedfamily} remains valid:

\begin{prop}\label{boundedfamilyKLT}
 Let  $\F$ a KLT foliation on a compact k\" ahler manifold $X$. Assume that $\mathcal F$ is not a fibration, then the family of irreducible hypersurfaces invariant by $\mathcal F$ is exceptional and thus has cardinal bounded from above by the Picard number $\rho (X)$.
\end{prop}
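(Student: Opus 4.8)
The plan is to transpose, essentially verbatim, the argument used for Corollary \ref{boundedfamily} to the present KLT framework, replacing the tools of \cite{to} by the logarithmic counterparts already established in this section. I would argue by contradiction: suppose the family of all irreducible $\F$-invariant hypersurfaces is \emph{not} exceptional. Then, exactly as in the non-logarithmic case, one can extract a connected $\F$-invariant hypersurface $K=K_1\cup\cdots\cup K_r$ whose family of irreducible components $\{K_1,\dots,K_r\}$ fails to be exceptional.

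The next step is to feed this $K$ into Proposition \ref{logformnearK}, which applies to an arbitrary finite family of invariant hypersurfaces and produces, on a small neighborhood of $K\cup\mbox{Sing}\ \F$, the canonical logarithmic form $\eta_K$ with non-negative residues. Since $K$ is connected and $\{K_1,\dots,K_r\}$ is non-exceptional, item (2) of that proposition asserts that the polar locus $(\eta_K)_\infty$ coincides with $K$ along $K$. Because $\F$ has only elementary singularities, the poles of $\eta_K$ are exactly the local separatrices; hence no separatrix of $\F$ through a point of $K$ escapes $K$. In the language of Section \ref{sectiondynamics} this is precisely the condition forcing $K$ to be a closed modified leaf in the sense of Definition \ref{modifiedleaf}: the equivalence relation $\overline{\mathcal R}$ never pushes one off $K$, so the modified leaf through $K$ is $K$ itself and is closed.

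I would then invoke the dynamical classification, which was checked in the proof of the theorem treating the case $Z\neq0$ above to remain valid in the KLT setting once the non-exceptional components are incorporated into the modified leaves: Theorem \ref{leafdynamic} together with Remark \ref{closedallclosed} applies, so the existence of a single closed modified leaf places us in case (1), whence $\F$ is tangent to a fibration, contradicting the standing hypothesis. This establishes that the family of invariant hypersurfaces is exceptional. For the cardinality bound, item \ref{familleexcep}) of Proposition \ref{Z(alpha)nef} gives that an exceptional family has negative definite intersection matrix $(\{A_i\}\{A_j\}\{\theta\}^{n-2})$; hence the classes $\{A_i\}$ are linearly independent in $H^{1,1}(X,\R)$, since a nontrivial relation $\sum c_i\{A_i\}=0$ would yield $\sum_{i,j}c_ic_j\{A_i\}\{A_j\}\{\theta\}^{n-2}=0$ with the $c_i$ not all zero, contradicting negative definiteness. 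Consequently the number of irreducible invariant hypersurfaces cannot exceed the Picard number $\rho(X)$.

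The genuinely delicate point, on which I would spend most care, is the third step: verifying that the dynamical trichotomy and the ``one closed modified leaf forces a fibration'' principle, proved with reference to $\mbox{Supp}\ N$ and the boundary $H$, really govern an \emph{arbitrary} connected invariant hypersurface $K$ — which need not lie in $\mbox{Supp}\ N\cup H$ — while keeping accurate track of the non-exceptional boundary components in the construction of $\overline{\mathcal R}$ and of the developing map. Once this bookkeeping is confirmed, the remaining arguments are routine.
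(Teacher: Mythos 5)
Your proposal is correct and follows essentially the same route the paper intends: the paper states Proposition \ref{boundedfamilyKLT} as a direct transposition of Corollary \ref{boundedfamily}, whose proof extracts a connected non-exceptional invariant hypersurface $K$, uses the logarithmic analogue (Proposition \ref{logformnearK}, item (2)) to see that $(\eta_K)_\infty$ coincides with $K$ along $K$, identifies $K$ as a closed modified leaf, and concludes via Remark \ref{closedallclosed} that $\F$ is a fibration, a contradiction. Your additional justification of the cardinality bound via negative definiteness of the intersection matrix, and your flagged caveat about arbitrary $K$ not lying in $\mbox{Supp}\ N\cup H$, are consistent with (indeed more careful than) what the paper records.
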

 

 






  
  \section{Dealing with strict Log-Canonical foliations}
  
  \subsection{The case $Z=0$}\label{caseZ=0}.
  
  The next result shows that the abundance principle holds true for this class of foliations.
  
  \begin{THM}
  Let $\F$ a strict log-canonical foliation on a projective manifold $X$. Assume moreover that $Z=0$. Then the Kodaira dimension of $N_{\F}^*\otimes {\mathcal O} (H)$ is equal to zero.
  \end{THM}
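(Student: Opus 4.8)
The plan is to run, in the logarithmic euclidean setting, the same argument as in the euclidean case ($\varepsilon=0$) of Theorem~\ref{abundancecases}, the only genuinely new feature being the boundary part $H_b$. Since $Z=0$ we have $\nu(\con\otimes\mathcal O(H))=0$, and as $\mbox{Kod}\leq\nu$ it suffices to prove $\mbox{Kod}(\con\otimes\mathcal O(H))\geq 0$. Here $N$ is $\mathbb Q$-effective and, after a suitable ramified covering and desingularization as in the KLT case, I may assume $N$ is an integral effective divisor; then $c_1(\con\otimes\mathcal O(H))=\{N\}$ and the line bundle $E:=\con\otimes\mathcal O(H)\otimes\mathcal O(-N)$ is numerically trivial. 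Because $X$ is projective, $E$ carries a flat unitary structure, and $\mbox{Kod}(\con\otimes\mathcal O(H))=0$ will follow as soon as $E$ is shown to be a torsion line bundle (the effective divisor $N$ then provides the section, and $\mbox{Kod}$ cannot exceed $0$).

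By the analysis of \ref{caseZ=0logcan} the foliation is defined by a flat-closed twisted logarithmic $1$-form with values in $E^{-1}$, whose rotational monodromy is precisely the linear part $G_L\subset U(1)$ of the euclidean monodromy $r\colon\pi_1\to\Im^0$ attached to $\Ieucl$; exactly as in Theorem~\ref{abundancecases}, one has $\mbox{Kod}(\con\otimes\mathcal O(H))=0$ if and only if $G_L$ is finite, i.e. if and only if $E$ is torsion. If the generic leaf is compact then $\F$ is a fibration and $G_L$ is finite directly, so I may assume the generic leaf is non-compact.

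The crux is to show that $E$ extends to a flat line bundle on the whole of $X$. Near every irreducible component of $H$ the foliation is defined by a genuine, untwisted closed logarithmic form: along the boundary $H_b$ this is the form of (\ref{euclideanatinfinity}), namely $\frac{i}{\pi}\omega\wedge\overline\omega$ with $\omega=\sum_i\alpha_i\frac{dz_i}{z_i}$, while along the components contained in $\mbox{Supp}\ N$ it is given by an elementary first integral $\prod f_i^{\lambda_i}$. In both cases the local euclidean developing map is a sum $\sum_i\alpha_i\log z_i$ (resp. $\sum_i\lambda_i\log f_i$), so the local monodromy is a pure translation and its linear part is trivial. Consequently the monodromy of $E$ around each component of $H$ is trivial, $E$ extends to a flat unitary line bundle on the projective manifold $X$, and its monodromy character factors through $\pi_1(X)$.

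With $E$ now an honest flat bundle on $X$, the argument of Theorem~\ref{abundancecases} (case $\varepsilon=0$) applies: if $E$ were non-torsion, the twisted closed logarithmic form would exhibit $E$ as a non-isolated point of the Green--Lazarsfeld set $\Sigma^1(X)$, whence by Simpson (and Campana in the K\"ahler case) a fibration $p\colon X\to B$ over a curve of genus $\leq 1$ with $E\in p^*H^1(B,\mathbb C^*)$; restricting the form to a smooth fibre, the Gauss--Manin monodromy preserves the line it spans, and Deligne's semisimplicity forces $G_L$ to be finite, a contradiction. Hence $E$ is torsion and $\mbox{Kod}(\con\otimes\mathcal O(H))=0$. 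I expect the main obstacle to be precisely the boundary $H_b$: its residues $\alpha_i$ are arbitrary non-zero complex numbers, so, unlike in the non-logarithmic and KLT cases, one cannot kill the logarithmic poles by a finite ramified cover and reduce to a holomorphic defining form. The point that makes the argument go through is that torsionness of $E$ is governed by the rotational part $G_L$, which is insensitive to $H_b$; verifying the cohomological input $H^1(X,E)\neq 0$ from a merely logarithmic (rather than holomorphic) twisted form, together with the residue compatibility along $H$, is the delicate step.
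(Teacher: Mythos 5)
Your strategy is to port the compact-manifold argument of Theorem \ref{abundancecases} (case $\varepsilon=0$) essentially verbatim: make $N$ integral by a ramified cover, set $E=\con\otimes\mathcal O(H)\otimes\mathcal O(-N)\in{\Pic}^\tau(X)$, and show $E$ is torsion via the Green--Lazarsfeld set $\Sigma^1(X)$ and Simpson's theorem on its isolated points. The gap is exactly the step you flag at the end and then do not carry out: the input to that machinery is $H^1(X,\mathcal E)\not=0$ for the unitary local system $\mathcal E$ on the \emph{compact} manifold $X$. In the non-logarithmic case this comes for free from $H^0(X,\Omega_X^1\otimes E)\not=0$ via the Hodge decomposition of $H^1(X,\mathcal E)$ for unitary local systems. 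Here the defining form is only a section of $\Omega_X^1(\log H)\otimes\mathcal O(-N)\otimes E^*$ with \emph{non-zero residues along the boundary part $H_b$} (and $H_b\not=\emptyset$, since otherwise the foliation would be KLT), so it produces a class on $X\setminus H$, not on $X$; the residues along $H_b$ are the obstruction to descending that class to the compact manifold, and they cannot be killed by a further ramified cover because the $\alpha_i$ in (\ref{euclideanatinfinity}) are arbitrary non-zero complex numbers. Hence $\mathcal E\in\Sigma^1(X)$ is never established and the Simpson/Beauville/Campana step has nothing to bite on. (A secondary slip: near $\mbox{Supp}\ N$ the distinguished first integrals are $\prod f_i^{\lambda_i}$, whose local monodromy is the rotation $w\mapsto e^{2i\pi\lambda_i}w$, not a translation; this is harmless once $N$ is integral, but your blanket claim that the linear part of the local monodromy is trivial around every component of $H$ only holds after that reduction.)

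The paper avoids this difficulty by staying on the quasi-projective side throughout: it works with the full affine representation $\varphi:\Gamma=\pi_1(X\setminus(\mbox{Supp}\ N\cup H))\rightarrow\Im(\C)$, observes that small loops around the components of $H_b$ generate an infinite group of translations, so that $G$ has no fixed point and therefore $H^1(\Gamma,\C_{\varphi_L})\not=0$ as group cohomology, and then invokes Theorem 4.1 of \cite{coupe}, which is precisely the quasi-projective substitute for the Green--Lazarsfeld/Simpson package: it factors $\varphi$ through an orbifold curve $C$. After blowing up at infinity and extending to a fibration $\tilde f:X\rightarrow\overline{C}$, the sheaf $\Ieucl$ becomes constant near a generic fiber, forcing $\F$ to be tangent to the fibration and contradicting the non-compactness of the generic leaf. (The identification of finiteness of $G_L$ with closedness of leaves is itself obtained by redoing the developing-map and connectedness-of-fibers analysis with the complete-at-infinity bundle-like metric of lemma \ref{bundlelikemetricatinfinity}, another ingredient your sketch bypasses.) To salvage your route you would have to replace $\Sigma^1(X)$ by the cohomology jump loci of the quasi-projective manifold $X\setminus H$, which is in substance what the Cousin--Pereira theorem already packages.
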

  
  \begin{proof}
  We will make use of informations collected in paragraph \ref{caseZ=0logcan}. We will also closely follow the presentation  made in \cite{coupe} as well as some results of {\it loc.cit} (especially paragraph 3.2, section 4 and references therein).
  From the existence of the euclidean transverse structure (namely, the locally constant sheaf $\Ieucl$), one inherits a representation 
  
      $$\varphi:\Gamma\rightarrow \Im{(\C)}$$
      
      setting $\Gamma=\pi_1(X\setminus (\mbox{Supp}\ N\cup H))$.  
      
      If $\gamma$ belongs to $\Gamma$, then we can write $\varphi(\gamma)(z)=\varphi_{L}(\gamma) +\tau(\gamma)$ where $$\varphi_L:\Gamma\rightarrow S^1\subset {\C}^*=GL(1,\C)$$ is a homomorphism and 
      
           $$\tau:\Gamma\rightarrow \C$$
           is a $1$-cocycle  with values in ${\C}_{\varphi_L}$, that is $\C$ with its structure of $\Gamma$ module  induced by the linear representation $\varphi_L$.  
   
   As usual, we will call the image  $G$ of $\varphi$ the monodromy group of the foliation. 
   Clearly, abundance holds whenever the linear part $G_L=\mbox{Im}(\varphi_L)$ of $G$ is finite.

   
As previously, one can defined a developping map $\rho$ attached to $\Ieucl$, that is $\rho$ is a section of $\pi^*\Ieucl$ where $\pi:X_0\rightarrow X\setminus (\mbox{Supp}\ N\cup H))$ is a suitable Galoisian covering.

One can then define the same equivalence relation $\overline{\mathcal R}$ as in section \ref{fibers connectedness} and conclude similarly that the induced map $$\overline{\rho}:X_0/\overline{\mathcal R}\rightarrow \C$$ 

\noindent is one to one and onto. Actually the proof proceeds on the same way, using lemma \ref{etaklogZ=0} and replacing compactness of $X$ by the existence of a complete bundle-like metric, as stated by lemma \ref{bundlelikemetricatinfinity}.

Like in section \ref{sectiondynamics}, this enables us to claim that $G$ faithfully encodes the dynamic of $\F$ and in particular that $G_L$ is finite whenever the topological closure of leaves are hypersurfaces of $X$.

       Henceforth, we proceed by contradiction assuming $|G_L|=+\infty$. 
         One can notice in addition that small loops around the components of $H_b$ give rise to infinite additive monodromy. In other words, the subgroup of translations of $G$ is also infinite. In particular, $G$ has no fixed points. At the cohomological level, this means that
     
     $$H^1(\Gamma,{\C}_{\varphi_L})\not=0.$$
     
     From a result directly borrowed from \cite{coupe} (Theorem 4.1), one can deduce that there exists a morphism $f$ of $X\setminus (\mbox{Supp}\ N\cup H))$ onto an orbifold curve $C$ and a representation ${\varphi}_C$ of the orbifold fundamental group of $C$ in $\Im{(\C)}$  such that the following diagram commutes

$$\xymatrix{ 
    \Gamma \ar[r]^\varphi \ar[d]_{f_*} & \Im{(\C)} \\ \pi_1^{orb} (C) \ar[ru]^{{\varphi}_C}  }$$
     
  Up performing some suitable blowing-up at infinity, one can assume that $f$ extends to an holomorphic fibration $$\tilde{f}:X\rightarrow\overline{C}.$$
Looking at the diagram above, one can observe that the sheaf $\Ieucl$ extends through the neighborhood of a generic fiber $\tilde{F}$ as a constant sheaf. In particular, the foliation admits an {\it holomorphic} first integral on such a neighborhood. This latter is necessarily constant on $\tilde{F}$ by compactness of the fibers. Hence, one obtains that $\F$ is tangent to the fibration, whence the contradiction.
\end{proof}




\vskip 7 pt

\subsection{The case $Z\not=0$}

This last possible situation requires some little bit more involved analysis.
Let $\mathcal C\subset H_b$  a boundary connected component and ${D}_1,...,{D}_l$ its irreducible component admitting repectively the reduced equations $f_1=0,..,f_l=0$.

On a neighborhood of ${D}_k$ the foliation is represented by a twisted one form $\omega^k$ valued in a line bundle trivial on ${D}_k$, wich locally expresses as $$\omega^k={\omega_0} +\sum_{i=1}^l\lambda_i\frac{df_i}{f_i}$$ where $\omega_0$ is holomorphic and  for each $i$, $\lambda_i$ is a non vanishing holomorphic functions on $f_i=0$.  

For notational convenience, we will assume that $D_k\cap D_j$ is connected (maybe empty) for every $k,l$.
Note that the quotient of residues $\lambda_{kj}=\frac{\lambda_k}{\lambda_j}$ on the non empty intersections ${D}_k\cap{D}_j$ is intrinsically defined as a non zero complex number and also that $\lambda_{lk}=\frac{1}{\lambda_{kl}}$ .Setting $\lambda_{kl}=0$ when ${D}_k\cap{D}_l=\emptyset$, this leads to the following intersection property:

    $$\mbox{For every}\ k,\ {\sum_{j=1}^l\lambda_{kj}D_j}.D_k=0.\label{vanishingintersection}$$

\begin{lemma}\label{cyclecomponents}
 Let $k\in\{1,...,l\}$. Then, there exists $p\geq 2$ integers $\{i_1,...,i_p\}\in\{1,...,l\}$ such that 

\begin{enumerate}

\item $k=i_1=i_p$
\item $D_{i_j}\cap D_{i_{j-1}}\not=\emptyset $ for every $2\leq j\leq p$.
\item $\prod_{j=1}^{p-1}\lambda_{{i_j}{i_{j+1}} }\not=1$.\label{productnot=1}
\end{enumerate}
\end{lemma}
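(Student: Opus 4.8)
The plan is to reinterpret the statement as a non-exactness assertion about a multiplicative $1$-cochain on the dual graph of $\mathcal C$, and then to derive that non-exactness from the negative-definiteness of the intersection form on $\mathcal C$ provided by Lemma \ref{negativeintersectionform}.

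First I would introduce the dual graph $G$ whose vertices are the components $D_1,\dots,D_l$ and whose edges join $D_i$ to $D_j$ exactly when $D_i\cap D_j\neq\emptyset$; since $\mathcal C$ is connected, $G$ is connected. The intrinsically defined ratios $\lambda_{ij}\in\mathbb C^*$ (with $\lambda_{ji}=\lambda_{ij}^{-1}$) constitute a $\mathbb C^*$-valued cochain on the edges of $G$, and the quantity $\prod_{j=1}^{p-1}\lambda_{i_ji_{j+1}}$ appearing in item (3) is precisely its holonomy along the closed walk $i_1\to\cdots\to i_p$. Thus the lemma amounts to saying that through every vertex $k$ there passes a closed walk of nontrivial holonomy. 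Because the holonomy takes values in the abelian group $\mathbb C^*$ and $G$ is connected, it suffices to produce a single closed walk (based anywhere) of nontrivial holonomy: given such a walk $\gamma$ based at $w$ and a path $\delta$ from $k$ to $w$, the walk $\delta\gamma\delta^{-1}$ is based at $k$ and, by commutativity (and since reversing a walk inverts its holonomy), has the same holonomy as $\gamma$. Equivalently, I must show that $(\lambda_{ij})$ is \emph{not} a coboundary, i.e. that there is no assignment $\mu:\{1,\dots,l\}\to\mathbb C^*$ with $\lambda_{ij}=\mu_i/\mu_j$ on every edge.

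The heart of the argument is then a short piece of linear algebra. Suppose for contradiction that $\lambda_{ij}=\mu_i/\mu_j$. Writing $m_{kj}=\{D_j\}\{D_k\}\{\theta\}^{n-2}$ for the symmetric real intersection matrix, I would insert this into the vanishing intersection relation stated just above, $\sum_{j}\lambda_{kj}\,m_{kj}=0$ for every $k$. Since $m_{kj}=0$ whenever $D_j\cap D_k=\emptyset$, and since the diagonal term is $\lambda_{kk}m_{kk}=1\cdot m_{kk}$ consistent with $\mu_k/\mu_k=1$, the substitution $\lambda_{kj}=\mu_k/\mu_j$ is legitimate on every nonvanishing term; after dividing by $\mu_k\neq0$ one obtains $\sum_j m_{kj}\,\nu_j=0$ for all $k$, where $\nu_j=1/\mu_j\neq0$. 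In matrix form this reads $M\nu=0$ with $\nu\neq0$. But by Lemma \ref{negativeintersectionform} the matrix $M=(m_{kj})$ is negative definite, hence invertible (and invertible over $\mathbb C$, as one sees by splitting $\nu$ into real and imaginary parts), so $\nu=0$, a contradiction. Therefore $(\lambda_{ij})$ is not a coboundary.

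Finally I would assemble the pieces: by the standard characterization of coboundaries on a connected graph (fix a spanning tree and define $\mu$ by path-products from a base vertex; a cochain is a coboundary precisely when every cycle has trivial holonomy), non-exactness of $(\lambda_{ij})$ yields a closed walk with holonomy $\neq1$, and transporting it to $k$ as above produces the required sequence $i_1=k,\dots,i_p=k$ with $p\geq2$, consecutive adjacency $D_{i_j}\cap D_{i_{j-1}}\neq\emptyset$, and $\prod_{j}\lambda_{i_ji_{j+1}}\neq1$. The only genuinely delicate point is the linear-algebra step: one must account correctly for the diagonal terms $\lambda_{kk}=1$ and for the vanishing of the off-support intersection numbers, so that the coboundary substitution is valid term by term; the graph-cohomology bookkeeping and the abelian transport to $k$ are then routine.
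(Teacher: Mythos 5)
Your proof is correct and follows essentially the same route as the paper: the paper also negates the statement to obtain non-zero constants $\mu_i$ with $\lambda_{ij}=\mu_i/\mu_j$ on every non-empty intersection, substitutes into the relation $\sum_j\lambda_{kj}\{D_j\}\{D_k\}\{\theta\}^{n-2}=0$, and contradicts the negative definiteness of the intersection matrix from Lemma \ref{negativeintersectionform}. You merely make explicit the graph-cohomology bookkeeping (spanning tree, transport of the cycle to the vertex $k$) that the paper leaves implicit.
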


\begin{proof}
Assume the contrary. This means that there exists $l$ non zero complex numbers $\lambda_1,  \lambda_l$ such that $\lambda_{ij}=\frac{\lambda_i}{\lambda_j}$ as soon as $D_i\cap D_j\not=\emptyset$. Hence, for every $j$, we obtain that

$$(\sum_{i=1}^l \lambda_i \{D_i\})\{D_j\}=0$$

which contradicts the negativity of the intersection matrix $\{D_i\}\{D_j\}\{\theta^{n-2}\}$.
\end{proof}

\subsubsection{Reduction to dimension 2}

We begin by the following local statement. 
\begin{prop} Let $\F$ a foliation defined on $({\mathbb C}^n,0)$ by some meromorphic form $\omega=\omega_0+\lambda_1\frac{dz_1}{z_1}+\lambda_2\frac{dz_2}{z_2}$, with $\omega_0$ holomorphic, $\lambda_1,\lambda_2\in {\C}^*$. Then there exists a germ of submersive holomorphic map $u=(u_1,u_2)$ of $({\mathbb C}^n,0)$ onto $({\mathbb C}^2,0)$ such that $\F=u^* {\mathcal F}'$ where ${\mathcal F}'$ is a foliation on $({\mathbb C}^2,0)$ defined by ${\omega}'={\omega_0}'+\lambda_1\frac{du_1}{u_1}+\lambda_2\frac{du_2}{u_2}$. Moreover, there exists units $v_i$ such that $u_i=z_iv_i$.

\end{prop}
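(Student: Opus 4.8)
The plan is to argue by induction on $n$, reducing the dimension by one at each step by quotienting out a carefully chosen holomorphic vector field tangent to $\F$. For $n=2$ there is nothing to prove: take $u=\mathrm{id}$ and $v_1=v_2=1$. So assume $n\geq 3$ and write $\omega_0=\sum_{i=1}^n a_i\,dz_i$ with the $a_i$ holomorphic, so that $\omega=(a_1+\lambda_1/z_1)\,dz_1+(a_2+\lambda_2/z_2)\,dz_2+\sum_{i\geq 3}a_i\,dz_i$. First I would produce a nonsingular vector field tangent both to the foliation and to the two polar hypersurfaces, namely
\[
V=\partial_{z_n}-\frac{z_1 a_n}{\lambda_1+z_1 a_1}\,\partial_{z_1}.
\]
Since $\lambda_1\neq 0$ the denominator is a unit, so $V$ is holomorphic; its $z_1$-component is divisible by $z_1$ and its $z_2$-component vanishes, so $V$ is tangent to $\{z_1=0\}$ and to $\{z_2=0\}$; a one-line check gives $\omega(V)=a_n-a_n=0$, i.e. $V$ is tangent to $\F$; and $V(z_n)=1$, so $V$ is nonsingular.

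Next I would show that $\F$ descends through the quotient by $V$. Integrability gives $\omega\wedge d\omega=0$; contracting this identity with $V$ and using $i_V\omega=0$ yields $\omega\wedge i_V d\omega=0$, and since $i_V d\omega$ is a $1$-form this means $i_V d\omega\in\langle\omega\rangle$, whence $L_V\omega=d(i_V\omega)+i_V d\omega=i_V d\omega\in\langle\omega\rangle$. Thus the local flow of $V$ preserves $\F$. As $V$ is nonsingular, the flow-box theorem provides a submersion $p\colon(\C^n,0)\to(\C^{n-1},0)$ whose fibres are the orbits of $V$, and the invariance $L_V\omega\in\langle\omega\rangle$ shows that $\omega$ is, up to a holomorphic multiplicative factor, the pull-back $p^*\omega''$ of a logarithmic $1$-form $\omega''$ defining a foliation $\F''$ on $(\C^{n-1},0)$ with $\F=p^*\F''$. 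Concretely the orbits of $V$ are the common level sets of the $n-1$ independent first integrals $(u_1,z_2,z_3,\dots,z_{n-1})$, where $u_1=z_1 v_1$ with $v_1=e^{w_1}$ and $w_1$ solves the linear equation $V(w_1)=-V(z_1)/z_1$ (whose right-hand side is holomorphic) normalized to vanish on $\{z_n=0\}$; then $v_1$ is a unit, so $u_1=z_1 v_1$ and $u_2:=z_2$ have the required multiplicative shape, and these functions serve as coordinates downstairs.

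Finally I would check that $\omega''$ is again of the prescribed form, so that the induction hypothesis applies. Its poles lie along the images $\{\tilde z_1=0\}$, $\{\tilde z_2=0\}$ of $\{z_1=0\}$, $\{z_2=0\}$; the residue of $\F''$ along $\{\tilde z_i=0\}$ pulls back under the dominant submersion $p$ to the residue of $\F$ along $\{z_i=0\}$, namely the constant $\lambda_i$, and being a function whose pull-back by a submersion is constant it equals $\lambda_i$ itself. Normalizing $\omega''$ to carry these residues, the difference $\omega''-\lambda_1\,d\tilde z_1/\tilde z_1-\lambda_2\,d\tilde z_2/\tilde z_2$ is holomorphic, i.e. $\omega''$ has exactly the form of $\omega$ in one fewer variable. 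Applying the induction hypothesis to $\omega''$ yields a submersion $q\colon(\C^{n-1},0)\to(\C^2,0)$ with $\F''=q^*\F'$ and components of the form $\tilde z_i\cdot(\text{unit})$; composing, $u=q\circ p$ is the desired submersion onto $(\C^2,0)$, its components are $z_i\cdot(\text{unit})$, and $\F=u^*\F'$.

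The construction of $V$ and the descent are routine once integrability is exploited as above; the step requiring genuine care is the last one, namely the bookkeeping ensuring that the descended form keeps \emph{constant} residues equal precisely to $\lambda_1,\lambda_2$ and that the first integrals retain the normalization $u_i=z_i v_i$ with $v_i$ a unit throughout the whole induction. That residue-constancy argument, together with verifying that the composite of the successive submersions remains a submersion with the correct multiplicative structure on its components, is where I expect the main technical effort to lie.
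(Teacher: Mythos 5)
Your proposal is correct and follows essentially the same route as the paper: produce a nonsingular holomorphic vector field tangent to the foliation and to both polar hypersurfaces, straighten it to eliminate one variable, and induct on $n$. In fact your vector field $V=\partial_{z_n}-\frac{z_1a_n}{\lambda_1+z_1a_1}\partial_{z_1}$ corrects a small slip in the paper, whose field $Z=\partial_{z_n}-\frac{z_1a_n}{\lambda_1}\partial_{z_1}$ annihilates $\Omega=z_1z_2\omega$ only up to the term $-\lambda_1^{-1}z_1^2z_2a_1a_n$; your explicit residue bookkeeping for the descended form and the multiplicative normalization $u_i=z_iv_i$ supplies detail the paper leaves implicit.
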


\begin{proof}
Suppose that $n>2$. Consider the holomorphic $1$ form $\Omega=z_1z_2\omega=z_1z_2\omega_0+\lambda_1z_2dz_1+\lambda_ 2z_1 dz_2$. Note that the vanishing locus $\mbox{Sing}\ (\Omega)$ of $\Omega$ is precisely the codimension $2$ analytic subset $\{z_1=0\}\cap \{z_2=0\}$. One can write $\omega_0=\sum_i a_i dz_i$ where the $a_i$'s are holomorphic functions and remark that 
$Z=\frac{\partial}{\partial z_n}-\frac{z_1 a_n}{\lambda_1}\frac{\partial}{\partial z_1}$ is a non vanishing vector field  belonging to the kernel of $\Omega$ and tangent to $\mbox{Sing}\ (\Omega)$. Consequently, there exists a diffeomorphism $\phi\in\mbox{Diff}({\C}^n,0)$ preserving the axis $\{z_1=0\}$ and $\{z_2=0\}$ such that $\phi_* Z=\frac{\partial}{\partial z_n}$. One can thus assume that $ Z=\frac{\partial}{\partial z_n}$. Keeping in mind that $\Omega$ is Frobenius integrable ($\Omega\wedge d\Omega=0$), this easily implies, up multiplying by a suitable unit, that $\Omega$ does not depend on the $z_n$ variable. We then obtain the result by induction on $n$.
\end{proof}

Pick a point $x\in \mathcal C$ contained in the pure codimension $2$ locus of $\mbox{Sing}\ \F$.
i.e: $x$ belongs to some intersection $D_i\cap D_j$ and $x\notin D_k$ for each $k\notin\{i,j\}$.  

Following the previous proposition, in a suitable local coordinates system $z=(z_1,...,z_n)$, $z(x)=0$, $D_i=\{z_1=0\}$, $D_j=\{z_2=0\}$ the foliation $\F$ is defined by an holomorphic one form only depending of the $(z_1, z_2)$ variable 

$$\Omega=\lambda_1 z_2dz_1+\lambda_2z_1dz_2 + z_2z_1\omega_0$$

with $\lambda_1$, $\lambda_2$ two non zero complex numbers whose quotient $\frac{\lambda_1}{\lambda_2}$ is precisely $\lambda_{ij}$ and $\omega_0$ an holomorphic one form.




Thanks to theorem \ref{loginvariant} and \ref {strictZnot0},   $\F$ admits near $p$ two invariant currents $T$ and $\eta$  (recall that this latter is a priori only defined in the complement of $D_i\cup D_j=\{z_1z_2=0\}$). 

It is worth keeping in mind that the local potentials of $T$ are automatically equal to $-\infty$ on $D_i\cup D_j$ despite the fact that $T$ has no non vanishing Lelong numbers along $D_i\cup D_j$.


We are therefore reduced to study a germ  foliation  in $({\C}^2,0)$ defined by a logarithmic form 
$$\omega=\omega_0+\lambda_1\frac{dz_1}{z_1}+\lambda_2\frac{dz_2}{z_2}$$

with non vanishing residues $\lambda_1$, $\lambda_2$ such that there exists in addition a {\it psh} function $\psi$ satisfying the following properties:

\begin{enumerate}
\item $T=\frac{i}{\pi}\ddbar \psi$ is an $\F$ invariant current. Moreover  $\psi=-\infty$ on the axis $z_1z_2=0$ but $T$ has no atomic part (i.e:Lelong numbers) along $\{z_1z_2=0\}$.\label{-inftyonaxis}
\item $\eta=\frac{i}{\pi}e^{2\psi}\omega\wedge\overline{\omega}$ is an $\F$ invariant current in the complement of $\{z_1z_2=0\}$.

\end{enumerate}
\begin{lemma}
Let $\F$ a germ a foliation in $({\C}^2,0)$ satisfying the above properties. Then $\F$ is linearizable; more precisely, there exists a germ a biholomorphism $\Phi\in\mbox{Diff}({\C}^2,0)$ preserving each axis such that $\Phi^*\F$ is defined by the form $\lambda_1\frac{dz_1}{z_1}+\lambda_2\frac{dz_2}{z_2}$.  Moreover, $\lambda_{ij}=\frac{\lambda_1}{\lambda_2}$ is {\bf positive real number}.
\end{lemma}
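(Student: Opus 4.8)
The plan is to first extract, from the invariant current $T$, a rigidity constraint on the ratio $\lambda_{ij}=\lambda_1/\lambda_2$, and only afterwards to address the linearization. First I would use that $T$ is residual (no Lelong numbers along $z_1z_2=0$) and that the singularity is elementary in order to invoke Proposition \ref{intersectionresidual}: its proof shows that the local potential may be chosen \emph{locally constant on the leaves}. Thus one may take $\psi$ to be a function of the multivalued first integral $F=z_1^{\lambda_1}z_2^{\lambda_2}u$ (with $u$ a unit) of $\mathcal F$, and single-valuedness of $\psi$ forces this function to be invariant under the monodromy group $\Gamma\subset\mathbb C^*$ generated by the multipliers $e^{2\pi i\lambda_1}$ and $e^{2\pi i\lambda_2}$ attached to small loops around the two axes.

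Next I would prove $\lambda_1,\lambda_2\in\mathbb R$, hence $\lambda_{ij}\in\mathbb R$. If some element $\beta\in\Gamma$ had $|\beta|\neq 1$, then along the orbit $\beta^nF_0\to 0$ the invariant potential would stay constant, contradicting the fact that $\psi=-\infty$ on the axis $\{F=0\}$ (property \ref{-inftyonaxis}), where $\psi\to-\infty$ as $F\to 0$. Hence $\Gamma\subset S^1$ and $\mathrm{Im}\,\lambda_i=0$. For positivity I would first treat the case $\lambda_{ij}\notin\mathbb Q$: then the rotations in $\Gamma$ are dense on a circle, forcing $\psi$ to be radial, $\psi=g(\log|F|)$ with $g$ convex by plurisubharmonicity in the transverse direction. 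Since $\log|F|=\lambda_1\log|z_1|+\lambda_2\log|z_2|$, approaching the two axes sends $\log|F|$ to $(\mathrm{sign}\,\lambda_1)\infty$ and $(\mathrm{sign}\,\lambda_2)\infty$; as $\psi\to-\infty$ on both axes, a convex $g$ would have to tend to $-\infty$ at both $+\infty$ and $-\infty$ if the signs were opposite, which is impossible. Hence $\lambda_1,\lambda_2$ share the same sign and $\lambda_{ij}>0$. The rational case $\lambda_{ij}\in\mathbb Q$ I would reduce to the same convexity argument after passing to the finite cover (or orbifold leaf space) on which $F$ becomes a genuine first integral.

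Finally, for the linearization, I would exploit the transverse hyperbolic structure provided by theorems \ref{loginvariant} and \ref{transmetricklt}: the holonomy of $\mathcal F$ along the two generating loops lies in $\mathrm{Aut}(\mathbb D)$, and the reality just established shows that both generators are elliptic, with modulus-one multipliers. Since they commute, they share a common fixed point $p_0\in\mathbb D$, so in the disk coordinate $w$ centered at $p_0$ they become honest rotations $w\mapsto e^{2\pi i\lambda_j/\lambda_\bullet}w$. The developing map is then linear in $w$ and comparable to $F$ near each axis; matching it with $z_1^{\lambda_1}z_2^{\lambda_2}$ produces coordinates $u_i=z_iv_i$ with $v_i$ units, in which $\omega=\lambda_1\frac{du_1}{u_1}+\lambda_2\frac{du_2}{u_2}$, giving the biholomorphism $\Phi$ preserving each axis.

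I expect the main obstacle to be this last step: one must control the resonant (rational ratio) case and, more importantly, upgrade the purely transverse, one-dimensional statement ``the holonomy is a rotation'' into an honest analytic change of coordinates $u_i=z_iv_i$ on the two-dimensional germ $(\mathbb C^2,0)$ compatible with the logarithmic form. By contrast, the reality and positivity of $\lambda_{ij}$ should be essentially forced, via the maximum principle and convexity, once the potential has been made constant on the leaves.
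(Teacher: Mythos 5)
Your argument is circular at its foundation, and it bypasses the one step that actually requires work. You begin by writing the potential $\psi$ as a function of the multivalued first integral $F=z_1^{\lambda_1}z_2^{\lambda_2}u$, invoking Proposition \ref{intersectionresidual} on the grounds that the singularity is elementary. But an elementary singularity is by definition one admitting a first integral $f_1^{\mu_1}f_2^{\mu_2}$ with \emph{positive real} exponents -- which is essentially the conclusion of the lemma (the paper even records this afterwards as a remark: ``every singularity of $\F$ is actually elementary''). A priori $\F$ is given by $\omega_0+\lambda_1\frac{dz_1}{z_1}+\lambda_2\frac{dz_2}{z_2}$ with $\omega_0$ holomorphic and $\lambda_i\in\C^*$ arbitrary; no such multivalued first integral exists unless the singularity is already linearizable. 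The paper instead gets reality of $\lambda=\lambda_1/\lambda_2$ as a black box from \cite{br} (a non-atomic invariant positive current that charges every neighborhood of the singular point forces a real ratio), \emph{then} splits into cases. For $\lambda<0$ linearization is automatic (Poincar\'e domain or Poincar\'e--Dulac with two separatrices), and the contradiction is obtained not by your convexity argument but by noting that $F=z_1z_2^{\lambda}$ maps a punctured neighborhood \emph{onto} all of $\C$ with connected fibers, so $e^{\psi}$ descends to a bounded subharmonic function on $\C$, hence is constant -- contradicting $\psi=-\infty$ on the axes. Your convexity-of-$g$ argument is a plausible substitute in the irrational case once constancy on leaves is granted, but that constancy should be derived from the closedness of $\eta$, not from Proposition \ref{intersectionresidual}.

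The more serious gap is in your final step. In the Siegel case $\lambda>0$, linearizability of the foliation is equivalent (by \cite{MM}) to linearizability of the holonomy germ $h(z)=e^{2i\pi\lambda}z+\dots$ on a transversal, and this is \emph{not} automatic: for $\lambda$ irrational violating a Brjuno-type condition there exist non-linearizable (Cremer) germs tangent to the rotation. Your claim that the holonomy generators are ``elliptic elements of $\mathrm{Aut}(\D)$ sharing a fixed point, hence honest rotations'' does not apply: the transverse hyperbolic structure lives only on the punctured transversal (the potential is $-\infty$ at the puncture and the metric degenerates there), so the holonomy is a germ of diffeomorphism preserving a singular metric, not a globally defined isometry of the disk with an interior fixed point. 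This is precisely where the paper must work: it invokes Perez-Marco's hedgehogs \cite{pe} for a putative non-linearizable $h$, and uses invariance of the length functional $l_g$ attached to $\eta$ to produce rectifiable curves of length tending to $0$ joining two fixed circles, whose Hausdorff limit would be a nondegenerate continuum inside a polar set -- a contradiction adapted from \cite{brsurf}. You correctly flag the resonant case and the passage from transverse to two-dimensional coordinates as issues, but the latter is standard; the missing idea is the hedgehog argument ruling out small divisors, and without it the lemma is not proved.
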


\begin{proof}
Putting the two properties mentioned in (\ref{-inftyonaxis})  together, one can infer that $T$ is a positive current  without atomic part on the separatrices $\{z_1z_2=0\}$ which is non zero on every neighborhood of $x$. According to \cite{br}, this implies that $\frac{\lambda_1}{\lambda 2}$ is a real number.
One has to discuss the following cases wich might occur and wich depend on the value of $\lambda=\frac{\lambda_1}{\lambda_2}$.

\begin{itemize}
\item $\lambda<0$. One knows (see \cite{brbir}) that $\F$ is linearizable; indeed, either $-\lambda\notin {\mathbb N}^* \cup{1\over{\mathbb N}^*}$ and then belongs to the Poincar\'e domain, either the foliation  singularity admits a Poincar\'e Dulac's normal form which is linearizable thanks to the existence of two separatrices. In particular, one can assume that $\F$ is defined by $\omega= \lambda_1\frac{dz_1}{z_1}+\lambda_2\frac{dz_2}{z_2}$, a closed logarithmic $1$ form. By assumptions, there exists $\psi$ {\it psh} such that $\eta= \frac{i}{\pi}e^{2\psi}\omega\wedge\overline{\omega}$ is closed as a current. This easily implies that $\psi$ is actually constant on the leaves. 

Note also that $F=z_1{z_2}^\lambda$ is a multivaluate first integral of $\F$ with connected fibers on $U\setminus\{z_2=0\}$ where $U$ is a suitable neighborhood of the origine and that $F(U\setminus\{z_2=0\}=\C$ (thanks to $\lambda<0$). Therefore, there exists a subharmonic function $\tilde{\psi}$ defined on $\C$ such that $\tilde{\psi}\circ F=\psi$.  Keeping in mind that a bounded subharmonic function on $\C$ is actually constant and applying this principle to $e^{\tilde{\psi}}$, one obtains that $\psi$ is constant, wich obviously contadicts property (\ref{-inftyonaxis}).
\item $\lambda>0$. The linearizability of the foliation  is equivalent to that of the germ of holonomy diffeomorphism 

      $$h(z)=e^{2i\pi\lambda}z+\ hot,\ \lambda=\frac{\lambda_1}{\lambda_2}$$ evaluated on a tranversal $\mathcal T=(\C,0)$ of $\{z_1=0\}$ (see \cite{MM}). This one does not hold automatically and is related to diophantine property of $\lambda$ (Brujno's conditions). 
 
 Assume that $h$ is non linearizable and let $\D_r=\{|z|< r\}$, $r>0$ small enought.
 We want to show that this leads to a contradiction with the existence of the invariant current $\eta$.
  Following Perez-Marco (\cite{pe}), there exists a totally invariant compact subset  $K_r$ with empty interior (the so-called hedgehog) 
  which verifies the following properties:  
 \begin{enumerate}
 \item $K_r$ is totally invariant by $h$: $h(K_r)=K_r$ and is maximal with respect to this property.
 \item $K_r\cap\partial{{\D}_r}\not=\emptyset$.
 
 \end{enumerate}
 
 \vskip 5 pt
 Let $0<r<r'$ for which the above properties hold.
 
The following argument is adapted from  (\cite{brsurf}, preuve du lemme 11, p.590) .\\

\noindent 

 \noindent Let $\gamma$  be a rectifiable curve contained in  $\mathcal T\setminus\{0\}$. its length with respect to the metric form $g=e^{\varphi}\frac{|dz|}{{|z|}}$ is given by

$$l_g(\gamma)=\int_\gamma \frac{e^{\varphi (z)}}{{|z|}}d\mathcal H$$

\noindent where $\mathcal H$ denotes the dimension 1 Hausdorff measure.



 Because that $\eta$ is invariant by the foliation, one gets

$$l_g(\gamma)=l_g(h\circ\gamma).$$

Using the existence and properties of $K_r$, $K_{r'}$ stated above, one can easily produce a sequence of rectifiable curves

         $$\gamma_n:[0,1]\rightarrow \{r\leq |z| \leq r'\}$$
  
  
 \noindent with $|\gamma_n(0)|=r, |\gamma_n(1)|=r'$ and such that  $l_g(\gamma_n)$ converges to $0$. Up extracting a subsequence, $\gamma_n$ converge, with respect to the Hausdorff distance, to a compact connected subset $K$ not reduced to a point contained in a polar subset, which provides, like in \cite{brsurf}, {\it loc.cit} the sought contradiction.

         

\end{itemize}

\end{proof}

\begin{remark}
We have shown that every singularity of $\F$ is actually elementary.
\end{remark}


\subsubsection{Dulac's transform and extension of the positive current  $\eta_T$ extends through the boundary components}

Let $\F$ a foliation given on a neighborhood of $0\in {\C}^2$ by the $1$ logarithmic form 

$$\omega=\lambda_1\frac{dz_1}{z_1}+\lambda_2\frac{dz_2}{z_2},\  \lambda_1\lambda_2>0\ (\mbox{Siegel domain})$$

Let $T_1$ be the transversal $\{z_2=1\}$ and $T_2$ the transversal $\{z_1=1\}$. The foliation induces a multivaluate holonomy map, the {\it Dulac's transform}, defined as

$$\begin{array}{cc} d_\lambda:& T_1\rightarrow T_2
\\&z_1 \rightarrow z_2={z_1}^\lambda\end{array}$$

where $\lambda=\frac{\lambda_1}{\lambda_2}$.

One now makes use of property \ref{productnot=1} stated in lemma \ref{cyclecomponents} above.
  Let $D$ a component of $\mathcal C$. Up  renumerotation, one can assume that 

$D=D_1$ and that there exists $D_2, ..., D_p$. such that  $D_{i}\cap D_{i-1}\not=\emptyset $  for every $2\leq i\leq p$, $D_1\cap D_p\not=\emptyset$. $\prod_{j=1}^{p-1}\lambda_{{i_j}{i_{j+1}} }\not=1$.

For every $i\in\{1,...,p\}$, let  $T_i\simeq (\C,0)$ a germ a transversal to $\F$ in a regular point $m_i$ of $D_i$.


Equip $(T_1,m_1)$ with an holomorphic coordinate $z$, $z(m_1)=0$ . Denote by $S_\eta$ the invariant positive current $\eta_T$ restricted to $T_1$. For any Borel subset $B$ of $T_1$, let $\nu_B (S_\eta)$ be the mass of $S_\eta$ on $B$, i.e: $\nu_B (S_\eta)=\int_B\frac {i}{\pi} \frac{e^{2\varphi (z)}dz\wedge d\overline z}{{|z|}^2}$. Note that this mass is finite whenever $B$ is compact subset of $T_1\setminus {\{ m_1\}} $.

When composing Dulac's transforms, one get a multivaluate holonomy tranformation of $T_1$ wich can be written as $h(z)= z^r U(z)$ where $r=\lambda_{12}\lambda_{23}.....\lambda_{p1}$  is positive real number strictly greater than $1$ (up reversing the order of composition) and $U$ is a bounded unit on each angular sector such that $\lim\limits_{z\to 0}U(z)$ exists and equal to $1$ up doing a change of variables $z\rightarrow \lambda z$. Up iterating $h$ and thus replacing $r$ by $r^n$, one can also assume that $r>2$. 

By a slight abuse of notation, we will identify $h$ with one of its determinations on a sector of angle $<2\pi$, that is $S_{\theta_1,\theta_2}= \{z\in T_1|\theta_1<\arg z<\theta_2,\ 0<\theta_2-\theta_1< 2\pi\}$. 

Consider $\rho>0$ small enought and for every integer $n>0$, let $A_n\in T_1$ be the annulus $\{\frac{\rho^{r^{n+1}}}{2}<|z|<2\rho^{r^n}\}$.

For $\theta=\theta_2-\theta_1\in ]0,\pi[$, consider the annular sector $A_{n,\theta_1,\theta_2}= S_{\theta_1,\theta_2}\cap A_n$.

One can easily check that $h(A_{n,\theta_1,\theta_2})$ contains an annular sector of the form $A_{n+1,{\theta_1}',{\theta_2}'}$ with ${\theta_2}'-{\theta_1}'>2({\theta_2}-{\theta_1})$. 

From these observations and the fact that $S_\eta$ is invariant by $h$, one deduces that $\nu_{A_n} (S_\eta)\leq \frac{1}{2^{n-1}}\nu_{A_1} (S_\eta)$. As $\{0<|z|<\rho\}=\bigcup_n A_n$, $S_\eta$ has finite mass on  $T_1\setminus\{0\}$.    

Consequently, the closed positive current  $\eta_T$ well defined only a priori on $X\setminus H_b$ extends trivially on the whole $X$ as a closed positive current. This extension will be also denoted by $\eta_T$.

\subsubsection{Existence of an invariant transverse hyperbolic metric, monodromy behaviour at infinity and consequences on the dynamics}

Every singularity of the foliation has elementary type and then, according to proposition \ref{intersectionresidual} and remark \ref{extensiontocodimension3},

$$\{\eta_T\}\{S\}=0$$

\noindent for every $\F$ invariant closed positive current $S$.

The positive part $Z$ of $c_1(\con \otimes\mathcal O (H))$ is represented by an $\F$ invariant positive current, one can then  make use of Hodge's index theorem to conclude that 

$$\{\eta_T\}=Z$$
up normalization of $\eta_T$ by a multiplicative positive number.

This last property enables us to give an analogous statement to that of proposition \ref{Z(alpha)nef}:

\begin{prop}\label{Z(alpha)neflogcan}Let $\F$ a strict Log-Canonical  foliation on a K\"ahler manifold $X$. Suppose moreover that $Z\not=0$. Let $\Xi$ a $(1,1)$ positive current $\mathcal F$-invariant. 
 
$$  \alpha =\{N(\alpha)\}+Z(\alpha)$$

avec $\alpha=\{\Xi\}$ (for instance, $\alpha=c_1(N_{\mathcal F}^*)+ \{H\}$

Then the following properties hold:
\begin{enumerate}
\item the components  $D_i$ of the negative part $N(\alpha)$ are hypersurfaces invariant by the foliation;  in particular,  $Z(\alpha)$ can be represented by an $\F$ invariant closed positive current.

\item $Z(\alpha)$ is a multiple of $\{\eta_T\}$.

\item  $Z(\alpha)$ is {\it nef} 
 and ${Z(\alpha)}^2=0$.

\item The decomposition is orthogonal: $\{N(\alpha)\} Z(\alpha)=0$. More precisely, for every component $D_i$ of $N(\alpha)$, on a $ \{D_i\} Z(\alpha)=0$.
\item  In $H^{1,1}(M,\R)$, the  $\R$ vector space spanned by the components $\{D_i\}$ of $\{N(\alpha)\}$ intersects the real line spanned by $\{\eta_T\}$ only at the origin.
\item  The decomposition is rational if $X$ is projective and $\alpha$ is a rational class (e.g: $\alpha=c_1(N_{\mathcal F}^*)+\{H\} $)\label{rationaldecomposition}.
\item Every  $(1,1)$ closed positive current wich represents $\alpha$ is necessarily $\mathcal F$-invariant.
\item\label{familleexcep} let $A$ be a hypersurface invariant by the foliation and $A_1,...,A_r$ its irreducible components; the family $\{A_1,...,A_r\}$ is exceptional if and only if the matrix $(m_{ij})=(\{A_i\}\{A_j\}{\{\theta\}}^{n-2})$ is negative , where $\theta$ is a K\" ahler form on $X$.
 \end{enumerate}       
  \end{prop}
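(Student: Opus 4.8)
The plan is to reproduce, essentially verbatim, the argument already used for the KLT analogue (Proposition \ref{Z(alpha)nef}), the only change being that the two structural inputs are now supplied by the analysis carried out just above rather than by \cite{to}. Concretely, I would build on the two facts established in the preceding paragraphs: that every singularity of $\F$ is of elementary type, so that Proposition \ref{intersectionresidual} (with remarks \ref{extensiontocodimension3} and \ref{elementaryinsupport}) applies and gives $\{\eta_T\}\{S\}=0$ for every closed positive $\F$-invariant current $S$; and that, after normalization, $\{\eta_T\}=Z$. Item (1) then follows from Theorem \ref{loginvariant}: since $\Xi$ is $\F$-invariant, the divisorial (Lelong) part of its Siu decomposition is carried by $\F$-invariant hypersurfaces, so the components $D_i$ of $N(\alpha)$ are invariant and $\Xi-[N(\alpha)]$ is a closed positive $\F$-invariant current representing $Z(\alpha)$. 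For item (2), both $\Xi-[N(\alpha)]$ and $\eta_T$ are residual $\F$-invariant currents, so Proposition \ref{intersectionresidual} forces their classes to be colinear, whence $Z(\alpha)$ is a multiple of $\{\eta_T\}$.

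Items (3)--(5) are then pure Hodge theory. Applying $\{\eta_T\}\{S\}=0$ with $S=\eta_T$ gives $\{\eta_T\}^2=0$, hence $Z(\alpha)^2=0$; the nefness of $Z(\alpha)$ follows as in \cite{to} from its being a multiple of the modified nef positive part $Z$. For the orthogonality (4) I would take $S=[D_i]$, the integration current on the invariant hypersurface $D_i$, and invoke $\{\eta_T\}\{D_i\}=0$; summing over $i$ yields $\{N(\alpha)\}Z(\alpha)=0$. For (5), a relation $\sum_i c_i\{D_i\}=t\{\eta_T\}$ would, after pairing its self-intersection with $\theta^{n-2}$, force $(\sum_i c_i\{D_i\})^2\theta^{n-2}=t^2\{\eta_T\}^2\theta^{n-2}=0$; since the $\{D_i\}$ span a subspace on which the intersection form is negative definite (item (8)), the left-hand side is strictly negative unless the class vanishes, giving $t=0$ and all $c_i=0$.

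Item (6) follows from the negative definiteness of the intersection matrix of the $D_i$: the coefficients of $N(\alpha)$ solve the linear system $\{N(\alpha)\}\{D_j\}=\alpha\{D_j\}$, whose matrix is invertible over $\Q$, so rationality of $\alpha$ propagates to the decomposition. For item (7) I would adapt the integration argument of \cite{to}: pairing any representing current $S$ with $\eta_T$ and $\theta^{n-2}$ gives, by items (3) and (4), $\{S\}\{\eta_T\}\theta^{n-2}=(\{N(\alpha)\}+Z(\alpha))Z(\alpha)\,\theta^{n-2}=0$, and the positivity of $S\wedge\eta_T$ then forces $S\wedge\omega=0$, i.e. $S$ is directed by $\F$ and hence $\F$-invariant. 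Finally item (8) is the characterization of exceptional families of \cite{bo}, transported to the present setting through the $\F$-invariance of the integration currents $[A_i]$ and the signature theorem, exactly as in Proposition \ref{Z(alpha)nef}.

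The bulk of the genuine work has already been discharged before the statement, namely the elementarity of all singularities and, above all, the extension of $\eta_T$ across the boundary $H_b$ through the Dulac transform estimate, which is precisely what secures $\{\eta_T\}=Z$ and the applicability of Proposition \ref{intersectionresidual}. Given those, I expect the only delicate point to be checking that the residual/elementary hypotheses hold globally for all the invariant currents involved, in particular for $[N(\alpha)]$ and the $[D_i]$ whose supports may meet $H_b$, and that the wedge $S\wedge\eta_T$ used in item (7) is legitimate there despite the mild degeneracy of $\eta_T$ along the boundary; once these are in place, each assertion reduces to the Hodge index theorem as above.
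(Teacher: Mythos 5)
Your proof follows the same route as the paper, which states this proposition without further argument as the analogue of Proposition \ref{Z(alpha)nef}, relying exactly on the two inputs you identify (elementarity of all singularities, hence $\{\eta_T\}\{S\}=0$ for every $\F$-invariant closed positive current $S$ via Proposition \ref{intersectionresidual} and Remark \ref{extensiontocodimension3}, together with the normalization $\{\eta_T\}=Z$ secured by the extension of $\eta_T$ across $H_b$) plus the Hodge signature theorem. One small correction to your item (2): the current $\Xi-[N(\alpha)]$ need not be residual, since the Siu decomposition of the particular representative $\Xi$ may charge hypersurfaces along which the generic Lelong number of the class $\alpha$ vanishes; colinearity of $Z(\alpha)$ and $\{\eta_T\}$ nevertheless follows from the signature theorem because $Z(\alpha)$ is modified nef (so $Z(\alpha)^2\{\theta\}^{n-2}\geq 0$), is orthogonal to $\{\eta_T\}$ by the invariance of $\Xi-[N(\alpha)]$, and $\{\eta_T\}$ is a nonzero nef class with $\{\eta_T\}^2=0$.
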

Doing the same than in the proof of proposition \ref{transmetricklt}, one can prove that there exists a closed positive current $T$ representing $c_1(\con \otimes\mathcal O (H))$ such that
\begin{equation}\label{hyperboliclogcan}
 T=[N]+\eta_T
\end{equation}     
 As previously, this means that $N_F$ comes equipped with a transverse hyperbolic metric with degeneracies on $K=\mbox{supp}\ N\cup H_b$. This provides a representation
    $$r:\pi_1(X\setminus K)\rightarrow \mbox{Aut}(\mathbb D).$$   
 
Write $N$ as a positive linear combination of prime divisors
$$N=\sum_{i=1}^ p N_i$$
 As previously, one denotes by $\Ihyp$ the corresponding locally constant sheaf of distinguished first integrals .
Like in the previous cases, there are multivaluate sections of $\Ihyp$ around $\mbox{Supp}\ N$  given by elementary first integrals $f_1^{\mu_1}....f_p^{\mu_p}$ with $f_i$ a local section of $\mathcal O(-N_i)$ and $\mu_i$ non negative real number such that $\mu_i=\lambda_i+1$ if $N_i\leq H$ and $\mu_i=\lambda_i$ otherwise (see item (1) of proposition \ref{logformnearK}). By the point (\ref{rationaldecomposition}) of the  proposition above, the exponents $\mu_i$'s are indeed rational whenever $X$ is projective 
In particular, the  monodromy around the local branches of $\mbox{Supp}\ N$ is finite abelian.

It remains to analyse the behaviour of such first integral near the boundary component $H_b$. Roughly speaking, one may think of the leaves space of $\F$ as a (non Hausdorff) orbifold hyperbolic Riemann surface  with finitely many orbifolds points (corresponding to the components of the negative part) and  cusps (corresponding to the components of $H_b$). This is made more precise by the following statement:

\begin{lemma}
 Let $\varphi\in PSL(2,\C)$ an homography such that $\varphi(\mathbb D)=\mathbb H$. Then, around each $x\in H_b$, there exists a local coordinate system $z=(z_1,...,z_n)$ and positive real numbers $\alpha_1,...,\alpha_p$ such that $H_b$ is defined by $z_1....z_p=0$ and 

$$f(z)=-i(\sum_{j=1}^p\alpha_j\log{z_j})$$

is a multivaluate section of $\varphi_* (\Ihyp)$.
In particular, the foliation is defined on a neighborhood of $x$ by the logarithmic form with positive redidues
\begin{equation}\label{canonicalformnearHb}
\omega=\sum_{i=1}^p\alpha_i\frac{dz_i}{z_i}
\end{equation}
\end{lemma}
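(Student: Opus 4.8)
The plan is to exhibit the explicit first integral $f(z)=-i\sum_j\alpha_j\log z_j$ and to prove that it is a section of $\varphi_*(\Ihyp)$ by showing that it induces the very transverse hyperbolic structure carried by the developing map. First I would record the local shape of $\F$ near $x$: by the reduction to dimension $2$ and the linearization lemma established just above, there is a coordinate system in which $\F$ is defined by a closed logarithmic form $\sum_i\alpha_i\frac{dz_i}{z_i}$ whose residues have pairwise positive real ratios. Normalizing the section of $\lhyp$ furnished by Proposition \ref{logformnearK}, whose residues are non negative reals, pins the $\alpha_i$ down as positive reals and reduces the ``in particular'' clause to the identification of the developing map. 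Writing $g=\sum_i\alpha_i\log z_i$, the multivaluate map $f=-ig$ satisfies $df=-i\,\omega$, hence is a holomorphic first integral, and $\mbox{Im}\,f=-\sum_i\alpha_i\log|z_i|>0$ on a punctured neighbourhood of $H_b$ because $|z_i|<1$ and $\alpha_i>0$; thus $f$ takes values in $\mathbb{H}$.

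Since the local leaf space is one dimensional, the genuine developing map $F$ and $f$ are two $\mathbb{H}$-valued holomorphic coordinates on it, related by a locally defined holomorphic injection $\Psi$ with $F=\Psi\circ f$, and it suffices to show $\Psi\in\mbox{Aut}(\mathbb{H})=PSL(2,\R)$. The core step is the analysis of the local monodromy. The local fundamental group is $\Z^p$, generated by the loops $\gamma_i$ around $\{z_i=0\}$, and $r(\gamma_i)\in PSL(2,\R)$. I would rule out the loxodromic and the infinite order elliptic cases using the behaviour of $\eta=\eta_T$ near $H_b$ obtained through the Dulac transform: $\eta$ extends across $H_b$ with finite mass on a transversal while the potential of $T$ is $-\infty$ there, so each component of $H_b$ is a cusp, i.e.\ a leaf at infinite transverse distance carrying no invariant subvariety inside $\D$. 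A loxodromic $r(\gamma_i)$ would force infinite mass along the transversal, and an elliptic one would produce a finite distance orbifold point, which by Lemma \ref{negativeintersectionform} cannot lie on $H_b$ since $H_b\cap\mbox{Supp}\ N=\emptyset$. Hence every $r(\gamma_i)$ is parabolic; as these isometries commute they share a single boundary fixed point, which I move to $\infty$ by post-composition with an element of $PSL(2,\R)$ (permissible, as sections of $\varphi_*\Ihyp$ are defined only up to this group). Then $r(\gamma_i)\colon w\mapsto w+2\pi\alpha_i$ with $\alpha_i\in\R$, the sign being forced to $\alpha_i>0$ by $\mbox{Im}\,F\to+\infty$ towards the cusp.

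With this normalization $F-f$ is invariant under every $\gamma_i$, so $\Psi(w)-w$ is periodic with periods $2\pi\alpha_i$. If the $\alpha_i$ are not all rationally related the period group is dense and $\Psi(w)-w$ is constant, giving $\Psi\in\mbox{Aut}(\mathbb{H})$ immediately; in the rational case $\Psi(w)-w$ is a single-period function tending to a limit as $\mbox{Im}\,w\to+\infty$, and imposing that $\Psi$ realizes the same invariant metric $\eta_T$ as $f$ forces $\Psi^*\,ds^2_{\mathbb{H}}=ds^2_{\mathbb{H}}$, whence $\Psi(w)=w+c$ is an isometry. In either case $F=-i\sum_j\alpha_j\log z_j$ modulo $PSL(2,\R)$, so $f$ is a section of $\varphi_*(\Ihyp)$ and $\F$ is cut out by $\omega=\sum_i\alpha_i\frac{dz_i}{z_i}$, as claimed. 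I expect the main obstacle to be precisely this rigidity step: proving that the monodromy is genuinely parabolic and that the residual periodic term in $\Psi$ is constant, since both rest on converting the current-theoretic degeneration of $\eta$ near $H_b$ (finite mass, $-\infty$ potential, absence of Lelong numbers) into sharp metric completeness and cusp statements.
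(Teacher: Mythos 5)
Your route (classify the local monodromy in $PSL(2,\R)$, then compare the candidate $f$ with the developing map $F$ via $F=\Psi\circ f$) is genuinely different from the paper's, which works directly with the transverse metric: near a regular point of $\F$ on $H_b$ the identity $T=[N]+\eta_T$ becomes the one-variable curvature equation $\Delta\varphi(z)=e^{2\varphi}/|z|^2$, whose solutions with $\varphi\equiv-\infty$ on $\{z=0\}$ but without Lelong number there are exactly the cusp metrics; this yields at once that the distinguished first integrals are $-i\alpha\log z+h(z)$ with $\alpha>0$ and $h$ holomorphic, the term $h$ is then absorbed by the coordinate change $z\mapsto ze^{ih/\alpha}$, and the normal crossing case follows from the abelianity of the local monodromy. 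Measured against this, two steps of your argument do not hold up. First, your exclusion of elliptic local monodromy is attributed to Lemma \ref{negativeintersectionform}, which concerns the negativity of the intersection form of a boundary component and says nothing about orbifold points; the fact you actually need --- that the local potentials of $T$ are $-\infty$ along $H_b$ while $T$ has no Lelong numbers there, so that neither an orbifold point (finite potential) nor a conical or loxodromic degeneration can occur --- is proved earlier in the paper but is never invoked where your argument needs it.

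Second, and more seriously, the final rigidity step is circular. Having fixed coordinates by the linearization lemma, you reduce to showing that $\Psi(w)-w$, a holomorphic function periodic of period $2\pi\alpha_1$ (at a generic point of $H_b$ only one branch passes, so there is a single period and the ``dense period group'' case is unavailable) and bounded as $\mbox{Im}\ w\to+\infty$, must be constant. It need not be: any function $\sum_{n\geq 0}c_n e^{inw/\alpha_1}$ qualifies, and correspondingly $F=-i\alpha\log z+h(z)$ with $h$ holomorphic is the true general form of the developing map in your chosen chart. The clause ``imposing that $\Psi$ realizes the same invariant metric $\eta_T$ as $f$'' assumes exactly what the lemma asserts, namely that $f$ itself is a distinguished first integral; nothing in your argument links $f$ to $\eta_T$ beforehand. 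The statement is rescued not by proving $h=0$ in the given chart but by exploiting the freedom in the conclusion --- ``there exists a local coordinate system'' --- to replace $z$ by $ze^{ih(z)/\alpha}$, which is precisely what the paper does.
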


\begin{proof}
 Assume firstly that $x\notin\mbox{Sing}\ \F$.

The identity (\ref{hyperboliclogcan}) can be locally expressed by an equation involving only one variable $z$ (wich parametrizes the local leaves space and such that $H_b=\{z=0\}$), namely

$$\Delta \varphi (z)=\frac{e^{2\varphi}}{{|z|}^2}.$$

\noindent which is nothing but the equation of negative constant curvature metric near the cuspidal point $z=0$. In particular, the foliation admits as first integrals multivaluate sections of $\phi (\Ihyp)$ of the form $f(z)=-i\alpha \log z + h(z)$ where $h$ is holomrphic and $\alpha >0$. Up right composition by a diffeomorphism of $\C,0$, one can then assume that 

\begin{equation}
 f(z)= -i\alpha \log z
\end{equation}
 (the transverse invariant hyperbolic metric is then given by $\eta_T=\frac{i}{\pi} \frac{dz\wedge d\overline{z}}{{|z|}^2{(\log|z|) }^2}$).
 The general case easily follows from the abelianity of the local monodromy on the complement of the normal crossing divisor $\{z_1...z_p=0\}$.

\end{proof}

\begin{lemma}
The representation $\rho$ has dense image.
\end{lemma}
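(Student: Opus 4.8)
The plan is to determine the closure $\overline{G}$ of the monodromy group $G=\mathrm{Im}(r)$ inside $\mbox{Aut}\ \D\simeq PSL(2,\R)$ and to show it is everything. First I would record that the normal form obtained in the previous lemma, $f(z)=-i\alpha\log z$ near a point of $H_b$, forces the local monodromy around each component of $H_b$ to be \emph{parabolic}: the loop $z\mapsto e^{2i\pi}z$ sends $f$ to $f+2\pi\alpha$, a non-trivial translation of the upper half-plane ${\mathbb H}$. Hence $G$ contains a non-trivial parabolic element. Invoking the classification of closed subgroups of $PSL(2,\R)$ containing a parabolic, the possibilities for $\overline{G}$ reduce to: (i) an elementary group fixing a single point of $\partial\D$ (the affine, $az+b$ case); (ii) a discrete (Fuchsian) group; or (iii) all of $PSL(2,\R)$. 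Indeed a parabolic fixes a unique boundary point and no interior point, which excludes both the elliptic closures and the closures preserving a geodesic (a group containing such a parabolic cannot preserve a pair of distinct boundary points). It therefore suffices to rule out (i) and (ii).

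To discard (i) I would reproduce the argument given just before Proposition \ref{StructureG} in the non-logarithmic setting. If $\overline{G}$ fixes a boundary point, then $G$ contains no non-trivial elliptic element and, in the half-plane model, the multivalued distinguished first integral $e^{\int\eta_K}$ becomes a genuine univalued \emph{holomorphic} first integral $F$ on a neighborhood of $K=\mbox{Supp}\ N\cup H_b$. As in \emph{loc.\ cit.}, the zero divisor of $dF$ is exactly the negative part $N$, so that $\F$ is defined by a section, without zeros in codimension $1$, of a flat unitary twist $N_\F^*\otimes\mathcal O(H)\otimes\mathcal O(-N)\otimes E$. This forces the positive part of $c_1(N_\F^*\otimes\mathcal O(H))$ to vanish, contradicting the standing assumption $Z\not=0$.

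It remains to eliminate (ii), which is the main obstacle. Here I would use that, by compactness of $X$ together with the completeness of the developing map and the connectedness-of-fibers statement for this setting (the analogues of Theorems \ref{complete} and \ref{connect}, with $H_b$ now contributing genuine hyperbolic cusps through the normal form above), a discrete $G$ would be a Fuchsian group of \emph{finite covolume}: via the homeomorphism of section \ref{fibers connectedness}, the leaf space $X/\F$ is identified with the finite-area orbifold $\D/G$, whose orbifold points come from $\mbox{Supp}\ N$ and whose cusps come from $H_b$. Consequently $\F$ would be a fibration over this quotient orbifold curve, and the logarithmic Bogomolov--Castelnuovo--De Franchis bound would give $\mbox{Kod}(N_\F^*\otimes\mathcal O(H))=\nu(N_\F^*\otimes\mathcal O(H))=1$, contradicting the non-abundance hypothesis $\mbox{Kod}(N_\F^*\otimes\mathcal O(H))=-\infty$ in force in this case. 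Having discarded (i) and (ii), we are left with $\overline{G}=\mbox{Aut}\ \D$, i.e.\ $G$ is dense. The delicate point is precisely the passage from ``discrete'' to ``finite-covolume lattice, hence fibration'': it requires transporting the completeness and fiber-connectedness machinery across the cuspidal degeneration along $H_b$, where the invariant transverse metric $\eta_T=\frac{i}{\pi}\frac{dz\wedge d\overline{z}}{{|z|}^2{(\log|z|)}^2}$ is complete but singular.
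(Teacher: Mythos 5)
Your overall skeleton --- classify the closed subgroups of $PSL(2,\R)$ containing a non-trivial parabolic, then eliminate the affine and the discrete possibilities --- is sound, and your elimination of the affine case (i) is exactly the argument the paper recycles from the non-logarithmic setting. The genuine gap is in case (ii). The paper never has to confront the discrete case, because it extracts more from the local monodromy along a connected component $\mathcal C$ of $H_b$ than the single parabolic element you record: the local monodromy group $G_{\mathcal C}\subset\Aff$ contains non-trivial translations \emph{and} elements with non-trivial linear part. Indeed, if $G_{\mathcal C}$ consisted only of translations, the closed logarithmic form $\omega=\sum_i\alpha_i\,dz_i/z_i$ of (\ref{canonicalformnearHb}) would glue into a closed logarithmic $1$-form on a whole neighborhood of $\mathcal C$ with poles on $\mathcal C$ and positive residues $\alpha_i$, forcing $\bigl(\sum_i\alpha_i\{D_i\}\bigr)\{D_j\}=0$ for every $j$ and contradicting the negative definiteness of the intersection matrix (lemma \ref{negativeintersectionform}). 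Conjugating a translation $z\mapsto z+b$ by $z\mapsto az+c$ with $a\neq1$ yields translations by $a^nb$ for all $n\in\Z$, hence arbitrarily small non-trivial translations; therefore $\overline{G}$ contains the full one-parameter parabolic subgroup and is automatically non-discrete. At that point only your alternatives (i) and (iii) survive, and the proof closes with no global dynamics at all.

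By contrast, your disposal of (ii) is only a programme. Identifying the leaf space with a finite-covolume quotient $\D/G$ requires the completeness and fiber-connectedness statements to be re-established across the cuspidal degeneration along $H_b$; you correctly flag this as the delicate point but do not supply it, and in the paper that dynamical machinery is deployed only \emph{after} density is known. Moreover, your contradiction in case (ii) invokes the non-abundance hypothesis $\mbox{Kod}(N_\F^*\otimes\mathcal O(H))=-\infty$, which the paper's proof of this lemma does not use: the paper first proves density, deduces quasi-minimality, and only then derives $\kappa=-\infty$ via the Schwarzian-derivative trick. Assuming $\kappa=-\infty$ here is admissible if you only want the lemma under the hypotheses of Theorem \ref{thprincipallog}, but it inverts the paper's logical order and leaves the unconditional statement (every strict Log-Canonical foliation with $Z\not=0$ has dense monodromy) unproved. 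The repair is the local observation above, which makes case (ii) disappear.
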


\begin{proof}
   Let $\omega$ be the logarithmic form given by 
(\ref{canonicalformnearHb}). Note that $\omega$ is nothing but the meromorphic extension of a section of $-id(\varphi_*(\Ihyp))$ near $H_b$ and is uniquely defined up to multiplication by a real positive scalar. 

Consequently, the monodromy representation $\varphi_*r$ restricted to $U\setminus \mathcal C$, where $U$ is a small neighborhood of a connected component $\mathcal C$ of $H_b$ takes values in the affine group $\mbox{Aff}$ of $PSL(2,\R)=\mbox{Aut}(\mathbb H)$ and the image $G_\mathcal C$ contains non trivial tranlations (thanks to the existence of non trivial residues). Moreover, $G_\mathcal C$ does not contain only translations, otherwise this would mean that $\omega$ extends as a logarithmic form on the whole $U$ with poles on $\mathcal C$ and this obviously contradicts lemma \ref{negativeintersectionform}. Denote by $G$ the image of $\varphi_* r$. The observations easily implies the following alternative

\begin {enumerate}
 \item Either $G\subset \mbox{Aff}$, 
\item either the topological closure $\overline{G}$ of $G$ is $PSL(2,\R)$.
\end {enumerate}
By an  argument already used in section \ref{sectiondynamics}, the first case forces the positive part $Z$ to vanish and then yields a contradiction.

The monodromy group of the foliation is thus dense in $\mbox{Aut}(\mathbb D)$. By using a bundle-metric complete at infinity in the spirit of lemma \ref{bundlelikemetricatinfinity} (here, it is convenient to choose local model of the form $g_x=i\sum_{l=1}^{n_x}\frac{{dz_l}\wedge\{d\overline{z_l}}{{|z_l|}^2\{(\log{|z_l|)}^2} +h_x$ near $x\in H_b$). Following the same line of argumentation as in \ref{caseZ=0}, one can show that the dynamical behaviour of $\F$ is completely encoded by $G$ and in particular that $\F$ is a {\bf quasi-minimal foliation}.

\end{proof}

\begin{remark}
As an illustration of this monodromy behavior, it may be relevant to keep in mind the example of modular Hilbert modular foliations (see \cite{pemen} for a thorough discussion on this subject). In this case, one may think of $H_b$ as the exceptional divisors arising from the desingularization of cusps and the affine monodromy near $H_b$ is here related to the isotropy groups of the cusps (the maximal parabolic subgroup of the lattice defining the Hilbert modular surface).
\end{remark}

 \begin{remark}
 Like in the non logarithmic and KLT case, the local monodromy at infinity remains quasi-unipotent . The new phenomenon, here, is that it is no more finite monodromy around the components of $H_b$ (because of the $\log$ terms in the local expression of distinguished first integrals).
 \end{remark}
 
\subsubsection{Proof of the main theorem}
It is namely the theorem \ref{thprincipallog} of the introduction.
Recall that $\F$ is a strict Log-Canonical foliation with non vanishing positive part $Z$. From now on, we will assume that the ambient manifold $X$ is {\bf projective}. 

Thanks to the transverse hyperbolic structure together with the quasi-minimality of $\F$, one can conclude by the use of 
the Schwarzian derivative "trick" (see the proof of theorem \ref{abundancecases}) that  $\kappa:=\mbox{Kod}(N_{\mathcal F}^*\otimes \mathcal O(H))=-\infty$. Indeed, if $\kappa\geq 0$,  up doing a suitable ramified covering, one can assume that the foliation is defined by a logarithmic form $\omega$ with normal crossing poles, hence closed.

Thanks to the moderate growth of distinguished first integrals,  $\F$ is a transversely projective foliations with regular singularities and the corresponding Riccati foliation does not factors through a Ricatti foliation over a curve for exactly the same dynamical reasons as those already observed  in \ref{projectivetriple}. 

One can then similarly claim that 
there exists 
 $\Psi$  a morphism of analytic varieties between $X\setminus H_b$ and the quotient $\frak{H}={{\D}^n}/{\Gamma}$ of a polydisk ($n\geq 2)$ by an irreducible lattice $\Gamma\subset {(\mbox{Aut}\ \D)}^n$ such that $\mathcal F={\Psi}^*\mathcal G$ where $\mathcal G$ is one of the $n$ tautological foliations on $\frak{H}$.

 To get the final conclusion, one extends $\Psi$ to an algebraic morphism 
 
         $$\overline {\Psi}\rightarrow {\overline{\frak H}}^{BB}$$
         
         thanks to Borel's extension theorem \cite{bor}.



\bigskip


\begin{thebibliography}{99}
\bibitem{beau} A.BEAUVILLE,
{\it Annulation du $H^1$ pour les fibr\'es en droites plats}, Complex algebraic varieties (Bayreuth, 1990), 1-15.
\bibitem{bo} S.BOUCKSOM, {\it Divisorial Zariski decompositions on compact complex manifolds}, Ann. Sci. \'Ecole Norm. Sup. (4)  {\bf 37}  (2004),  no. 1, 45-76.

\bibitem{bor}{A.BOREL}, {\it Some metric properties of arithmetic quotients of symmetric spaces and an extension theorem}, J.Differential Geom., Volume 6, Number {\bf 4} (1972), 421-598.
\bibitem{brbir} M.BRUNELLA, {\it Birational geometry of foliations. Publica\c coes Matem\'aticas do IMPA}, [IMPA Mathematical Publications] Instituto de MatemÃÂ¡tica Pura e Aplicada (IMPA), Rio de Janeiro, 2004.
 \bibitem{brsurf} M.BRUNELLA, {\it Feuilletages holomorphes sur les surfaces complexes compactes}, Ann. , Sci. \'Ecole Norm. Sup. (4)  {\bf 30}  (1997),  no. 5, 569-594. 
\bibitem{br} M.BRUNELLA, {\it Courbes enti\`eres et feuilletages holomorphes},  Enseign. Math. (2)  {\bf 45} (1999),  no. 1-2, 195-216.
\bibitem{brme} M.BRUNELLA, L.G MENDES,{\it Bounding the degree of solutions to Pfaff equations},  Publ. Mat.  {\bf 44}  (2000),  no. 2, 593-604.
 \bibitem{camp} F.CAMPANA {\it Ensembles de Green-Lazarsfeld et quotients r\'esolubles des groupes de K\"ahler},  J. Algebraic Geom. {\bf 10} (2001), no. 4, 599-622.



\bibitem{corsim} K.CORLETTE, C.SIMPSON,
{\it On the classification of rank-two representations of quasiprojective fundamental groups},
Compos. Math. {\bf 144} (2008), no. 5, 1271-1331. 
\bibitem{coupe} G.COUSIN, J.V. PEREIRA {\it Transversely affine foliations on projective manifolds}, Math. Res. Lett. {\bf 18} (2014).
\bibitem{Deligne} P.DELIGNE, {\it \'{E}quations diff\'erentielles \`a points singuliers
  r\'eguliers}, Lecture Notes in Mathematics, Vol. 163, Springer-Verlag,
  Berlin, 1970.
\bibitem{del} P.DELIGNE {\it Th\'eorie de Hodge. II},  Inst. Hautes \'Etudes Sci. Publ. Math. No. {\bf 40} (1971), 5-57.
\bibitem{de} J.P DEMAILLY, {\it On the Frobenius integrability of certain holomorphic $p$-forms},  Complex geometry (G\"ottingen, 2000),  93-98, Springer, Berlin, 2002.
\bibitem{god} C.GODBILLON,  {\it Feuilletages. \'Etudes g\'eom\'etriques},  Progress in Mathematics, 98. Birkh\"auser Verlag, Basel, 1991.
\bibitem{gro} A.GROTHENDIECK, {\it A General Theory of Fibre Spaces with Structure Sheaf},  University of Kansas, 1955.
\bibitem{ho} L.H\"ORMANDER,  {\it The analysis of linear partial differential operators. I. Distribution theory and Fourier analysis},  Classics in Mathematics. Springer-Verlag, Berlin, 2003. 
\bibitem{lope} F.LORAY, J.V PEREIRA, {\it Transversely projective foliations on surfaces: existence of minimal forms and prescription of monodromy}, Int.J.Maths, Vol {\bf 18}, Number 6, (2007), 723-747. 
 \bibitem{lptbis}  F.LORAY, J.V PEREIRA, F.TOUZET, {\it Representations of quasiprojective groups, Flat connections and Transversely projective foliations}, arXiv:1402.1382 (2014)


\bibitem{MM}J.-F. MATTEI, R.MOUSSU, {\it Holonomie et int\'egrales premi\`eres}, Ann.Sc. ENS {\bf 13 } (1980), 469-523.
\bibitem{pemen} L. G. MENDES; J. V. PEREIRA, {\it Hilbert modular foliations on the projective plane},  Comment. Math. Helv.  {\bf 80}  (2005),  no. 2, 243-291.
\bibitem{mo} P.MOLINO, {\it Riemannian foliations}, Translated from the French by Grant Cairns. With appendices by Cairns, Y. Carri\`ere, \'E. Ghys, E. Salem and V. Sergiescu. Progress in Mathematics, {\bf 73.} Birkh\"auser Boston, Inc., Boston, MA, 1988, 339 pp.
\bibitem{na} N.NAKAYAMA, {\it Zariski-decomposition and abundance}, MSJ Memoirs, vol. 14, Mathematical Society of Japan, Tokyo, 2004.
\bibitem{pa2} E.PAUL,
{\it Connectedness of the fibers of a Liouvillian function},
Publ. Res. Inst. Math. Sci. {\bf 33} (1997), no. 3, 465-481.
\bibitem{pe}R.PEREZ-MARCO, {\it Fixed points and circle maps}, Acta Math., {\bf 179}, (1997), 243-294.
\bibitem{ra} T.RANSFORD, {\it Potential theory in the complex plane}, London mathematical Society Student Texts, 28. Cambridge University Press, Cambridge, 1995.
\bibitem{Reid} M.REID,
{\it Bogomolov's theorem $c_{1}^{2}\leq 4c_{2}$}, Proceedings of the International Symposium on Algebraic Geometry (Kyoto Univ., Kyoto, 1977), pp. 623-642, Kinokuniya Book Store, Tokyo, 1978. 
\bibitem{sim} C.SIMPSON, {\it Subspaces of moduli spaces of rank one local systems,}  Ann. Sci. \'Ecole Norm. Sup. (4)  {\bf 26}  (1993),  no. 3, 361-\index{}401.
\bibitem{sim1} C.SIMPSON, {\it
Lefschetz theorems for the integral leaves of a holomorphic one-form},
Compositio Math. {\bf 87} (1993), no. 1, 99-
113. 
\bibitem{to} F.TOUZET,{\it Uniformisation de l'espace des feuilles de certains feuilletages de codimension un}, Bull Braz Math Soc, New Series {\bf 44}(3) (2013), 351-391.



\end{thebibliography}
\end{document}